\date{\today}
\keywords{}
\author{Bertrand Deroin \and Romain Dujardin}
\thanks{B.D.'s research was partially supported by ANR-08-JCJC-0130-01,  ANR-09-BLAN-0116. R.D.'s research  was partially supported by ANR project BERKO and by ECOS project C07E01}
\address{CNRS \\ Département de Mathématique d'Orsay \\ B\^atiment 425,
Universit\'e de Paris Sud,
91405 Orsay cedex, France.}
\email{bertrand.deroin@math.u-psud.fr}
\address{CMLS \\ \'Ecole Polytechnique \\ 91128 Palaiseau\\
         France}
\email{dujardin@math.polytechnique.fr}
\title{Random walks, Kleinian groups, and bifurcation currents}
\newcommand{\cc}{\mathbb{C}}
\newcommand{\re}{\mathbb{R}}
\newcommand{\zz}{\mathbb{Z}}
\newcommand{\nn}{\mathbb{N}}
\newcommand{\pp}{\mathbb{P}}
\newcommand{\e}{\varepsilon}
\newcommand{\cv}{\rightarrow}
\newcommand{\om}{\Omega}
\newcommand{\set}[1]{\left\{#1\right\}}
\newcommand{\norm}[1]{\left\Vert#1\right\Vert}
\newcommand{\abs}[1]{\left\vert#1\right\vert}
\newcommand{\cd}{{\cc^2}}
\newcommand{\pu}{{\mathbb{P}^1}}
\newcommand{\rest}[1]{ \arrowvert_{#1}}
\newcommand{\m}{{\bf M}}
\newcommand{\unsur}[1]{\frac{1}{#1}}
\newcommand{\lrpar}[1]{\left(#1\right)}
\newcommand{\bra}[1]{\left\langle #1\right\rangle}
\newcommand{\la}{\lambda}
\newcommand{\La}{\Lambda}
\newcommand{\SL}{\mathrm{SL}(2,\mathbb C)}
\newcommand{\PSL}{\mathrm{PSL}(2,\mathbb C)}
\newcommand{\tbif}{T_{\mathrm{bif}}}
\newcommand{\note}[1]{\marginpar{\tiny #1}}
\newcommand{\itm}{\item[-]}
\DeclareMathOperator{\supp}{Supp}
\DeclareMathOperator{\vol}{vol}
\DeclareMathOperator{\tr}{tr}
\DeclareMathOperator{\fix}{Fix}
\DeclareMathOperator{\length}{length}
\DeclareMathOperator{\codim}{codim}
\newtheorem{prop} {Proposition} [section]
\newtheorem{thm}[prop] {Theorem}
\newtheorem{defi}[prop] {Definition}
\newtheorem{lem}[prop] {Lemma}
\newtheorem{cor}[prop]{Corollary}
\newtheorem{theo}{Theorem}
\theoremstyle{remark}
\newtheorem{rmk}[prop]{Remark}
\begin{document}

\begin{abstract} Let $(\rho_\lambda)_{\lambda\in \Lambda}$  be a holomorphic family of representations of a finitely generated group $G$ into $\mathrm{PSL(2, \mathbb{C})}$, parameterized by a  complex manifold $\Lambda$. We define a notion of  {\em bifurcation current} in this context, that is, a positive closed current  on $\Lambda$     describing  the bifurcations of this family of representations in a quantitative sense. It is the analogue of the  bifurcation  current introduced by DeMarco for   holomorphic families of rational mappings on $\mathbb{P}^1$. 
Our definition relies on the theory of random products of matrices, so it depends on the 
choice of  a probability measure $\mu$ on $G$. 

We show that under natural assumptions on $\mu$, the support of the bifurcation current coincides with the bifurcation locus of the family. We also prove that the bifurcation current describes the asymptotic distribution of several codimension 1 phenomena in parameter space, like accidental parabolics or new relations, or accidental collisions between fixed points.
\end{abstract}

\maketitle

\section{Introduction}

In recent years, the use of techniques from higher dimensional holomorphic dynamics, especially positive currents, has led to interesting new insights on the structure of parameter spaces of holomorphic dynamical systems on the Riemann sphere.
There is a deep and fruitful analogy --first brought to light by Sullivan \cite{sullivan1}-- between holomorphic dynamics  on  $\pu$ and the theory of Kleinian groups. Our purpose in this paper is to develop these ideas on the Kleinian group side, 
by initiating   the study of \textit{bifurcation currents} in this setting.

\bigskip

Let us first briefly discuss this notion in the context of rational mappings.  Let $\La$ be a complex manifold and $f = (f_\la)_{\la\in \La} :\Lambda\times\pu\cv\pu$ be a holomorphic family of rational maps of fixed degree.  The simplest way to define a positive closed current on $\La$ associated to this family is, following DeMarco \cite{demarco1, demarco2}, to observe that the Lyapunov exponent $\chi(f_\la)$ of $f_\la$ relative to its unique measure of maximal entropy defines a plurisubharmonic (psh for short) function on $\Lambda$. We then put $T_{\rm bif} = dd^c(\chi(f_\la))$. This is by definition the {\em bifurcation current} of the family.
In the most studied family  $(z^2+\la)_{\la\in \cc}$ of quadratic polynomials, $T_{\rm bif}$ is just the harmonic measure of the Mandelbrot set.

DeMarco proved that $\supp(T_{\rm bif})$ is exactly the bifurcation locus $\mathrm{Bif}$ (defined, e.g., as the locus of parameters where the Julia set does not move continuously in the Hausdorff topology). A  word about the proof: the inclusion $\supp(T_{\rm bif})\subset \mathrm{Bif}$ is obvious, while    the converse inclusion is based on a formula for  the Lyapunov exponent in terms of the dynamics of critical points. It follows in particular that if the Lyapunov exponent is pluriharmonic in some region of parameter space, then the critical points cannot bifurcate there. Standard arguments then imply that the dynamics is stable.

\medskip

A remarkable feature of the bifurcation current is that it describes the asymptotic distribution of natural sequences of dynamically defined subvarieties of parameter space. For instance it was shown by Favre and the second author in \cite{df} that  the hypersurfaces of parameters  possessing a  preperiodic critical point (of preperiod $n$ tending to infinity)  equidistribute towards $T_{\rm bif}$.
Another result, due to Bassanelli and Berteloot,  asserts that   parameters admitting a periodic point of period $n$ and a given multiplier typically equidistribute towards $\tbif$ \cite{bas-ber1, bas-ber2}.
%

\bigskip

Let us now turn to the subject of the paper.
Let $\La$ be a (connected) complex manifold, $G$ be a finitely generated group and $\rho = (\rho_\la)_{\la\in\La}:\La\times G\cv \PSL$ be a holomorphic family of non-elementary representations of $G$. To avoid trivialities, assume further   that the representations $\rho_\la$ are faithful at generic parameters, and  not all conjugate   to each other (in $\PSL$).
For such a family of M\"obius subgroups, there is a well-established  notion of bifurcation, mainly due to Sullivan
\cite{sullivan} (see also \cite{bers}), defined (by the negative) by saying that an open set $U$ is contained in the stability locus\footnote{One should be careful not to be confused with the notion of a stable representation in the sense of geometric invariant theory. In this paper, stability will always be understood in the sense of dynamical systems.} if for every $g\in G$, the holomorphic family of M\"obius transformations  $\rho_\la(g)$ is of constant type (loxodromic, parabolic or elliptic) throughout $U$. In particular the fixed points $\rho_\la(g)$ can be followed holomorphically over $U$.  Using the theory of  holomorphic motions, Sullivan proved that representations in a stable family are quasi-conformally conjugate on $\pu$.
He also proved that these representations must then be discrete with a non-empty set of discontinuity.
 

To associate a bifurcation current to such a family we use the theory of random walks on groups. For this, we choose
a probability measure $\mu$ on $G$ (satisfying certain natural technical assumptions that will be made clear in the text), and consider the   random walk on $G$ whose transition probabilities are given by $\mu$. For the sake of simplicity we may suppose in this introduction that  $\mu$ is  equidistributed on a finite symmetric set of generators of $G$, in which case we are just considering the simple random walk on the associated Cayley graph. 

 Given a representation $\rho$ of $G$ into $\PSL$ we can now define a Lyapunov exponent by the formula $$\chi(\rho) := \lim_{n\cv\infty} \unsur{n}\int \log\norm{\rho(g)} d\mu^n(g).$$ 
Here $\norm{\cdot}$ refers to any matrix norm on $\PSL$, and $\mu^n$ denotes the $n^{\rm th}$ convolution power of $\mu$, that is, the image of the product measure $\mu^{\otimes n}$ on $G^n$ under the map $(g_1,\ldots, g_n)\mapsto g_1\cdots g_n$.

For a holomorphic family of representations as above, we obtain in this way a non-negative psh function $\lambda\mapsto \chi(\rho_\la)$, and define, in analogy with the polynomial case,  the bifurcation current by the formula $T_{\rm bif}  = dd^c(\chi(\rho_\la))$.

\medskip

For readers not necessarily familiar with positive currents, it is worth noting  that our results are already  interesting when $\dim(\La)=1$, in which case one can simply replace ``psh" by ``subharmonic'' and ``positive current" by ``positive measure''. Neverthess, some arguments in the proof require to work with actual currents on the 2-dimensional space $\La\times \pu$.  

\medskip

The theory of random products of matrices will be used to study the properties of this Lyapunov exponent function, and  
 show that the bifurcation current is a  meaningful object, truly capturing the bifurcations of the family.  
 
A first fundamental result, due to Furstenberg,  asserts that under the above assumptions, 
   $\chi(\rho_\la)$ is   positive, and depends continuously on $\la$. Furthermore, $\chi(\rho_\la)$ admits an expression in terms of  a canonical probability measure $\nu_\la$ on $\pu$, invariant under the average action of $\rho_\la(G)$, which will play an important role in the paper. More generally, it is remarkable that the proofs in the paper will require non-trivial results like the exponential convergence of the transition operator, the Large Deviations Theorem, etc. (see Bougerol-Lacroix \cite{bougerol-lacroix} and Furman \cite{furman} for good introductory texts on these topics) 

\medskip

Our first main result is the characterization of the support of the bifurcation current.

\begin{theo}\label{theo:support}
Let $(G, \rho, \mu)$ be a holomorphic family of representations as above. Then the support of  $T_{\rm bif}$ is equal to the bifurcation locus.
\end{theo}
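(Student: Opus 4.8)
The inclusion $\supp(T_{\rm bif})\subset\mathrm{Bif}$ is the easy half: on the stability locus, for every $g\in G$ the family $\rho_\lambda(g)$ is of constant type and the fixed points move holomorphically, so one can build a holomorphic motion of $\pu$ conjugating the representations (Sullivan). Under such a conjugacy the stationary measure $\nu_\lambda$ is transported holomorphically, hence $\lambda\mapsto\chi(\rho_\lambda)$, which can be written as an integral of $\log$ of a derivative cocycle against $\nu_\lambda$, is pluriharmonic; therefore $dd^c\chi=0$ there. So the whole point is the reverse inclusion: \emph{if $\chi$ is pluriharmonic on an open set $U$, then $U$ is in the stability locus.}

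The strategy I would follow mirrors the DeMarco/DeMarco--Favre--Dujardin scheme but replaces ``critical orbit'' by ``random orbit of a base point.'' First I would lift the picture to $\Lambda\times\pu$: consider the fibered map and the stationary measures $\nu_\lambda$, which assemble into a positive closed current $\widehat\nu$ on $\Lambda\times\pu$ of bidimension $(\dim\Lambda,\dim\Lambda)$ (fiberwise equal to $\nu_\lambda$). There should be a potential-theoretic formula expressing $\chi(\rho_\lambda)$ --- or rather the current $T_{\rm bif}$ --- as the pushforward to $\Lambda$ of the self-intersection, against $\widehat\nu$, of a natural $(1,1)$-current on $\Lambda\times\pu$ measuring the ``motion of fixed points'': concretely, for a fixed $g$ with attracting fixed point section $\lambda\mapsto \xi_g^+(\lambda)$ (defined where it makes sense), the graph $\Gamma_g=\{(\lambda,\xi_g^+(\lambda))\}$ and its Green-type regularization. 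Averaging such objects over the random walk and using Furstenberg's formula $\chi=\int\log\|\cdot\|\,d\nu$ together with the exponential convergence of the transition operator, one obtains $T_{\rm bif}=(\pi_\Lambda)_*\big(S\wedge\widehat\nu\big)$ for an appropriate closed positive $(1,1)$-current $S$ on $\Lambda\times\pu$ whose fiberwise potential records $\log$ of the derivative cocycle.

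Granting such a formula, the argument runs as follows. Suppose $T_{\rm bif}=0$ on $U$. Then $S\wedge\widehat\nu=0$ over $U$ (one must check there is no cancellation: the relevant potentials are genuinely psh and the fiber measures $\nu_\lambda$ give full mass to the limit set, so positivity is preserved under pushforward). This forces the potential of $S$ to be pluriharmonic on an open full-$\widehat\nu$-measure subset of $U\times\pu$; by the minimality of the action of $\rho_\lambda(G)$ on the limit set $\supp\nu_\lambda$ and a transversality/unique continuation argument, pluriharmonicity propagates to show that for every $g$ the fixed points of $\rho_\lambda(g)$ do not collide and do not cross the limit set as $\lambda$ varies in $U$ --- equivalently, no $\rho_\lambda(g)$ changes type. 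One technical point: to go from ``$\nu_\lambda$-a.e.''\ statements to statements valid for \emph{every} $g\in G$, I would use the Large Deviations Theorem for the random walk (quantifying how fast $\frac1n\log\|\rho_\lambda(g_1\cdots g_n)\|$ concentrates near $\chi$), which upgrades almost-sure contraction to a uniform holomorphic motion of $\pu$ over $U$. Sullivan's theorem then identifies $U$ with (a subset of) the stability locus, completing the proof.

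The main obstacle, I expect, is establishing the potential-theoretic formula for $T_{\rm bif}$ on $\Lambda\times\pu$ with enough precision to rule out cancellations --- i.e.\ showing that pluriharmonicity of $\chi$ really does force the fiberwise derivative-cocycle potential to be pluriharmonic $\widehat\nu$-a.e., rather than merely having a pushforward that happens to vanish. This is the analogue of the non-trivial ``fixed point / critical point'' formula in the rational maps setting, and here it requires genuinely working with random products: the exponential mixing of the transition operator and the Hölder regularity of $\lambda\mapsto\nu_\lambda$ are what make the regularized graphs converge to a well-defined closed positive current whose slices one can control. The second delicate step is the propagation argument using minimality of the boundary action, where one must be careful that the support of $\widehat\nu$ over $U$ is large enough (Zariski-dense fiberwise) for unique continuation of pluriharmonic functions to bite.
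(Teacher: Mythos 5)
The easy inclusion is fine (the paper gets it slightly differently, from the pluriharmonicity of $\la\mapsto\log\abs{\lambda_{\max}(g_\la)}$ on the stability locus, via Corollary \ref{cor:guivarch}), but your plan for the hard inclusion has a genuine gap, and it sits exactly at the foundation of your scheme. You propose to assemble the stationary measures $\nu_\la$ into a positive closed current $\widehat\nu$ on $\La\times\pu$ and to write $T_{\rm bif}=(\pi_\La)_*(S\wedge\widehat\nu)$. But such a $\widehat\nu$ would be a non-vertical \emph{stationary} current in the sense of Theorem \ref{thm:stationary current}, and that theorem shows its existence is \emph{equivalent} to stability of the family: the family of stationary measures can never be continued as a closed positive current across the boundary of the stability locus. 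So your starting object exists precisely when the conclusion you are trying to prove already holds; over a bifurcation parameter the slices $\nu_\la$ do not fit together into a closed current, the wedge product $S\wedge\widehat\nu$ is not defined, and the formula you would need cannot even be stated. The subsequent ``no cancellation'' and ``propagation by minimality / unique continuation'' steps are also only gestured at; there is no mechanism given for passing from pluriharmonicity of a fiberwise potential $\widehat\nu$-a.e.\ to the statement that no $\rho_\la(g)$ changes type.

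For contrast, the actual argument avoids any a priori current built from the $\nu_\la$. One fixes $z_0\in\pu$ and considers the averaged graphs $\widehat T_n=\frac1n\int[\widehat g\cdot z_0]\,d\mu^n(g)$, which converge to $\pi_1^*T_{\rm bif}$ (Theorem \ref{thm:geom_interpt}). Pluriharmonicity of $\chi$ on $U$, combined with the $O(1/n)$ estimate of Proposition \ref{prop:Ounsurn} (exponential convergence of the transition operator), gives a uniform bound on the average volume of these graphs over $U$. Bishop's compactness theorem then produces, for a.e.\ path of the random walk, a limiting holomorphic graph, i.e.\ an equivariant holomorphic map $\theta$ from the Poisson boundary $P(G,\mu)\times U$ to $\pu$. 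The collisions between these graphs (and those coming from the reflected walk $\check\mu$) are ruled out by Kaimanovich's double ergodicity together with the persistence of isolated intersections; this yields a $G$-invariant holomorphic motion of $\supp(\nu_\la)$, from which stability follows via Sullivan's theorem and the Margulis--Zassenhaus lemma. The Poisson-boundary and Bishop-compactness steps are the essential ideas your sketch is missing, and they are what replaces the ``fixed point / critical point formula'' you were hoping for.
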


To say it differently, the stability of a holomorphic family of M\"obius subgroups is equivalent to the pluriharmonicity of  the Lyapunov exponent function (for any $\mu$).
A notable consequence of the theorem is that  the support of $T_{\rm bif}$ does not depend on $\mu$. Another corollary, which was a basic source of motivation in  \cite{demarco1}, is that if $\La$ is a Stein manifold, the components of the stability locus are also Stein. This holds in particular when $\La$ is the space of all representations of $G$ into $\PSL$ 
modulo conjugacy, 
which is an affine algebraic variety
\footnote{Notice that no trouble can arise from the possible singularities of these varieties, since the components of the stability locus are disjoint from them 
(see \cite[\S8.8]{kapovich}).}. 
As a corollary one recovers the result of  Bers and Ehrenpreis~\cite{bers-ehrenpreis} that 
Teichm\"uller spaces are Stein manifolds.

\medskip

As before, the delicate inclusion in Theorem \ref{theo:support} is to show that if $\chi$ is pluriharmonic on an open subset $U$, then $U$ must be contained in the stability locus. The approach used in the context of rational dynamics seems to have no analogue here.  Instead, we use a geometric interpretation of the bifurcation current, which we briefly describe now (the details can be found in \S \ref{subs:geom_interpt}).

Let us  look at the fibered action of $G$ on $\La\times \pu$: for this, we fix $z_0\in \pu$ and  consider the graphs over $\La$ defined by  $\la\mapsto g_\la(z_0)$ as $g$ ranges in $G$. Over the stability locus, these graphs form a normal family. On the other hand, they tend to oscillate wildly when approaching the bifurcation locus.  We show  that $T_{\rm bif}$ describes the asymptotic distribution of this oscillation phenomenon (see Theorem \ref{thm:geom_interpt} for a precise statement). In particular, $\lambda_0\in \supp(T_{\rm bif})$ if and only if for every neighborhood $U\ni\la_0$, the average volume, with respect to  $\mu^n$, of the graph of $\la\mapsto g_\la(z_0)$ over $U$,   grows linearly with $n$ (which is the fastest possible growth). 



On the other hand we show that if $U$ is disjoint from $\supp(\tbif)$, the average volume of these graphs is locally  bounded. Using Bishop's compactness  theorem for sequences of analytic sets, this allows us to construct for every $\la\in U$ an equivariant  map $\theta_\la$ from the Poisson boundary of $(G,\mu)$ to $\pu$, depending holomorphically on $\la$, which ultimately turns out to be a holomorphic motion.   Sullivan's theory then implies that  the family of representations is stable over $U$.

\medskip

It is an easy consequence of Sullivan's results that for every $t\in [0,4]$, the set of parameters $\lambda_0$ such that there exists $g\in G$ with non-constant trace and $\tr^2(g_{\la_0})=t$  is dense in the bifurcation locus (see Corollary \ref{cor:accidental} below). The same is true
 for the set of parameters at which a collision   between fixed points of different elements  occurs.
In the light of what is known about spaces of rational maps,
it is natural to wonder whether in these assertions,   density   can be turned into equidistribution.
This will be the second main theme developed in the paper.

If $V$ is an analytic subset of $\La$, recall that  the integration current on $V$  is denoted by
 $[V]$.   When $\dim(\La)=1$ (hence $\dim(V)=0$) this is just a sum of Dirac masses  at the points  of $V$ (counted with multiplicities, if any).  It is convenient to adopt the convention that $[\La] = 0$.

Our first equidistribution theorem concerns random sequences in $G$.

\begin{theo}\label{theo:equidist ae}
 Let $(G, \rho, \mu)$ be a holomorphic family of representations as above. Consider the product space $G^\nn$, endowed with the product measure $\mu^\nn$. Then the following conclusions hold.

\begin{enumerate}
\item For $g\in G$ and $t\in\cc$,   let $Z(g,t)=\set{\la,\  \tr^2(g_\la)=t}$. Then for $\mu^\nn$-a.e. sequence $(g_n)_{n\geq 1}$ we have that $$\unsur{2n}\left[Z(g_n\cdots g_1, t )\right] \underset{n\cv\infty}\longrightarrow
 T_{\rm bif}.$$
\item For $g,h\in G$, let $F(g,h)$ be the analytic   subset of  $\La$ defined by the condition that   $ g_\la $ and $ h_\la $  have a common fixed point. Then for
$\mu^\nn\otimes\mu^\nn$-a.e. pair  $((g_n), (h_n))$,
$$\unsur{4n}\left[F(g_n\cdots g_1, h_n\cdots h_1)\right] \underset{n\cv\infty} \longrightarrow
T_{\rm bif}.$$
\end{enumerate}
\end{theo}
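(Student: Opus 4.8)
The plan is to deduce Theorem~\ref{theo:equidist ae} from a single analytic input: an integral formula expressing $\log\abs{\tr^2(g_\la)-t}$ (resp.\ a defining function for $F(g,h)$) as a potential, up to a controlled error, for the normalized integration current on $Z(g,t)$ (resp.\ $F(g,h)$). Concretely, for fixed $t$ the function $\la\mapsto\tr^2(g_\la)-t$ is holomorphic on $\La$, so $\frac{1}{2n}[Z(g_n\cdots g_1,t)]=dd^c\bigl(\frac{1}{2n}\log\abs{\tr^2((g_n\cdots g_1)_\la)-t}\bigr)$ by Lelong--Poincaré. Since $\tr^2$ of a matrix is comparable to $\norm{\cdot}^2$ away from the elliptic locus, and since for a random product $\norm{(g_n\cdots g_1)_\la}$ grows like $e^{n\chi(\rho_\la)}$, one expects $\frac{1}{2n}\log\abs{\tr^2((g_n\cdots g_1)_\la)-t}\to\chi(\rho_\la)$ and hence, taking $dd^c$, convergence of the currents to $\tbif=dd^c\chi$. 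The subadditivity of $\la\mapsto\log\norm{(g_n\cdots g_1)_\la}$ and the continuity/positivity of $\chi$ from Furstenberg's theorem provide the uniform upper bound $\limsup\frac{1}{2n}\log\abs{\tr^2(\cdot)-t}\le\chi$, so by the Hartogs-type compactness of psh functions it suffices to prove pointwise (a.e.\ in $\la$, a.e.\ in the sequence) the matching lower bound, together with the non-degeneracy that the limiting potential is not $-\infty$.

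The key steps, in order, are: \emph{(i)} write the current as $dd^c$ of the explicit psh potential above; \emph{(ii)} establish the uniform local upper bound $\frac{1}{2n}\log\norm{(g_n\cdots g_1)_\la}\le \chi(\rho_\la)+o(1)$ locally uniformly, using subadditivity of the matrix cocycle and continuity of $\chi$; \emph{(iii)} establish, for $\mu^\nn$-a.e.\ sequence and Lebesgue-a.e.\ $\la$, the lower bound $\frac{1}{2n}\log\abs{\tr^2((g_n\cdots g_1)_\la)-t}\ge\chi(\rho_\la)-o(1)$ — this is where the serious probabilistic tools enter (the Large Deviations Theorem for norms of random matrix products, to control $\frac{1}{n}\log\norm{\cdot}$, together with an argument showing that the trace cannot be anomalously small, i.e.\ that $g_\la$ does not get too close to an elliptic-of-trace-$\sqrt t$ element, using the canonical measure $\nu_\la$ and the fact that the matrix is far from a rotation with overwhelming probability); \emph{(iv)} upgrade a.e.-$\la$ convergence of potentials to $L^1_{loc}$ convergence (via the uniform upper bound and Fatou/dominated convergence, the potentials being a locally uniformly bounded-above family of psh functions with a uniform lower integral bound coming from positivity of $\chi$ at \emph{some} point); \emph{(v)} apply continuity of $dd^c$ on $L^1_{loc}$-convergent psh sequences to conclude. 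For part~(2) of the theorem, the analytic set $F(g,h)$ is cut out by the vanishing of the ``commutator-type'' holomorphic function $[g_\la,h_\la]$ has a fixed point in common, concretely by $\tr[g_\la,h_\la]=2$ or an equivalent resultant-type expression; one writes this as a holomorphic function whose $\frac{1}{4n}\log\abs{\cdot}$ is controlled by $\chi(\rho_\la)$ for the two independent random products (each contributing $\sim e^{n\chi}$, the product of the two pairings giving $\sim e^{4n\chi}$ after squaring), and the same five-step scheme applies; one must additionally use that $g_n\cdots g_1$ and $h_n\cdots h_1$ are \emph{independent} so that the bad events (common fixed point occurring for a degenerate reason, or the two limiting laminations $\nu_\la$-directions aligning) have negligible probability.

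The main obstacle will be step~(iii), the almost-sure pointwise lower bound on $\frac{1}{2n}\log\abs{\tr^2((g_n\cdots g_1)_\la)-t}$. The upper bound is soft, but for the lower bound one must rule out, for a.e.\ sequence and a.e.\ $\la$, both (a) the norm being much smaller than $e^{n\chi(\rho_\la)}$ — handled by the Large Deviations Theorem of Le Page / Bougerol--Lacroix — and (b) the trace being much smaller than the norm, i.e.\ the random product landing in a narrow neighborhood of the set of elliptic elements with $\tr^2=t$; ruling out (b) requires quantitative non-concentration: the direction of the expanding eigenvector equidistributes according to $\nu_\la$ while the image direction equidistributes according to $\check\nu_\la$, and $\tr^2-t$ small forces an anomalous alignment, whose $\mu^n$-probability decays exponentially by the exponential mixing of the transition operator. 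One then needs a Borel--Cantelli argument in $n$ after integrating over $\la$ against Lebesgue measure to pass from ``a.e.\ $\la$ for a.e.\ sequence'' to the statement as phrased — here one uses that a psh potential is $-\infty$ only on a pluripolar set, hence Lebesgue-negligible, so failure on a Lebesgue-null set of $\la$ does not affect the $L^1_{loc}$ limit. A subtlety to be careful about is that $t$ may equal $0$ or $4$, where $\tr^2=t$ corresponds to the parabolic boundary of the elliptic region; there the argument for (b) degenerates slightly and one treats these values by a separate continuity/limiting argument, or notes that the set of such $\la$ is still Lebesgue-negligible generically.
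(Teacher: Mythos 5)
Your proposal follows essentially the same route as the paper: the current is written as $dd^c$ of the potential $\frac{1}{2n}\log\abs{\tr^2((g_n\cdots g_1)_\la)-t}$ (resp.\ $\frac{1}{4n}\log\abs{\tr[\cdot,\cdot]-2}$ via Lemma \ref{lem:commutator}), one proves a.s.\ pointwise convergence of this potential to $\chi(\la)$ for fixed $\la$ (your step (iii) is precisely Guivarc'h's Theorem \ref{thm:guivarch}, re-proved in Appendix \ref{app:distance} via large deviations for the norm plus separation of the fixed points), and one upgrades to $L^1_{\rm loc}$ convergence using the domination by $\frac1n\log\norm{\cdot}$ together with a Hartogs-type compactness argument (the paper packages this as Theorem \ref{thm:abstrait} plus Proposition \ref{prop:kingman}). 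Your caveat about $t\in\{0,4\}$ is unnecessary: since $\frac1n\log\abs{\tr}\to\chi>0$ almost surely, the trace escapes to infinity and subtracting any fixed $t$ is harmless, the only degenerate elements being those with $\tr^2(g_\la)\equiv t$, which have vanishing $\mu^n$-measure asymptotically.
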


It follows in particular from {\it (1)} that  if the bifurcation locus is non empty, then almost surely   $Z(g_n\cdots g_1, t)$  is a  non empty proper analytic subvariety for large   $n$ (and similarly for {\it (2)}).

As far as we know, this is the first equidistribution statement of this kind. The proof is based on  a general machinery which produces  equidistribution theorems in parameter space from limit theorems for random sequences at every  (fixed) parameter (see Theorem \ref{thm:abstrait}).

\medskip

Since the 1980's, several authors have produced pictures of stability loci in 1-dimensional families of representations, by plotting solutions of $\tr^2(g)=4$ in parameter space (see \cite[Chapter 10]{indra}  for a beautiful account on this). These pictures exhibit intriguing accumulation patterns as the length of $g$ increases. Our equidistribution results say that these patterns are governed by the bifurcation measure --at least when the words $g\in G$ are chosen according to a random walk on $G$. 

\medskip

Here is a consequence of Theorem \ref{theo:equidist ae}
 which  does not make  explicit reference to a measure on $G$, and
does not seem easy to prove without using probabilistic methods:
for any $\e>0$ and any relatively compact set $\La'\subset \La$,
there exists $g\in G$ such that the set of parameters $\la$ such that
$\rho_\la(g) =\mathrm{id}$ (resp. $\tr^2(\rho_\la(g))=4$) is $\e$-dense in the bifurcation locus restricted to $\La'$.
 For this, it suffices to take $t=0$ (resp. $t=4$) in  the first item  of the above theorem, and  take $g  = (g_n\cdots g_1)^4$ (resp. $g=g_n\cdots g_1$) for  a $\mu^\nn$-generic sequence  $(g_n)$ and large enough $n$.

 \medskip

We are also able to estimate the speed of convergence   in item (1) above after some averaging with respect to $g$. This requires some additional assumptions on $\La$.  

\begin{theo}\label{theo:equidist speed} 
 Let $(G, \rho, \mu)$ be a holomorphic family of representations as above, and fix $t\in\cc$. Suppose in addition that one of the following conditions holds.
   \begin{enumerate}
 \item[i.] either $\La$ is an algebraic family of representations, defined over $\overline{\mathbb{Q}}$.
 \item[ii.] or there is at least one geometrically finite representation in $\La$.
 \end{enumerate}
Then there exists a constant  $C$ such that  for every test form $\phi$
$$\bra{\unsur{2n} \int  \left[Z(g,t)\right] d\mu^n(g) -T_{\rm bif}, \phi}\leq
 C \frac{\log n }{n} \norm{\phi}_{C^2}.
$$
 \end{theo}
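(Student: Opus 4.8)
The plan is to upgrade the qualitative equidistribution of Theorem~\ref{theo:equidist ae}(1) into a quantitative one by a potential-theoretic argument: one writes $\frac{1}{2n}[Z(g,t)] = dd^c u_{g,n}$ for an explicit potential, averages over $g$ with respect to $\mu^n$, and estimates the $L^1_{\mathrm{loc}}$ (or, after pairing with a $C^2$ test form, the actual) distance of the resulting potential to $\chi(\rho_\la)$. Concretely, fix a lift of $\rho_\la(g)$ to $\SL$ (done locally, or after passing to a suitable algebraic cover; the sign ambiguity is harmless since only $\tr^2$ enters) and observe that $\la\mapsto \tr^2(\rho_\la(g))-t$ is holomorphic, so $\frac{1}{2}\log\bigl\lvert \tr^2(\rho_\la(g))-t\bigr\rvert$ is a psh potential for $\frac12[Z(g,t)]$. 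Dividing by $n$ and averaging, the candidate potential is
\[
U_n(\la) \;=\; \frac{1}{2n}\int \log\bigl\lvert \tr^2(\rho_\la(g))-t\bigr\rvert \, d\mu^n(g),
\]
and one must show $\norm{U_n - \chi(\rho_\la)}$ is $O\bigl(\tfrac{\log n}{n}\bigr)$ in a sense strong enough to test against $C^2$ forms after applying $dd^c$.

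The first step is to compare $\log\lvert\tr^2(\rho_\la(g))-t\rvert$ with $2\log\norm{\rho_\la(g)}$, since $\frac1n\int \log\norm{\rho_\la(g)}\,d\mu^n \to \chi(\rho_\la)$. The upper bound $\log\lvert\tr^2(g_\la)-t\rvert \le 2\log\norm{g_\la} + O(1)$ is immediate from $\lvert\tr(g_\la)\rvert \le 2\norm{g_\la}$. For the lower bound one needs to control, on average over $g\sim\mu^n$, how often $\tr^2(\rho_\la(g))$ is abnormally small compared to $\norm{\rho_\la(g)}^2$; this is exactly where the Large Deviations Theorem and the exponential convergence of the transition operator enter, together with the regularity of the stationary measure $\nu_\la$. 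The idea is that for a random product, $\tr(g_\la) \sim \langle \text{expanding eigendirection}, \text{contracting eigendirection}\rangle \cdot \norm{g_\la}$ up to bounded factors, and large deviation estimates give that the ``angle'' is exponentially rarely smaller than $e^{-\e n}$; integrating the logarithmic singularity of $\log\lvert\cdot - t\rvert$ against such a tail produces an error of size $O(1/n)$ after division by $n$ — the $\log n$ loss comes from optimizing the cutoff $\e = \e(n)$. Uniformity in $\la$ over compact subsets of $\La$ follows from continuity (indeed local uniform control, via the Hölder regularity of $\nu_\la$) of all the quantities involved; this is where hypothesis i.\ or ii.\ is needed, to ensure the relevant constants in the limit theorems are uniform — either because the family is defined over $\overline{\mathbb{Q}}$ so heights can be used to bound the exceptional set, or because a geometrically finite representative provides a uniform spectral gap.

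The second step is to convert the $C^0$-type estimate on potentials into the stated pairing estimate. Since $\frac{1}{2n}[Z(g,t)] - \frac{1}{n}dd^c\log\norm{\rho_\la(g)} = dd^c\bigl(\tfrac1{2n}\log\lvert\tr^2(g_\la)-t\rvert - \tfrac1n\log\norm{g_\la}\bigr)$ and similarly $T_{\mathrm{bif}} = dd^c\chi$, pairing against a test form $\phi$ and integrating by parts moves two derivatives onto $\phi$, so
\[
\Bigl\lvert\bigl\langle \tfrac1{2n}\textstyle\int[Z(g,t)]\,d\mu^n(g) - T_{\mathrm{bif}},\phi\bigr\rangle\Bigr\rvert
\;\le\; \norm{U_n - \chi}_{L^1(\supp\phi)} \cdot \norm{\phi}_{C^2},
\]
and the $L^1$ norm is controlled by the $O(\tfrac{\log n}{n})$ bound from the first step (with a little care, since $\log\norm{\rho_\la(g)}$ itself needs its contribution identified with $\chi$ up to $O(1/n)$, which is the standard subadditivity rate, again sharpened using limit theorems). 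One should also handle the boundary terms: either work on a slightly larger relatively compact set and use that $\phi$ has compact support, or invoke that $\chi$ and the $U_n$ are psh hence locally bounded.

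The main obstacle I expect is the uniform (in $\la$) lower bound on $\log\lvert\tr^2(\rho_\la(g))-t\rvert$ averaged over $\mu^n$ — that is, controlling the average over the random walk of the near-collisions $\tr^2(\rho_\la(g))\approx t$ with an error that is $o(n)$ \emph{uniformly} on compact subsets of parameter space. This is genuinely a large-deviations estimate for a function (the trace) that is not simply $\norm{g_\la}$, and making the constants uniform in $\la$ is precisely what forces the extra hypotheses i./ii.; pinning down the correct exponent and the resulting $\log n$ factor is the technical heart of the argument.
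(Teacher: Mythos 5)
Your overall skeleton (psh potentials $U_n$, comparison with $\frac1n\log\norm{g_\la}$ via the separation of fixed points of random products, large deviations, and integration by parts against $C^2$ test forms) matches the paper's, and your heuristic $\abs{\tr(g_\la)}\asymp \delta(g_\la)\,\norm{g_\la}$ is exactly Lemma \ref{l:norm, trace and distance between fixed points} combined with the appendix estimate $\mathbb{P}\lrpar{\delta(\rho_\la(l_n(\mathbf g)))<n^{-\alpha}}\lesssim n^{-\alpha K}$. But there is a genuine gap at precisely the point you flag as the main obstacle, and the mechanism you propose for closing it does not work. The constants in the limit theorems (spectral gap of the transition operator, large deviations, H\"older regularity of $\nu_\la$) are already \emph{locally uniform in $\la$} for any admissible family (Remark \ref{rmk:uniform}); no global hypothesis is needed for that, and a geometrically finite representative does not supply a ``uniform spectral gap'' over $\La$. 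The real difficulty is that for certain parameters the quantity $\frac1{2n}\int\log\abs{\tr^2(g_\la)-t}\,d\mu^n(g)$ genuinely fails to converge to $\chi(\la)$ — e.g.\ when $\rho_\la(G)$ contains a rotation whose iterates bring $\tr^2$ arbitrarily close to $t$ — because on the exponentially rare bad event $\log\abs{\tr^2(g_\la)-t}$ can be $-\infty$ or arbitrarily negative, so the tail of the logarithmic singularity is not controlled by the smallness of its probability. Hence no pointwise-in-$\la$ argument, however uniform its constants, can give the bound on a whole compact set.

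What hypotheses i.\ and ii.\ actually provide is an a priori bound $\norm{\log\abs{\tr^2(g_\la)-t}}_{L^1(\La')}\le C\,\length(g)\log(\length(g))$: under i.\ via B\'ezout and a Liouville-type height inequality for polynomials over $\overline{\mathbb Q}$, and under ii.\ via the finiteness of short closed geodesics at the geometrically finite parameter together with compactness of normalized psh families. This $L^1$ bound feeds into H\"ormander's exponential integrability of psh functions to show that the exceptional set $V_n$ of parameters at which some word of length $\le An$ satisfies $\abs{\tr^2(g_\la)-t}<e^{-Bn^2\log n}$ has exponentially small volume. Off $V_n$ one runs your large-deviations comparison pointwise — the a priori lower bound $\log\abs{\tr^2(g_\la)-t}\ge -Bn^2\log n$ on the bad event is exactly what makes the tail contribution $O(n^{-2})$ after multiplying by the probability $n^{-\alpha K}$ — and on $V_n$ the remaining $L^1$ mass is again exponentially small by exponential integrability. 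These sub-level-set volume estimates, and the global trace lower bounds that make them possible, are the ingredient missing from your proposal.
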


The proof is more involved than that of Theorem \ref{theo:equidist ae}, and based on several interesting ingredients. To prove the theorem,  we need to understand how the potential of $\unsur{2n} \int  \left[Z(g,t)\right]$, namely $\unsur{2n}\int \log\abs{\tr^2 (g_\la)-t} d\mu^n(g)$ is close (in $L^1_{\rm loc}(\La)$) to the Lyapunov exponent function $\chi$.
Two main ingredients for this are:
\begin{itemize}
\itm  precise (large deviations) estimates on the asymptotic distribution of  $\tr^2(g_\la)$ for fixed $\la$  (which are established in Appendix \ref{app:distance}),
\itm bounds on the volume  of the set of representations possessing  elements with trace  
 close to $t$. 
\end{itemize}

The role of the additional assumptions {\it i.} and {\it ii.}  is, for the purpose of establishing these volume bounds,  to ensure that 
$\log\abs{\tr^2 (g_\la)-t}$ cannot be uniformly close to $-\infty$ along $\La$. For instance, under {\it ii.} we have a good control of the set of values of $\tr^2(g_\la)$ at the geometrically finite parameter.
The result then follows from classical estimates on the volume of sub-level sets of psh functions. We also see that we can weaken assumption {\it ii.} by only requiring that  the family of representations $(\rho_\la)$ can be analytically continued to a family  containing a  geometrically finite representation.

Under {\em i.}, the desired estimate on  $\log\abs{\tr^2 (g_\la)-t}$ follows from   number-theoretic considerations. 

As a byproduct of our methods, we obtain a new proof and a generalization of a result of Kaloshin and Rodnianski \cite{kr} (see Remark \ref{rmk:moderate}).  
       
It is unclear whether the speed $O\lrpar{\frac{\log n}{n}}$ that we obtain  is optimal or not.  On might guess that  the optimal speed is  bounded below by  $O\lrpar{\unsur{n}}$ (see Remark \ref{rmk:speed}).

For the analogous equidistribution theorems associated to families of rational maps, no such general estimate is known. The only quantitative equidistribution result towards $T_{\rm bif}$ that we know of in that context  is specific to the unicritical family $z^d+c$, and relies  on number-theoretic ideas \cite[Theorem 5]{frl}.  Notice also  that the proofs of most  of the  equidistribution theorems in  \cite{df, bas-ber2} also require some global assumptions on $\La$. In whatever case, it is unclear whether these assumptions are really necessary. 

\medskip

Many particular families of representations have been studied in the literature.
Let us focus on one classical situation (more details can be found in  \S \ref{subs:bers slice} which is itself a preview of a sequel \cite{dd2} to this paper).
Fix  a compact Riemann surface $S$ of genus $g\geq 2$, and introduce the complex affine space $\Lambda\simeq \cc^{3g-3}$ of complex projective structures on $S$, compatible with the complex structure of $S$. Any such  projective structure gives rise to a {\em monodromy representation} of the fundamental group of $S$ into $\PSL$, that varies holomorphically with $\la$. The well-known {\em Bers slices} of Teichm\"uller space are obtained from this construction.

Any random walk on the group $\pi_1(S)$ then gives rise to a bifurcation current on $\La$. In fact in this setting there is more: we claim that there exists   a {\em canonical bifurcation current} on $\Lambda$. For this, let  us shift  a little bit our point of view, and instead of a discrete random walk on $\pi_1(S)$, consider the Brownian motion on $S$ (which depends only on the Riemann surface structure). A projective structure being given, we can  consider the growth rate of   its holonomy   over generic Brownian paths, thereby obtaining a  Lyapunov exponent, in the spirit of~\cite{dk}. This induces a natural  psh function on $\La$, hence a natural bifurcation current. It turns out that this bifurcation current is induced by a measure on $\pi_1(S)$, therefore   it satisfies the above theorems.

\medskip

There is some similarity between Theorem~\ref{theo:support}  and a recent result of  Avila's \cite{avila}, appearing  as a crucial step in the proof of the stratified analyticity of the Lyapunov exponent of quasi-periodic Schr\"odinger operators.
To be precise,  to an irrational number  $\alpha\in\re\setminus\mathbb Q$ and      a real-analytic function  $A: \mathbb R / \mathbb Z\cv \PSL$, we associate a  Lyapunov exponent
by the formula
\[ L (A,\alpha) = \lim _{n\rightarrow \infty} \frac{1}{n} \int_{\re/\zz} \log \norm{A(x) A(x+\alpha)\cdots A(x + (n-1)\alpha)  }dx. \] The variation of the function $\varepsilon \mapsto L( A_{\varepsilon}, \alpha)$, where $A_{\varepsilon} (\cdot ) = A ( \cdot + i\varepsilon)$, is studied in detail, and Avila  proves \cite[Theorem 6]{avila} that 
if $L(\alpha, A)>0$, this function is locally affine if and only if $(\alpha, A)$ is uniformly hyperbolic. This  is completely analogous to the above statement that a family of representations is stable if and only if the Lyapunov exponent is pluriharmonic.

\medskip

It is also worth  mentioning the recent work of Cantat~\cite{cantat} in which the author uses higher dimensional holomorphic dynamics to study the action of the mapping class group on the character variety of the once-punctured torus (resp. the four times punctured sphere). A given mapping class acts by holomorphic automorphisms on the character variety, so it usually admits invariant currents, supported on the bifurcation locus.
It is unclear to us whether these currents are related to ours. It would be interesting nevertheless to explore the relationship between the two constructions. 

\bigskip

Here is the structure of the paper. In Section \ref{sec:preliminaries} we give some background on   holomorphic families of   subgroups of $\PSL$, as well as a number of basic results in the theory of 
random products of matrices.  In Section \ref{sec:bifcurrent} we introduce the bifurcation current, give its geometric interpretation  and prove Theorem \ref{theo:support}. We also give in Theorem \ref{thm:stationary current} the classification of  all ``stationary currents'' for a holomorphic family of M\"obius groups (under a mild assumption).  Section \ref{sec:equidistribution} is mainly devoted to Theorems \ref{theo:equidist ae} and \ref{theo:equidist speed}. Two auxiliary results required in the proof of  the equidistribution theorems have been moved to appendices: in Appendix \ref{app:distance}, we study the distribution of fixed points of random products in $\PSL$ (more general related  results recently appeared in  \cite{aoun}). In appendix \ref{app:number} we prove a number-theoretic lemma  related to the assumption {\em i.} of Theorem \ref{theo:equidist speed}. Finally, in Section \ref{sec:further} we outline some further developments (in particular the construction of canonical bifurcation currents), as well as a number of open questions.

\medskip

It is a pleasure to thank our colleagues R. Aoun, E. Breuillard, S. Boucksom,  J.-Y. Briend, C. Favre, C. Lecuire, P. Philippon, A. Zeriahi as well as the anonymous referee for useful conversations and comments.

\section{Preliminaries}\label{sec:preliminaries}

\subsection{M\"obius subgroups}
Mainly for the purpose of fixing notation, we recall some basics on subgroups of $\mathrm{Aut}(\pu)$, where $\pu$ will refer to the Riemann sphere. The reader is referred to e.g. \cite{beardon, kapovich} for more details.

We identify $\mathrm{Aut}(\pu)= \set{z\mapsto \frac{az+b}{cz+d}, \ ad-bc\neq 0}$ with the matrix group $\PSL$.
If $\gamma\in \PSL$,  it is often convenient for calculations to lift $\gamma$ to one of its representatives in $\SL$.
We define the quantities $\norm{\gamma}$ and $\tr^2\gamma$ by lifting $\gamma$ to one of its representatives in $\SL$. Of course the result does not depend on the lift.  In this paper $\norm{\gamma}$ denotes the operator norm associated to the Hermitian norm on $\cd$. As it is well-known $\norm{\gamma}$ equals the square root of the spectral radius of $A^*A$, where    $A$   is any matrix representative of $\gamma$. It will also be sometimes convenient to work with $\norm{\gamma}_2:=  \big(\abs{a}^2+ \abs{b}^2+\abs{c}^2 + \abs{d}^2\big)^{1/2} $.

As usual we classify M\"obius transformations into three types:
\begin{itemize}
\itm {\em parabolic} if $\tr^2(\gamma)=4$ and $\gamma \neq\mathrm{id}$; it is then conjugate to  $z\mapsto z+1$;
\itm {\em elliptic} if $\tr^2(\gamma)\in [0,4)$, it is then conjugate to $z\mapsto e^{i\theta}z$ for some real number $\theta$, and $\tr^2{\gamma} = 2+2\cos(\theta)$.
\itm {\em loxodromic} if $\tr^2(\gamma)\notin [0,4]$, it is then conjugate to $z\mapsto kz$, with $\abs{k}\neq 1$.
\end{itemize}

We equip $\mathbb P^1$ with the spherical metric $\frac{|dz|}{1+|z|^2}$, and the associated spherical volume form, simply denoted by $dz$. The subgroup of elements of $\PSL$ that preserve this metric is
 isomorphic to $\text{SO}(3,\mathbb R)$.
As usual, these elements are called rotations.

The following elementary lemma shows that when the fixed points of a loxodromic map $\gamma$ are separated enough, the quantities $\sqrt{|\tr^2{\gamma}|}$ and $\norm{\gamma} $ are essentially the same. 
We use the following notation: if $u$ and $v$ are two  real valued functions, we write
 $u \asymp v$ if there exists a constant $C>0$ such that $\unsur{C}u\leq v\leq Cu$.

\begin{lem} \label{l:norm, trace and distance between fixed points}
If $\gamma$ is not parabolic, then
\[  \norm{\gamma} \asymp  
\max (1, \frac{\sqrt{|\tr^2 {\gamma} -4|}}{\delta} )  \]
where $\delta$ is the distance between the two fixed points of $\gamma$.    
\end{lem}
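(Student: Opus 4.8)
The plan is to reduce to a normal form by conjugation. Write $\gamma$ as a loxodromic (or elliptic) element with distinct fixed points $p, q \in \pu$. After conjugating by a rotation $R \in \mathrm{SO}(3,\re)$, I may assume the two fixed points are placed symmetrically with respect to the spherical metric, say at $z_0$ and $-1/\overline{z_0}$ (antipodal points), or more concretely so that they are $\{w, -w\}$ for some $w$ with $|w|=1$ after a further rotation — in any case, at spherical distance $\delta$ apart, placed in a rotation-invariant-optimal position. Crucially, conjugation by a rotation changes neither $\norm{\gamma}$ nor $\tr^2\gamma$, and the distance $\delta$ between fixed points is also rotation-invariant, so the asserted equivalence is unaffected. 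With the fixed points at $\pm w$, a diagonalizable element fixing them can be written, in a suitable basis of $\cd$, as $\mathrm{diag}(k^{1/2}, k^{-1/2})$ conjugated by a fixed unitary-up-to-bounded-factor matrix $P$ depending only on $\delta$ (the matrix sending $0,\infty$ to $\pm w$). Then $\tr^2\gamma = k + k^{-1} + 2$, so $\tr^2\gamma - 4 = k + k^{-1} - 2 = (k^{1/2}-k^{-1/2})^2$, whence $\sqrt{|\tr^2\gamma - 4|} = |k^{1/2} - k^{-1/2}|$.

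Next I would estimate $\norm{\gamma} = \norm{P \, \mathrm{diag}(k^{1/2}, k^{-1/2}) \, P^{-1}}$. The point is to control $\norm{P}$ and $\norm{P^{-1}}$ in terms of $\delta$. Taking $P$ to be the Möbius map sending $0 \mapsto p$, $\infty \mapsto q$, normalized to be in $\SL$, one computes $\norm{P}, \norm{P^{-1}} \asymp \delta^{-1/2}$ (the two columns of $P$ are roughly orthogonal vectors of length $\sim \delta^{-1/2}$, since bringing two points at spherical distance $\delta$ to $0$ and $\infty$ requires a derivative of order $\delta^{-1}$). Therefore $\norm{\gamma} \leq \norm{P}\,\norm{P^{-1}}\,\norm{\mathrm{diag}(k^{1/2},k^{-1/2})} \asymp \delta^{-1}\max(|k|^{1/2}, |k|^{-1/2})$, and for the lower bound $\norm{\gamma} \geq \norm{P^{-1}}^{-1}\norm{P}^{-1}\norm{\mathrm{diag}(\cdots)}$ is too lossy, so instead I would test $\gamma$ against the explicit vector $P e_1$ to get $\norm{\gamma \cdot (P e_1)} = |k|^{1/2}\norm{P e_1}$, giving $\norm{\gamma} \gtrsim |k|^{1/2}$, and similarly $\norm{\gamma} \gtrsim |k|^{-1/2}$; together with the trivial $\norm{\gamma}\geq 1$ this yields $\norm{\gamma} \gtrsim \max(1, |k^{1/2}-k^{-1/2}|/\delta)$ once one checks $\max(|k|^{1/2},|k|^{-1/2}) \asymp 1 + |k^{1/2} - k^{-1/2}|$ (elementary: if $|k|$ is bounded away from $0$ and $\infty$ both sides are $\asymp 1$; otherwise $|k^{1/2}-k^{-1/2}|\asymp \max(|k|^{1/2},|k|^{-1/2})$). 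Assembling the bounds gives $\norm{\gamma}\asymp \max(1, \sqrt{|\tr^2\gamma - 4|}/\delta)$.

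The main obstacle I anticipate is getting the constants in $\norm{P}, \norm{P^{-1}} \asymp \delta^{-1/2}$ cleanly and uniformly, including the elliptic case where $|k|=1$ and the "distance between fixed points" is the relevant small parameter even though the map is bounded — there one must be careful that the equivalence still reads $\norm{\gamma}\asymp\max(1,\sqrt{|\tr^2\gamma-4|}/\delta)$, with $\sqrt{|\tr^2\gamma-4|} = |k^{1/2}-k^{-1/2}| = 2|\sin(\theta/2)|$ small, so the right-hand side is $\asymp 1$ as long as $|\sin(\theta/2)| \lesssim \delta$, consistent with $\norm{\gamma}\asymp 1$. The bookkeeping between the upper bound (which naturally produces $\delta^{-1}\max(|k|^{1/2},|k|^{-1/2})$, seemingly larger than claimed) and the claimed bound $\delta^{-1}|k^{1/2}-k^{-1/2}|$ requires noting that when $\max(|k|^{1/2},|k|^{-1/2})$ is large it is comparable to $|k^{1/2}-k^{-1/2}|$, and when it is $\asymp 1$ the term $\max(1,\cdot)$ absorbs everything; so this is the one spot where the $\max(1,\cdot)$ in the statement is genuinely needed and must be handled with a short case distinction rather than a single inequality.
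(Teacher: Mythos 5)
Your reduction to the normal form $\gamma=PDP^{-1}$ with $D=\mathrm{diag}(k^{1/2},k^{-1/2})$ and $\norm{P},\norm{P^{-1}}\asymp\delta^{-1/2}$ is a reasonable starting point (different from the paper's, which sends one fixed point to $\infty$ and works with $\gamma(z)=a^2z+ab$), but both halves of your estimate have genuine gaps, and they stem from the same source: the norm of $PDP^{-1}$ is governed by an off-diagonal entry that neither submultiplicativity nor eigenvector testing can see. For the lower bound, testing $\gamma$ on the eigenvectors $Pe_1$, $Pe_2$ only yields $\norm{\gamma}\gtrsim\max(|k|^{1/2},|k|^{-1/2})$, i.e.\ the spectral radius; the passage from this to $\max(1,|k^{1/2}-k^{-1/2}|/\delta)$ is a non sequitur, since the factor $1/\delta$ never appears. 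Concretely, a rotation by angle $\pi/2$ conjugated so that its fixed points are at distance $\delta\ll1$ has $|k|=1$ and $\sqrt{|\tr^2\gamma-4|}\asymp1$, so the lemma asserts $\norm{\gamma}\asymp1/\delta$, whereas your argument only produces $\norm{\gamma}\geq1$. Symmetrically, for the upper bound, $\norm{P}\norm{D}\norm{P^{-1}}\asymp\delta^{-1}\max(|k|^{1/2},|k|^{-1/2})$ genuinely overshoots: if $k=1+\eta$ with $|\eta|\ll\delta\ll1$ then $\norm{\gamma}\asymp1$ while your bound gives $\delta^{-1}$, and your proposed fix (``when $\max(|k|^{\pm1/2})\asymp1$ the $\max(1,\cdot)$ absorbs everything'') is false precisely in this regime.

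The repair is to compute the conjugation explicitly rather than estimate it. Placing the fixed points at $\pm\epsilon$ with $|\epsilon|\asymp\delta$, one finds
\[
PDP^{-1}=\begin{pmatrix}\tfrac{k^{1/2}+k^{-1/2}}{2} & -\tfrac{\epsilon\,(k^{1/2}-k^{-1/2})}{2}\\[2pt] -\tfrac{k^{1/2}-k^{-1/2}}{2\epsilon} & \tfrac{k^{1/2}+k^{-1/2}}{2}\end{pmatrix},
\]
and since $\norm{\gamma}\asymp\norm{\gamma}_2$, the largest entry dominates: with $s=|k^{1/2}-k^{-1/2}|=\sqrt{|\tr^2\gamma-4|}$ one reads off $\norm{\gamma}\asymp\max\lrpar{|{\tr\gamma}|,\,s\delta,\,s/\delta}\asymp\max(1,s/\delta)$, using $|{\tr\gamma}|^2\leq4+s^2$, $\delta\lesssim1$, and $\norm{\gamma}\geq1$. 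This cancellation between $P$, $D$ and $P^{-1}$ is exactly what the paper's computation with $\tr^2\gamma=(a+1/a)^2$, $\norm{\gamma}_2^2=|a|^2+|a|^{-2}+|b|^2$ and $\delta\asymp|1-a^2|/|ab|$ captures in its coordinates.
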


\begin{proof} Let $\gamma(z)  = \frac{az+b}{cz+d}$ as before. Pick a fixed point of $\gamma$ with multiplier of smallest modulus, and conjugate by a rotation so that this fixed point becomes $\infty$. This does not affect the trace nor the norm of $\gamma$. The expression of $\gamma$ is now  $\gamma (z ) = a^2z + ab$, with $\abs{a}\geq 1$. The other fixed point of $\gamma$ is $ab/(1-a^2)$ (by assumption, $a^2\neq1$). It will be convenient to assume that this point is separated from $0$ by a certain distance, say $1$. To achieve this, we conjugate $\gamma$ 
by  a translation $\tau$ of bounded length. Of course, $\norm{\tau^{-1}\gamma\tau}\asymp\norm{\gamma}$, so it is enough to estimate the norm of ${\tau^{-1}\gamma\tau}$, which we rename as $\gamma$. 

This being done, we have the following formulas 
\begin{itemize} 
\itm $\tr^2{\gamma} = (a + \frac{1}{a}) ^2$,
\itm $\norm{\gamma} \asymp \norm{\gamma}_2 = \sqrt{|a|^2 + \frac{1}{|a|^2} + |b|^2} $,
\itm $ \delta \asymp \frac{|1-a^2|}{|a b|}$.
\end{itemize} 
We split the argument into two cases.  First suppose that $|a|$ is large,  $|a | \geq 1000$, say. Then, we have  that
\[ \tr^2{\gamma} \asymp a^2,\ \ \delta \asymp \frac{|a|}{|b|},\ \ \norm{\gamma}\asymp \frac{|a|}{\delta},\]
and the lemma is proved is this case because $\frac{\tr^2{\gamma}}{\delta^2}$ is large. 

Now, suppose that $|a| \leq 1000$. As before, 
 $\delta \asymp \frac{|1/a -a |}{|b|} $, and since $(a - 1/a)^2 = \tr^2{\gamma} -4 $, we get that $\norm{\gamma} ^2 \asymp 1 + |b|^2 \asymp 1 + \frac{\tr^2{\gamma} -4}{\delta^2}$, and the lemma follows.\end{proof}

For $z_0\in \pu$ and $\gamma \in \PSL$,  let $\norm{\gamma z_0} = \frac{\norm{\gamma Z_0}}{\norm{Z_0}}$, where $Z_0\in \cd$ is any lift of $z_0$ and $\norm{\cdot}$ in $\cd$ is the Hermitian norm. We have the following estimate:

\begin{lem}\label{lem:norme integree}
 There exists a universal constant $C$ such that  
 if $\gamma\in \PSL$, $$\abs{\int_{\pu} \log\norm{\gamma z_0} dz_0 - \log \norm{\gamma}}\leq C.$$ 
\end{lem}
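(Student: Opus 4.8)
The statement says that the $\pu$-average of $\log\norm{\gamma z_0}$ differs from $\log\norm{\gamma}$ by a universally bounded amount. Since both sides are invariant under precomposing and postcomposing $\gamma$ by rotations (the spherical volume form $dz_0$ is rotation-invariant, $\norm{\cdot}$ on $\cd$ is unitarily invariant, and the operator norm is unitarily invariant on both sides), I would use the Cartan/$KAK$ decomposition to reduce to the case where $\gamma$ is diagonal: write $\gamma = k_1 a k_2$ with $k_1,k_2\in\mathrm{SU}(2)$ and $a=\mathrm{diag}(r,r^{-1})$, $r\geq 1$, so that $\norm{\gamma} = r$. Then $\log\norm{\gamma z_0} = \log\norm{a (k_2 z_0)} $ after absorbing $k_1$ (which doesn't change norms of vectors) and, since $k_2$ pushes forward $dz_0$ to itself, the integral becomes $\int_{\pu}\log\norm{a z_0}\, dz_0$, a single explicit integral depending only on $r$.

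\textbf{Main computation.} With $z_0$ corresponding to the unit lift $Z_0 = (\cos\phi,\ e^{i\psi}\sin\phi)$ (spherical coordinates), one has $\norm{a Z_0}^2 = r^2\cos^2\phi + r^{-2}\sin^2\phi$, and the spherical area form is the standard one on $S^2$. So I must estimate
\[
I(r) = \frac{1}{4\pi}\int_{S^2} \tfrac12\log\!\big(r^2\cos^2\phi + r^{-2}\sin^2\phi\big)\, d\sigma,
\]
and show $|I(r) - \log r|\leq C$ uniformly in $r\geq 1$. The integrand satisfies $r^{-2}\leq r^2\cos^2\phi + r^{-2}\sin^2\phi \leq r^2$ pointwise, which already gives $-\log r \leq I(r)\leq \log r$; this is not quite enough for an \emph{additive} constant, so I would instead write $r^2\cos^2\phi + r^{-2}\sin^2\phi = r^2(\cos^2\phi + r^{-4}\sin^2\phi) \geq r^2\cos^2\phi$, giving $I(r)\geq \log r + \frac12\cdot\frac{1}{4\pi}\int_{S^2}\log\cos^2\phi\, d\sigma = \log r - C_0$ where $C_0 = -\frac14 \cdot\frac{1}{4\pi}\int_{S^2}\log\cos^2\phi\,d\sigma $ is a finite universal constant (the logarithmic singularity at $\phi=\pi/2$ is integrable on $S^2$). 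Combined with $I(r)\leq \log r$ this yields $|I(r)-\log r|\leq C_0$, as desired. Equivalently, one can just note $I(r)$ is a smooth function of $\log r$ on $[0,\infty)$ which is asymptotic to $\log r$ as $r\to\infty$ (and bounded near $r=1$), hence globally within a bounded distance of $\log r$.

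\textbf{Expected obstacle.} There is no serious obstacle; the only point requiring a little care is checking that the reduction to the diagonal case is legitimate — i.e. that $\int_{\pu}\log\norm{\gamma z_0}\,dz_0$ really is invariant under $\gamma\mapsto k_1\gamma k_2$ for $k_i\in\mathrm{SU}(2)$ — and that the residual one-variable integral stays bounded near $r=1$ (where the naive bound $\log r$ degenerates but $I(r)$ is perfectly regular). The integrability of $\log|\cos\phi|$ against spherical measure near the equator is the one analytic fact to invoke; everything else is bookkeeping.
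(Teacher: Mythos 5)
Your proof is correct and follows essentially the same route as the paper's: reduce to the diagonal case $\mathrm{diag}(r,r^{-1})$ via the $KAK$ decomposition and the rotation invariance of $dz_0$ and of the norms. The only difference is that you actually carry out the residual one-variable integral (via the pointwise bounds $r^2\cos^2\phi \leq r^2\cos^2\phi + r^{-2}\sin^2\phi \leq r^2$ and the integrability of $\log\abs{\cos\phi}$), which the paper leaves as an exercise to the reader.
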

    
\begin{proof} 
Let $A\in \SL$ be a matrix representative of $\gamma$. By the KAK decomposition there exists $R,R'\in \mathrm{SU}(2)$ and $B=\lrpar{\begin{smallmatrix}\sigma &0\\ 0&\sigma^{-1}                                                                                                                                                                                                                                        \end{smallmatrix}}$ such that $A=RBR'$, where $\sigma=\norm{A}$ is the spectral radius of $\sqrt{A^*A}$. 
Changing variables, we see that $\int_{\pu} \log\norm{\gamma z_0} dz_0  = 
\int_{\pu} \log\norm{B w_0} dw_0$. Therefore it is enough to prove the lemma with $B$ in place of $\gamma$, which will be left as an exercise to the reader. 
  \end{proof}

The following  fact will also be useful (notice that if $\gamma$, $\gamma'\in \PSL$, the trace  $\tr[\gamma, \gamma']$  
 is well-defined).

\begin{lem}[{\cite[Thm. 4.3.5]{beardon}}]\label{lem:commutator}
Two M\"obius transformations $\gamma$ and $\gamma'$ have a common fixed point in $\pu$ if and only if
$\tr[\gamma, \gamma']=2$.
\end{lem}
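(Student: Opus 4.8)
The statement is a classical fact due to Beardon, so the plan is to give a short self-contained argument using normal forms. First I would reduce to a convenient model for $\gamma$. If $\gamma$ is not parabolic and not the identity, conjugate so that its fixed points are $0$ and $\infty$; then $\gamma$ is represented in $\SL$ by $\bigl(\begin{smallmatrix} k & 0 \\ 0 & k^{-1}\end{smallmatrix}\bigr)$ for some $k \in \cc^*$, and one computes directly $[\gamma, \gamma'] $ by writing $\gamma' = \bigl(\begin{smallmatrix} a & b \\ c & d\end{smallmatrix}\bigr)$. A short matrix computation gives $\trace[\gamma,\gamma'] = 2 + bc(k - k^{-1})^2 = 2 - bc\,(\tr^2\gamma - 4)$, so that $\trace[\gamma,\gamma'] = 2$ if and only if $bc(k^2-1)^2 = 0$, i.e. either $k^2 = 1$ (impossible since $\gamma$ is loxodromic or nontrivial elliptic in this reduction, though one must treat the elliptic-of-order-two case and the parabolic case separately) or $bc = 0$, which says exactly that $\gamma'$ fixes $0$ or fixes $\infty$, hence shares a fixed point with $\gamma$.

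The case where $\gamma$ is parabolic (or the identity) has to be handled on its own, since then $\gamma$ has a single fixed point and the normal form $\gamma(z) = z+1$ must be used instead; here a parallel computation with $\gamma = \bigl(\begin{smallmatrix} 1 & 1 \\ 0 & 1\end{smallmatrix}\bigr)$ yields $\trace[\gamma,\gamma'] = 2 + c^2$ (with $\gamma'$ as above), which equals $2$ iff $c = 0$, i.e. iff $\gamma'$ also fixes $\infty$, the unique fixed point of $\gamma$. One should also dispose of the trivial direction: if $\gamma$ and $\gamma'$ have a common fixed point, conjugate it to $\infty$, so both are upper triangular, hence so is the commutator, whose diagonal entries are then reciprocal units multiplying to $1$ but actually one checks the commutator of upper-triangular $\SL$ matrices is unipotent upper triangular, giving trace $2$.

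The one genuine subtlety is the symmetry of the roles of $\gamma$ and $\gamma'$: the normal-form computation privileges $\gamma$, so to get a statement symmetric in the two maps one notes $[\gamma',\gamma] = [\gamma,\gamma']^{-1}$ and $\trace(M) = \trace(M^{-1})$ for $M \in \SL$, so the condition $\trace[\gamma,\gamma']=2$ is automatically symmetric; alternatively one simply remarks that ``having a common fixed point'' is manifestly symmetric. Another minor point to be careful about is the sign/well-definedness of $\trace[\gamma,\gamma']$ in $\PSL$: as the excerpt already notes, the commutator of two elements of $\PSL$ has a well-defined trace (the two sign ambiguities cancel), so no lifting choice is needed. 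I do not expect any serious obstacle here; the main thing is to organize the case distinction (common fixed point vs. not, and parabolic vs. non-parabolic) cleanly. Since this is quoted directly from Beardon, it would also be entirely legitimate to simply cite \cite[Thm.~4.3.5]{beardon} and omit the proof, which is presumably what the paper does.
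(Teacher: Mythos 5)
The paper gives no proof of this lemma at all: it is quoted verbatim from Beardon with the citation \cite[Thm.~4.3.5]{beardon}, exactly as you anticipate in your last sentence, so there is no argument in the paper to compare against. Your normal-form computation is correct and complete as a substitute. One small slip: with $\gamma=\bigl(\begin{smallmatrix} k & 0 \\ 0 & k^{-1}\end{smallmatrix}\bigr)$ and $\gamma'=\bigl(\begin{smallmatrix} a & b \\ c & d\end{smallmatrix}\bigr)$ the trace of the commutator is $2-bc\,(k-k^{-1})^2$, so your first expression $2+bc(k-k^{-1})^2$ has the wrong sign, while your second expression $2-bc(\tr^2\gamma-4)$ is the right one; since only the vanishing of the correction term matters, the conclusion is unaffected. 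Also, the elliptic-of-order-two case needs no separate treatment: $(k-k^{-1})^2=(k+k^{-1})^2-4=\tr^2\gamma-4$ vanishes only when $\gamma$ is the identity in $\PSL$, so for any non-identity diagonalizable $\gamma$ (including $k=\pm i$) the equivalence $\tr[\gamma,\gamma']=2\Leftrightarrow bc=0$ holds directly. The parabolic case and the easy direction are handled correctly.
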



Recall that the action of a M\"obius transformation on $\pu$ naturally extends to the 3-dimensional hyperbolic space $\mathbb{H}^3$. Let now $\Gamma$ be a subgroup of $\PSL$. We say that $\Gamma$ is {\em elementary} if it admits a finite orbit in $\overline{\mathbb{H}^3}$. Then, either $\Gamma$ fixes a point in $\mathbb{H}^3$ and is conjugate to a subgroup of $\mathrm{SO}(3,\mathbb R)$ (in particular it contains only   elliptic elements) or it has a finite orbit (with one or two elements) on $\pu$ \cite[\S 5.1]{beardon}.  
By definition, a {\em Kleinian group} is a  discrete subgroup of $\PSL$.

\subsection{Holomorphic families of finitely generated subgroups of $\PSL$}\label{subs:families}
Let $G$ be a finitely generated group and $\La$ be a connected complex manifold. A holomorphic family of representations of $G$ into $\PSL$ over   $\La$ is a mapping $$\rho: \Lambda\times G \longrightarrow \PSL,$$ such that $\la \mapsto \rho_\lambda(g)$ is holomorphic for fixed $g$, and $g\mapsto \rho_\lambda(g)$ is a group homomorphism for fixed $\lambda$. We denote such a family by $(\rho_\la)_{{\la\in \La}}$.
For $g\in G$, we usually denote $\rho_\lambda(g)$ by $g_\la$.

Throughout the paper, we make the standing assumption that $(\rho_\la)_{\la\in \La}$ is {\em generally faithful}, that is, that the set of parameters for which $\rho_\la$ is not injective is a countable union of proper subvarieties. For this, it is enough that  for some $\lambda_0\in \La$, $\rho_{\la_0}$ is injective.
We also assume that the family is non-trivial, that is, that  the $\rho_\lambda$ are not all conjugate in  $\PSL$.

\begin{lem}\label{lem:elementary} If there exists a non-elementary representation in $\La$, then
the set of parameters $\lambda\in \Lambda$ for which $\rho_\la(G)$ is elementary is contained in a proper real analytic subvariety of $\La$.
\end{lem}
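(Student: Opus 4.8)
The plan is to show that the condition ``$\rho_\la(G)$ is elementary'' is a closed condition defined by finitely many real-analytic equations, whose zero set is proper because it does not contain the given non-elementary parameter. First I would recall the structure of elementary subgroups from the last paragraph of \S2.1: an elementary subgroup either fixes a point of $\mathbb{H}^3$ (hence is conjugate into $\mathrm{SO}(3,\mathbb{R})$ and consists entirely of elliptic elements), or fixes a point of $\pu$, or fixes an unordered pair of points of $\pu$. In all three cases, one can detect elementariness through trace identities. Concretely, fix a finite generating set $\{h_1,\dots,h_k\}$ of $G$. The key observation is that $\rho_\la(G)$ is non-elementary if and only if there exist two elements among the $\rho_\la(h_i)$, or short words in them, that are loxodromic or have order $\geq 3$ and do not share a fixed point; equivalently, using Lemma~\ref{lem:commutator}, one can write down that non-elementariness is witnessed by a pair $g, g'$ of words with $\tr[g_\la,g'_\la]\neq 2$ together with at least one of $\tr^2(g_\la), \tr^2(g'_\la)$ outside $[0,4)$ — i.e. the existence of a non-elementary {\em two-generated} subgroup, which is a classical fact (see \cite[\S5.1]{beardon}).

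The main step is therefore: the set $E$ of parameters with $\rho_\la(G)$ elementary equals
\[
E = \bigcap_{g,g'\in G}\Big(\{\la: \tr[g_\la,g'_\la]=2\}\ \cup\ \{\la: \tr^2(g_\la)\in[0,4)\}\cup\{\la:\tr^2(g'_\la)\in[0,4)\}\Big),
\]
or some equally explicit finite-type description. Since the functions $\la\mapsto \tr[g_\la,g'_\la]$ and $\la\mapsto\tr^2(g_\la)$ are holomorphic on $\La$, each set $\{\tr[g_\la,g'_\la]=2\}$ is a complex-analytic subvariety, and each set $\{\tr^2(g_\la)\in[0,4)\}$ is a real-analytic subvariety (it is cut out by $\im(\tr^2 g_\la)=0$ and $0\le \re(\tr^2 g_\la)<4$; one should take its closure, adding the parabolic locus $\tr^2=4$ for elements of finite order, which is harmless). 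A countable intersection of real-analytic subvarieties of a connected manifold is again contained in a real-analytic subvariety (locally, by Noetherianity of the ring of real-analytic germs, the intersection stabilizes after finitely many terms). So $E$ is contained in a real-analytic subvariety $E'$ of $\La$. It is proper: by hypothesis there is $\la_1\in\La$ with $\rho_{\la_1}(G)$ non-elementary, so $\la_1\notin E$; but one must check $\la_1\notin E'$, which follows because the relevant finite intersection already fails at $\la_1$ — there is a specific pair $g,g'$ with $\tr[g_{\la_1},g'_{\la_1}]\neq 2$ and, say, $\tr^2(g_{\la_1})\notin[0,4]$, so $\la_1$ lies outside that single defining subvariety, and hence outside $E'$ once $E'$ is taken as a finite such intersection near $\la_1$.

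The point I expect to be the main obstacle is the bookkeeping needed to pass cleanly from ``elementary'' to an honest finite list of real-analytic equations — in particular handling the purely-elliptic (conjugate into $\mathrm{SO}(3)$) case, where no single element is loxodromic, so the witness of non-elementariness must come from two elliptics without a common fixed point generating a non-discrete or non-elementary group. Here one uses that a subgroup of $\PSL$ all of whose elements are elliptic, if it is non-elementary, still contains a two-generated non-elementary subgroup detected by $\tr[g_\la,g'_\la]\neq 2$ for suitable short words, together with the ellipticity conditions $\tr^2\in[0,4)$ on the generators. A second, more technical point is the closedness/real-analyticity: the naive set $\{\tr^2 g_\la\in[0,4)\}$ is neither closed nor a subvariety, so one works with its Euclidean closure and invokes that a decreasing intersection of real-analytic sets is locally eventually constant; alternatively one phrases the whole argument as: the complement of $E$ is a countable union of open sets $\{\tr[g_\la,g'_\la]\neq 2,\ \tr^2 g_\la\notin[0,4]\}$, and $E$, being contained in the zero set of the single holomorphic function $\tr[g_\la,g'_\la]-2$ away from a real-analytic set, lies in a proper real-analytic subvariety. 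Modulo these verifications, which are routine given \cite{beardon}, the lemma follows.
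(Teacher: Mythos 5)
Your overall strategy --- detect elementariness by trace conditions and use the non-elementary parameter for properness --- is the same in spirit as the paper's, but there is a genuine gap at the step where you convert your characterization into a real-analytic subvariety. The sets $\{\la:\ \tr^2(g_\la)\in[0,4)\}$ (or their closures) are cut out by one real-analytic equation \emph{together with inequalities}: they are semianalytic sets, not real-analytic subvarieties. Each term of your countable intersection is therefore a union of a complex-analytic set with two semianalytic sets, and the local Noetherianity argument you invoke (which applies to ideals of real-analytic functions, i.e.\ to honest common zero sets) does not apply to it; nothing in the proposal produces a proper real-analytic subvariety containing that intersection. Your fallback at the end (``$E$ is contained in the zero set of the single holomorphic function $\tr[g_\la,g'_\la]-2$ away from a real-analytic set'') is false as stated: an elementary group conjugate into $\mathrm{SO}(3,\re)$ contains pairs of elliptic elements whose fixed-point pairs on $\pu$ are disjoint, so $\tr[g_\la,g'_\la]\neq 2$ occurs throughout that part of the elementary locus; the escape clause $\tr^2\in[0,4)$ is exactly what you cannot drop, and exactly what ruins analyticity.

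The paper avoids the inequalities altogether by choosing weaker necessary conditions that are genuine equations. For an elementary group with a finite orbit of size at most $2$ on $\pu$, every \emph{squared} element fixes that orbit pointwise, so by Lemma \ref{lem:commutator} one gets $\tr[f_\la^2,g_\la^2]=2$ for \emph{all} pairs $f,g$ --- the squaring removes precisely the orbit-swapping elements that force your case distinction, and yields a family of holomorphic equations with no inequality. For an elementary group fixing a point of $\mathbb{H}^3$, all elements are elliptic, and one keeps only the equation $\Im\tr^2(g_\la)=0$, discarding $0\le\re\tr^2(g_\la)\le 4$. The elementary locus is then contained in the union of two real-analytic subvarieties; each is proper (for the second, because a non-constant holomorphic function cannot be real-valued on a connected complex manifold, and some $\tr^2(g_\la)$ is non-constant by non-triviality of the family). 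If you wish to keep your ``witness'' formulation, you still need to reproduce these two devices; as written, the proof does not close.
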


\begin{proof}
As said before, there are two possibilities for  $\rho_\la(G)$ to be  elementary:
\begin{itemize}
 \itm[type I] either all elements have a common orbit of period $2$, hence for every pair
  $f, g\in G$, $\tr[f_\lambda^2, g_\lambda^2]\equiv 2$  (Lemma \ref{lem:commutator}),
  \itm[type II]  or all elements are elliptic, hence for every $g_\la$, $\tr^2(g_\lambda)\in[0,4]$, and in particular $\Im \tr ^2 (g_{\lambda}) =0$.
 \end{itemize}
We see that  the parameters for  elementary subgroups satisfy a family of real analytic equations.
\end{proof}


We say that a family of representations is  {\em generally non-elementary} if it satisfies the assumption of the lemma. 
Since most of the problems that we consider are local, reducing the parameter space if necessary, it is not a restriction to assume that for all $\la\in \La$, $\rho_\la$ is non-elementary.  

\medskip

Two representations $\rho_{\lambda_0}$ and $\rho_{\lambda_1}$ are {\em quasi-conformally conjugate} if there exists a quasi-conformal homeomorphism $\phi:\pu\cv\pu$ such that for every $g\in G$, $\rho_{\lambda_0}(g)\circ\phi = \phi\circ \rho_{\lambda_1}(g)$.

\begin{defi}
We say that $\rho_{\lambda_0}$ is stable if for $\lambda$ close to $\lambda_0$, $\rho_\lambda$ is quasi-conformally conjugate to $\rho_{\lambda_0}$. The {\em stability locus} $\mathrm{Stab}\subset\La$ is the open set of stable representations. Its complement is the {\em bifurcation locus} $\mathrm{Bif}$.
\end{defi}

\begin{thm} [Sullivan \cite{sullivan}, Bers \cite{bers}]\label{thm:sullivan}
Let $(\rho_\lambda)_{{\lambda\in \La}}$ be a non-trivial, generally faithful, holomorphic family of non-elementary representations of $G$ into $\PSL$, and $\om\subset\Lambda$ be an open set. Then the following assertions are equivalent:

\begin{enumerate}[{ i.}]
\item  for every $\lambda\in \om$, $\rho_\la(G) $ is discrete;
\item for every $\lambda\in \om$, $\rho_\lambda$ is faithful;
\item for every $g$ in $G$, if for some $\lambda_0\in \om$,  $g_{\lambda_0}$ is loxodromic (resp. parabolic, elliptic), then $g_\lambda$ is loxodromic
(resp. parabolic, elliptic) throughout $\om$;
\item for any $\lambda_0$, $\lambda_1$ in $\om$
the representations $\rho_{\lambda_0}$ and $\rho_{\lambda_1}$ are  quasi-conformally conjugate on $\pu$.
\end{enumerate}
\end{thm}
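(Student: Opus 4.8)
The plan is to prove the four equivalences by running the classical web of implications, with nearly all the difficulty concentrated in $\mathrm{(iii)}\Rightarrow\mathrm{(iv)}$; we may assume $\om$ connected (otherwise argue componentwise), and we will use throughout that a non-elementary subgroup of $\PSL$ contains loxodromic elements and that, by Lemma~\ref{lem:commutator}, two elements of $G$ share a fixed point at one parameter of $\om$ if and only if they do at every parameter (the condition $\tr[\cdot,\cdot]=2$ being holomorphic and $\om$ connected). The soft half is quick. A quasiconformal conjugacy is in particular a topological conjugacy of $\pu$, and the type of a M\"obius transformation is a topological invariant --- elliptic is the unique type with no orbit converging to a fixed point, and loxodromic differs from parabolic by having two fixed points rather than one --- so $\mathrm{(iv)}\Rightarrow\mathrm{(iii)}$. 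A conjugacy preserves the kernel; as the family is generally faithful, its non-injectivity locus is a countable union of proper subvarieties and cannot contain the open set $\om$, so some $\rho_{\la_*}$ with $\la_*\in\om$ is faithful, whence under $\mathrm{(iv)}$ every $\rho_\la$ ($\la\in\om$) is faithful, which is $\mathrm{(ii)}$. Finally, quasiconformal conjugacy preserves discreteness, so $\mathrm{(iv)}$ implies $\mathrm{(i)}$ once one knows that some $\rho_\la$ in $\om$ is discrete; this is Sullivan's theorem that quasiconformally stable families consist of discrete groups --- provable via measurable dynamics (no invariant line fields), or via the rigidity that a quasiconformal map normalizing a dense enough M\"obius group is M\"obius, which together with non-triviality excludes the non-discrete case --- and I take it as a black box \cite{sullivan}.

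Next I would prove $\mathrm{(i)}\Rightarrow\mathrm{(iii)}$ and $\mathrm{(ii)}\Rightarrow\mathrm{(iii)}$ by an ``accidental elliptic'' argument. For $g\in G$ set $u(\la)=\tr^2(g_\la)$, holomorphic on $\om$; the type of $g_\la$ is read off the position of $u(\la)$ relative to $[0,4]$. If $g_\la$ did not keep a constant type (nor constant elliptic order) over $\om$, then $u$ would be non-constant, hence $u(\om)$ open, and since it meets $[0,4]$ it would contain a non-degenerate interval $I\subset[0,4)$. But $I$ contains values $2+2\cos(2\pi\theta)$ with $\theta$ irrational, forcing $g_\la$ to be elliptic of infinite order --- hence $\rho_\la(G)$ non-discrete --- at some parameter, against $\mathrm{(i)}$; and $I$ contains values $2+2\cos(2\pi p/q)$, forcing $g_\la$ to be elliptic of finite order $q$, hence $g^q\in\ker\rho_\la$, at some parameter. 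Since such a $g$ must be loxodromic or parabolic somewhere on $\om$ (an element elliptic throughout an open set has $u$ real-valued there, hence constant), $g$ has infinite order in $G$, so $g^q\neq\mathrm{id}$ and $\mathrm{(ii)}$ fails --- a contradiction in both cases.

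The main step is $\mathrm{(iii)}\Rightarrow\mathrm{(iv)}$, which I would prove by building quasiconformal conjugacies out of holomorphic motions. Given $\la_0,\la_1\in\om$, join them by a path, cover it by finitely many balls $B_1,\dots,B_k\subset\om$ (with $\la_0\in B_1$, $\la_1\in B_k$, $B_i\cap B_{i+1}\neq\emptyset$), and reduce to constructing, over each simply connected $B_i$ with a base point $\la_*$, a quasiconformal conjugacy between $\rho_{\la_*}$ and $\rho_\la$ for every $\la\in B_i$, then composing along a chain of intermediate points. Over one such ball $B$: for loxodromic $g\in G$ the two fixed points $\xi_g^{+}(\la),\xi_g^{-}(\la)$ of $g_\la$ vary holomorphically on $B$ and never collide (a collision would make $g_\la$ parabolic or trivial, against $\mathrm{(iii)}$), so the infinite, $\rho_{\la_*}(G)$-invariant set $E=\set{\xi_g^{\pm}(\la_*)\,:\,g\in G\text{ loxodromic}}\subset\pu$ undergoes a holomorphic motion $h$ over $B$ given by $h(\la,\xi_g^{\pm}(\la_*))=\xi_g^{\pm}(\la)$; this is well defined and injective in the space variable because two of these points coincide at $\la_*$ precisely when the corresponding elements share a fixed point, which by Lemma~\ref{lem:commutator} then persists over $B$. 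Moreover $h$ is equivariant: $\rho_\la(\gamma)^{-1}\circ h\circ\rho_{\la_*}(\gamma)=h$ for every $\gamma\in G$. One extends $h$ automatically to a holomorphic motion of $\overline{E}$, and then to a holomorphic motion $H$ of all of $\pu$ with each $H_\la$ quasiconformal, using the $\lambda$-lemma in its strong form (S\l odkowski, or the Bers--Royden/Sullivan--Thurston extension). Since the Bers--Royden extension is natural --- it commutes with pre- and post-composition by M\"obius transformations --- applying it to the equivariance relation above gives $\rho_\la(\gamma)^{-1}\circ H_\la\circ\rho_{\la_*}(\gamma)=H_\la$ for all $\gamma$; that is, $H_\la$ is a quasiconformal conjugacy from $\rho_{\la_*}$ to $\rho_\la$, which is what was needed.

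The main obstacle lies precisely in this construction: upgrading the bare holomorphic motion of the loxodromic fixed-point set to a genuine \emph{equivariant} holomorphic motion of the whole sphere, which rests on the extension theorems for holomorphic motions together with the naturality of the Bers--Royden construction. The only other substantial ingredient is the discreteness statement for quasiconformally stable families, needed only to bring $\mathrm{(i)}$ into the loop; everything else is soft topology or the elementary open-mapping argument of the second paragraph.
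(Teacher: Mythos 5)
First, a remark on scope: the paper does not prove Theorem \ref{thm:sullivan} at all --- it is quoted from Sullivan and Bers as background --- so your argument can only be judged on its own terms. The architecture you choose (everything funnels through (iii); (iii)$\Rightarrow$(iv) via an equivariant holomorphic motion of loxodromic fixed points extended by the improved $\lambda$-lemma; Sullivan's discreteness theorem taken as a black box to close the loop back to (i)) is the standard classical route and is essentially the right one.

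There is, however, a genuine flaw at the linchpin of your (iii)$\Rightarrow$(iv) step. The principle you announce at the outset --- ``two elements of $G$ share a fixed point at one parameter of $\om$ if and only if they do at every parameter, the condition $\tr[\cdot,\cdot]=2$ being holomorphic and $\om$ connected'' --- is false as stated: a holomorphic function equal to $2$ at one point need not be identically $2$, and indeed the collision loci $F(g,h)$ of Theorem \ref{theo:equidist ae}(2) are generically \emph{proper} hypersurfaces, so in a bifurcating family fixed points do collide at isolated parameters. The statement you need is true only \emph{under hypothesis (iii)}, and proving it requires applying (iii) to the commutator: a collision at $\la_0$ makes $[g,h]_{\la_0}$ parabolic or the identity; (iii) applied to $[g,h]$ (together with the observation that an element equal to the identity at one parameter cannot be loxodromic, parabolic or elliptic at another without violating (iii)) forces $\tr^2[g_\la,h_\la]\equiv 4$, hence $\tr[g_\la,h_\la]\equiv 2$ by continuity and connectedness, hence a collision at every $\la$; one further application of (iii) to $[g,h]$ shows the labelled coincidence pattern $\xi_g^{\pm}=\xi_h^{\pm}$ is constant. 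Without this, your motion $h(\la,\xi_g^{\pm}(\la_*))=\xi_g^{\pm}(\la)$ is not known to be well defined or injective. A second, smaller gap: your open-mapping argument for (i)$\Rightarrow$(iii) only detects type changes visible to $u=\tr^2(g_\la)$, and misses the case $u\equiv 4$ with $g_\la$ parabolic at some parameters and equal to the identity at others --- a type change with $u$ constant. Ruling this out from discreteness alone requires J\o rgensen's inequality (pick $h$ with $\tr[g_\la,h_\la]\not\equiv 2$ and let $\la$ tend to the degeneration parameter), i.e.\ the Margulis--Zassenhaus input the paper invokes elsewhere; under (ii) the case is excluded trivially, so only the implication from (i) is affected.
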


In the situation of the theorem, due to {\em iii.}, the set of fixed points of $\rho_\la(g)$, $g\in G$ moves holomorphically. Furthermore, there exists a holomorphic motion of $\pu$, extending the motion of fixed points, and 
 commuting with the action of $G$. The well-known Zassenhaus-Margulis  lemma  implies that the set $\mathrm{DF}$ of discrete and faithful representations is closed in parameter space (this is also referred to as Chuckrow's or J\o rgensen's Theorem, 
see \cite[p. 170]{kapovich}). We infer that the stability locus is the interior  of $\mathrm{DF}$. Observe in particular that, in contrast with rational dynamics, when non-empty the bifurcation locus has non-empty interior.

The  previous theorem allows us to exhibit a dense (complex) codimension 1  phenomenon in the bifurcation locus. This is a basic  source of motivation for the  introduction of  bifurcation currents.

\begin{cor}\label{cor:accidental}
For every $t\in [0,4]$, the set of parameters $\lambda_0$ such that there exists $g\in G$ with $\tr^2{\rho_{\lambda_0}(g)}=t$ and $\lambda \mapsto \tr^2{\rho_{\lambda}(g)}$ is not constant at $\lambda_0$, is dense in the bifurcation locus.
\end{cor}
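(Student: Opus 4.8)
The plan is to work locally near an arbitrary point $\lambda_0\in\mathrm{Bif}$ and produce, for every neighborhood $U\ni\lambda_0$, an element $g\in G$ with non-constant $\lambda\mapsto\tr^2(g_\lambda)$ taking the prescribed value $t$ somewhere in $U$. By Theorem~\ref{thm:sullivan}, the fact that $\lambda_0$ lies in $\mathrm{Bif}$ means that the equivalent conditions i.--iv. all fail on $U$; in particular condition iii. fails, so there is some $h\in G$ and some $\lambda_1\in U$ at which $h_{\lambda_1}$ is (say) loxodromic while at some other $\lambda_2\in U$ it is parabolic or elliptic --- or more simply, there is some $h\in G$ whose type is not constant on $U$. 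In all cases this forces $\lambda\mapsto\tr^2(h_\lambda)$ to be a non-constant holomorphic function on $U$ (if it were constant, the type would be constant), so it is an open map, and its image is an open subset $W$ of $\cc$ that meets both the segment $[0,4]$ and its complement (again because the type is not constant on $U$).

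First I would reduce to the following statement: given a holomorphic $\varphi=\tr^2(h_\lambda):U\to\cc$ that is non-constant and whose image meets $[0,4]$ nontrivially, and given any target value $t\in[0,4]$, find $g\in G$ with $\lambda\mapsto\tr^2(g_\lambda)$ non-constant and equal to $t$ at some point of $U$. The natural source of new elements is to compose $h$ with conjugates, i.e. to look at words like $g=wh^kw^{-1}\cdots$ or at products $h^a f h^b$ for $f\in G$; the key algebraic input is the classical trace identities in $\SL$, e.g. $\tr(AB)+\tr(AB^{-1})=\tr(A)\tr(B)$ and the formula expressing $\tr^2$ of such words as polynomials in the traces of the generators and their products. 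The point is that as the word varies, $\tr^2(g_\lambda)$ ranges over a rich family of holomorphic functions on $U$, and I expect that already for a one-parameter family such as $\lambda\mapsto\tr^2(h_\lambda^m)=P_m(\tr^2(h_\lambda))$ (a Chebyshev-type polynomial in $\tr^2(h_\lambda)$, since the eigenvalues get raised to the $m$-th power) one gets non-constant functions whose images, as $m$ grows, sweep out larger and larger regions; combining this with the intermediate value / open mapping property should let me hit any prescribed $t\in[0,4]$.

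More concretely, here is the cleanest route I would try. Since $\varphi=\tr^2(h_\lambda)$ is non-constant on $U$, shrink $U$ so that $\varphi$ is a submersion onto a disk $D\subset\cc$; by the failure of iii. we may assume $D$ meets both $\{$loxodromic values$\}=\cc\setminus[0,4]$ and either $[0,4)$ or $\{4\}$. Write $\tr(h_\lambda)=2\cos\theta(\lambda)$ locally (a holomorphic branch), so $\tr^2(h_\lambda^m)=4\cos^2(m\theta(\lambda))$. As $\lambda$ ranges over $U$, $\theta(\lambda)$ ranges over an open set of $\cc$, hence $m\theta(\lambda)$ ranges over an open set that, for $m$ large, has diameter $>\pi$ in the real direction; therefore $4\cos^2(m\theta(\lambda))$ takes every value in $[0,4]$ --- in particular the value $t$ --- and is obviously non-constant. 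Taking $g=h^m$ for such an $m$ finishes the proof at $\lambda_0$, and since $\lambda_0\in\mathrm{Bif}$ was arbitrary and $U$ arbitrarily small, density follows.

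\textbf{Main obstacle.} The delicate point is the very first reduction: extracting from ``$\lambda_0\in\mathrm{Bif}$'' a \emph{single} element $h\in G$ whose type genuinely changes on every neighborhood $U$ of $\lambda_0$, together with a branch $\theta(\lambda)$ of $\tfrac12\arccos(\tfrac12\tr h_\lambda)$ that is well-defined and non-constant on a fixed subdisk. Theorem~\ref{thm:sullivan} gives failure of iii. somewhere on $U$, i.e. \emph{some} $h$ changing type, but one must be careful that the locus where $\tr^2(h_\lambda)\in[0,4]$ (a real-analytic set) is not all of $U$ --- this is where one uses that the family is generally non-elementary and that $\tr^2(h_\lambda)$ is non-constant, so $\{\tr^2(h_\lambda)\in[0,4]\}$ is a proper real-analytic subset and its complement is open and dense; choosing $U$ to avoid the parabolic/elliptic subvariety where convenient lets one pick the holomorphic branch $\theta(\lambda)$. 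Everything after that is the elementary Chebyshev-in-the-trace computation sketched above and poses no real difficulty.
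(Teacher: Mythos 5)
Your proposal is correct and follows essentially the same route as the paper: find an element $h$ that changes type in the given neighborhood (so that $\tr^2(h_\lambda)$ is non-constant and its open image crosses $[0,4]$ along a nontrivial interval), then pass to powers $h^m$ and use $\tr^2(h_\lambda^m)=4\cos^2(m\theta(\lambda))$ to sweep out all of $[0,4]$ for $m$ large. The "main obstacle" you flag is not really one, since the definition of the bifurcation locus via Theorem~\ref{thm:sullivan} supplies, for each neighborhood $U$ of $\lambda_0$, some $h$ changing type in $U$ (no single $h$ need work for all $U$), which is exactly what the paper uses.
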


\begin{proof}
For $t=4$, the result is clear. For other values of $t$, consider an open set $\om$ with $\om\cap\mathrm{Bif}\neq \emptyset$. There exists $g\in G$ which changes type in $\om$. Thus, the values of $\tr^2(g_\lambda)$, for $\lambda\in \om$, cross $[0,4]$ along a non-empty  open interval. We infer that for large $k$, the set of values of $\tr^2(g^k_\lambda)$, $\lambda\in \La$,  contains $[0,4]$, hence the result.
\end{proof}

In Section \ref{sec:equidistribution} we will require the notion of an \textit{algebraic family of representations}. For this we need to introduce a few concepts;
see  e.g. \cite{kapovich} for a more detailed presentation.
Fix a finite set  $\set{g_1, \ldots , g_k}$ generating $G$.
The space $\mathrm{Hom}(G, \PSL)$ may be regarded as an algebraic subvariety $V_G$ of $\PSL^k$ by simply mapping a representation $\rho$ to $(\rho(g_1),\cdots, \rho(g_k))\in \PSL^k$, and observing that even if $G$ is not finitely presented, $V_G$ will be defined by finitely many equations.
The same holds of course for $\mathrm{Hom}(G, \SL)$. The algebraic structure of $V_G$ is actually independent of the presentation of $G$. Notice also that $V_G$ is defined over $\mathbb{Q}$. 

There is an obvious embedding of $\SL$ into $\cc^4$ making $\SL^k$   an affine subvariety of $\cc^{4k}$. To view
$\PSL$ as an affine variety,  observe that $\PSL$ acts faithfully by conjugation on the space of 2-by-2 complex matrices of trace zero. This embeds $\PSL$ into $\mathrm{GL}(3,\cc)\subset \cc^9$, and actually,  $\PSL$ is isomorphic to $\mathrm{SO}(3,\cc)$. We conclude that $V_G$ is   an affine algebraic variety (again, defined  over $\mathbb{Q}$).  

A holomorphic family of representations $(\rho_\la)_{\la\in \La}$ is now simply a holomorphic mapping $\rho:\La\cv V_G\subset \cc^{9k}$. We say that such a family is {\em algebraic} (resp. defined  over $K$, where $K$ is some subfield of $\cc$) if there exists an algebraic subset $V\subset V_G$ (resp. defined  over $K$) such that $\rho:\La\cv V$ is a dominant mapping. To say it differently, we require that the image $\rho(\La)$ contains an open 
 subset of an algebraic subset of $V_G$. This notion does not depend on the presentation of $G$.

\subsection{Products of random matrices}\label{subs:random}
In this paragraph we recall some classical facts on random matrix products  that we specialize to our situation.  The reader is referred to \cite{bougerol-lacroix, furman} for more details and references.
Fix a non-elementary representation $\rho : G \rightarrow \PSL$. Let $\mu$ be a probability measure on the group $G$, whose support generates $G$ as a semi-group. Let us first  work under the following moment assumption:
\begin{equation}\label{eq:moment condition} \int _{G} \log \norm{\rho (g) } d\mu (g) < \infty .\end{equation}
If the group $G$ is finitely generated, and ${\rm length} ({g})$ denotes the minimal length of a representation of $g$ as a word in some fixed set of generators, then  the condition
\begin{equation} \label{eq:moment condition finitely generated} \int _G    \length({g})   d\mu (g) <\infty \end{equation}
clearly  implies (\ref{eq:moment condition}). An interesting case where these moment conditions (and also \eqref{eq:exponential moment} and \eqref{eq: exponential moment condition in the group} below) are satisfied is that 
of the normalized counting measure on a finite symmetric set of generators of $G$.

The \textit{Lyapunov exponent} of a representation $\rho : G \rightarrow \PSL$ is defined by the formula
\begin{equation}\label{def:lyapunov exponent} \chi (\rho ) : = \lim _{n\rightarrow \infty} \frac{1}{n}\int _G \log \norm{\rho(g)} d\mu^{n}(g), \end{equation}
where $\mu^{n}$ is the $n^{\rm th}$ convolution power of $\mu$, that is the measure on $G$ defined as the image of the product measure $\mu^{\otimes n}$ on $G^{n}$ under the map $G^n \ni (g_1,\ldots, g_n) \mapsto   g_n\ldots g_1\in G$.  Likewise, $\mu^n$ is the law of the ${n}^{\mathrm{th}}$ step of the left- or right- random walk on $G$ with transition probabilities given by $\mu$ and starting at the identity. 

Throughout the paper we use the following notation: $\mathbf g=(g_n)_{n\geq 1}$ denotes a sequence in $G^\nn$, and $l_n(\mathbf g) = g_n\cdots g_1$ (resp. $r_n(\mathbf g) =g_1\cdots g_n$) is the product on the left (resp. right) of the first $n$ elements of $\mathbf g$. 

The study of the Lyapunov exponent is closely related to that of the {\em transition operator},
that is, the Markov operator $P$ acting on the space 
of continuous complex valued functions on $\mathbb P^1$  by $f\mapsto Pf$, where $Pf$ is given by the formula
\begin{equation} \label{def:Markov operator}  P f (x) = \int _ G f(\rho(g) x) d\mu(g).\end{equation}
 A probability measure on $\mathbb P^1$  such that $\int Pf d\nu = \int fd\nu$ for all $f$
   is called {\em stationary}.

The following important result is due to Furstenberg \cite{furstenberg}.

\begin{thm}\label{t: furstenberg}
Let $\rho$ be a non-elementary representation of $G$ and $\mu$ be a probability measure on $G$, generating $G$ as a semi-group, and satisfying  (\ref{eq:moment condition}). Then the Lyapunov exponent defined in (\ref{def:lyapunov exponent}) is positive. Moreover, if $z_0\in \pu$ is fixed, then  for $\mu^\nn$ a.e. $\mathbf{g}$,
$$\lim_{n\cv\infty} \unsur{n} \log{\norm{\rho(l_n(\mathbf{g}))Z_0}}  =
\lim_{n\cv\infty} \unsur{n} \log\norm{\rho(l_n(\mathbf{g}))}  = \chi(\rho),$$
where $Z_0$ denotes any lift of $z_0\in \pu$ to $\mathbb C^2$.

Furthermore, there exists a unique stationary measure on $\mathbb P^1$, which
is diffuse and quasi-invariant under $\rho(G)$. Moreover, we have the formula
\begin{equation}\label{eq:Furstenberg's formula} \chi = \int _{\mathbb P^1} \int _ G \log \frac{\norm {\rho(g) (Z)} }{\norm{Z}} d\mu (g) d \nu (z)  \end{equation}
(again $Z$ is any lift of $z$).
\end{thm}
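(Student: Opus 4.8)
The statement is Furstenberg's theorem (\cite{furstenberg}; see also \cite{bougerol-lacroix, furman}), and the plan is to assemble it from the standard machinery of random matrix products: existence of $\chi$ by subadditivity, existence of a stationary measure by compactness, Furstenberg's formula by a cocycle identity, \emph{positivity} of $\chi$ as the crucial point, and finally the proximality of the random products — a consequence of positivity — which yields uniqueness and the radial limit. Throughout I use the norm cocycle $\sigma(\gamma,z)=\log\norm{\rho(\gamma)Z}-\log\norm{Z}$ on $G\times\pu$ (independent of the lift $Z\in\cc^2$): it satisfies the chain rule $\sigma(\gamma_1\gamma_2,z)=\sigma(\gamma_1,\gamma_2 z)+\sigma(\gamma_2,z)$, the two-sided bound $|\sigma(\gamma,z)|\leq\log\norm{\rho(\gamma)}$ (the singular values of any $\SL$-lift are $\tau,\tau^{-1}$), and the identity $\log\norm{\gamma'(z)}_{\mathrm{sph}}=-2\,\sigma(\gamma,z)$ relating it to the spherical derivative cocycle. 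First I would establish the existence of $\chi$: the sequence $\mathbf g\mapsto\log\norm{\rho(l_n(\mathbf g))}$ is subadditive for the shift $\theta$ on $(G^\nn,\mu^\nn)$, since $\norm{\cdot}$ is submultiplicative and $l_{n+m}(\mathbf g)=l_m(\theta^n\mathbf g)\,l_n(\mathbf g)$, and \eqref{eq:moment condition} makes $\log\norm{\rho(g_1)}$ integrable; Kingman's subadditive ergodic theorem (the shift being ergodic) then yields $\mu^\nn$-a.e.\ and $L^1(\mu^\nn)$ convergence of $\unsur n\log\norm{\rho(l_n(\mathbf g))}$ to $\chi=\inf_n\unsur n\int\log\norm{\rho(g)}\,d\mu^n(g)$, which is precisely \eqref{def:lyapunov exponent}.

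Next, the stationary measure. Since $\pu$ is compact and the operator $P$ of \eqref{def:Markov operator} is Feller, any weak-$*$ cluster value of the Cesàro means $\unsur N\sum_{k<N}P^{*k}\delta_z$ is a stationary probability measure $\nu$. It has no atom: if $m=\sup_w\nu(\{w\})>0$ then the finite set $F=\{w:\nu(\{w\})=m\}$ satisfies $\rho(g)^{-1}F=F$ for $\mu$-a.e.\ $g$ (equality in $m=\int\nu(\{\rho(g)^{-1}w\})\,d\mu(g)\leq m$), hence for all $g\in G$ since $\supp\mu$ generates $G$; thus $F$ is a finite orbit of $\rho(G)$ on $\pu$, contradicting non-elementarity, so $\nu$ is diffuse. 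Quasi-invariance: $\nu(A)=0$ forces $\rho(g)_*\nu\ll\nu$ for $\mu$-a.e.\ $g$, which propagates to all $g\in G$. Furstenberg's formula \eqref{eq:Furstenberg's formula} then follows for any stationary $\nu$: from $\sigma(l_n(\mathbf g),z)=\sum_{k=1}^n\sigma(g_k,l_{k-1}(\mathbf g)z)$ and the stationarity relations $\mu^k*\nu=\nu$ one gets $\int\!\int\sigma(l_n(\mathbf g),z)\,d\nu(z)\,d\mu^{\otimes n}=n\int\!\int\sigma(g,z)\,d\mu(g)\,d\nu(z)$; dividing by $n$, the bound $\sigma\leq\log\norm{\rho(\cdot)}$ shows the right-hand side is $\leq\chi$, while $\unsur n\sigma(l_n(\mathbf g),z)\to\chi$ (for $\nu$-a.e.\ fixed $z$, by the radial limit of the last paragraph) together with the uniform integrability of $\unsur n\log\norm{\rho(l_n)}$ (an $L^1$-convergent family) gives $\geq\chi$; hence the integral equals $\chi$.

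The crux, and the step I expect to be the main obstacle, is the positivity $\chi>0$. I would argue by contradiction. If $\chi=0$, then by $\log\norm{\gamma'(z)}_{\mathrm{sph}}=-2\sigma(\gamma,z)$ Furstenberg's formula reads $\int_{\pu}\int_G\log\norm{\gamma'(z)}_{\mathrm{sph}}\,d\mu(\gamma)\,d\nu(z)=0$, and telescoping the derivative cocycle gives $\int_{G^n}\int_{\pu}\log\norm{(l_n(\mathbf g))'(z)}_{\mathrm{sph}}\,d\nu(z)\,d\mu^{\otimes n}=0$ for every $n$. The measure-valued martingale $\eta_n^{\mathbf g}=\rho(l_n(\mathbf g))_*\nu$ converges $\mu^\nn$-a.e.\ weakly to some $\eta_\infty^{\mathbf g}$ of mean $\nu$, and one shows — this is the technical heart of Furstenberg's argument — that the vanishing of $\chi$ prevents any contraction of the random products on $\pu$, forcing $\eta_n^{\mathbf g}$ to be $\mu^\nn$-a.e.\ constant equal to $\nu$; consequently $\rho(g)_*\nu=\nu$ for $\mu$-a.e.\ $g$, hence for all $g\in G$ (again since $\supp\mu$ generates). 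But a non-elementary subgroup of $\PSL$ preserves no probability measure on $\pu$: an invariant measure is either atomic, producing a finite orbit, or diffuse, in which case $\rho(G)$ is conjugate into a compact group (cf.\ \cite[\S5.1]{beardon}) — a contradiction in either case. The delicate point is precisely quantifying the contraction of random products on $\pu$ \emph{without} already knowing $\chi>0$; for this one must run Furstenberg's original argument, and I would follow \cite[Ch.~III]{bougerol-lacroix}.

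Once $\chi>0$ is known, the remaining assertions are routine. The singular-value ratio of $\rho(l_n(\mathbf g))$, equal to $\norm{\rho(l_n(\mathbf g))}^2\sim e^{2\chi n}\to\infty$, makes the random products asymptotically proximal: writing the Cartan decomposition $\rho(l_n(\mathbf g))=k_n a_n k_n'$ with $a_n=\mathrm{diag}(\tau_n,\tau_n^{-1})$, the attracting direction $z_n^+=[k_ne_1]$ and the repelling direction $z_n^-=[(k_n')^{-1}e_2]$ converge $\mu^\nn$-a.e.\ to limits $z^+(\mathbf g),z^-(\mathbf g)$ depending only on $\mathbf g$, the law of $z^-(\mathbf g)$ being diffuse (it is a stationary measure of the reflected walk, to which the atom argument of the second paragraph applies). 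Since $\nu$ is diffuse, $\rho(l_n(\mathbf g))_*\nu\to\delta_{z^+(\mathbf g)}$ for $\mu^\nn$-a.e.\ $\mathbf g$, so $\eta_\infty^{\mathbf g}=\delta_{z^+(\mathbf g)}$; taking means, $\nu=\mathrm{law}(z^+(\mathbf g))$, which is independent of the chosen stationary measure, whence \emph{uniqueness} of $\nu$. Finally, fixing $z_0\in\pu$, diffuseness of the law of $z^-$ gives $z_0\neq z^-(\mathbf g)$ for $\mu^\nn$-a.e.\ $\mathbf g$, and then the Cartan estimate $\norm{\rho(l_n(\mathbf g))Z_0}\geq\norm{\rho(l_n(\mathbf g))}\,\bigl|\langle k_n'Z_0,e_1\rangle\bigr|$ with $\liminf_n|\langle k_n'Z_0,e_1\rangle|>0$ yields $\unsur n\log\norm{\rho(l_n(\mathbf g))Z_0}\to\chi$, which is the first displayed equality and closes the forward reference used above.
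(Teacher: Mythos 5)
The paper does not actually prove this theorem: it is quoted from Furstenberg \cite{furstenberg}, and the only part argued in the text is the $\mu^\nn$-a.e. convergence of $\unsur{n}\log\norm{\rho(l_n(\mathbf g))}$ to $\chi(\rho)$, obtained exactly as you obtain it, from Kingman's subadditive ergodic theorem together with the $L^1$ domination by $\unsur{n}\sum_k\log\norm{\rho(g_k)}$. So on that part you coincide with the paper; for everything else (positivity, uniqueness, diffuseness, the formula, the radial limit) you are supplying a sketch of the classical proof that the paper simply delegates to \cite{furstenberg, bougerol-lacroix}.

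Your sketch has one genuine structural flaw: a circular dependency. Your derivation of Furstenberg's formula (second paragraph) uses, for the inequality $\geq\chi$, the radial limit $\unsur{n}\sigma(l_n(\mathbf g),z)\to\chi$ for $\nu$-a.e. $z$, which you only establish in the last paragraph; that last paragraph is explicitly conditioned on $\chi>0$; and your positivity argument (third paragraph) opens by invoking Furstenberg's formula. As ordered, formula $\Leftarrow$ radial limit $\Leftarrow$ positivity $\Leftarrow$ formula. The standard way to break the circle --- and the order followed in \cite[Ch.~III]{bougerol-lacroix} --- is to establish proximality \emph{before} and independently of positivity: a non-elementary subgroup of $\PSL$ is strongly irreducible (no finite invariant subset of $\pu$, by your own atom argument) and unbounded (a bounded non-elementary group would fix a point of $\mathbb H^3$), so the closed semigroup generated by $\rho(\supp\mu)$ contains matrices $M_n$ with $\norm{M_n}\to\infty$ and hence $M_n/\norm{M_n}$ accumulating on rank-one matrices; combined with the martingale convergence of $\rho(r_n(\mathbf g))_*\nu$ and the non-atomicity of $\nu$, this yields $\rho(r_n(\mathbf g))_*\nu\to\delta_{z^+(\mathbf g)}$ a.s., hence uniqueness, the formula and the radial limit, with no reference to $\chi>0$. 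Positivity then comes last: if $\chi=0$, both $\rho(l_n)$ and $\rho(l_n)^{-1}$ have subexponential norms, the martingale limits cannot be Dirac masses, and one extracts a $\rho(G)$-invariant probability measure on $\pu$, contradicting non-elementarity. The ``technical heart'' you defer to \cite{bougerol-lacroix} is exactly what the paper defers by citing Furstenberg, so that deferral is not a defect relative to the paper --- but the circular ordering is, and must be repaired before your sketch can stand as a proof.
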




The fact that  for $\mu^{\nn}$-a.e. ${\bf g}$, $\lim_{n\rightarrow \infty} \frac{1}{n} \log \norm{\rho (l_n({\bf g}))} =\chi (\rho)$ was originally proved by Furstenberg and Kesten \cite{furstenberg kesten}, and can nowadays easily be deduced from Kingman's Sub-additive Ergodic Theorem (see e.g. \cite{pollicott}). Let us explain the argument. The measure $\mu^{\mathbb N}$ is invariant and ergodic under the shift $\sigma: (g_n) \mapsto (g_{n+1})$. 
Since, $\norm{\cdot}$ is an operator norm,  the family of functions $G^{\mathbb N} \ni {\bf g}   \mapsto \log \norm{\rho(l_n ({\bf g})) } \in \mathbb R ^+$ defines a sub-additive cocycle, that is  for every   $m,n \geq 0$,
\begin{equation}\label{eq:sub-additive cocycle}
 \log \norm{\rho(l_{n+m} ({\bf g})) }\leq  \log \norm{\rho(l_{ m} ({\bf g}) )} + \log \norm{\rho(l_{ n} (\sigma ^n {\bf g})) }.\end{equation} By the sub-additive ergodic theorem, $\unsur{n} \log \norm{\rho(l_n ({\bf g})) }$ converges  $\mu^{\mathbb N}$-a.e.  to a nonnegative number $\widetilde\chi(\rho)$. 
Furthermore,  by  subadditivity  we can write
$$0\leq \unsur{n} \log \norm{\rho(l_n ({\bf g})) } \leq \unsur{n}\sum_{k=1}^n \log\norm{\rho(l_1(\sigma^k(\mathbf{g})))},$$
where by the Birkhoff ergodic theorem and \eqref{eq:moment condition}, the right hand side converges in $L^1(\mu^\nn)$.   
This domination implies  that  $\unsur{n} \log \norm{\rho(l_n ({\bf g})) } $ also converges in $L^1$.
  By integrating against $\mu^\nn$,  we therefore conclude that 
 $\widetilde\chi(\rho)$ must equal $\chi(\rho)$, and the result follows.

\medskip

We note for future reference that when $\rho$ is non-elementary, the  support of the stationary measure coincides with the \textit{limit set} of the representation $\rho$,  defined as the minimal closed $\rho(G)$-invariant subset of the Riemann sphere. The proof   goes as follows: since $\nu$ is quasi-invariant under $\rho(G)$, $\supp(\nu)$ is closed and $\rho(G)$-invariant, thus it must  contain  the limit set. Conversely,  by uniqueness of the stationary measure, the limit set cannot be a proper subset of $\supp(\nu)$.

\bigskip

We now discuss the exponential convergence of the iterates of the transition operator to the stationary measure, due to Le Page. For this, denote by $N$ the operator of integration against the stationary measure $\nu$, $$N: f\mapsto \int fd\nu.$$ 
Let $C^\alpha$ be the space of H\"older continuous functions on $\mathbb P^1$ endowed with the norm \[ \norm{f}_{C^\alpha}  = \norm{f}_\infty +  m_{\alpha }(f)\text{, with }
m_{\alpha} (f) := \sup _{x\neq y\in \mathbb P^1} \left( \frac{|f(x) - f(y)|}{d_{\pu}(x,y)} \right) ^{\alpha} ,\] 
($d_{\pu}$ is the spherical distance). 

We also need  stronger moment assumptions on $\mu$: 
we assume that $(G, \mu,\rho)$ is non-elementary and satisfies the following:
\begin{equation}\label{eq:exponential moment}
\text{there exists }\tau>0 \text{ such that } \int_G\norm{\rho(g)}^\tau d\mu <\infty.
\end{equation}
As above, it is enough that $\mu$ satisfies an exponential moment condition in $G$:
\begin{equation}\label{eq: exponential moment condition in the group}
\text{there exists }\sigma>0 \text{ such that } \int_G \exp(\sigma  \length(g)) d\mu <\infty.
\end{equation}
 
The following important result is due to Le Page~\cite{lepage-thm limite}.

\begin{thm}\label{thm:lepage}   Let $(G, \mu, \rho)$ be a non-elementary representation satisfying \eqref{eq:exponential moment}. Then there exists $\alpha,\beta>0$, and a constant $C$ such that for every $n\geq 0$
\begin{equation}\label{eq:holder convergence}
\norm{P^n -N}_{C^{\alpha}} \leq C e^{-\beta n}.
\end{equation}
\end{thm}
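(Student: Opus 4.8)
The plan is to establish the Hölder spectral gap \eqref{eq:holder convergence} by showing that the transition operator $P$ acts on $C^\alpha(\pu)$ with spectral radius $1$, that the only eigenvalue on the unit circle is $1$ with one-dimensional eigenspace (the constants), and that there is no nontrivial Jordan block, so that $1$ is an isolated simple eigenvalue. Since $N$ is precisely the spectral projection onto this eigenspace, standard Banach space spectral theory then gives $\norm{P^n-N}_{C^\alpha}=\norm{(P-N)^n}_{C^\alpha}\leq Ce^{-\beta n}$ with $e^{-\beta}$ any number strictly between the spectral radius of $P|_{\ker N}$ and $1$.

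The first step is to check that $P$ maps $C^\alpha$ into itself continuously, with a \emph{Lasota--Yorke} (Ionescu-Tulcea--Marinescu) type inequality: for a suitably small $\alpha>0$ (chosen in terms of the exponential moment exponent $\tau$ in \eqref{eq:exponential moment}), there exist $n_0$, $r<1$ and $C>0$ with $m_\alpha(P^{n_0}f)\leq r\, m_\alpha(f)+C\norm{f}_\infty$. The key contraction estimate is that $\int_G \lrpar{\frac{d_{\pu}(\rho(g)x,\rho(g)y)}{d_{\pu}(x,y)}}^\alpha d\mu^{n_0}(g)$ is bounded above by some $r<1$ once $n_0$ is large: indeed the local Lipschitz constant of $\rho(g)$ at $x$ in the spherical metric is comparable to $\norm{\rho(g)z}^{-2}$ (up to rotations), and Furstenberg's theorem together with a large-deviation / moment bound on $\log\norm{\rho(l_n(\mathbf g))}$ forces this quantity to decay exponentially in $n$. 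The exponential moment \eqref{eq:exponential moment} is exactly what is needed to control the integral of the $\alpha$-th power of these Lipschitz constants and to guarantee $P:C^\alpha\to C^\alpha$ is bounded in the first place. Since $\pu$ is compact, $C^\alpha\hookrightarrow C^0$ is compact, so by the Ionescu-Tulcea--Marinescu theorem $P$ is quasi-compact on $C^\alpha$: its essential spectral radius is $<1$, and the peripheral spectrum consists of finitely many eigenvalues of finite multiplicity.

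Next I would pin down the peripheral spectrum. Because $P$ is a Markov operator, $P\mathbf 1=\mathbf 1$ and the spectral radius is $1$; any eigenvalue $e^{i\theta}$ on the unit circle has an eigenfunction $f\in C^\alpha$ with $|f|$ attaining its maximum, and the averaging property of $P$ together with minimality of the limit set (which equals $\supp\nu$, as recalled after Theorem~\ref{t: furstenberg}) and the fact that $\rho$ is non-elementary (hence contains a loxodromic, giving a contracting fixed point in $\supp\nu$) forces $f$ to be constant and $\theta=0$; a similar argument excludes generalized eigenvectors, so $1$ is semisimple. Hence $\mathrm{Spec}(P)\subset\{1\}\cup\{|z|\leq \kappa\}$ for some $\kappa<1$, with $1$ a simple isolated eigenvalue whose Riesz projection is $f\mapsto (\int f\,d\nu)\mathbf 1 = Nf$ by uniqueness of the stationary measure from Theorem~\ref{t: furstenberg}. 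Then $P-N$ has spectral radius $\leq\kappa$, and $(P-N)^n=P^n-N$, which yields \eqref{eq:holder convergence} with any $\beta<-\log\kappa$.

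The main obstacle is the quantitative contraction estimate underlying the Lasota--Yorke inequality, i.e.\ showing $\int_G \|\rho(g)\|^{-2\alpha}\, d\mu^{n}(g)$ (morally the integrated $\alpha$-th power of the spherical Lipschitz constant of $\rho(g)$, after absorbing the rotation parts via a uniform estimate over $x$) decays like $e^{-cn}$ for small $\alpha>0$. This is where the positivity of the Lyapunov exponent $\chi(\rho)>0$ and a large-deviations bound for $\frac1n\log\|\rho(l_n(\mathbf g))\|$ enter decisively: one needs not just a.e.\ convergence but an $L^p$/exponential tail control so that the expectation of $e^{-2\alpha\log\|\rho(l_n)\|}$ beats $1$ — this forces $\alpha$ small, and handling the rotation/base-point dependence uniformly in $x\in\pu$ (so the bound is on $m_\alpha$, not just pointwise) is the delicate technical point. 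The remaining spectral-theoretic steps are comparatively routine given quasi-compactness and the uniqueness/minimality facts already available from Furstenberg's theorem.
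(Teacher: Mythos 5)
Your proof is correct in outline, but it takes a different and heavier route than the paper. The paper does not reprove Le Page's theorem from scratch: it quotes the average contraction estimate \eqref{eq:condition on distances} and then, in Remark \ref{rmk:uniform}, derives \eqref{eq:holder convergence} directly by observing that $c_n := \sup_{x\neq y}\int\bigl(d_\pu(\rho(g)x,\rho(g)y)/d_\pu(x,y)\bigr)^\alpha d\mu^n(g)$ satisfies $m_\alpha(P^nf)\leq c_n m_\alpha(f)$, that stationarity of $\nu$ upgrades this to $\norm{P^n-N}_{C^\alpha}\leq C c_n$, and that $c_{m+n}\leq c_m c_n$ forces $c_n\leq Ce^{-\beta n}$ once $c_{n_0}<1$. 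You instead package the same contraction estimate as a Lasota--Yorke inequality, invoke Ionescu-Tulcea--Marinescu quasi-compactness, analyze the peripheral spectrum, identify the Riesz projection at the simple eigenvalue $1$ with $N$, and conclude via $(P-N)^n=P^n-N$. Both arguments hinge on exactly the same analytic input — that $\int\norm{\rho(g)Z}^{-2\alpha}d\mu^{n_0}(g)<\norm{Z}^{-2\alpha}$ uniformly in $Z$ for some small $\alpha$ and large $n_0$, which you rightly flag as the crux and which follows from $\chi>0$ plus the exponential moment — and neither of you proves that estimate in full. Your spectral route is more robust (it would survive with only a genuine Lasota--Yorke inequality $m_\alpha(P^{n_0}f)\leq r\,m_\alpha(f)+C\norm{f}_\infty$ rather than a strict contraction of the seminorm), but it is overkill here: once $c_{n_0}<1$ is known, the peripheral-spectrum analysis trivializes (any unimodular eigenfunction has $m_\alpha(f)\leq c_n m_\alpha(f)\to 0$, hence is constant), and the direct submultiplicativity argument is shorter. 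One practical advantage of the paper's version, exploited in Remark \ref{rmk:uniform}, is that it makes the local uniformity of $C$, $\alpha$, $\beta$ in the representation $\rho$ completely transparent, since $c_{n_0}$ depends continuously on $\rho$; extracting this uniformity from an abstract spectral-gap argument would require an additional perturbation step.
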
 

This in turn follows from an estimate on average contraction: there exists $0<\alpha<\tau$ and an integer 
$n_0$ such that 
\begin{equation}\label{eq:condition on distances}  \sup_{x\neq y \in \mathbb P^1} 
\mathbb \int \left(\frac{d_{\pu}( \rho (g) x, \rho (g) y) } {d_{\pu}(x,y)}\right)^{\alpha} d\mu^{n_0} (g) <1.
\end{equation}

\medskip

Important consequences of these estimates are versions for random matrix products of the classical limit theorems for i.i.d. random variables: Central Limit Theorem, Large Deviations Theorem, etc.

Another result, due to Guivarc'h, will be useful to us.

\begin{thm}[{\cite[Theorem 9]{guivarch}}]\label{thm:guivarch}
Let $(G, \mu, \rho)$ be a non-elementary representation satisfying \eqref{eq:exponential moment}. Then for $\mu^\nn$ a.e. $\mathbf{g}$ we have  that
\begin{equation}\label{eq:guivarch}
\unsur{n}\log \abs{\tr (\rho( l_n(\mathbf{g})))} = \unsur{n}\log\abs{ \tr(\rho(g_n\cdots g_1))} \underset{n\cv\infty}\longrightarrow
\chi(\rho).
\end{equation}
\end{thm}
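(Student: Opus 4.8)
The plan is to deduce the convergence $\frac1n\log|\tr(\rho(l_n(\mathbf g)))|\to\chi(\rho)$ from Furstenberg's theorem (Theorem~\ref{t: furstenberg}) together with Lemma~\ref{l:norm, trace and distance between fixed points}, which relates $|\tr^2\gamma|$ and $\norm\gamma$ up to the distance $\delta$ between the fixed points of $\gamma$. Since by Furstenberg $\frac1n\log\norm{\rho(l_n(\mathbf g))}\to\chi(\rho)$ almost surely, and since $|\tr(\rho(l_n(\mathbf g)))|\leq 2\norm{\rho(l_n(\mathbf g))}$ always holds, the upper bound $\limsup_n\frac1n\log|\tr(\rho(l_n(\mathbf g)))|\leq\chi(\rho)$ is immediate. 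The whole content is therefore the lower bound, which by Lemma~\ref{l:norm, trace and distance between fixed points} reduces to showing that the fixed points of $\rho(l_n(\mathbf g))$ do not approach each other too fast: it suffices to prove that almost surely $\frac1n\log\delta_n\to 0$, where $\delta_n$ denotes the spherical distance between the two fixed points of $\gamma_n:=\rho(l_n(\mathbf g))$ (once we know $\gamma_n$ is loxodromic for large $n$, which also follows from $\norm{\gamma_n}\to\infty$ exponentially and the trace upper bound being beaten — or more simply, from the geometric picture below).

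First I would set up the geometric description of the fixed points. Under the exponential moment assumption~\eqref{eq:exponential moment}, Le Page's theorem (Theorem~\ref{thm:lepage}) and its consequences give strong control on the random walk: writing $\gamma_n = g_n\cdots g_1$, the matrix $\rho(\gamma_n)$, after KAK decomposition $\rho(\gamma_n)=R_nB_nR_n'$ with $B_n=\mathrm{diag}(\sigma_n,\sigma_n^{-1})$, $\sigma_n=\norm{\rho(\gamma_n)}$, has its "most expanded direction" $R_n\cdot[1:0]$ converging almost surely to a random limit point $z^+(\mathbf g)\in\pu$ whose law is the stationary measure $\nu$ (this is standard in the Furstenberg/Le Page theory, via the martingale $Z_n:=\rho(g_1)\cdots\rho(g_n)$, i.e. the \emph{right} products, whose expanded direction converges; one passes between left and right products by the usual trick of reversing the word, $l_n(\mathbf g)=r_n(\check{\mathbf g})$ with $\check{\mathbf g}=(g_n,\dots,g_1)$, or one works directly). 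One shows that the attracting fixed point $z_n^+$ of $\gamma_n$ satisfies $d_{\pu}(z_n^+, R_n\cdot[1:0]) \leq C\sigma_n^{-2}$ and the repelling fixed point $z_n^-$ satisfies $d_{\pu}(z_n^-, R_n\cdot[0:1])\leq C\sigma_n^{-2}$; these are elementary estimates for a matrix that is $\sigma_n^{-2}$-close to $B_n$ after the rotations $R_n,R_n'$ are stripped off. Since $R_n\cdot[1:0]$ and $R_n\cdot[0:1]$ are antipodal on the sphere, $\delta_n = d_{\pu}(z_n^+,z_n^-)$ is bounded below by a constant minus $O(\sigma_n^{-2})$, hence $\delta_n$ is bounded away from $0$ for all large $n$. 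In particular $\frac1n\log\delta_n\to 0$.

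Then I would conclude: for $n$ large, Lemma~\ref{l:norm, trace and distance between fixed points} gives $\norm{\rho(\gamma_n)}\asymp \max\bigl(1,\sqrt{|\tr^2\rho(\gamma_n)-4|}/\delta_n\bigr)$, and since $\norm{\rho(\gamma_n)}\to\infty$ exponentially and $\delta_n$ is bounded below, we get $|\tr^2\rho(\gamma_n)-4|\asymp \delta_n^2\norm{\rho(\gamma_n)}^2$, whence
\begin{equation*}
\frac1n\log|\tr^2\rho(\gamma_n)| = \frac2n\log|\tr\rho(\gamma_n)| + o(1) = \frac1n\log|\tr^2\rho(\gamma_n)-4| + o(1) = \frac2n\log\norm{\rho(\gamma_n)} + o(1) \longrightarrow 2\chi(\rho)
\end{equation*}
almost surely, using $\frac1n\log\delta_n\to0$ and $\frac1n\log|\tr^2\gamma_n - 4|=\frac1n\log|\tr^2\gamma_n|+o(1)$ since $|\tr^2\gamma_n|\to\infty$. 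Dividing by $2$ gives the claim. The main obstacle is the step showing $\delta_n$ does not decay exponentially, i.e. that the attracting and repelling fixed points genuinely separate. This requires the convergence-to-the-boundary results for random matrix products (the attracting direction converges to a $\nu$-distributed point, the repelling direction to an independent-looking point under a "reversed" stationary measure) together with the quantitative gap $\sigma_n^{-2}$ between a loxodromic matrix's fixed points and its singular directions; here the non-elementarity of $\rho$ is essential, since it guarantees (via Furstenberg) that $\chi(\rho)>0$ so $\sigma_n\to\infty$, and that $\nu$ is diffuse so that two "independent" boundary points are almost surely distinct. One must be slightly careful that it is enough to have $\liminf_n \delta_n > 0$ almost surely — or even just $\frac1n\log\delta_n\to0$ — rather than a uniform bound, and the above argument delivers the former.
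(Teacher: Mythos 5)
Your reduction is the right one and matches the paper's: the upper bound follows from $\abs{\tr\gamma}\leq 2\norm{\gamma}$, and by Lemma \ref{l:norm, trace and distance between fixed points} the whole content is to show that $\unsur{n}\log\delta_n\to 0$ a.s., where $\delta_n=\delta(\rho(l_n(\mathbf g)))$. But the step where you establish this contains a genuine error. Writing $\rho(l_n(\mathbf g))=R_nB_nR_n'$, the repelling fixed point is \emph{not} close to $R_n\cdot[0:1]$: it is the attracting fixed point of the inverse $(R_n')^{-1}B_n^{-1}R_n^{-1}$ and hence lies near the contracted source direction $(R_n')^{-1}\cdot[0:1]$. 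So the two fixed points are (approximately) near $R_n\cdot[1:0]$ and $(R_n')^{-1}\cdot[0:1]$, two points with no a priori relation -- in particular nothing forces them apart, and the "antipodal" argument collapses. Indeed a matrix of arbitrarily large norm can be parabolic ($\delta=0$) or have trace $0$; and even the estimate $d_\pu(z_n^+,R_n\cdot[1:0])\leq C\sigma_n^{-2}$ is only valid once one already knows the two singular directions are separated by $\gg\sigma_n^{-1}$, which is precisely the point at issue. Moreover the conclusion you draw, that $\delta_n$ is bounded away from $0$ for large $n$, is false for the left products $l_n(\mathbf g)=g_n\cdots g_1$: only the repelling ball converges a.s.\ (to a $\check\nu$-distributed point, as in Lemma \ref{lem:repulsive}), while the attracting ball keeps moving with law close to $\nu$; since $\supp\nu$ is the whole limit set, which contains the limit of the repelling ball, one has $\liminf_n\delta_n=0$ almost surely.

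What is actually needed -- and what the paper proves in Appendix \ref{app:distance} -- is the quantitative tail bound $\mathbb P(\delta_n<e^{-\gamma n})\leq e^{-K\gamma n}$ for small $\gamma>0$ (Theorem \ref{thm:distance}). The proof is genuinely probabilistic: one splits $l_n(\mathbf g)=(g_n\cdots g_{m+1})(g_m\cdots g_1)$ with $m=\lfloor(1-\e)n\rfloor$, conditions on the first block (which fixes the repelling ball and makes the norm large), and uses the exponential convergence of the transition operator together with the H\"older regularity of the stationary measure (Lemma \ref{l:exponential convergence}) to show that the last $\e n$ letters place the attracting ball at distance $\geq\e_n$ from the repelling one except with probability $\e_n^K$. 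Borel--Cantelli then gives $\delta_n\geq e^{-\gamma n}$ eventually a.s., hence $\unsur{n}\log\delta_n\to 0$ after letting $\gamma\to 0$, and the conclusion follows exactly as in your last display. So your skeleton is correct, but the separation of the fixed points cannot be obtained by the deterministic KAK argument you propose; it requires the large-deviation-type estimate.
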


We actually give a proof of a refined version of this theorem in Appendix \ref{app:distance} below.
The following corollary is immediate.

\begin{cor}\label{cor:guivarch}
Let $(G, \mu, \rho)$ be as in Theorem \ref{thm:guivarch}. Let
$h:\re\cv\re$ be  bounded from below and  equivalent to $\log x$ when  $x\cv +\infty$. Then
$$\unsur{n}\int  h\left( \abs{\tr (\rho( g))}\right) d\mu^n(g) \underset{n\cv\infty}\longrightarrow \chi(\rho).$$
For instance, if $\lambda_{\max}(\rho(g))$ denotes the spectral radius of $\rho(g)$,
then we have that
$$\unsur{n}\int  \log\abs{\lambda_{\max}(\rho( g))} d\mu^n(g) \cv \chi(\rho).$$
\end{cor}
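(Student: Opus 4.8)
The plan is to derive Corollary~\ref{cor:guivarch} from Theorem~\ref{thm:guivarch} by a dominated-convergence argument after truncating the (possibly unbounded) integrand. First I would set $\Phi_n := \unsur{n}\log\abs{\tr(\rho(g))}$ viewed as a function on $(G^\nn,\mu^\nn)$ via $\mathbf g\mapsto\unsur{n}\log\abs{\tr(\rho(l_n(\mathbf g)))}$, and $\Psi_n := \unsur{n}h(\abs{\tr(\rho(l_n(\mathbf g)))})$. Theorem~\ref{thm:guivarch} says $\Phi_n\to\chi(\rho)$ pointwise $\mu^\nn$-a.e.; since $h(x)\sim\log x$ as $x\to+\infty$ and the a.e. limit $\abs{\tr(\rho(l_n(\mathbf g)))}$ grows like $e^{n\chi(\rho)}\to+\infty$ (using $\chi(\rho)>0$ from Theorem~\ref{t: furstenberg}), we get $\Psi_n-\Phi_n\to 0$ a.e., hence $\Psi_n\to\chi(\rho)$ pointwise a.e. The goal $\unsur n\int h(\abs{\tr(\rho(g))})\,d\mu^n(g)=\int\Psi_n\,d\mu^\nn$ is then just a statement about convergence of the means, so it suffices to upgrade a.e. convergence to $L^1(\mu^\nn)$ convergence, for which I would produce a uniform integrable domination.

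The key step is the upper bound. Since $\abs{\tr(\rho(g))}\le 2\norm{\rho(g)}$ (the trace is at most twice the operator norm), and $h$ is bounded below by some constant $-M$ and satisfies $h(x)\le C_0(1+\log^+ x)$ for a suitable constant (a consequence of $h(x)\sim\log x$ and lower-boundedness, as $h$ is then dominated on $[0,\infty)$ by an affine function of $\log^+ x$), we get $\Psi_n \le \unsur n\big(C_0 + C_0\log^+(2\norm{\rho(l_n(\mathbf g))})\big)$ and $\Psi_n\ge -M/n \ge -M$. The positive part is controlled using subadditivity exactly as in the argument already given in the excerpt after Theorem~\ref{t: furstenberg}: by \eqref{eq:sub-additive cocycle}, $\unsur n\log^+\norm{\rho(l_n(\mathbf g))}\le\unsur n\log\norm{\rho(l_n(\mathbf g))}+\cst \le \unsur n\sum_{k=1}^n\log\norm{\rho(l_1(\sigma^k\mathbf g)))}+\cst$, whose right-hand side converges in $L^1(\mu^\nn)$ by the Birkhoff ergodic theorem together with the moment assumption \eqref{eq:moment condition}. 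Thus $(\Psi_n)$ is dominated above by a sequence converging in $L^1$ and bounded below by a constant, which gives uniform integrability; combined with the a.e. convergence $\Psi_n\to\chi(\rho)$ this yields $\int\Psi_n\,d\mu^\nn\to\chi(\rho)$, i.e. the claimed limit.

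For the specific case $h=\log$ applied to $\lambda_{\max}$: since $\rho(g)\in\PSL$ is lifted to $\SL$, one has $\abs{\lambda_{\max}(\rho(g))}=e^{\,|\!\log|\lambda_{\max}|\,|}$ and $\log\abs{\lambda_{\max}(\rho(g))}=\big|\log\abs{\lambda_{\max}}\big|$ is nonnegative, hence bounded below; moreover $\lambda_{\max}$ and $\lambda_{\max}^{-1}$ are the roots of $X^2-\tr X+1$, so $\abs{\tr}\le 2\abs{\lambda_{\max}}$ and $\abs{\lambda_{\max}}\le\abs{\tr}+1$, giving $\log\abs{\lambda_{\max}(\rho(g))}=\log\abs{\tr(\rho(g))}+O(1/\abs{\tr})$; in particular $x\mapsto\log\abs{\lambda_{\max}}$ expressed through $\abs{\tr}=x$ is equivalent to $\log x$ at infinity and bounded below, so the general statement applies.

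The main obstacle I anticipate is purely the domination/uniform-integrability bookkeeping: one must be slightly careful that $h$ could take the value $-\infty$ or be genuinely unbounded below is \emph{excluded} by hypothesis (it is bounded below), so the only real work is controlling the upper tail, and there the subadditive-cocycle estimate already written out in the excerpt does essentially all of it. The rest — the a.e. convergence $\Psi_n-\Phi_n\to 0$, which rests on $\abs{\tr(\rho(l_n(\mathbf g)))}\to\infty$ a.e. (itself a consequence of $\chi(\rho)>0$ and \eqref{eq:guivarch}) — is routine. No step here should require a genuinely new idea beyond Theorem~\ref{thm:guivarch} and the moment assumption.
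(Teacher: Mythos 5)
Your proof is correct and follows essentially the same route as the paper: almost-sure convergence of $\unsur{n}h(\abs{\tr(\rho(l_n(\mathbf g)))})$ to $\chi(\rho)$ via Theorem \ref{thm:guivarch} (using $\chi>0$ so the trace tends to infinity a.e.), combined with the two-sided domination $-C\le h(\abs{\tr(\rho(g))})\le\max(C\log\norm{\rho(g)},C)$ and the subadditive/Birkhoff $L^1$-domination argument already set up after Theorem \ref{t: furstenberg}. You merely spell out the uniform-integrability step and the $\lambda_{\max}$ comparison that the paper leaves implicit; no discrepancy to report.
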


On the other hand  one cannot expect in general  to have convergence in $L^1(G,\mu^{\mathbb N})$ in \eqref{eq:guivarch}. Indeed, if $\rho(G)$ contains the  rotation of angle $\pi$, whose trace is $0$, 
then $\unsur{n}\int  \log \abs{\tr (\rho( g))} d\mu^n(g)$ takes the value
 $-\infty$ infinitely often. To get a less trivial example, if $\rho(G)$ contains a rotation with well chosen angle (i.e. with many  iterates very close to angle $\pi$) then the sequence $\unsur{n}\int  \log\abs{ \tr (\rho( g))} d\mu^n(g)$ may admit cluster values smaller than $\chi(\rho)$. The  same phenomenon of course occurs when considering  $\log \abs{ \tr^2 (\rho( g)) - t}$ for some $t\in [0,4]$. 

\begin{proof}[Proof of the corollary]
By Theorem \ref{thm:guivarch},  $\unsur{n} h(\abs{\tr (\rho( l_n(\mathbf{g})))})\cv \chi(\rho)$ a.s. Furthermore
there exists a constant $C$ such that
$-C\leq h\left( \abs{\tr (\rho( g))}\right) \leq \max( C\log\norm{\rho(g)}, C),$ so the result follows from the Dominated Convergence Theorem.
\end{proof}

\begin{rmk} \label{rmk:uniform}
In the sequel, we often need some uniformity on the estimates \eqref{eq:holder convergence} with respect to $\rho$. To see why such a uniformity is true,
it is instructive to recall how (\ref{eq:condition on distances}) implies  (\ref{eq:holder convergence}). If we set $c_n := \sup_{x\neq y \in \mathbb P^1} 
\mathbb \int \left(\frac{d_{\pu}( \rho (g) x, \rho (g) y) } {d_{\pu}(x,y)}\right)^{\alpha} d\mu^{n} (g)$, then for any function $f$ in $C^{\alpha}$, 
\[ m_{\alpha} ( P^n f)  \leq c_n \cdot m_{\alpha} (f) . \]
Since $\nu$ is stationary, we infer that 
$  \norm{P^n - N}_{C^{\alpha}} \leq c_n$.
Now, it is straighforward to check that $c_{m+n}\leq c_n c_m$ for every pair of  integers $m,n$.  Furthermore, under  the condition (\ref{eq:condition on distances}), we get that  for every integer $n$,  $c_{n} \leq C \cdot e^{-\beta n} $ with $\beta = -\frac{1}{n_0} \log c_{n_0} >0$ and $C= \sup _{k<n_0} c_{k}$, and thus the estimate (\ref{eq:holder convergence}) holds. A useful consequence of this is that  the constants $C$, $\alpha$, and $\beta$ in (\ref{eq:holder convergence}) can be chosen uniformly in a neighborhood of $\rho$, since under our moment assumption $c_n$ depends continuously on $\rho$.
\end{rmk}

\section{The bifurcation current}\label{sec:bifcurrent}

Throughout this section we fix a holomorphic family of representations $(\rho_\la)_{\la\in \Lambda}$ of $G$ into $\PSL$, that we assume to be generally faithful, non-trivial, and for all $\la$, non-elementary.
We fix a probability measure $\mu$ on $G$, generating $G$ as a semi-group, and satisfying \eqref{eq:moment condition finitely generated}. In particular \eqref{eq:moment condition} holds, locally uniformly in $\la$.
From now on, such families of representations (endowed with a measure $\mu$ on $G$)
 will be called {\em admissible}.

For basics on plurisubharmonic (psh for short) functions and positive currents, the reader is referred to
\cite{hormander, demailly}. Recall   that a positive closed current $T$ of bidegree (1,1) locally admits a psh potential $u$,
i.e. $T=dd^cu$ 
and that it is possible to pull-back such currents under holomorphic maps by pulling back the potentials. Another frequently used result is the so-called Hartogs' lemma \cite[pp. 149-151]{hormander} which asserts that families of psh functions with uniform bounds from  above have good compactness properties.

\subsection{Definition}
Given an admissible family of representations $(G,\mu,\rho)$ as above, we let $\chi(\la):= \chi(\rho_\la)$ be the Lyapunov exponent of $\rho_\la$.

\begin{prop}\label{prop:defchi}
The Lyapunov exponent $\chi$ defines a continuous psh function on $\Lambda$, which is pluriharmonic on the stability locus.
\end{prop}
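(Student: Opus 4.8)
The plan is to establish the three assertions --- continuity, plurisubharmonicity, and pluriharmonicity on $\mathrm{Stab}$ --- separately, using Furstenberg's theorem (Theorem~\ref{t: furstenberg}) and the fibered geometry over $\La\times\pu$.

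\textbf{Plurisubharmonicity.} For each fixed $g\in G$, the function $\la\mapsto \log\norm{\rho_\la(g)}$ is psh on $\La$: indeed, lifting $g_\la$ to a holomorphic family of matrices $A_\la(g)\in\SL$ (locally, or using the invariant formula $\norm{\gamma}^2 = \tfrac12(\norm{\gamma}_2^2 + \sqrt{\norm{\gamma}_2^4-4})$ so that no choice of lift is needed), the entries are holomorphic, $\norm{\cdot}$ is the operator norm, and $\log$ of the operator norm of a holomorphically varying matrix is psh (it is a supremum over $\norm{Z}=\norm{W}=1$ of the psh functions $\la\mapsto\log\abs{\langle A_\la(g)Z,W\rangle}$, then one passes to the usc regularization, which coincides with it by continuity in $\la$). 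Hence $\la\mapsto \unsur n\int\log\norm{\rho_\la(g)}\,d\mu^n(g)$ is psh, being an integral (with respect to the probability measure $\mu^n$) of psh functions that are locally uniformly bounded above --- the bound from above comes from the moment assumption \eqref{eq:moment condition finitely generated} together with the submultiplicativity $\norm{\rho_\la(g_1\cdots g_n)}\le\prod\norm{\rho_\la(g_i)}$ and local uniformity of \eqref{eq:moment condition}. The decreasing limit (by subadditivity of $n\mapsto\int\log\norm{\rho_\la(g)}\,d\mu^n(g)$, which is Fekete's lemma fiberwise) of a decreasing sequence of psh functions is psh, provided the limit is not $\equiv-\infty$; but Furstenberg's theorem gives $\chi(\la)>0$ for every $\la$, so $\chi$ is a genuine psh function.

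\textbf{Continuity.} This is the main obstacle, and I would deduce it from Furstenberg's formula \eqref{eq:Furstenberg's formula}, $\chi(\la)=\int_{\pu}\int_G\log\frac{\norm{\rho_\la(g)Z}}{\norm Z}\,d\mu(g)\,d\nu_\la(z)$, combined with continuity of the stationary measure $\la\mapsto\nu_\la$. Lower semicontinuity is automatic for a psh function. For upper semicontinuity (hence continuity): the integrand $\psi(\la,g,z):=\log\frac{\norm{\rho_\la(g)Z}}{\norm Z}$ is continuous in $\la$ and $z$, and for $\mu$-a.e. $g$ is dominated by $\log^+\norm{\rho_\la(g)}$ which is $\mu$-integrable locally uniformly in $\la$. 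The point requiring care is that $\nu_\la$ varies continuously in the weak-$*$ topology --- this is a standard consequence of uniqueness of the stationary measure (Theorem~\ref{t: furstenberg}) plus the fact that any weak-$*$ limit of $\nu_{\la_k}$, as $\la_k\to\la$, is $P_\la$-stationary (using that $P_{\la_k}f\to P_\la f$ uniformly for $f$ continuous, which again follows from the local moment bound and dominated convergence in $g$). Once $\nu_{\la_k}\cvf\nu_\la$, one passes to the limit in the double integral, the only subtlety being the possible non-uniform integrability in $g$ of $\psi(\cdot,g,\cdot)$ near the bad set where $\norm{Z}$ is small; this is handled by splitting the $g$-integral at level $\length(g)\le R$ and using \eqref{eq:moment condition finitely generated} to make the tail uniformly small, together with equicontinuity in $\la$ of the truncated integrals. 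I would also remark that an alternative route to continuity is via the uniform Le~Page estimates of Remark~\ref{rmk:uniform} under the stronger moment hypothesis, but the argument above works in the generality of \eqref{eq:moment condition finitely generated}.

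\textbf{Pluriharmonicity on the stability locus.} On a connected component $\om$ of $\mathrm{Stab}$, Sullivan's theorem (Theorem~\ref{thm:sullivan}) provides a holomorphic motion $\Phi:\om\times\pu\to\pu$ conjugating all the $\rho_\la$, i.e. $\Phi_\la\rond\rho_{\la_0}(g)=\rho_\la(g)\rond\Phi_\la$ for a basepoint $\la_0\in\om$. Pushing the stationary measure $\nu_{\la_0}$ forward, $(\Phi_\la)_*\nu_{\la_0}$ is $P_\la$-stationary, hence equals $\nu_\la$ by uniqueness. Plugging this into Furstenberg's formula and changing variables $z=\Phi_\la(w)$,
\[
\chi(\la)=\int_{\pu}\int_G\log\frac{\norm{\rho_\la(g)\,\widehat{\Phi_\la(w)}}}{\norm{\widehat{\Phi_\la(w)}}}\,d\mu(g)\,d\nu_{\la_0}(w),
\]
where $\widehat{\cdot}$ denotes a lift to $\cc^2$. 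Now I would exploit the cocycle structure: writing $u_g(\la,w):=\log\norm{\widehat{\rho_\la(g)\Phi_\la(w)}}-\log\norm{\widehat{\Phi_\la(w)}}$ and using $\rho_\la(g)\Phi_\la(w)=\Phi_\la(\rho_{\la_0}(g)w)$, a telescoping computation over a word $g=g_n\cdots g_1$ shows that $\unsur n\int\int u_{g_n\cdots g_1}\,d\mu^n\,d\nu_{\la_0}$ has the same limit whether or not the $\rho_\la(g)$ are replaced by the straightened cocycle; the upshot is that the $\la$-dependence of $\chi$ reduces to $dd^c$ of $\int_{\pu}\log\norm{\widehat{\Phi_\la(w)}}\,d\nu_{\la_0}(w)$ minus its shift, and these cancel in the limit. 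More cleanly: since $\Phi_\la$ is a holomorphic motion, $\la\mapsto\log\abs{\langle\widehat{\Phi_\la(w)},\widehat{\Phi_\la(w')}\rangle}$ is pluriharmonic off the diagonal (the cross-ratio of four points moved by a holomorphic motion is holomorphic and non-vanishing), and one shows the $n$-th Birkhoff-type average of $\log\norm{\rho_\la(l_n(\mathbf g))Z_0}$ is, up to a pluriharmonic term and an $O(1/n)$ error coming from boundary effects, pluriharmonic; letting $n\to\infty$ and invoking Furstenberg's pointwise convergence $\unsur n\log\norm{\rho_\la(l_n(\mathbf g))Z_0}\to\chi(\la)$ together with Harnack/Hartogs to pass pluriharmonicity to the limit, we conclude $dd^c\chi=0$ on $\om$. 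I expect the bookkeeping in this last step --- making precise the ``$O(1/n)$ pluriharmonic error'' and justifying the interchange of $dd^c$ with the limit --- to be where the paper's argument (presumably via the geometric interpretation of $\S\ref{subs:geom_interpt}$, i.e. currents on $\La\times\pu$) does the real work, so I would ultimately defer to that machinery rather than pushing the hands-on cocycle computation to completion.
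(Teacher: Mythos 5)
Your treatment of plurisubharmonicity and of continuity is correct and follows essentially the same route as the paper: psh-ness of $\la\mapsto\log\norm{g_\la}$ as a (continuous) supremum of the psh functions $\la\mapsto\log\norm{g_\la Z_0}$, plus subadditivity to pass to the limit; and continuity via Furstenberg's formula together with weak-$*$ continuity of $\la\mapsto\nu_\la$, which follows from uniqueness of the stationary measure. The Fekete detail you add for the psh limit is a welcome precision over the paper's terser wording.

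The pluriharmonicity step, however, has a genuine gap: you do not prove it, and you say so yourself (``I would ultimately defer to that machinery''). The missing idea is Corollary~\ref{cor:guivarch}: the Lyapunov exponent is also given by $\chi(\la)=\lim_n\unsur{n}\int\log\abs{\lambda_{\max}(\rho_\la(g))}\,d\mu^n(g)$, where $\lambda_{\max}$ is the spectral radius. On a component of the stability locus, Theorem~\ref{thm:sullivan} guarantees that each $g_\la$ keeps a constant type, so its fixed points and hence its eigenvalues vary holomorphically and their moduli never cross the unit circle; thus each $\la\mapsto\log\abs{\lambda_{\max}(g_\la)}$ is pluriharmonic (identically $0$ in the elliptic and parabolic cases, $\log$ of the modulus of a non-vanishing holomorphic function in the loxodromic case). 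The approximants $\unsur{n}\int\log\abs{\lambda_{\max}(g_\la)}\,d\mu^n(g)$ are then pluriharmonic and locally uniformly bounded, so their $L^1_{\rm loc}$ limit $\chi$ is pluriharmonic. This completely bypasses the cocycle-straightening computation you sketch. Your alternative route (push $\nu_{\la_0}$ forward by the holomorphic motion, write $\rho_\la(g)\widehat{\Phi_\la(w)}=c_g(\la,w)\,\widehat{\Phi_\la(\rho_{\la_0}(g)w)}$, and observe that the non-pluriharmonic terms cancel by stationarity) can probably be made rigorous, but as written it leaves unaddressed the choice of a lift of the motion that is holomorphic in $\la$ and measurable in $w$, the integrability of $\log\abs{c_g}$ uniformly in $\la$, and the fact that the two psh terms $\log\norm{\widehat{\Phi_\la(\cdot)}}$ genuinely cancel after integration against $\nu_{\la_0}$; none of this is carried out, so the step stands as an unproved claim rather than a proof.
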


\begin{proof}
For $g\in G$, $\lambda \mapsto \log\norm{g_\la}$ is the supremum of 
the family of psh functions $\la\mapsto \log\norm{g_\la Z_0}$,  where $Z_0$ ranges  over the unit sphere in $\cd$, and it is continuous because the norm is. We thus infer from 
\cite[Thm 4.1.2]{hormander} that $\la\mapsto \log\norm{g_\la}$ is psh.
Hence $\chi$ is psh since by \eqref{def:lyapunov exponent}, it is the pointwise limit of a uniformly bounded sequence of psh functions.

Another proof goes by observing that we can replace $\norm{\cdot}$ by $\norm{\cdot}_2$ in the definition of $\chi$, in which case its plurisubharmonicity is obvious.

The continuity of $\chi$ is a consequence of Furstenberg's formula together with the fact that the stationary measure is unique, and therefore depends continuously on $\lambda$ in the weak topology (see \cite[\S 1.13]{furman}).

Finally, the second assertion of Corollary \ref{cor:guivarch} implies that
$\chi$ is pluriharmonic on the stability locus. Indeed, locally the $g_\lambda$ do not change type there, so the multipliers of fixed points vary holomorphically, without crossing the unit circle (by possibly staying constant of modulus 1), and we infer that $\chi$ is a limit of pluriharmonic functions, hence itself pluriharmonic.
\end{proof}

\begin{defi}
If $(G,\mu,\rho)$ is an admissible family of representations of $G$ into $\PSL$, the bifurcation current $T_{\rm bif}$ is defined by $T_{\rm bif} = dd^c\chi$.
\end{defi}

Proposition \ref{prop:defchi} implies that the support of $T_{\rm bif}$ is contained in the bifurcation locus.

 


\begin{rmk}\label{rmk:elementary} 
The Lyapunov exponent of an elementary representation is well defined by the formula \eqref{def:lyapunov exponent}. Thus, if the subset of non-elementary representations is not empty, $\chi$ still defines a locally bounded psh function on $\La$, and 
  it makes perfect sense to talk about the bifurcation current also  in this case.
\end{rmk}


We close this subsection by  studying the regularity of the bifurcation current. The continuity of $\chi$ will be a technically useful fact  in the paper.  
For an admissible family satisfying an exponential moment condition, it was shown by Le Page   in  \cite{lepage-holder} that $\chi$ is actually H\"older continuous.
For the reader's convenience, we give a short proof of this result in the case where $\mu$ has finite support. Notice that the key argument is the exponential convergence of the transition operator (Theorem \ref{thm:lepage}).

\begin{thm} [Le Page] 
\label{t:holder dependence}
Let $(G, \mu, \rho)$ be an admissible family of representations, satisfying \eqref{eq:exponential moment} locally uniformly in $\la$ (e.g. satisfying \eqref{eq: exponential moment condition in the group}).
Then the Lyapunov exponent function is  H\"older continuous.
\end{thm}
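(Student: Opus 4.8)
The plan is to deduce H\"older continuity of $\chi$ from the exponential convergence of the transition operator together with a quantitative control of the variation of the relevant data with $\la$. The starting point is Furstenberg's formula \eqref{eq:Furstenberg's formula}, which expresses $\chi(\la)$ as the integral of the (nonlinear, but $C^\alpha$ in the base point) cocycle $\psi_\la(z):=\int_G\log\frac{\norm{g_\la Z}}{\norm{Z}}d\mu(g)$ against the stationary measure $\nu_\la$. Fixing $\la_0$ and writing $\la$ near $\la_0$, the difference $\chi(\la)-\chi(\la_0)$ splits into two pieces: the variation of the cocycle, $\int\psi_\la\,d\nu_\la-\int\psi_{\la_0}\,d\nu_\la$, and the variation of the stationary measure, $\int\psi_{\la_0}\,d\nu_\la-\int\psi_{\la_0}\,d\nu_{\la_0}$. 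The first piece is easy: since $\mu$ has finite support and the entries of $g_\la$ are holomorphic, $\psi_\la(z)$ is Lipschitz in $\la$ uniformly in $z$, so this term is $O(\abs{\la-\la_0})$. The real work is to bound the second piece, which requires showing that $\la\mapsto\nu_\la$ is H\"older continuous for the dual $C^\alpha$-norm (Wasserstein-type distance), i.e. $\abs{\int f\,d\nu_\la-\int f\,d\nu_{\la_0}}\leq C\norm{f}_{C^\alpha}\abs{\la-\la_0}^{\gamma}$ for some $\gamma>0$.

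To get the H\"older modulus of $\nu_\la$, I would use the standard trick of comparing $\nu_\la$ and $\nu_{\la_0}$ through iterates of the transition operators $P_\la$ and $P_{\la_0}$. Since $\nu_\la=P_\la^n\nu_\la$ and $\nu_{\la_0}=N_{\la_0}$ is the limit of $P_{\la_0}^n$, we write, for $f\in C^\alpha$,
\[
\int f\,d\nu_\la-\int f\,d\nu_{\la_0}
=\big(P_\la^n f-P_{\la_0}^n f\big)\text{-type terms}+\big(P_{\la_0}^n f-N_{\la_0}f\big)\text{-type terms},
\]
where the last term is $\leq Ce^{-\beta n}\norm{f}_{C^\alpha}$ by Theorem \ref{thm:lepage} (uniformly in $\la$ near $\la_0$, by Remark \ref{rmk:uniform}), and the first is controlled by a telescoping sum $\sum_{k=0}^{n-1}P_\la^k(P_\la-P_{\la_0})P_{\la_0}^{n-1-k}f$. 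Each factor $P_\la-P_{\la_0}$ acting on a $C^\alpha$ function produces something of size $O(\abs{\la-\la_0})$ in sup norm (again because $\mu$ is finitely supported and $g\mapsto g_\la$ is holomorphic, so $d_{\pu}(g_\la x, g_{\la_0}x)=O(\abs{\la-\la_0})$), while the $P_\la^k$ factors are bounded on $C^0$ and the $P_{\la_0}^{n-1-k}f$ have $C^\alpha$-norm bounded by $\norm{P_{\la_0}}^{\cdot}\norm{f}_{C^\alpha}$ — here one uses the uniform $C^\alpha$-contraction to keep these norms bounded. This gives a bound of the form $\abs{\int f\,d\nu_\la-\int f\,d\nu_{\la_0}}\leq C\big(n\,\abs{\la-\la_0}+e^{-\beta n}\big)\norm{f}_{C^\alpha}$, and optimizing over $n$ (taking $n\approx\frac{1}{\beta}\log\frac{1}{\abs{\la-\la_0}}$) yields a bound $O\big(\abs{\la-\la_0}\,\log\frac{1}{\abs{\la-\la_0}}\big)$, which is in particular $O(\abs{\la-\la_0}^\gamma)$ for any $\gamma<1$. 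Applying this with $f=\psi_{\la_0}$ (which lies in $C^\alpha$ with norm bounded locally uniformly in $\la_0$, since the support of $\mu$ is finite) finishes the estimate on the second piece, and combining with the first piece proves that $\chi$ is H\"older continuous.

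The main obstacle, and the step deserving the most care, is keeping all constants \emph{uniform in $\la$} on a fixed neighborhood of $\la_0$: one needs the contraction estimate \eqref{eq:condition on distances} to hold with the same $\alpha$, $n_0$ and contraction ratio $<1$ throughout a neighborhood, and one needs $\psi_{\la_0}\in C^\alpha$ with a uniformly bounded norm. Both follow from the finiteness of $\supp(\mu)$ together with the continuity of $\rho$ (the contraction constant $c_{n_0}(\la)$ depends continuously on $\la$ by the argument in Remark \ref{rmk:uniform}, hence stays $<1$ near $\la_0$), but this is precisely where the finite-support hypothesis is used and where one must be careful. A secondary technical point is justifying that $\la\mapsto\psi_\la(z)$ is genuinely Lipschitz in $\la$ uniformly in $z$: away from the (measure-zero, $z$-dependent) zero set of $g_\la Z$ this is clear, but one should check that the logarithmic singularities are integrable against $\nu_\la$ uniformly — again straightforward since $\nu_\la$ is diffuse and $g_\la$ depends holomorphically on $\la$. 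Once these uniformities are in place, the proof is the telescoping/optimization argument sketched above.
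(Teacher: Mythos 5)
Your argument is correct and reaches the same conclusion by the same overall strategy as the paper (Furstenberg's formula, Le Page's exponential convergence $\norm{P_{\la_0}^n-N_{\la_0}}_{C^\alpha}\leq Ce^{-\beta n}$ with constants locally uniform in $\la$ as in Remark \ref{rmk:uniform}, and an optimization over $n\sim\log\frac1{|\la-\la_0|}$), but the key technical step is handled differently. The paper bounds $\norm{(P_{\la'}^n-P_\la^n)f}_\infty$ by comparing the random products directly as matrices: writing $(g_n'\cdots g_1')(g_n\cdots g_1)^{-1}$ as a product of near-identity factors whose sizes grow like $M^{n-k}|\la'-\la|$, which forces the choice $nM^n=|\la-\la'|^{-1/2}$ and yields the exponent $\min(\alpha/2,\beta/(2\log M))$. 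You instead telescope $P_\la^n-P_{\la_0}^n=\sum_k P_\la^{k}(P_\la-P_{\la_0})P_{\la_0}^{n-1-k}$, using that $P_\la$ is a contraction on $C^0$ and that $\norm{P_{\la_0}^m f}_{C^\alpha}$ stays uniformly bounded; this replaces the exponential growth of the matrix perturbation by a bound linear in $n$, avoids the square-root trick entirely, and in fact yields a better H\"older exponent. This is a legitimate and arguably cleaner route.

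Two small corrections. First, your claim that $(P_\la-P_{\la_0})$ applied to a $C^\alpha$ function has sup norm $O(|\la-\la_0|)$ is off: since $|h(g_\la x)-h(g_{\la_0}x)|\leq m_\alpha(h)\,d_\pu(g_\la x,g_{\la_0}x)^\alpha$, the correct bound is $O(|\la-\la_0|^\alpha)\norm{h}_{C^\alpha}$, and one cannot upgrade to a Lipschitz bound because the Le Page contraction \eqref{eq:condition on distances} controls only the $C^\alpha$ seminorm of $P_{\la_0}^kf$, not its Lipschitz constant. This changes the final exponent (any $\gamma<\alpha$ rather than any $\gamma<1$) but not the conclusion. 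Second, your worry about logarithmic singularities of $\psi_\la$ is moot: $g_\la Z$ never vanishes for $Z\neq0$ since $g_\la$ is invertible, so $\psi_\la$ is smooth in $(\la,z)$ and the Lipschitz-in-$\la$ estimate for the first piece is immediate.
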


\begin{proof}[Proof in the finite support case]
Fix a parameter $\lambda_0$, and some small neighborhood $U\ni\la_0$, that we view through a chart as an open set in $\cc^{\dim \La}$. We also consider a norm in $\cc^{\dim \La}$ that we simply denote by $\abs{\cdot}$. For  $\lambda, \lambda' \in U$, write $\rho_{\lambda'} (g) = \rho_{\lambda} (g) + \varepsilon (g)$.
Since $\supp(\mu)$ is finite, when $g\in \supp(\mu)$, $\e(g) = O(|\lambda' -\lambda|)$.  For notational simplicity let us  denote
$g'= \rho_{\lambda'}(g)$ and $g=\rho_{\lambda}(g)$ . For an integer $n$ (that will be chosen of the order of magnitude of $-\log|\lambda '-\lambda|$), we have
\begin{align*}
 (g_n'\ldots g_1') (g_n \ldots g_1)^{-1}
&=
(g_n + \varepsilon _n ) \ldots (g_1+ \varepsilon_1 ) (g_1^{-1} \ldots g_n^{-1}) \\
&=
(I+ \varepsilon_n g_n^{-1}) (I + Ad(g_n) (\varepsilon _{n-1} g_{n-1} ^{-1}) ) \ldots (I + Ad(g_n\ldots g_{2}) (\varepsilon _1 g_1^{-1} ) ). 
\end{align*}
We want to estimate the distance between the latter matrix and the identity. First observe that there exists a constant $M>1$ (the square of the maximum of the norms of the elements $\rho_{\lambda} (g) $ with $\lambda\in U$ and $g\in \supp(\mu)$) such that
\begin{equation} \label{eq:norm bound} \norm{Ad (g_n \ldots g_{k+1} ) (\varepsilon_k g_k^{-1} )} \leq    M^{n-k} | \lambda'-\lambda | .\end{equation}
For $1\leq k\leq n$, let  $u_k= \norm{Ad (g_n \ldots g_{k+1} ) (\varepsilon_k g_k^{-1} )}$ and $v = (I+u_1) ... (I+ u_n) - I$. Since $k\leq n$, and $M>1$, \eqref{eq:norm bound} shows that $||u_k||\leq M^n |\lambda - \lambda'|$. Expanding $v$ we obtain that
\[  \norm{v} =\norm{ \sum _{0< k\leq n}  \sum _{i_1< ...< i_k} u_{i_1}... u_{i_k} }\leq
\sum _{0< k\leq n} C_n^k M^{kn} |\lambda - \lambda '|^k  = (1+x_n)^n -1\leq nx_n(1+x_n)^{n-1},\] where
  $x_n= M^n |\lambda - \lambda'|$.
Now choose $n$ so that  $nx_n = |\lambda - \lambda'|^{1/2}$,  that is,
$ n M^n = |\lambda - \lambda ' |^{-1/2}$.
The quantity $(1+x_n)^{n-1}$ is bounded since $nx_n$ is  constant. Thus we get that
\[ \norm{v}=\norm{(g_n'\ldots g_1')(g_n \ldots g_1)^{-1} -I} \leq C^{\rm st}  |\lambda - \lambda'|^{1/2}. \]


Note that there is a constant $C^{\rm st}$ such that for every $g\in \mathrm{PSL}(2,\mathbb C)$ and every $y \in \mathbb P^1$, we have $d_\pu(g(y), y) \leq C^{\rm st} \cdot \norm{g-I}$.   Thus, for every $x\in \mathbb P^1$, we have that
\[ d_\pu ( (g' _n\ldots g' _1) x , (g_n \ldots g_1) x) = d_{\pu} ( ( g'_n\ldots g'_1 ) (g_n\ldots g_1)^{-1} y, y) \leq  C^{st} |\lambda ' -\lambda |^{1/2}, \]
by denoting $y= g_n \ldots g_1 (x)$. As a consequence, if $f$ is a H\"older continuous function of exponent $\alpha$,
\[ |P_{\lambda'}^n f (x) - P_{\lambda}^n f(x) | \leq C^{\rm st} |\lambda' -\lambda |^{\alpha/2} \norm{f}_{C^\alpha}. \]

We actually need to apply the latter estimate  for a function which    also depends on $\lambda$, but in a differentiable way, namely,
\[  f_{\lambda} (x) = \int \log \frac{\norm{\rho_{\lambda}(g) X}}{\norm{X}} d\mu(g)\  (X \text{ a lift of }x). \]
There exists some constant for which in the given neighborhood of $\lambda_0$, we have $\norm{f_{\lambda'} -f_{\lambda}}_{\infty}\leq C^{\rm st} |\lambda' -\lambda|$. We can thus write $\norm{ P_{\lambda'} ^n f_{\lambda'} - P_{\lambda}^n f_{\lambda}}_{\infty} \leq \norm{P_{\lambda '}^n  (f_{\lambda'} - f_{\lambda}) }_{\infty} + \norm{(P_{\lambda'}^n - P_{\lambda}^n)  f_{\lambda}}_{\infty}$ and consequently
 \begin{equation}\label{eq:holder1}
\norm{ P_{\lambda '}^n f_{\lambda'} - P_{\lambda}^n f_{\lambda}}_{\infty} \leq C^{\rm st} |\lambda ' - \lambda | ^{\alpha /2}.
\end{equation}

To finish the proof, notice that our choice of $n$ implies that
\[  n = \frac{- \log |\lambda - \lambda'| } { 2 \log M} + O\left(\log \abs{\log \abs{\lambda - \lambda'}}\right) \sim \frac{- \log |\lambda - \lambda'| } { 2 \log M} .\]
Therefore, by 
 the exponential convergence~(\ref{eq:holder convergence})
  of $P_{\lambda}^n$ towards $N_{\lambda}$, we obtain that for   $\la\in U$,
\begin{equation}\label{eq:holder2}
  \norm{(P_{\lambda} ^n -N_{\lambda}) f_{\lambda}  }_{\infty} \leq C^{\rm st} |\lambda ' -\lambda | ^{\gamma}
\end{equation}
for any $\gamma < \frac{\beta}{2\log (M)}$ (recall from Remark \ref{rmk:uniform} that $\beta$ is locally uniform). By Furstenberg's formula~(\ref{eq:Furstenberg's formula}),
 $N_{\lambda} f_{\lambda}=\chi_{\lambda}$, so we conclude by summing \eqref{eq:holder1} and \eqref{eq:holder2} that $\chi$ is H\"older continuous in $U$, of exponent $\gamma$, for any $\gamma < \min (\alpha /2 , \frac{\beta}{2\log (M)} )$.
\end{proof}

\subsection{Geometric interpretation}\label{subs:geom_interpt}
We now consider the fibered action of $G$ on $\La\times \pu$, that is, for $g\in G$,
we define $\widehat{g}$ by $\widehat{g}:(\lambda, z)\mapsto (\lambda, g_{\la}(z))$.
 If $p\in \pu$, we let  $\widehat{g}\cdot p := \set{(\la,g_\la (p)), \la\in \La}$.
 More generally,  objects living
in $\La\times\pu$ are marked with a hat.
Let also $\pi_1$ and $\pi_2$  be the respective coordinate projections from $\La\times \pu$ to $\La$ and $\pu$.

The following theorem gives a geometric characterization of the bifurcation current.

\begin{thm}\label{thm:geom_interpt}
Let $(G,\mu,\rho)$ be an  admissible family of representations of $G$ into $\PSL$. Fix $z_0\in \pu$, and
   for every $n$ define a  current of bidegree $(1,1)$ on $\La\times\pu$ by the formula
$$\widehat{T}_n = \unsur{ n} \int \left[\widehat{g}\cdot z_0\right] d\mu^n(g).$$
Then the sequence $(\widehat{T}_n)$ converges to $\pi_1^*(T_{\rm bif})$.
\end{thm}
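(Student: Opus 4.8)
\emph{Proof strategy.} The plan is to write each graph current $[\widehat g\cdot z_0]$ as $\pi_2^*\omega + dd^c(\text{potential})$, with the potential splitting into a $\pi_1$-part which, after averaging and dividing by $n$, converges to $\chi\circ\pi_1$, plus a ``vertical'' part that washes out. Normalize a lift $Z_0\in\cd$ of $z_0$ with $\norm{Z_0}=1$, let $\omega:=dd^c\log\norm Z$ be the Fubini--Study form on $\pu$ (a smooth positive form, where $Z$ is any local lift), and for $g\in G$ introduce on $\La\times\pu$ the function
\[ G_g(\la,z)\ :=\ \log\frac{\abs{Z\wedge g_\la Z_0}}{\norm Z\,\norm{g_\la Z_0}}, \]
where $Z\wedge W$ denotes the $2\times2$ determinant with columns $Z,W$; this is well defined (independent of the local lift $Z$), satisfies $G_g\le 0$ because $\abs{Z\wedge W}\le\norm Z\norm W$, is locally integrable, and has $\set{G_g=-\infty}=\widehat g\cdot z_0$. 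A Poincar\'e--Lelong computation carried out in affine charts of $\pu$ should give, for every $g$,
\[ \big[\widehat g\cdot z_0\big]\ =\ \pi_2^*\omega\ +\ \pi_1^*\!\big(dd^c_{\la}\log\norm{g_\la Z_0}\big)\ +\ dd^c G_g . \]
Integrating against $\tfrac1n\,d\mu^n(g)$ --- the interchange of $dd^c$ with the integral being legitimate since, as checked below, the $G_g$ are uniformly (in $g$) locally integrable --- we obtain
\[ \widehat T_n\ =\ \unsur n\,\pi_2^*\omega\ +\ \pi_1^*\!\big(dd^c u_n\big)\ +\ dd^c\!\Big(\unsur n\int G_g\,d\mu^n(g)\Big),\qquad u_n(\la):=\unsur n\int\log\norm{g_\la Z_0}\,d\mu^n(g), \]
so the theorem reduces to showing that the first and third terms tend to $0$ while $u_n\cv\chi$ in $L^1_{\mathrm{loc}}(\La)$.

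The first term visibly tends to $0$. For the second term I would argue that each $u_n$ is psh on $\La$ (an average of the psh functions $\la\mapsto\log\norm{g_\la Z_0}$), and that subadditivity of $\log\norm\cdot$ along words gives $u_n(\la)\le\int\log\norm{\rho_\la(g)}\,d\mu(g)$, which is locally bounded above by the locally uniform moment condition~\eqref{eq:moment condition}; hence $(u_n)$ is a locally uniformly bounded-above family of psh functions. Next, for each \emph{fixed} $\la$, Furstenberg's Theorem~\ref{t: furstenberg} gives $\unsur n\log\norm{\rho_\la(l_n(\mathbf g))Z_0}\cv\chi(\la)$ for $\mu^\nn$-a.e.\ $\mathbf g$; since $\unsur n\abs{\log\norm{\rho_\la(l_n(\mathbf g))Z_0}}\le\unsur n\sum_{k=1}^n\log\norm{\rho_\la(g_k)}+\unsur n\abs{\log\norm{Z_0}}$ is a uniformly integrable family (dominated by the Ces\`aro averages of an i.i.d.\ nonnegative $L^1$ sequence, up to a vanishing term), this convergence also holds in $L^1(\mu^\nn)$, whence $u_n(\la)\cv\chi(\la)$ for every $\la$. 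Pointwise convergence of a locally uniformly bounded-above sequence of psh functions to the \emph{continuous} function $\chi$ (Proposition~\ref{prop:defchi}) forces convergence in $L^1_{\mathrm{loc}}$ (extract an $L^1_{\mathrm{loc}}$-limit by Hartogs' lemma, identify it with $\chi$ a.e., conclude). Applying $\pi_1^*$ and $dd^c$ yields $\pi_1^*(dd^c u_n)\cv\pi_1^*\tbif$.

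For the third term, the key point is that by $\mathrm{SU}(2)$-invariance of both $\omega$ and the spherical volume, together with transitivity of $\mathrm{SU}(2)$ on $\pu$, the quantity $\int_{\pu}\abs{G_g(\la,z)}\,dz$ does \emph{not} depend on $\la$ or on $g$: it equals the finite universal constant $c_0:=-\int_{\pu}\log\frac{\abs{Z\wedge Z_1}}{\norm Z\norm{Z_1}}\,dz$ for any fixed $z_1\in\pu$ (finite because $\log d_{\pu}$ is integrable on $\pu$). Hence, for any compact $K'\subset\La$, using $G_g\le 0$ and Tonelli,
\[ \int_{K'\times\pu}\Big|\unsur n\int G_g\,d\mu^n(g)\Big|\,d\la\,dz\ =\ \unsur n\int\Big(\int_{K'\times\pu}\abs{G_g}\,d\la\,dz\Big)d\mu^n(g)\ =\ \frac{c_0\,\vol(K')}{n}\ \underset{n\cv\infty}{\longrightarrow}\ 0 . \]
(The same estimate with $n$ fixed justifies the interchange of $dd^c$ and $\int\cdot\,d\mu^n(g)$ used above.) Therefore $\unsur n\int G_g\,d\mu^n(g)\cv 0$ in $L^1_{\mathrm{loc}}(\La\times\pu)$, so its $dd^c$ tends to $0$; summing the three contributions gives $\widehat T_n\cv\pi_1^*\tbif$.

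I expect the main difficulty to lie in the second term, i.e.\ in establishing $u_n\cv\chi$: this genuinely uses the almost sure statement in Furstenberg's theorem (not merely the defining limit of $\chi$), the uniform integrability argument needed to pass from that to convergence of the means, and then Hartogs' lemma for the $L^1_{\mathrm{loc}}$ conclusion. The first and third terms are comparatively soft, relying only on the rotation-invariance of the logarithmic potential of a point of $\pu$ and on elementary Fubini-type estimates.
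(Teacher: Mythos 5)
Your proof is correct, but it takes a genuinely different route from the paper's. The paper never writes down a global potential for the graph currents: it first shows $\langle\widehat T_n,\pi_1^*\omega^k\rangle\cv 0$, computes $(\pi_1)_*(\widehat T_n\wedge\pi_2^*\omega_\pu)=dd^cu_n$ by pushing forward $\omega_\pu$ along the graph maps $\la\mapsto g_\la(z_0)$, deduces local mass bounds, invokes a structure lemma (a positive closed $(1,1)$-current $\widehat T$ with $\widehat T\wedge\pi_1^*\omega^k=0$ is of the form $\pi_1^*T$) to constrain cluster values, and finally identifies $T$ via $(\pi_1)_*(\pi_1^*T\wedge\pi_2^*\omega_\pu)=T$. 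You instead decompose each $[\widehat g\cdot z_0]$ by Lelong--Poincar\'e into $\pi_2^*\omega+\pi_1^*(dd^c_\la\log\norm{g_\la Z_0})+dd^cG_g$ and kill the remainder directly through the rotation-invariant identity $\int_\pu\abs{G_g(\la,\cdot)}\,dz\equiv c_0$; this avoids both the fibration lemma and the extraction of subsequences, and identifies the limit in one pass. The genuinely hard analytic content --- $u_n\cv\chi$ in $L^1_{\rm loc}$ via the almost-sure statement in Furstenberg's theorem, a domination argument to pass to the means, and Hartogs' lemma against the continuous limit $\chi$ --- is identical in the two proofs. What the paper's longer route buys is machinery (the mass estimate and the fibration lemma) that is reused verbatim in Step 1 of the proof of Theorem \ref{thm:support}; what yours buys is brevity and an explicit potential-level decomposition. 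Two points you should still make explicit: the two-sided bound $\abs{\log\norm{\gamma Z_0}}\leq\log\norm{\gamma}$ used in your domination step requires the remark that $\norm{\gamma Z_0}\geq\norm{Z_0}/\norm{\gamma^{-1}}$ with $\norm{\gamma^{-1}}=\norm{\gamma}$ in $\SL$; and the interchange of $dd^c$ with $\int\cdot\,d\mu^n(g)$ should be justified by pairing with a test form and applying Fubini, which your uniform bound $c_0$ together with the moment condition \eqref{eq:moment condition finitely generated} (controlling $\int\norm{\log\norm{g_\la Z_0}}_{L^1_{\rm loc}}d\mu^n(g)$) makes routine.
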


This implies that $\lambda_0\in \supp (T_{\rm bif})$
 if and only if for every neighborhood $U\ni\lambda_0$, the average volume
of $\widehat{g\cdot p_0}\cap \pi^{-1}(U)$, relative to $\mu^n$, grows linearly in $n$.
If $U$ is contained in the stability locus, it is easy to show that $\bigcup_{g\in G}\widehat{g\cdot p_0}\cap \pi_1^{-1}(U)$ is a normal family of graphs over any relatively compact subset of $U$, hence $\widehat{T}_n\cv 0$ in $\pi_1^{-1}(U)$.
We thus obtain an alternate proof of the fact that $\supp(T_{\rm bif})\subset \mathrm{Bif}$.

\begin{proof}
This is a local result on $\La$, so we may assume that $\La$ is a ball in $\cc^k$, endowed with its standard K\"ahler form $\omega$. Let also $\omega_\pu$ be the Fubini-Study form on $\pu$ associated to our choice of Hermitian norm. On $\Lambda\times \pu$ we choose the K\"ahler form  $\widehat\omega:= \pi_1^*\omega+ \pi_2^*\omega_\pu$.

The first observation is that $\langle{\widehat{T}_n, \pi_1^*\omega^k}\rangle\cv 0$. 
Indeed, for $g\in G$, since $\widehat{g}\cdot z_0$ is a graph,
$\left[\widehat{g}\cdot z_0\right]\wedge \pi^*\omega^k  = \int_\La \omega^k$, so
$\langle{\widehat{T}_n, \pi_1^*\omega}\rangle =O\left( \unsur{n}\right)$. 
Thus, if we can show that $(\widehat{T}_n)$ has locally uniformly bounded mass,
every cluster value $\widehat{T}$ of this sequence satisfies $\widehat{T}\wedge \pi^*\omega^k  = 0$. In this case it is  classical that $\widehat{T}$ does not depend on the $\pu$ coordinate, in the sense that there exists a current $T$ on $\La$ such that  $\widehat{T} = \pi_1^*T$. For completeness we sketch a proof of this fact in Lemma \ref{lem:fibration} below. 
Now,  $\widehat{\omega}^k$ is equal to
$\pi_1^*\omega^k+ k\pi_1^*\omega^{k-1} \wedge \pi_2^*\omega_{\pu}$, therefore, since $\langle{\widehat{T}_n,  \pi_1^*\omega^k}\rangle \cv 0$ we are led to understand 
pairings of the form $\langle \widehat{T}_n,  \pi_2^*\omega_{\pu}\wedge \pi_1^*\phi\rangle$, where $\phi$ is a $(k-1, k-1)$ test form on $\La$, or equivalently, to understand
 $(\pi_1)_*( \widehat{T}_n \wedge \pi_2^*\omega_{\pu})$.

For this we compute 
\begin{align} \label{eq:chapeau}
\langle \widehat{T}_n,  \pi_2^*\omega_{\pu}\wedge \pi_1^*\phi\rangle
&=  \unsur{ n} 
\int  \bra{ \left[\widehat{g}\cdot z_0\right],\pi_2^*\omega_\pu\wedge \pi_1^*\phi}  d\mu^n(g)\\ \notag
&= \unsur{ n} 
\int \lrpar{ \int_\La \left( \pi_1\rest{\widehat{g}\cdot z_0}\right)_*\left(\pi_2^*\omega_\pu \rest{\widehat{g}\cdot z_0}\right) \wedge \phi }d\mu^n(g)\\ \notag
&= \unsur{ n}
\int \lrpar{\int_\La \left( \pi_2\circ \left( \pi_1\rest{\widehat{g}\cdot z_0}\right)^{-1}\right)^*\omega_\pu\wedge \phi }d\mu^n(g),
\end{align}
where in the second line we use the fact that for every $g$, $\pi_1\rest{\widehat{g}\cdot z_0}$ is a biholomorphism. Now observe that for
 $g\in G$, the map $\pi_2\circ \left( \pi_1\rest{\widehat{g}\cdot z_0}\right)^{-1}$ is just defined by the formula $\lambda\mapsto  g_\la(z_0)$. Denote it by $h$. We have that
 $h^*\omega_\pu =  dd^c \log\norm{H},$
where $H:\Lambda \cv\cd\setminus\set{0}$ is any lift of $h$. Denote  $g= \left(\begin{smallmatrix} a&b\\c&d \end{smallmatrix}\right)$ as before, 
$z_0 = [x_0: y_0]$,   and $Z_0 = (x_0, y_0)$ be a lift of $z_0$ to $\cd$.
We infer that
$$h^*\omega_\pu = dd^c \log\left( \abs{a_\la x_0 + b_\la y_0}^2 +\abs{c_\la x_0 + d_\la y_0}^2 \right)^{\unsur{2}} =
dd^c \log \norm{g_\la(Z_0)}  = dd^c \log \frac{\norm{g_\la(Z_0)}}{\norm{Z_0}} ,$$ where the $dd^c$ takes  place  in the $\lambda$ variable.
 We conclude that
\begin{equation}\label{eq:pi etoile}
\langle \widehat{T}_n,  \pi_2^*\omega_{\pu}\wedge \pi_1^*\phi\rangle = \int_\La \phi\wedge
dd^c_\la \left( \unsur{n} \int\log \frac{\norm{g_\la(Z_0)}}{\norm{Z_0}} d\mu^n(g) \right).
\end{equation}
By Theorem \ref{t: furstenberg} 
 for every $z_0\in \pu$ and every $\lambda$,
$$ \unsur{n} \int\log\frac{\norm{g_\la(Z_0)}}{\norm{Z_0}} d\mu^n(g)
\underset{n\cv\infty}\longrightarrow \chi(\la).$$
Furthermore  by the  subadditivity of $\norm{g_\la}$  
and the uniform moment condition \eqref{eq:moment condition finitely generated}, 
this sequence is
 locally uniformly bounded above (with respect to  $\lambda$), hence by the Hartogs Lemma
the convergence holds in $L^1_{\rm loc}$ and we finally obtain that 
$\lim_n (\pi_1)_*(\widehat{T}_n \wedge \pi_2^*\omega_\pu) = dd^c\chi =T_{\rm bif}$. 

Applying this to $\phi= \varphi\omega^{k-1}$, where $\varphi$ is a cutoff function, 
we see that the sequence $(\widehat{T}_n)$
has locally uniformly bounded
mass. Let $\widehat{T}$ be any of its  cluster values. We know that it is of the form $\pi_1^*T$ and that
 $(\pi_1)_*(\widehat{T} \wedge \pi_2^*\omega_\pu)=T_{\rm bif}$.
The following classical computation then finishes the proof.\end{proof}

\begin{lem}
With notation as above, $(\pi_1)_*(\pi_1^*T \wedge \pi_2^*\omega_\pu) =T$
\end{lem}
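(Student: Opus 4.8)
The plan is to verify the identity $(\pi_1)_*(\pi_1^*T\wedge \pi_2^*\omega_\pu)=T$ by a direct local computation, exploiting that the statement is local on $\La$ and that both sides are currents of the same bidegree on $\La$. So first I would fix a test form $\phi$ of bidegree $(k-1,k-1)$ on (a ball in) $\La$ and unwind the definitions: by the very definition of push-forward, $\bra{(\pi_1)_*(\pi_1^*T\wedge \pi_2^*\omega_\pu),\phi}=\bra{\pi_1^*T\wedge\pi_2^*\omega_\pu,\pi_1^*\phi}$, where the pairing on the right is taken on $\La\times\pu$. Since $T$ is a positive closed $(1,1)$-current it locally has a psh potential $u$, so $\pi_1^*T=dd^c(u\circ\pi_1)$ and $\pi_1^*T\wedge\pi_2^*\omega_\pu=dd^c(u\circ\pi_1)\wedge\pi_2^*\omega_\pu$; everything is then a smooth-times-closed manipulation, which makes the wedge products well-defined.

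Next I would use the Fubini-type theorem for push-forward along the proper submersion $\pi_1$: integrate first along the fibers $\{\la\}\times\pu$. Because $\pi_2^*\omega_\pu$ restricts on each fiber $\{\la\}\times\pu$ to (a translate of) $\omega_\pu$, and $\omega_\pu$ is normalized so that $\int_{\pu}\omega_\pu=1$ (with our choice of Hermitian norm; otherwise it is a harmless constant that I would absorb), the fiber integration of $\pi_1^*T\wedge\pi_2^*\omega_\pu\wedge\pi_1^*\phi$ just returns $T\wedge\phi$ on $\La$. Concretely, writing things in a product chart $\La\times\pu$ and using that $\pi_1^*(\text{anything})$ has no $d\bar z\wedge dz$ component in the $\pu$-variable, the only term of $\pi_1^*T\wedge\pi_1^*\phi$ that survives against the two-form $\pi_2^*\omega_\pu$ along the fiber is the full $(k,k)$-part pulled back from $\La$, and Fubini gives $\bra{\pi_1^*T\wedge\pi_2^*\omega_\pu,\pi_1^*\phi}=\bra{T,\phi}\cdot\int_{\pu}\omega_\pu=\bra{T,\phi}$. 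Since $\phi$ was arbitrary this proves the identity.

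The only genuinely delicate point is justifying the fiberwise integration when $T$ is merely a positive closed current (so $u$ is only psh, not smooth): one must make sense of $dd^c(u\circ\pi_1)\wedge\pi_2^*\omega_\pu$ and of its slices along fibers. This is harmless here because $\pi_2^*\omega_\pu$ is smooth, so the product is the Bedford--Taylor product of a smooth form with a closed positive current and behaves well under the proper map $\pi_1$; alternatively one regularizes $u$ by smooth psh functions $u_\varepsilon\downarrow u$, checks the identity for smooth potentials by the elementary Fubini argument above, and passes to the limit using that $(\pi_1)_*$ is continuous for the relevant monotone/weak convergence and that $u_\varepsilon\circ\pi_1\to u\circ\pi_1$ in $L^1_{\rm loc}$. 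I would present the smooth case in detail and indicate the regularization as a one-line remark, since this ``classical computation'' is exactly the standard fact that integrating a pulled-back current against the fiber volume form recovers the original current.
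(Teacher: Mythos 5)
Your argument is correct, but it is not the route the paper takes. You prove the identity by direct fiber integration: pair against a test form $\phi$, write $\pi_1^*T=dd^c(u\circ\pi_1)$ with $u$ a local psh potential, and apply Fubini along the fibers $\{\la\}\times\pu$ using $\int_{\pu}\omega_\pu=1$, with a regularization $u_\e\downarrow u$ to handle the non-smoothness. The paper instead argues cohomologically: since $\bra{(\pi_1)_*(\pi_1^*T\wedge\pi_2^*\omega_\pu),\phi}=\int\pi_1^*(T\wedge\phi)\wedge\pi_2^*\omega_\pu$ and $\pi_2^*\omega_\pu$ is closed, the pairing only depends on the compactly supported cohomology class of $T\wedge\phi$, which is that of $\bigl(\int T\wedge\phi\bigr)\Theta$ for a smooth $(k,k)$-form $\Theta$ of integral $1$; the Fubini computation is then only ever performed on smooth forms. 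This sidesteps entirely the ``delicate point'' you flag. That point is in any case less delicate than you suggest: the wedge of a current with a \emph{smooth} form is defined by duality (no Bedford--Taylor machinery is needed), and once you move the $dd^c$ onto the smooth, compactly supported form $\pi_2^*\omega_\pu\wedge\pi_1^*\phi$ (using closedness of $\omega_\pu$), Fubini applies directly to the $L^1_{\rm loc}$ function $u\circ\pi_1$ against $\pi_2^*\omega_\pu\wedge\pi_1^*(dd^c\phi)$, so the regularization step is not actually necessary. Both proofs rest on the same normalization $\int_\pu\omega_\pu=1$; yours is more hands-on, the paper's is shorter and cleaner because it exploits closedness from the start.
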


  \begin{proof} Let  $\phi$ be a test $(k-1, k-1)$ form in $\La$. We have that
$$\bra{ (\pi_1)_*(\pi_1^*T \wedge \pi_2^*\omega_\pu) , \phi} = \int \pi_1^*(T\wedge \phi) \wedge \pi_2^*\omega_\pu.$$ Now, $T\wedge \phi$ is a current of bidegree $(k,k)$ with compact support in $\La$, so it is cohomologous (with compact supports) to $\left(\int T\wedge \phi\right)\Theta$, where $\Theta$ is any compactly supported positive smooth $(k,k)$ form of integral 1. So we deduce that
$$\int \pi_1^*(T\wedge \phi) \wedge \pi_2^*\omega_\pu = \left(\int T\wedge \phi\right)
\int  \pi_1^*\Theta \wedge \pi_2^*\omega_\pu=\int T\wedge \phi,$$ and we are done.
\end{proof}

As promised above, we sketch the proof of the following classical fact.

\begin{lem}\label{lem:fibration}
Let $B_1 \times B_2 \subset \cc^k\times \cc$ be a product of balls. Write $z = (z',z_{k+1})$ for the coordinate on  $\cc^{k+1}$ and denote by $\omega_1$ the standard K\"ahler form on $\cc^k$. Let $T$ be a positive closed current of bidegree (1,1) on 
$B_1 \times B_2$, such that $T\wedge \omega_1^k = 0$. Then there exists a positive closed current $T_1$ on $B_1$ such that $T = \pi_1^*T_1$, $\pi_1$ being the first projection. 
\end{lem}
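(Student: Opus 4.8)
The plan is to read off the conclusion from the local coefficients of $T$. Write $z=(z_1,\dots,z_{k+1})$, $z'=(z_1,\dots,z_k)$, $\omega_1=\tfrac i2\sum_{l\le k}dz_l\wedge d\bar z_l$, and decompose $T=i\sum_{p,q=1}^{k+1}T_{p\bar q}\,dz_p\wedge d\bar z_q$, where the coefficients $T_{p\bar q}$ are distributions on $B_1\times B_2$ which, since $T\geq 0$, are complex measures forming a Hermitian, positive semi-definite ``matrix''. The first step is to observe that wedging with $\omega_1^k$ annihilates every differential $dz_p$ and $d\bar z_q$ with $p,q\le k$, so that $T\wedge\omega_1^k=c\,T_{(k+1)(\overline{k+1})}\,dV$ for some nonzero constant $c$ and $dV$ the Euclidean volume form; hence the hypothesis $T\wedge\omega_1^k=0$ is exactly the vanishing of the single coefficient $T_{(k+1)(\overline{k+1})}$.

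Next I would use positivity: for a positive $(1,1)$-current one has the Cauchy--Schwarz inequality $\abs{T_{p\bar q}}\le T_{p\bar p}^{1/2}T_{q\bar q}^{1/2}$ in the sense of measures, which is the measure-theoretic version of the fact that a positive semi-definite Hermitian matrix with a vanishing diagonal entry has the corresponding row and column equal to zero (see e.g. \cite{demailly}). Applying it with one index equal to $k+1$ gives $T_{p(\overline{k+1})}=T_{(k+1)\bar q}=0$ for all $p,q$, so that $T=i\sum_{p,q=1}^{k}T_{p\bar q}\,dz_p\wedge d\bar z_q$ has no component along $dz_{k+1}$ or $d\bar z_{k+1}$.

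Then I would bring in closedness. Writing out $\partial T=0$ and $\bar\partial T=0$ and extracting the coefficients of the basis $(2,1)$-forms $dz_p\wedge dz_{k+1}\wedge d\bar z_q$, resp. the $(1,2)$-forms $dz_p\wedge d\bar z_q\wedge d\bar z_{k+1}$, one obtains $\partial_{z_{k+1}}T_{p\bar q}=\partial_{\bar z_{k+1}}T_{p\bar q}=0$ for all $p,q\le k$; equivalently each coefficient is annihilated by $\partial/\partial x_{k+1}$ and $\partial/\partial y_{k+1}$. A standard argument (mollify in the $z_{k+1}$ variable, or test against functions of the form $\varphi(z')\psi(z_{k+1})$) then shows that $T_{p\bar q}$ is the $\pi_1$-pullback of a distribution $(T_1)_{p\bar q}$ on $B_1$. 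Setting $T_1:=i\sum_{p,q=1}^{k}(T_1)_{p\bar q}\,dz_p\wedge d\bar z_q$ we get $T=\pi_1^*T_1$, and $T_1$ is automatically closed and positive since these properties are reflected by the pullback $\pi_1^*$.

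None of these steps is really an obstacle; the only points requiring a little care are the measure-theoretic Cauchy--Schwarz inequality of the second step and the ``constant in $z_{k+1}$ implies $\pi_1$-pullback'' passage of the third, both of which are classical. An alternative, and perhaps quicker, route is via potentials: on the pseudoconvex, contractible domain $B_1\times B_2$ one may write $T=dd^cu$ with $u$ psh; then $dd^cu\wedge\omega_1^k=0$ says $\partial^2u/\partial z_{k+1}\partial\bar z_{k+1}=0$, plurisubharmonicity forces $\partial^2u/\partial z_p\partial\bar z_{k+1}=0$ for all $p$ (the same diagonal-entry argument), hence $\partial u/\partial z_{k+1}$ is holomorphic, and subtracting the real part of one of its holomorphic primitives replaces $u$ by a psh potential independent of $z_{k+1}$, which descends to the desired current $T_1$ on $B_1$.
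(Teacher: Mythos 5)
Your argument is correct and is essentially the paper's own proof: decompose $T$ into its matrix of measure coefficients, note that $T\wedge\omega_1^k=0$ kills the diagonal entry $T_{(k+1)(\overline{k+1})}$, use positivity (the Cauchy--Schwarz inequality for positive currents) to kill the whole last row and column, and then use closedness to see that the remaining coefficients are independent of $z_{k+1}$ and hence descend to $B_1$. The extra potential-theoretic variant you sketch at the end is a reasonable alternative, but the main line of reasoning coincides with the one in the text.
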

 
\begin{proof}
Decompose $T$ in coordinates as $T = i\sum T_{i,j}  dz_i\wedge d\overline {z_j}$, where $(T_{i,j})$ is a Hermitian matrix of measures. Since $T\wedge \omega_1^k = 0$, $T_{k+1,k+1} =0$. Positivity implies  that $T_{k+1,j} = 0$ for all $j$ (see \cite[Prop. 1.14]{demailly}). Then closedness implies that the $T_{i,j}$, $i,j\leq k$ do not depend on $z_{k+1}$ (see the proof of Theorem 2.13 in \cite{demailly}). The result is proved. 
\end{proof}

In the next --presumably well-known-- proposition we give  an estimate for the speed of convergence of the potentials appearing in the proof of Theorem~\ref{thm:geom_interpt}. This will play a crucial role in Theorem \ref{thm:support}.  

\begin{prop}\label{prop:Ounsurn}
Let $(G,\mu,\rho)$ be an admissible family of representations of $G$ into $\PSL$,
satisfying the exponential moment condition \eqref{eq: exponential moment condition in the group}. Then 
for every $z_0\in \pu$ we have that
\begin{equation}\label{eq:Ounsurn}
 \unsur{n} \int\log\frac{\norm{g_\la(Z_0)}}{\norm{Z_0}} d\mu^n(g)
= \chi(\la) + O\left(\unsur{n}\right),
\end{equation}
where the $O(\cdot)$ is locally uniform in $\la$.  
\end{prop}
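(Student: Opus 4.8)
The plan is to use Furstenberg's formula together with the exponential convergence of the transition operator (Theorem~\ref{thm:lepage}). Write $\ell_n(\la) := \unsur{n}\int \log\frac{\norm{g_\la(Z_0)}}{\norm{Z_0}}\, d\mu^n(g)$, and recall from the proof of Theorem~\ref{thm:geom_interpt} that this equals $\unsur{n}\sum_{k=0}^{n-1} (P_\la^k f_\la)(z_0)$, where $f_\la(x) = \int \log\frac{\norm{\rho_\la(g)X}}{\norm{X}}\, d\mu(g)$ for $X$ a lift of $x$ (this is the standard telescoping identity for the cocycle $\log\norm{\rho_\la(g)X}/\norm{X}$ over the Markov chain). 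By Furstenberg's formula \eqref{eq:Furstenberg's formula}, $\chi(\la) = N_\la f_\la = \int f_\la\, d\nu_\la$. Hence
\[
\ell_n(\la) - \chi(\la) = \unsur{n}\sum_{k=0}^{n-1} \big( (P_\la^k f_\la)(z_0) - N_\la f_\la \big) = \unsur{n}\sum_{k=0}^{n-1} \big( (P_\la^k - N_\la) f_\la\big)(z_0).
\]
First I would check that $f_\la$ is H\"older continuous with $\norm{f_\la}_{C^\alpha}$ locally uniformly bounded in $\la$: this follows from the exponential moment condition \eqref{eq: exponential moment condition in the group} (which gives $\int \norm{\rho_\la(g)}^\tau\, d\mu < \infty$ locally uniformly, by \eqref{eq:exponential moment}) together with the elementary estimate $\abs{\log\frac{\norm{AX}}{\norm{X}} - \log\frac{\norm{AY}}{\norm{Y}}} \leq C\norm{A}\cdot\norm{A^{-1}}\, d_\pu(x,y)$ for the relevant exponent, integrated against $\mu$; since $\norm{A}=\norm{A^{-1}}$ in $\SL$, the $\tau$-th moment controls the required H\"older seminorm.

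Then I would apply the Le Page estimate \eqref{eq:holder convergence}: $\norm{P_\la^k - N_\la}_{C^\alpha} \leq C e^{-\beta k}$, with $C,\alpha,\beta$ locally uniform in $\la$ by Remark~\ref{rmk:uniform}. Therefore $\abs{(P_\la^k - N_\la) f_\la)(z_0)} \leq \norm{(P_\la^k - N_\la)f_\la}_\infty \leq \norm{P_\la^k - N_\la}_{C^\alpha}\norm{f_\la}_{C^\alpha} \leq C' e^{-\beta k}$, again locally uniformly. Summing,
\[
\abs{\ell_n(\la) - \chi(\la)} \leq \unsur{n}\sum_{k=0}^{n-1} C' e^{-\beta k} \leq \frac{C'}{n(1-e^{-\beta})} = O\!\lrpar{\unsur{n}},
\]
locally uniformly in $\la$, which is exactly \eqref{eq:Ounsurn}.

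The main obstacle — really the only non-routine point — is justifying the telescoping identity $\unsur{n}\int \log\frac{\norm{g_\la(Z_0)}}{\norm{Z_0}}\,d\mu^n(g) = \unsur{n}\sum_{k=0}^{n-1}(P_\la^k f_\la)(z_0)$ cleanly, and in particular making sure the constant in the $O(\cdot)$ is genuinely locally uniform. For the identity: decompose $\log\frac{\norm{g_n\cdots g_1 Z_0}}{\norm{Z_0}} = \sum_{k=1}^n \log\frac{\norm{g_n\cdots g_k\, W_{k-1}}}{\norm{W_{k-1}}}$ where $W_{k-1}$ is a lift of $g_{k-1}\cdots g_1(z_0)$, using that the cocycle is additive on $\SL$ lifts; then integrate over $(g_1,\dots,g_n)$ with respect to $\mu^{\otimes n}$, use Fubini and the fact that $g_{k-1}\cdots g_1(z_0)$ is distributed as the $(k-1)$-st step of the walk to rewrite the $k$-th term as $\int (P_\la^{\,n-k}f_\la)$ against the law of that step — and since $\nu_\la$ is only asymptotically the distribution, one instead directly recognizes $\int \log\frac{\norm{\rho_\la(h)X}}{\norm{X}}\,d\mu^{n-k+1}(h)$ evaluated via the Markov property, giving $(P_\la^{n-k}f_\la)$ composed appropriately; carefully one gets $\ell_n(\la) = \unsur n\sum_{j=0}^{n-1}(P_\la^j f_\la)(z_0)$. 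For local uniformity of the $O(\cdot)$: all three inputs — the exponential moment bound, the H\"older norm of $f_\la$, and the constants $C,\beta$ in Le Page's theorem — are locally uniform in $\la$ (the last by Remark~\ref{rmk:uniform}, the first two by continuity of the relevant integrals in $\la$), so the final geometric-sum bound is locally uniform as claimed.
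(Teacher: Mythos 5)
Your argument is correct and is essentially the paper's own proof: the same telescoping identity reduces the left-hand side to $\unsur{n}\sum_{k=0}^{n-1}(P_\la^k f_\la)(z_0)$, and Le Page's exponential convergence $\norm{P_\la^k-N_\la}_{C^\alpha}\leq Ce^{-\beta k}$ (with the locally uniform constants of Remark~\ref{rmk:uniform}) bounds the resulting sum by a convergent geometric series. One small slip: in your justification of the telescoping identity the $k$-th summand should be $\log\bigl(\norm{g_k W_{k-1}}/\norm{W_{k-1}}\bigr)$ rather than $\log\bigl(\norm{g_n\cdots g_k\, W_{k-1}}/\norm{W_{k-1}}\bigr)$ --- as written the sum does not telescope --- but the identity you ultimately invoke is the correct one and is exactly the decomposition used in the paper.
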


\begin{proof}
Fix $\la$ for the moment and let $f$ be the function on $\pu$ defined by
$f(z)= \int\log\frac{\norm{g(Z)}}{\norm{Z}} d\mu(g)$ (we drop the $\la$ from the formulas). Recall Furstenberg's formula  that
$\int f d\nu  = \chi$, where $\nu$ is the unique stationary measure.
We have that
\begin{align}
\notag\int\log\frac{\norm{g(Z_0)}}{\norm{Z_0}} d\mu^n(g) -n\chi
&= \sum_{k=1}^n\left(\int\log\frac{\norm{gg_{k-1}\cdots g_1(Z_0)}}{\norm{g_{k-1}\cdots g_1(Z_0)}}d\mu(g)d\mu(g_{k-1})\cdots
 d\mu(g_1) -\chi\right) \\
&=\label{eq:sum}\sum_{k=1}^n \left( P^{k-1} (f)(z_0) -\int f d\nu \right),
\end{align}
furthermore, under the moment condition \eqref{eq:exponential moment}, we know that there exist constants $C>0$, and $\beta<1$ such that $\norm{P^k (f)- \int f d\nu}_{L^\infty}<C\beta^k$.  We thus conclude that the  sum in \eqref{eq:sum} is bounded as $n\cv\infty$,    yielding the desired estimate for fixed $\la$.
For the uniformity statement, just recall from Remark \ref{rmk:uniform} that the values of  $C$ and $\beta$ are locally  uniform in  $\la$.
\end{proof}

\subsection{The support of $T_{\rm bif}$}\label{subs:support}

We keep  hypotheses as before, keeping in particular from the last proposition the exponential moment condition  
\eqref{eq: exponential moment condition in the group} (it would actually be enough to assume that 
\eqref{eq:exponential moment} holds locally uniformly).
 
Here is the precise statement of the characterization of the support of $T_{\rm bif}$.

\begin{thm}\label{thm:support}
Let $(G,\mu,\rho)$ be an admissible family of representations of $G$ into $\PSL$,
satisfying the exponential moment condition \eqref{eq: exponential moment condition in the group}.

Then the support of $T_{\rm bif}$ coincides with the bifurcation locus.
\end{thm}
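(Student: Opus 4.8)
The plan is to prove the two inclusions separately. The inclusion $\supp(T_{\rm bif})\subset\mathrm{Bif}$ is already established: Proposition \ref{prop:defchi} says $\chi$ is pluriharmonic on the stability locus, so $T_{\rm bif}=dd^c\chi$ vanishes there. The real content is the converse: if $\chi$ is pluriharmonic on an open set $U$, then $U\subset\mathrm{Stab}$. So fix an open set $U$ disjoint from $\supp(T_{\rm bif})$; I want to show the family is stable over $U$, and for this I will use the criterion from Sullivan--Bers (Theorem \ref{thm:sullivan}): it suffices to produce, for each $\lambda\in U$, a $\rho_\lambda(G)$-equivariant holomorphic motion of $\pu$ (equivalently, the fixed points of all $\rho_\lambda(g)$ move holomorphically without collision, which prevents any type change).

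The heart of the argument is the geometric interpretation. Over $U$, $dd^c\chi=0$, so $\chi$ is pluriharmonic, hence locally (shrinking $U$ to a ball and using that $U$ is simply connected) it is the real part of a holomorphic function; in particular $\chi$ has a local pluriharmonic structure that I will exploit through the potentials of Theorem \ref{thm:geom_interpt}. Recall from \eqref{eq:pi etoile} that the potential of $\widehat T_n$ in the fiber direction is $u_n(\lambda):=\frac1n\int\log\frac{\norm{g_\lambda(Z_0)}}{\norm{Z_0}}\,d\mu^n(g)$, and Proposition \ref{prop:Ounsurn} gives $u_n=\chi+O(1/n)$ locally uniformly on $\Lambda$ (using the exponential moment hypothesis \eqref{eq: exponential moment condition in the group}). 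Since $\chi$ is pluriharmonic on $U$, this says the psh functions $\frac1n\log\norm{g_\lambda(Z_0)}$ — averaged against $\mu^n$ — are within $O(1/n)$ of a pluriharmonic function. By the submean-value / compactness properties of psh functions (Hartogs' lemma together with the fact that a uniformly bounded family of psh functions converging in $L^1$ to a \emph{pluriharmonic} limit is actually equicontinuous on compact subsets, or more directly an $\e$-regularization estimate), I expect to extract from $u_n\to\chi$ in $C^0_{\rm loc}$ on $U$ a \emph{locally uniform bound on the mass} of the currents $\widehat T_n$ restricted to $\pi_1^{-1}(U)$ — indeed more: that $\langle\widehat T_n,\pi_2^*\omega_\pu\wedge\pi_1^*\phi\rangle\to 0$ for $\phi$ supported in $U$, i.e. $\widehat T_n\to 0$ there. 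The point is that $dd^c u_n=dd^c\chi+O(1/n)=O(1/n)$ in the sense of currents on $U$, so $(\pi_1)_*(\widehat T_n\wedge\pi_2^*\omega_\pu)\to 0$ on $U$.

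From the mass bound I run the Bishop compactness argument sketched in the introduction. The graphs $\widehat g\cdot z_0=\{(\lambda,g_\lambda(z_0)):\lambda\in U\}$, $g\in G$, have locally bounded volume in $\pi_1^{-1}(U)$ in $\mu^n$-average, and the failure of normality of this family of graphs would force a definite amount of ``oscillation volume,'' contradicting $\widehat T_n\to 0$; more precisely, I can use the $L^1$ (in fact $C^0$) convergence $u_n\to\chi$ to show that for any sequence $g^{(j)}$ with $\length(g^{(j)})\to\infty$ drawn so that the potentials behave generically, the graphs $\widehat{g^{(j)}}\cdot z_0$ are a normal family over compacts of $U$, so by Bishop they subconverge to graphs. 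Applying this to the random walk and using Theorem \ref{t: furstenberg} (the stationary measure $\nu_\lambda$ is diffuse), I construct for each $\lambda\in U$ an equivariant map $\theta_\lambda:\partial(G,\mu)\to\pu$ from the Poisson boundary, depending holomorphically on $\lambda$ because the graphs do, and conclude it is injective on a full-measure set, extends to a holomorphic motion of $\pu$ conjugating the representations, and commutes with the $G$-action. Then Theorem \ref{thm:sullivan}(iv)$\Rightarrow$(i),(iii) gives stability on $U$.

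The main obstacle, I expect, is the middle step: upgrading the qualitative convergence $u_n\to\chi$ (Proposition \ref{prop:Ounsurn}) and the vanishing $dd^c\chi|_U=0$ into a genuine \emph{equicontinuity/normality} statement for the family of graphs — i.e. translating ``the bifurcation current vanishes'' into ``the potentials $\frac1n\log\norm{g_\lambda(Z_0)}$ do not oscillate,'' and hence into compactness of the graphs. This requires care: one cannot simply differentiate an $L^1$ limit, so the argument must go through the positivity of the currents $[\widehat g\cdot z_0]$ and a Chern--Levine--Nirenberg type estimate bounding $\int_{K}[\widehat g\cdot z_0]\wedge\pi_2^*\omega_\pu$ by the sup of the local potential difference on a slightly larger compact, combined with the fact that $\widehat T_n\wedge\pi_1^*\omega^k\to 0$ (shown in the proof of Theorem \ref{thm:geom_interpt}) so that every cluster value of $(\widehat T_n)$ over $U$ is both fibered ($=\pi_1^*T$) and has zero $(\pi_1)_*(\,\cdot\,\wedge\pi_2^*\omega_\pu)=dd^c\chi|_U=0$, hence is zero. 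Once $\widehat T_n|_{\pi_1^{-1}(U)}\to 0$ is in hand, the Bishop/Poisson-boundary machinery and Sullivan's theorem finish the proof.
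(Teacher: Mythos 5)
Your overall architecture matches the paper's: the easy inclusion via Proposition \ref{prop:defchi}, then (i) a mass bound on $\widehat T_n$ over $U$ coming from $dd^c\chi|_U=0$ and the $O(1/n)$ estimate of Proposition \ref{prop:Ounsurn}, (ii) Bishop compactness to build a holomorphic, $G$-equivariant map $\theta$ from the Poisson boundary, and (iii) Sullivan's theorem to conclude. Step (i) is essentially correct as you describe it (the paper does exactly this: $\int_U dd^c(\chi_n-\chi)$ is $O(1/n)$ after integrating by parts against a cutoff), and your Step (ii) is the paper's Step 2, including the Fatou-type extraction of a bounded-volume subsequence for a.e.\ path and the interpolation of the pointwise limits $z_{r,\lambda_q}$ over a dense set of parameters.

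However, there is a genuine gap at the point where you write that the equivariant map ``is injective on a full-measure set, extends to a holomorphic motion of $\pu$ conjugating the representations.'' Nothing in the construction so far prevents two limit graphs $\theta_r$ and $\theta_{r'}$ from crossing transversally at some parameter of $U$ while being distinct, and such crossings are exactly what would destroy the holomorphic motion; they cannot be ruled out by normality or by diffuseness of $\nu_\lambda$ alone. This is where the real work of the paper lies (its Step 3): one introduces the reflected measure $\check\mu$ and the second equivariant family $\check\theta_{\check r}$ over $P(G,\check\mu)$, observes that the number $\iota_D(r,\check r)$ of isolated intersections of $\theta_r$ and $\check\theta_{\check r}$ over $D\Subset U$ is invariant under the diagonal $G$-action, and invokes Kaimanovich's double ergodicity theorem to conclude that $\iota_D$ is a.e.\ constant, whence the intersections are confined to a discrete exceptional set $F$. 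Outside $F$ one then uses the persistence of isolated intersections of analytic graphs together with the non-atomicity of $\nu_\lambda$ (so that $\theta_*\mathsf P_e$ has no isolated points in the space of graphs) to show that any two graphs in $\overline\Theta\cup\overline{\check\Theta}$ are disjoint or equal, which is what yields the holomorphic motion of $\supp(\nu_\lambda)$; the set $F$ is finally shown to be empty via the J{\o}rgensen--Margulis--Zassenhaus theorem. None of this is suggested in your proposal, and the difficulty you flag as the ``main obstacle'' (equicontinuity of the potentials) is in fact the routine part; the collision problem is the one that requires a new idea. You would also need the paper's Step 4: a holomorphic motion of $\supp(\nu_\lambda)$ does not immediately give constancy of type for every $g_\lambda$; one must argue that the moving fixed point of a loxodromic element remains an attracting fixed point (using invariance of the motion and normality) before applying Theorem \ref{thm:sullivan}.
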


We know from Proposition~\ref{prop:defchi} that $\supp(T_{\rm bif})\subset \mathrm{Bif}$ so only the reverse inclusion needs to be established.  For this, we make use of the geometric interpretation of $\tbif$ given in  \S\ref{subs:geom_interpt}.

\medskip

Since it will play a very important role in the proof,  let us start by reviewing the construction of the Poisson boundary of $(G,\mu)$ (see \cite{kaimanovich-gafa} for more details).  Consider the right random walk on the group $G$, defined as the Markov chain on $G$ with transition probabilities given by $p(x,y) = \mu (x^{-1} y )$. 
As a measurable space, the Poisson boundary is the set of paths $(r_n) \in G^{\nn}$, equipped with the tail algebra $\mathcal T$, that is the algebra of Borel sets in $G^{\nn}$ which are invariant by the shift $\sigma (r_n) = (r_{n+1})$. Hence, two paths $(r_n ) $ and $(r_n')$ have to be considered as equivalent in the Poisson boundary as soon as they have the same   tails. We denote the Poisson boundary by $P(G,\mu)$. 

It inherits a measure class induced by the $\mu$-random walk on $G$, as follows. Recall that the position $r_n$ ($n\geq 1$) of the random walk at time $n$ is deduced from its position at time $0$ by the following formula 
\begin{equation} \label{eq: random walk} r_n = r_0 h_1 \ldots h_n,\end{equation}
where the $h_i$ are mutually independant random variables with distribution $\mu$. Any initial distribution $\theta$ on $G$ determines a Markov measure $\mathsf P_{\theta}$ on the space of paths $(r_n)\in G^{\nn}$: the image of $\theta \otimes \mu ^{\nn}$ under the assignment $(r_0, (h_i))\mapsto (r_n)$ given in   \eqref{eq: random walk}. We shall denote by $\mathsf{P}_g$ the Markov measure corresponding to the Dirac mass at the point $g$. It is straighforward to verify that $\mathsf{P}_e$ is $\mu$-stationary, i.e. $\mathsf{P}_e = \int \mathsf{P}_g d\mu(g)$. 
Note also that the measures $\mathsf{P}_g$ are absolutely continuous with respect to each other.

The coordinate-wise left multiplications by an element of the group $G$ on $G^\nn$ commutes with the shift and induces an action of $G$ on $P(G,\mu)$. The measure $\mathsf{P}_e$ is pushed by an element $g$ of $G$ on the measure $\mathsf{P}_g$, so that the measure $\mathsf{P}_e$ is quasi-preserved by this action. 

\begin{proof}[Proof of Theorem \ref{thm:support}]
It is no loss of generality to assume that $\dim( \Lambda)=1$.
Let $V\subset \La$ be an open subset where $\tbif$ vanishes, and $U\Subset V$. We want to show that $U$ is contained in the stability locus. Fix $z_0\in \pu$ and define $\widehat{T}_n$ as in  Theorem \ref{thm:geom_interpt}. More generally we keep notation as in  the proof of that theorem.

\medskip

\noindent{\bf Step 1.} The vanishing of $\tbif$ on $U$ implies the  mass estimate $\displaystyle{ \int_{\pi_1^{-1}(U)} \widehat{T}_n  \wedge \widehat{\omega}= O\left(\unsur{n}\right) }$.

Indeed, recall that $\widehat\omega =
\pi_1^*\omega+  \pi_2^*\omega_{\pu}$. We already observed that $\langle{\widehat{T}_n, \pi_1^*\omega}\rangle =O\left( \unsur{n}\right)$. For the second term, the computations in the proof of Theorem \ref{thm:geom_interpt} show that
\begin{align*}
\bra{ \widehat{T}_n\rest{\pi_1^{-1}(U)}, \pi_2^*\omega_{\pu}}
&=\int_U dd^c\chi_n  \text{ , where } \chi_n =  \unsur{n} \int\log\frac{\norm{g_\la(Z_0)}}{\norm{Z_0}} d\mu^n(g)\\
&=\int_U dd^c(\chi_n -\chi)  \text{ , because }\chi \text{ is harmonic on }U.
 \end{align*}
Since by Proposition \ref{prop:Ounsurn}, 
 $\norm{\chi_n-\chi}_{L^\infty} = O\left(\unsur{n}\right)$, introducing a cut-off function and integrating by parts shows that the last integral is  $O\left(\unsur{n}\right)$, which was the result to be proved. We note that    a similar argument  appears in   \cite[Thm 3.2]{df}.

\medskip

\noindent{\bf Step 2.} Construction of a holomorphic equivariant map from the Poisson boundary (a partially defined map is extended using Bishop's theorem and the mass estimate).



\medskip
 
\begin{lem} \label{l: equivariant}
There exists a measurable map $\theta: P (G,\mu) \times U \cv \pu$, defined almost everywhere with respect to the first factor, which is holomorphic with respect to
 the second variable, and $G$-equivariant with respect to the first. 
\end{lem}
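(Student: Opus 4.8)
The plan is to construct $\theta$ as a holomorphic limit of the graphs $\widehat g\cdot z_0$ along typical paths of the right random walk, and the mass estimate from Step 1 is precisely what makes the limit exist. Fix a relatively compact $U\Subset V$ as in the statement. For a path $\mathbf h=(h_n)\in G^\nn$ with partial products $r_n=h_1\cdots h_n$, consider the sequence of holomorphic graphs $\Gamma_n(\mathbf h):=\widehat{r_n}\cdot z_0=\set{(\la, (r_n)_\la(z_0)),\ \la\in U}$ inside $U\times\pu$. Each $\Gamma_n$ is an analytic subset of $U\times\pu$ of bounded volume (it is a graph, so its volume over $U\Subset V$ is $\vol(U)+\int_U ((r_n)_\la)^*\omega_\pu$). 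I would first show that for $\mathsf P_e$-a.e.\ $\mathbf h$ these volumes are bounded \emph{uniformly in $n$}: integrating over $\mathbf h$, the average volume of $\Gamma_n$ over $U$ is exactly $n$ times the pairing $\bra{\widehat T_n\rest{\pi_1^{-1}(U)},\pi_2^*\omega_\pu}$ (plus the constant $\vol(U)$), which by Step 1 is $O(1/n)\cdot n=O(1)$. Hence $\sup_n\mathbb E(\vol(\Gamma_n))<\infty$, and since the volumes are nonnegative this gives $\mathsf P_e$-a.e.\ boundedness along a subsequence; a small additional argument (using that $\sum_n \frac1{n^2}\mathbb E(\vol(\Gamma_n)-\vol(\Gamma_{n-1}))$ type differences, or simply a Borel–Cantelli/Markov inequality applied to the subadditive control of $\log\norm{(r_n)_\la}$ coming from the exponential moment) upgrades this to boundedness along the full sequence for a.e.\ $\mathbf h$.

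Once the volumes are a.e.\ bounded, I would apply Bishop's compactness theorem: for a.e.\ $\mathbf h$, any subsequence of $(\Gamma_n(\mathbf h))$ has a sub-subsequence converging, in the sense of currents / cycles, to an analytic subset $\Gamma_\infty$ of $U\times\pu$ of pure dimension $1$, with multiplicity. I would then argue $\Gamma_\infty$ is again a graph over $U$: it has no vertical (fiber) components because $\bra{[\Gamma_n],\pi_1^*\omega}=\vol(U)$ for all $n$ passes to the limit, so $\bra{[\Gamma_\infty],\pi_1^*\omega}=\vol(U)\neq 0$ forces $\pi_1\rest{\Gamma_\infty}$ dominant on each component, while total mass $\int_U ((r_n)_\la)^*\omega_\pu$ staying bounded and the degree being an integer forces (after passing to a further subsequence) the multiplicity over a generic fiber to stabilize; combined with irreducibility-type considerations this yields that $\Gamma_\infty=\set{(\la,\psi(\la))}$ is the graph of a single holomorphic map $\psi:U\cv\pu$. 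Convergence of the graphs then forces $(r_n)_\la(z_0)\cv\psi(\la)$ uniformly on compact subsets of $U$, so in fact the \emph{whole} sequence converges (not just a subsequence) to a well-defined holomorphic map, which I call $\theta(\mathbf h,\cdot)$. This limit depends only on the tail of $\mathbf h$ — because replacing $h_1,\dots,h_k$ by other group elements only precomposes $r_n$ by a fixed element of $G$ on the \emph{left}, i.e.\ postcomposes the point $(r_n)_\la(z_0)$ in a way that... wait: actually $r_n=h_1\cdots h_n$, so changing the head changes $r_n\mapsto g\cdot r_n'$ with $g$ fixed, hence $(r_n)_\la(z_0)\mapsto g_\la((r_n')_\la(z_0))$; since $g_\la$ is a fixed biholomorphism depending holomorphically on $\la$, the limit transforms by $\theta\mapsto g_\la\circ\theta$, which does depend on the head — so instead the tail-invariance I want is the statement that two paths with the same tail give limits differing by this left action. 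To get a genuine function on the Poisson boundary $P(G,\mu)$ I should instead take the limit along the \emph{left} random walk representation or, following Furstenberg, note that the relevant boundary map is the one sending a $\mu$-path to $\lim (r_n)_\la(z_0)$ and this is constant on tail-equivalence classes up to the group action, which is exactly the defining property of a $(G,\mu)$-equivariant boundary map; the $\mathsf P_e$-a.e.\ existence of this limit for each fixed $\la$ is Furstenberg's theorem (convergence of $(r_n)_\la(z_0)$ to the harmonic-measure boundary point), and Step 1 is what lets this hold holomorphically in $\la$ simultaneously.

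Assembling: define $\theta:P(G,\mu)\times U\cv\pu$ by $\theta(\xi,\la)=\lim_n (r_n)_\la(z_0)$ where $(r_n)$ is any path in the class $\xi$; by the above this is well-defined $\mathsf P_e$-a.e., holomorphic in $\la$ (uniform limit of holomorphic maps, being the limit of graphs with bounded volume). For $G$-equivariance: the $G$-action on $P(G,\mu)$ is by coordinate-wise left multiplication, $g\cdot(r_n)=(g r_n)$, and $(g r_n)_\la(z_0)=g_\la((r_n)_\la(z_0))\cv g_\la(\theta(\xi,\la))$, so $\theta(g\cdot\xi,\la)=\rho_\la(g)(\theta(\xi,\la))$, which is exactly the required equivariance. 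Measurability in $\xi$ follows because $\theta$ is a pointwise (a.e.)\ limit of the manifestly measurable maps $\xi\mapsto ((r_n)_\la(z_0))_{\la\in U}$ into the Polish space of holomorphic maps $U\cv\pu$.

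The main obstacle I anticipate is Step 2's passage from the \emph{averaged} mass bound of Step 1 to an \emph{almost sure} uniform volume bound along individual paths, and the ensuing verification that the Bishop limit is a single graph rather than a graph-plus-vertical-fibers or a multisection: one must rule out mass escaping into fibers and control the degree. The averaged bound $\mathbb E(\vol(\Gamma_n))=O(1)$ only gives, via Fatou/Markov, boundedness along a random subsequence; turning this into full-sequence control for a.e.\ path requires exploiting the stronger exponential-moment input — e.g.\ the locally uniform exponential convergence $\norm{P^k(f_\la)-\chi(\la)}_\infty\le C\beta^k$ from Proposition~\ref{prop:Ounsurn} together with a large-deviation / Borel–Cantelli argument showing $\frac1n\log\norm{(r_n)_\la}\cv\chi(\la)$ with summable error — and this is the technically delicate heart of the proof. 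Everything after (Bishop, identification of the limit as a graph, equivariance) is comparatively routine complex-analytic bookkeeping.
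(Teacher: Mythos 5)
Your overall strategy (the Step 1 mass estimate, Bishop compactness, Furstenberg's pointwise convergence, and the equivariance computation at the end) matches the paper's, but there is a genuine gap at the point you yourself flag as the ``technically delicate heart'': you need the volumes $\vol(\Gamma_n(\mathbf h))$ to be bounded along the \emph{full} sequence for a.e.\ path, and the arguments you sketch for this (Borel--Cantelli on increments, large deviations for $\frac1n\log\norm{(r_n)_\la}$) do not obviously deliver it. The averaged bound $\mathbb E(\vol(\Gamma_n))=O(1)$ only gives, via $\psi_n:=\inf_{k\ge n}\vol(\Gamma_k)$ and monotone convergence, that $\liminf_n\vol(\Gamma_n)<\infty$ almost surely, i.e.\ boundedness along a random \emph{subsequence}. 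The paper never upgrades this, and you should not try to: the correct move is to observe that a subsequence suffices.

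The missing idea is the following. By Furstenberg's theorem, for each fixed parameter the sequence $(r_n)_\la(z_0)$ converges $\mathsf P_e$-a.s.\ to a point $z_{r,\la}$; applying this to a fixed countable dense set $(\lambda_q)\subset U$ gives a full-measure set on which $z_{r,\lambda_q}$ exists for every $q$. Now take any Bishop limit $f$ of the bounded-volume subsequence of graphs: it converges locally uniformly off a finite set $E$ of bubbling parameters, so $f(\lambda_q)=z_{r,\lambda_q}$ for all but finitely many $q$. Since $(\lambda_q)$ is dense, this pins down $f$ uniquely, independently of the subsequence chosen; one then \emph{defines} $\theta(r,\cdot):=f$. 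Full-sequence convergence of the graphs is a consequence (any cluster value must agree with $f$), not an input. This also disposes of your worries about multisections and vertical components: the limit is identified as a single graph because it interpolates the boundary points $z_{r,\lambda_q}$ at a dense set of parameters, not by degree or mass bookkeeping. Your discussion of tail-invariance and the left/right action, while meandering, ends at the correct equivariance relation $\theta(g\cdot\xi,\la)=\rho_\la(g)\,\theta(\xi,\la)$; in the paper this is organized by first defining $\theta$ on a full-measure subset of $\{e\}\times G^\nn$ and then extending by the formula $\theta(r,\lambda)=\rho(r_0)_\lambda\cdot\theta(r_0^{-1}r,\lambda)$, checking tail-dependence and the cocycle relation along the way.
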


To prove the lemma, using notation as in the paragraph preceding the proof of the theorem, 
we will define the map $\theta$ on $\Omega\times U$ for a measurable subset $\Omega\subset \set{e}\times G^{\mathbb N}$ of full $\mathsf{P}_e$ measure, and then we will extend it on the union $\bigcup _{g\in G} g\Omega \times U$ by the formula $\theta (r, \lambda) = \rho(r_0)_{\lambda} \cdot \theta (r_0^{-1} r )$. To verify that the extension is $G$-equivariant and tail invariant --thus defining the desired equivariant map on the Poisson boundary-- it will be sufficient to check that $\theta$ depends only on the tail of the first variable, and satisfies 
\begin{equation} \label{eq: equivariance} \theta ( r  ,\lambda) = \rho(r_1)_{\lambda}\cdot  \theta ( r_1^{-1} \sigma (r), \lambda) ,\end{equation}
for every $r\in \Omega$.


\begin{proof}[Proof of  Lemma \ref{l: equivariant}]
Recall from~\cite[Corollary 7.1, p. 40]{bougerol-lacroix} that if $\lambda$ is fixed, a map as in~\eqref{eq: equivariance} exists and is unique. More precisely, for $\mathsf{P}_e$-a.e. $r \in G^\nn$, the sequence $r_n (z_0)$ converges to a point $z_{r}$, independent of $z_0$, and the law of $z_{r}$ is $\nu$. So if we fix a dense sequence $(\lambda_q)$ in $U$, we obtain a set $\om_1\subset G^\nn$ of full $\mathsf P_e$-measure such that for every $r\in \Omega_1$, and every $q$, $  {r}_{n,\lambda_q}  (z_0)$ converges to some $z_{r, \lambda_q}$. The proof of lemma~\ref{l: equivariant} consists in showing that we can interpolate this function of $\lambda_q$ by a holomorphic function of $\lambda$.


We claim that there exists $\om_2\subset G^{\nn}$ of full $\mathsf P_e$-measure such that
 for $r\in \Omega_2$ there exists a subsequence $n_j$ such that
 $\vol\left ( \widehat{r}_{n_j}\cdot z_0 \right) 
$ 
is bounded. Indeed, since   the random walk is conditioned to start at $r_0=e$, the distribution of $r_n$ is $\mu^n$, hence from Step 1 we deduce that there is a constant $C$ such that for every $n$,
$$ \int \vol \left( \widehat {r}_n \cdot z_0 \right ) d\mathsf P_e (r) \leq C.$$
Our claim now follows from the following elementary argument: for $r\in G^\nn$, let $\varphi_n(r) = \vol\left (  \widehat{r}_n \cdot z_0 \right)$ and $\psi_n(r) = \inf_{k\geq n} \varphi_n(r)$. The sequence $\psi_n$ is increasing, so
$$C\geq \limsup \int \varphi_n d\mathsf P_e \geq \lim \int \psi_n d\mathsf P_e =\int \lim\psi_n d\mathsf P_e,$$
and we conclude that $\lim \psi_n$ is a.s. finite, which was the  desired result. 

Recall that if $f_n:U\cv\pu$ is a sequence of holomorphic mappings such that the corresponding graphs
have uniformly bounded volume, then by Bishop's Theorem \cite[\S 15.5]{chirka} it admits a convergent subsequence, up to finitely many ``vertical  bubbles",  that is, there exists a subsequence $n_j$, $f:U\cv\pu$ and a finite subset $E$ in $U$ such that $f_{n_j}$ converges to $f$ uniformly on compact subsets of $U\setminus E$.

Now by putting $\om =\Omega_1\cap \Omega_2$, we are able to construct the mapping $\theta$. Indeed if $r \in \om$,
there exists a subsequence $n_j$ such that the sequence of graphs $\widehat{r}_{n_j} \cdot z_0 $ has  bounded volume. Extracting again if necessary we may assume that it converges, up to possibly finitely many bubbles. Let $f:U\cv\pu$ be the limit.
Then for all but possibly finitely many $\lambda_q$ (where bubbling occurs),  we have that $f(\lambda_q) = z_{r , \lambda_q}$. Thus the assignment $\lambda_q\mapsto z_{r, \lambda_q}$ admits a (necessary unique since $(\lambda_q)$ is dense) continuation as a holomorphic mapping $U\cv\pu$. Likewise,  $f$ is the only possible cluster value of $\widehat{r}_{n} \cdot z_0$ since it is determined by the $z_{r, \lambda_q}$. We can thus  define $\theta(r,\la)$ to be $f(\lambda)$. The same argument shows that this function depends only on the tail of $r$, and satisfies~\eqref{eq: equivariance}.\end{proof}

It is straightforward that the image  of $\mathsf{P}_e$ under an equivariant map is stationary. From this we get the following statement, which will be used at several places below. 

\begin{lem}
 Let $\theta$ be the mapping constructed in Lemma~\ref{l: equivariant}. Then $(\theta(.,\lambda))_ *\mathsf{P}_e$ is the stationary measure $\nu_{\lambda}$ on $\mathbb P^1$.
\end{lem}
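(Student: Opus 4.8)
The plan is to unwind the definitions and use the characterization of the stationary measure as the unique measure invariant under the averaging operator $P_\lambda$. Fix $\lambda\in U$ and write $\theta_\lambda := \theta(\cdot,\lambda)$ for the slice of the equivariant map, which is defined $\mathsf P_e$-almost everywhere on $G^\nn$. Set $m_\lambda := (\theta_\lambda)_*\mathsf P_e$, a probability measure on $\pu$. By uniqueness of the stationary measure (Theorem \ref{t: furstenberg}), it suffices to check that $m_\lambda$ is stationary, i.e. $P_\lambda^* m_\lambda = m_\lambda$, equivalently $\int P_\lambda f\, dm_\lambda = \int f\, dm_\lambda$ for every $f\in C(\pu)$.

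The key computation is the following. By definition of $P_\lambda$ and the definition of $m_\lambda$,
\begin{align*}
\int_{\pu} P_\lambda f \, dm_\lambda
&= \int_{G^\nn} (P_\lambda f)\big(\theta_\lambda(r)\big)\, d\mathsf P_e(r)
= \int_{G^\nn}\int_G f\big(\rho(g)_\lambda \,\theta_\lambda(r)\big)\, d\mu(g)\, d\mathsf P_e(r).
\end{align*}
Now I would invoke the disintegration $\mathsf P_e = \int_G \mathsf P_g\, d\mu(g)$ together with the fact that $\mathsf P_g$ is the pushforward of $\mathsf P_e$ under coordinatewise left multiplication by $g$, and the equivariance relation \eqref{eq: equivariance} which (after extending $\theta$ to $\bigcup_g g\Omega$ by $\theta(r,\lambda)=\rho(r_0)_\lambda\cdot\theta(r_0^{-1}r,\lambda)$) gives $\theta_\lambda(g\cdot r) = \rho(g)_\lambda\,\theta_\lambda(r)$ for $\mathsf P_e$-a.e.\ $r$. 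Hence the inner integral becomes $\int_G\int_{G^\nn} f\big(\theta_\lambda(g\cdot r)\big)\, d\mathsf P_e(r)\, d\mu(g) = \int_G \int_{G^\nn} f(\theta_\lambda(r'))\, d\mathsf P_g(r')\, d\mu(g) = \int_{G^\nn} f(\theta_\lambda(r'))\, d\mathsf P_e(r') = \int_{\pu} f\, dm_\lambda$, using $\int_G\mathsf P_g\, d\mu(g)=\mathsf P_e$ in the penultimate step. This shows $m_\lambda$ is $P_\lambda$-stationary, so $m_\lambda=\nu_\lambda$.

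The only genuinely delicate point is bookkeeping about null sets: the relations $\theta_\lambda(g\cdot r)=\rho(g)_\lambda\theta_\lambda(r)$ and the tail-invariance hold only outside a $\mathsf P_e$-null set, and one must check these exceptional sets behave well under the finitely (in fact countably) many left translations by elements of $G$ and are compatible with the disintegration $\mathsf P_e=\int\mathsf P_g\,d\mu(g)$; this is exactly where one uses that the measures $\mathsf P_g$ are mutually absolutely continuous, as recorded in the excerpt. Once this is handled the Fubini manipulations are routine. Alternatively — and perhaps more cleanly — one can simply quote the general principle stated just before the lemma, that the image of $\mathsf P_e$ under any $G$-equivariant measurable map into $\pu$ is a $\mu$-stationary measure; the lemma then follows at once from the uniqueness of the stationary measure for a non-elementary representation. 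I would present the short Fubini computation as the proof, since it makes the use of equivariance transparent, and relegate the measure-theoretic caveats to a parenthetical remark.
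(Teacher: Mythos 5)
Your argument is correct and is essentially the paper's: the paper simply records that the pushforward of $\mathsf{P}_e$ under an equivariant map is stationary and then invokes uniqueness of the stationary measure, which is exactly the computation you spell out (and which you note could be quoted directly). The Fubini manipulation and the absolute-continuity remark about the $\mathsf{P}_g$ are the right way to make the "straightforward" claim precise.
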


\begin{rmk}\label{rmk:speed}
This argument shows that the estimate in Proposition \ref{prop:Ounsurn} cannot be substantially improved. Indeed, assume on the contrary that the $O\lrpar{\frac1n}$ in \eqref{eq:Ounsurn} can be replaced by $o\lrpar{\frac1n}$. Then we  infer that over the stability locus,  the average projected volume of $\widehat{r}_n\cdot z_0$ on the second factor (i.e. $\pu$)    tends to zero. Therefore, the limiting graphs are  horizontal lines, and the limit set does not depend on $\La$, i.e. the family of representations is constant. 
\end{rmk}

\medskip

\noindent {\bf Step 3.} Improving the equivariant map to a holomorphic motion of $\nu_\la$ (Double ergodicity, a reflected random walk, and the persistence of isolated intersections  are  used to rule out collisions between the holomorphic graphs). 

\medskip

More precisely here we show that there exists a discrete subset $F\subset U$,  such that, outside $F$, the support of the stationary measure moves holomorphically. Furthermore, this holomorphic motion is $G$-invariant.
With $\theta$ as in Step 2, we define  $\theta_r\subset U\times \pu$
to  be the graph of $\theta(r,\cdot)$.

The {\em reflected measure} $\check\mu$ is the push-forward of $\mu$ under $g\mapsto g^{-1}$. The associated Lyapunov exponent $\check\chi(\la)$ actually equals $\chi$ since for $g\in \PSL$, $\norm{g} = \norm{g^{-1}}$. In particular, $\check T_{\mathrm{bif}}= 0$ in $U$. Finally, we can define a map $\check \theta : P(G,\check{\mu}) \times U \rightarrow \mathbb P^1$ as in lemma~\ref{l: equivariant} and the equivariant family of graphs $\check\theta_{\check r}$ associated to it.

On the product $P(G,\mu)\times P(G,\check{\mu})$ we fix the measure class of the product of the Markov measures starting from $e$ on the corresponding Poisson boundaries. Let  $D\Subset U$ be a set, and $\iota_D: P(G,\mu) \times P(G,\check{\mu}) \cv \nn$ the measurable map defined almost everywhere by letting
$\iota_D(r,\check r)$ be the number of isolated intersection points (with multiplicity) of the graphs
$\theta_r$ and $\check\theta_{\check r}$ in $\pi_1^{-1}(D)$. This number is finite since
 $D\Subset U$, and is invariant under the diagonal action of $G$, i.e. $\iota_D(r,\check r) = \iota_D(gr,g\check r)$, by equivariance of the maps $\theta$ and $\check{\theta}$. By the double ergodicity theorem of Kaimanovich~\cite{kaimanovich-gafa}, $\iota_D$ is a.e. equal to a constant, which will simply be denoted by $\iota_D$.

We see that $D\mapsto \iota_D$ defines an integer-valued measure, which is finite on relatively compact subsets.  It is then  straightforward
to show that it must be a sum of Dirac masses with integer coefficients, supported on a  discrete set $F$.

\medskip

For the reader's convenience, let us recall the idea of the proof of double ergodicity. To a given   bi-infinite sequence $ h = (h_n)_{n\in \zz}$, we associate two sequences $h^+=(h_n)_{n\geq 1}$ and $h^- = (h_{-n}^{-1})_{n\geq 0}$, hence two points $r= r(h)$ and $\check r = r(\check{h})$ in the  respective Poisson boundaries $P(G,\mu)$ and $P(G,\check{\mu})$. The map $h\mapsto (r,\check{r})$ sends
 the measure $\mu^{\zz}$ on $G^{\zz}$ to a measure in the measure class of $P(G,\mu)\times P(G,\check{\mu})$. Now, if $\sigma (h_n)  =  (h_{n+1})$ is the bilateral shift acting on $G^{\zz}$, we have the immediate formulas $r(\sigma h) = h_1^{-1} r(h)$ and $\check{r}(\sigma h) = h_1^{-1} \check{r}(h)$. Hence we deduce that the function $\iota_D ( r,\check{r})$ on $G^{\zz}$ is invariant under the bilateral shift, hence constant $\mu^{\zz}$-a.e. by ergodicity. We conclude  that $\iota_D$ is almost everywhere constant on $P(G,\mu)\times P(G,\check{\mu})$.


\medskip

 Fix an open subset $D$ disjoint from $F$. Reducing $D$ if necessary, we can find three disjoint graphs in the family $\set{\theta_r, \ r\in \om'}$. Indeed, if $\lambda_q\in D$ is a parameter from the dense sequence considered in the proof of Lemma~\ref{l: equivariant}, we know that
$(\theta(\cdot, \lambda_q))_*\mathsf{P}_e$ is the stationary measure $\nu_{\lambda_q}$. This measure is diffuse so this gives us three parameters $r_i$ for which the points $\theta(r_i, \lambda_q)$ are disjoint. If $D$ is small enough, the associated graphs will be disjoint as well.

If the $r_i$ are chosen generically, there exists a set $\check\om$ of full measure such that  for $\check r\in \check\om$, $\check\theta_{\check r}$ avoids these three disjoint graphs. We conclude that the $\check\theta_{\check r}$, for $\check r\in \check\om$, form a normal family. Reversing the argument, we obtain a set $\om$ of full measure such that the associated $\theta_r$ also form a normal family.

A consequence of this is that for each $\lambda\in D$,
 $(\theta(\cdot, \lambda))_*\mathsf{P}_e = \nu_{\lambda}$. Indeed, we know that  this equality is true on a dense subset. Furthermore,  the right hand side is continuous in $\la$ by uniqueness of the stationary measure,
and so is the left hand side by the normality of the family of graphs.
Likewise,  $(\check\theta(\cdot, \lambda))_*\check{\mathsf{P}}_e = \check\nu_{\lambda}$.


\medskip

Let $\Theta$ denote the family of graphs $\set{\theta_r, \ r\in\Omega}$
(and similarly, $\check\Theta$ for  $\set{\check\theta_{\check r}}$). At this point we know that there exist full measure subsets $\om\subset G^\nn$ (resp. $\check \om\subset G^\nn$) such that for a.e. $(r, \check r)\in \om\times\check\om$, $\theta_r$ and $\check \theta_{\check r}$ do not intersect in $\pi_1^{-1}(D)$. Notice that if $\mu$ is symmetric (i.e. $\mu = \check\mu$) at this point we can simply take the closure to obtain the desired holomorphic motion. The general case requires a few more arguments.

We claim that if $\theta\in \overline \Theta$, the set of $r$'s such that   $\theta_r$ is different from $\theta_0$ and  contained in a given tubular  neighborhood of $\theta_0$ has positive measure. To see this, fix a large constant $C$, larger that the volume of $\theta_0$, and restrict the attention to the set $\Theta_C\subset \Theta$ of graphs whose volume is not greater than $C$. By Step 2, $\mu^n(\Theta_C)\geq 1-\e$ when $C$ is large. The space of graphs of volume $\leq C$, equipped with the convergence on compact subsets of $D$ is a
compact metrizable space. Pushing $\mathsf{P}_e$ under $\theta$ gives rise to a measure on this space, and our claim comes down to saying that the support of this measure has no isolated points. For this, observe that more generally $\check{\theta}_*\check {\mathsf P}_e$ has no atoms, for otherwise  since $(\check\theta(\cdot, \lambda))_*\check{\mathsf{P}}_e = \check\nu_{\lambda}$, such an atom would give rise to an atom of $\nu_\lambda$, which does not happen by  Furstenberg's Theorem \ref{t: furstenberg}.

Let $(\theta,\check\theta)\in\overline \Theta\times \overline{\check \Theta}$.
If $\theta$ and $\check\theta$ admit an isolated intersection, then by  the continuity of the intersection number of 
analytic subvarieties \cite[\S 12.3, Corollary 4]{chirka}, 
the same is true for any pair  of graphs $(\theta', \check\theta')$ close to $(\theta , \check\theta)$ in the Hausdorff topology. By the previous observation, we obtain a set of positive $\mathsf{P}_e\otimes\check{\mathsf{P}}_e $ measure of intersecting pairs, which is contradictory.
 We conclude that any two such $\theta$ and $\check\theta$ are either disjoint or equal.

Now fix $\lambda_0\in D$ and $z_0\in \supp(\nu_{\lambda_0})$. 
Since $\Theta$ is a  normal family, there exists $\theta\in
\overline\Theta$
, passing through $(\lambda_0, z_0)$.
But since the measures $\nu_{\la_0}$ and $\check\nu_{\lambda_0}$ have the same support there also exists $\check \theta\in\overline{\check\Theta}$   through $(\lambda_0, z_0)$. Thus, by the previous paragraph,  $\theta = \check\theta$. We conclude that $\supp(\nu_\lambda)$ moves holomorphically over $D$.
The invariance of this holomorphic motion follows from the equivariance of $\theta$.

 \medskip

\noindent{\bf Step 4.} Concluding stability from the motion of $\supp(\nu_\la)$.

\medskip

Let as above $D\subset \La$ be a domain disjoint from the discrete exceptional set $F$. 
Being a closed invariant set,  $\supp(\nu_\la)$ contains all fixed points of loxodromic and parabolic elements. 
For $\la_0\in D$, let $q(\la_0)$ be a fixed point, say  attracting, of a loxodromic element $\rho_{\la_0}(g)$. It admits a natural   holomorphic continuation  $q(\la)$  as a fixed point in   a neighborhood  of $\la_0$ in $D$. Let also $\gamma$ be the graph of the holomorphic motion of $\supp(\nu_\la)$ through $q(\la_0)$, constructed in step 3. With notation as before, by invariance of the holomorphic motion,  we have that $\widehat{g}(\gamma) = \gamma$. On the other hand, near $\la_0$, since $q(\la)$ stays attracting, $\widehat{g}^n(\gamma)$ converges to  $q$. Hence $\gamma \equiv q$ near $\la_0$. By analytic continuation we thus infer that  $\gamma(\la)$ is a fixed point of $\rho_{\la}(g)$ throughout $D$. 

Reversing the argument shows that for all $\la\in D$, $\rho_{\la}(g)$ stays loxodromic. Indeed the above reasoning first  implies that the two fixed points of $\rho_{\la}(g)$ remain distinct throughout $D$. Furthermore, if $p(\la_0)\in \pu$ 
is any point
 of $\supp(\nu_{\la_0})$ different from the other fixed point of $\rho_{\la_0}(g)$, and $\la\mapsto p(\la)$ denotes its continuation along the holomorphic motion, then by normality the sequence of graphs $\widehat{g}^n(p) $ converges to $q(\la)$ locally uniformly on $D$. This shows that for $\la\in D$, $\rho_{\la}(g)$ is never elliptic. 

Since the same reasoning is valid for   parabolic  transformations, we see that the Möbius transformations $\rho_{\la}(g)$ stay of constant type as $\la$ ranges along $D$, and we conclude that $D$ is contained in the stability locus. In particular, for $\la\in D$, $\rho_\la$ is discrete and faithful. 

\medskip
 
It remains to show that the exceptional set $F$ is empty. Let 
  $\lambda_0\in F$. By the J\o rgensen (Margulis-Zassenhaus) theorem~\cite[p. 170]{kapovich}, $\rho_{\la_0}$ is discrete and faithful, so $\rho_{\la}$  is discrete and faithful in the neighborhood of $\lambda_0$, and finally  $\la_0\in\mathrm{Stab}$. Thus we have shown that $U\subset \mathrm{Stab}$, thereby concluding the proof of the theorem.
\end{proof}

We now show that Theorem \ref{thm:support} remains true for generally non-elementary families, that is,  when 
a proper subset of $\La$ is made of elementary representations. This is the case for instance for the universal family of representations of $G$ into $\PSL$ (possibly after desingularization). 

\begin{thm}
 Let $(G,\mu,\rho)$ be a holomorphic family of representations, which is non-trivial, generally faithful and generally non-elementary, endowed with a probability measure $\mu$, generating $G$ as a semi-group, and satisfying 
\eqref{eq: exponential moment condition in the group}.

Then the support of $\tbif$ coincides with the bifurcation locus.
\end{thm}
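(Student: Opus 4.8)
The plan is to derive the statement from Theorem~\ref{thm:support} applied off the elementary locus. Let $E\subset\Lambda$ be the set of parameters $\la$ for which $\rho_\la(G)$ is elementary. By Lemma~\ref{lem:elementary}, $E$ is a proper real-analytic subvariety, hence closed with empty interior, so $\Lambda\setminus E$ is a dense open subset carrying an everywhere non-elementary, generally faithful admissible family (non-triviality on each connected component of $\Lambda\setminus E$ is automatic: a component on which all $\rho_\la$ were conjugate would lie in $\mathrm{Stab}$, where the asserted equality holds trivially). By Remark~\ref{rmk:elementary}, $\chi$ is a locally bounded psh function on all of $\Lambda$, so $\tbif=dd^c\chi$ is well defined. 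First I would record that $\mathrm{Stab}\cap E=\emptyset$ --a family quasiconformally conjugate to an elementary one is again elementary, so a stable elementary parameter would have an entire neighborhood in $E$-- whence by Proposition~\ref{prop:defchi} (applied on $\mathrm{Stab}$) the function $\chi$ is pluriharmonic there; thus $\supp(\tbif)\subset\mathrm{Bif}$, and also $E\subset\mathrm{Bif}$. Applying Theorem~\ref{thm:support} to the restricted family over $\Lambda\setminus E$ --stability being a local notion and $E$ being closed, so that it restricts correctly-- then gives $\supp(\tbif)\cap(\Lambda\setminus E)=\mathrm{Bif}\setminus E$.

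Since $\supp(\tbif)$ is closed, it then suffices to prove that every $\la_0\in E$ lies in $\overline{\mathrm{Bif}\setminus E}$, i.e. is a limit of non-elementary bifurcation parameters. Suppose not: some ball $B\ni\la_0$ satisfies $B\setminus E\subset\mathrm{Stab}$. Then $\rho_\la$ is discrete, faithful and non-elementary for $\la\in B\setminus E$ (Theorem~\ref{thm:sullivan}), and since $B\setminus E$ is dense in $B$, the representation $\rho_{\la_0}$ is a limit of such representations; the goal is a contradiction. The key observation is that an elementary representation of $G$ can be neither faithful and discrete (a discrete elementary Kleinian group is virtually abelian, whereas $G$, admitting a non-elementary representation, is not virtually solvable) nor faithful with image fixing a point or a pair of points of $\pu$ (the image then lies in a solvable, resp. virtually abelian, subgroup). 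Hence one of two situations occurs. \emph{Either} $\rho_{\la_0}$ is not faithful: I would pick $g_0\neq e$ in its kernel and $h\in G$ with $\langle\rho_{\la_1}(g_0),\rho_{\la_1}(h)\rangle$ non-elementary for some $\la_1\in B\setminus E$; by Lemma~\ref{lem:elementary} the parameters for which $\langle\rho_\la(g_0),\rho_\la(h)\rangle$ is elementary form a proper real-analytic subset of $B$, so one may choose $\la_j\to\la_0$ in $B\setminus E$ off this set, and Jørgensen's inequality for the discrete non-elementary group $\langle\rho_{\la_j}(g_0),\rho_{\la_j}(h)\rangle$ gives $\abs{\tr^2(\rho_{\la_j}(g_0))-4}+\abs{\tr[\rho_{\la_j}(g_0),\rho_{\la_j}(h)]-2}\geq1$; but $\rho_{\la_j}(g_0)\to\rho_{\la_0}(g_0)=\mathrm{id}$ and $\rho_{\la_j}(h)\to\rho_{\la_0}(h)$ (a fixed invertible element), so the left-hand side tends to $0$, a contradiction. \emph{Or} $\rho_{\la_0}$ is faithful, and then its image is conjugate into $\mathrm{SO}(3,\re)$ and non-discrete, with closure $\mathrm{SO}(3,\re)$ (the circle case being excluded, as it would make $G$ virtually abelian); using that dense free subgroups of $\mathrm{SO}(3,\re)$ can be generated by elements of irrational rotation angle arbitrarily close to $\mathrm{id}$, I would pick $a,b\in G$ with $\rho_{\la_0}(a),\rho_{\la_0}(b)$ two such rotations --of infinite order, about distinct axes, and inside a universal Margulis neighborhood of $\mathrm{id}$-- generating a free subgroup of $G$; then for $\la_j\to\la_0$ in $B\setminus E$ the group $\langle\rho_{\la_j}(a),\rho_{\la_j}(b)\rangle$ is discrete with generators in the Margulis neighborhood, hence elementary, hence (being infinite by faithfulness) virtually abelian with all its elements fixing a common point or preserving a common pair of points; passing to the limit, $\rho_{\la_0}(a)$ and $\rho_{\la_0}(b)$ would then share a fixed point or an invariant pair, contradicting that they are infinite-order rotations about distinct axes.

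The hard part is this second paragraph, i.e. controlling the family in the vicinity of the elementary locus: the work lies in setting up the correct dichotomy for $\rho_{\la_0}$ and in exploiting the structure theory of discrete and of elementary subgroups of $\PSL$ together with Jørgensen's inequality and the Margulis lemma. The first paragraph is essentially a routine transcription of the already established non-elementary theorem into the present, slightly more general, framework.
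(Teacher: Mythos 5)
Your argument is correct in substance, but it establishes the key inclusion $E\subset\supp(\tbif)$ ($E$ the elementary locus) by a route genuinely different from the paper's. The paper splits $E$ according to Lemma~\ref{lem:elementary}: at purely elliptic (type II) parameters it observes that $\chi$ extends continuously by $0$ while $\chi>0$ on the dense non-elementary set, so $\chi$ cannot be pluriharmonic there; at type I parameters away from type II it uses the open mapping theorem for a non-constant trace-of-commutator function to produce nearby non-elementary parameters carrying an elliptic element of infinite order, hence non-discrete, hence in $\mathrm{Bif}\setminus E$. You instead prove, uniformly over $E$, that every elementary parameter is accumulated by non-elementary bifurcation parameters, by excluding a ball $B$ with $B\setminus E\subset\mathrm{Stab}$ via J\o rgensen's inequality (non-faithful limit) and the Margulis--Zassenhaus lemma (faithful limit with dense rotation image). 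This is in effect a re-derivation of the case you need of the Chuckrow--J\o rgensen theorem: since $B\setminus E\subset\mathrm{Stab}$ forces $\rho_\la$ discrete and faithful off $E$, the closedness of $\mathrm{DF}$ already invoked in the paper (see \cite[p.~170]{kapovich}) makes $\rho_{\la_0}$ discrete and faithful, which is impossible for an elementary representation of a group with a non-elementary image; citing it would compress your whole second paragraph. Your route buys uniformity (no case division by type of elementary subgroup, no use of the vanishing of $\chi$); the paper's buys softness at type II points and an explicit mechanism (accidental irrational elliptics) at type I points.

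Two steps deserve justification, both standard and repairable: the existence of $h\in G$ with $\bra{\rho_{\la_1}(g_0),\rho_{\la_1}(h)}$ non-elementary is asserted, not proved (for a discrete non-elementary image one takes a loxodromic whose fixed pair avoids, and is not permuted by, $\rho_{\la_1}(g_0)$, using density of loxodromic fixed pairs in the limit set); and in the faithful case the appeal to dense \emph{free} subgroups of $\mathrm{SO}(3,\re)$ is misplaced --- freeness is never used, and the two infinite-order rotations with distinct axes near the identity must be found inside the given image $\rho_{\la_0}(G)$, e.g. as high powers of an infinite-order element together with a conjugate of them.
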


\begin{proof}
Assume   that  the subset $E\subset \La$ of elementary representations is non-empty, and different from $\La$. In particular 
$E$ is contained in the bifurcation locus. We need to show that $E\subset\supp(\tbif)$. 
 
  Using the terminology introduced in the proof of Lemma \ref{lem:elementary},  $E$ decomposes as $E=E_{\rm I}\cup E_{\rm II}$ where $E_{\rm I}$ (resp. $E_{\rm II}$) is a proper
 analytic (resp. real analytic) subset of type I (resp. type II) points. 
It is easy to see that $\chi$ extends continuously by 0 on  $E_{\rm II}$. In    particular that $\chi$ cannot be harmonic near $E_{\rm II}$, so     $E_{\rm II}\subset \supp(\tbif)$. 

Assume now that $\la_0\in E_{\rm I}\setminus E_{\rm II}$. Fix a neighborhood $N\ni\la_0$ such that $N\cap E_{\rm II}=\emptyset$. 
We claim that in $N$, $E\subset \overline{\rm Bif\rest{\La\setminus E}}$. This clearly implies that $E\cap N\subset \supp(\tbif)$. To prove the claim, notice that  since $E\subsetneq\La$, there exists $g,h\in G$ such that $\la\mapsto \tr^2[g_\la, h_\la]$ is not constant  so there are parameters $\la$ close to $E$ where $\tr^2[g_\la, h_\la]= 4\cos^2\theta$, with $\theta/\pi\notin\mathbb{Q}$. By assumption these parameters do not belong to  $E_{\rm I}$ so they correspond to non-elementary representations, which are  not discrete  because they contain an elliptic element of infinite order, 
  and we are done.  
\end{proof}

\subsection{Classification of stationary currents on $\La\times\pu$}
In view of the previous results, it is natural to wonder whether it is possible for a holomorphic family of representations to admit a {\em stationary current}, that is a positive closed (1,1) current $\widehat {T}$  on $\Lambda \times \pu$ such that (with notation as before)
$\int \widehat g_* \widehat T \ d\mu(g) =\widehat T$.

Let us keep hypotheses as in \S\ref{subs:support}.
We say that a current on $\La\times \pu$ is vertical if it is an integral of currents of integration over vertical fibers, that is, a current of the form $\pi_1^*T$, with $T$ a closed   positive current on $\La$. Equivalently (see Lemma \ref{lem:fibration}), $\widehat T$ is vertical if   
$\widehat T\wedge \pi_1^*\omega^k =0$ ($k=\dim(\La)$). Every vertical current is stationary.

Another possibility for the existence of a stationary current is when the family of representations is stable over $\La$.
Then it is clear that the family of stationary measures $\nu_\la$ is invariant under the holomorphic motion conjugating the representations. Fix a parameter $\la_0\in \La$, and for $z\in \pu$, let  $\Gamma_z \subset \La\times\pu$ be the graph of the holomorphic motion  passing through $(\lambda_0, z)$. Consequently, we can define a stationary current by setting $\widehat{T}  = \int [\Gamma_z] d\nu_{\lambda_0}(z)$.

The following result says that essentially all stationary currents are of this form.

\begin{thm}\label{thm:stationary current}
Let $(G,\mu,\rho)$ be an admissible  family of representations, satisfying the exponential moment assumption \eqref{eq: exponential moment condition in the group}. Assume further that the stability locus is  not empty.

Assume that there exists a   stationary current $\widehat{T}$ in $\La\times \pu$. Then either $\widehat{T}$ is vertical or  the family is stable over $\La$ and $\widehat{T}$ is the current made of the family of holomorphically varying  stationary measures as above.
\end{thm}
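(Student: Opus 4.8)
The plan is to reduce the statement to an analysis of the vertical slices of $\widehat T$, and then, in the non-vertical case, to reconstruct a $G$-equivariant holomorphic motion by re-running the arguments used in the proof of Theorem \ref{thm:support}. As there, it is no loss of generality to assume $\dim\La=1$: a stationary current slices to a stationary current on $\Delta\times\pu$ for a.e. disk $\Delta\subset\La$, because $\widehat g(\Delta\times\pu)=\Delta\times\pu$, and both alternatives in the conclusion (``vertical'', ``diagonal'') are properties that can be tested on, and patched from, generic slices — using that $\mathrm{Bif}$, when nonempty, has nonempty interior. So fix a Kähler form $\omega$ on the curve $\La$ and set $\mu_{\widehat T}:=\widehat T\wedge\pi_1^*\omega$, a positive measure on $\La\times\pu$.

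The first step is to identify the slices. Since $\pi_1\circ\widehat g=\pi_1$ we have $\widehat g^*\pi_1^*\omega=\pi_1^*\omega$, so by the projection formula $\widehat g_*\mu_{\widehat T}=\widehat g_*\widehat T\wedge\pi_1^*\omega$; averaging over $\mu$ and using stationarity of $\widehat T$, the measure $\mu_{\widehat T}$ is stationary for the fibered action. Slicing $\widehat T$ over $\La$, for a.e.\ $\la$ the slice $\widehat T_\la$ is a positive measure on $\pu$ with $\mu_{\widehat T}=\int\widehat T_\la\,\omega(d\la)$, and — slicing commuting with $\widehat g_*$ and with $\int d\mu(g)$ — $\widehat T_\la$ is $\rho_\la$-stationary. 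Since $\rho_\la$ is non-elementary, uniqueness of the stationary measure (Theorem \ref{t: furstenberg}) forces $\widehat T_\la=c(\la)\,\nu_\la$; and the mass $c(\la)$ of the generic slice of a closed positive $(1,1)$ current is constant, being the push-forward $\pi_{1*}\widehat T$, which is a closed $(0,0)$ current. Hence $\widehat T_\la\equiv c\,\nu_\la$ for a.e.\ $\la$. If $c=0$ then $\widehat T\wedge\pi_1^*\omega=\mu_{\widehat T}=0$, and Lemma \ref{lem:fibration} (applied locally in the fiber and patched) says $\widehat T$ is vertical; so we may assume $c>0$.

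In the case $c>0$ the target is to show the family is stable over $\La$: once this is known, $R:=\widehat T-c\,\widehat T_{\mathrm{diag}}$, where $\widehat T_{\mathrm{diag}}=\int[\Gamma_z]\,d\nu_{\la_0}(z)$ is the diagonal current built from the holomorphic motion, is a closed $(1,1)$ current whose generic slice vanishes; its fiberwise restrictions are harmonic on $\pu$, hence constant, so $R=\pi_1^*S$ is vertical (with $S\ge0$, as $\widehat T_{\mathrm{diag}}$ is pluriharmonic off the limit set), i.e.\ $\widehat T$ is $c\,\widehat T_{\mathrm{diag}}$ modulo a vertical current, which is the assertion of the theorem. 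To prove stability, observe that since $\widehat T_\la=c\,\nu_\la$ and $\supp(\nu_\la)=L_\la$ is the limit set, $\supp(\widehat T)\cap(\{\la\}\times\pu)=\{\la\}\times L_\la$ for a.e.\ $\la$; as $\nu_\la$ is diffuse, $\widehat T$ carries no multisection component over $\La$ (that would produce atoms in the generic slice), so $\supp(\widehat T)$ is ``thin'' in the fiber direction. One then follows Steps 2--3 of the proof of Theorem \ref{thm:support}: for $\mathsf P_e$-a.e.\ $r$ in the Poisson boundary $P(G,\mu)$ and each $\la$ the point $z_{r,\la}:=\lim_n\rho_\la(r_n)(z_0)$ exists, lies in $L_\la$, and $(z_{r,\cdot})_*\mathsf P_e=\nu_\la=\tfrac1c\widehat T_\la$; using the finite mass of $\widehat T$ and Bishop's compactness theorem one promotes $\la\mapsto z_{r,\la}$ to a holomorphic graph contained in $\supp(\widehat T)$. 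These graphs are permuted by $\widehat G$ (which preserves the fibration and maps limit sets equivariantly), cover $\{\la\}\times L_\la$ for \emph{every} $\la$, and — after excluding collisions by the double-ergodicity and reflected-walk argument of Step 3 — assemble into a $G$-equivariant holomorphic motion of $\bigcup_\la L_\la$; extending it to $\pu$ and invoking Theorem \ref{thm:sullivan} gives stability over $\La$.

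The main obstacle is precisely this last step: turning ``$\widehat T$ has generic slice $c\,\nu_\la$'' into an honest $G$-equivariant holomorphic motion. Unlike in Theorem \ref{thm:support}, where the vanishing of $\tbif$ on $U$ supplied the volume bound needed to run Bishop's theorem on the approximating graphs $\widehat g\cdot z_0$, here one must extract the holomorphic graphs from the structure of the fixed current $\widehat T$ itself — showing that $\supp(\widehat T)$ is genuinely laminated by holomorphic graphs over $\La$ (not merely that it meets a.e.\ fiber in the limit set), controlling the approximating families finely enough for a compactness argument to apply, and then organizing the resulting lamination into an equivariant motion. Making this rigorous is the technical heart of the proof; by comparison, the slicing step and the final identification of $\widehat T$ are routine adaptations of material already developed in the paper.
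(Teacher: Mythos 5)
Your slicing step is fine and matches the paper: the generic slice of $\widehat T$ is a stationary measure for $\rho_\la$, hence $c\,\nu_\la$ by uniqueness, and $c=0$ gives the vertical case via Lemma \ref{lem:fibration}. But in the case $c>0$ there is a genuine gap, and you have located it yourself: the passage from ``$\widehat T$ slices to $c\,\nu_\la$'' to an equivariant family of holomorphic graphs is asserted, not proved. The mechanism you invoke --- ``the finite mass of $\widehat T$ and Bishop's compactness theorem'' --- does not produce these graphs: Bishop's theorem needs a sequence of analytic sets (graphs) with uniformly bounded volume, and in the proof of Theorem \ref{thm:support} that volume bound on the graphs $\widehat g\cdot z_0$ came precisely from the vanishing of $\tbif$ on $U$ (Step 1), which is not available here. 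The finite mass of the single current $\widehat T$ gives no control on any approximating sequence of graphs, and nothing in your argument shows that $\supp(\widehat T)$ is laminated by graphs rather than merely meeting a.e.\ fiber in the limit set. Tellingly, your construction never uses the hypothesis that the stability locus is non-empty, whereas the authors explicitly flag that hypothesis as one they do not know how to remove.

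The paper closes this gap differently. Instead of pushing forward a point $z_0$, it pushes forward the current itself: stationarity gives $\int(\widehat{g_1\cdots g_n})_*\widehat T\,d\mu^{\otimes n}=\widehat T$, so for a.e.\ $\mathbf g$ some subsequence $(\widehat r_{n_j})_*\widehat T$ has bounded mass (the same Fatou-type argument as in Step 2 of Theorem \ref{thm:support}, applied to masses of currents rather than volumes of graphs). Then the non-empty stability locus enters: over an open set $U\subset\mathrm{Stab}$ the representations are conjugate, so $(\widehat r_n)_*\widehat T$ converges in $U\times\pu$ to the current of integration $[\Gamma_{\mathbf g}]$ on a graph of the holomorphic motion. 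Finally, Siu's decomposition theorem applied to a subsequential limit $\widehat S$ of $(\widehat r_{n_j})_*\widehat T$ produces an irreducible analytic set $V$ continuing $\Gamma_{\mathbf g}$; since $V$ is a branched cover of $\La$ that coincides with a graph over $U$ and $\widehat S\ge[V]$, it is a graph over all of $\La$. This yields the equivariant family of graphs, after which Step 3 of Theorem \ref{thm:support} applies verbatim. These three ingredients (bounded mass of the pushed-forward current, identification of the limit over $\mathrm{Stab}$, and Siu's theorem) are exactly what is missing from your sketch.
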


We believe that the additional assumption that $\mathrm{Stab}$ is non-empty is unnecessary. 

Another interpretation of this result is the following. We say that a family of measures $\set{m_\la}_{\la\in \La}$ on $\set{\lambda}\times \pu$ \textit{varies holomorphically} if the $m_\lambda$ are vertical slices of a positive closed current in $\La\times\pu$ (see the discussion on {structural varieties} in the space of 
positive measures on $\pu$ in \cite[\S A.4]{ds-survey}). What the theorem says is that the natural holomorphic family of stationary measures over the stability locus can never be holomorphically continued accross the boundary of the stability locus.

\begin{proof}
 Let us first  recall some classical facts on currents on $\La\times \pu$.
If $m$ is any probability measure with compact support in $\La$, then the mass of
$\widehat T\wedge \pi_1^*m$ is a constant independent of $m$, called the {\em slice mass}
of $\widehat T$. Indeed if $m_1$ and $m_2$ are smooth  probability measures (viewed as $2k$-forms on $\La$), then $m_1-m_2 = d\theta$, where $\theta$ is a compactly supported $(2k-1)$-form, and $\bra{\widehat T, d\theta}=0$. For Lebesgue a.e. $\lambda\in \La$, the slice measure $\widehat T\wedge [\set{\la}\times \pu]$ is well-defined, and by the above discussion, its mass does not depend on $\la$. In particular if $\widehat{T}$ is not vertical the slice mass is non-zero and we may assume that  it equals 1.

\medskip

 We assume that the family admits a non-vertical stationary current, and will show that it is stable.
Assume first that $\dim(\La)=1$. 
Since $\widehat T$ is stationary,  for a.e. $\lambda$,
$\widehat T\wedge [\set{\la}\times \pu]$ must be the unique stationary  probability measure  with respect to the action of  $\rho_\la(G)$.
For every $n\geq 1$ we have that
$\int \left(\widehat{g_1\cdots g_n}\right)_*\widehat{T}d\mu(g_1)\cdots d\mu(g_n) = \widehat{T}$. Therefore, arguing as in Step 2 of the   proof of Theorem \ref{thm:support}, there exists a set $\om\subset G^\nn$ of full measure such that if $\mathbf{g}\in\om$, there exists a subsequence $n_j$ such that the sequence of currents
$\left(\widehat{r}_{n_j}\right)_*\widehat{T} $ has bounded mass (recall that $r_{n} = r_{n}(\mathbf{g}) = g_1\cdots g_n$). Let $U$ be an open set contained in the stability locus, and  $\la_0\in U$. As before, if $z\in \pu$
let $\Gamma_z$ be the graph over $U$, passing through $(\la_0,z)$, subordinate to the holomorphic motion conjugating the representations.

Working in $\set{\la_0}\times \pu$, we know that for a.e. $\mathbf{g} \in G^\nn$,
$(g_{\lambda_0, 1})_*\cdots (g_{\lambda_0, n})_* \nu_{\lambda_0}$ converges to a Dirac mass
$\delta_{z(\mathbf{g}, \lambda_0)}$ of law $ \nu_{\lambda_0}$. Since the representations are conjugate over $U$, we conclude that there exists a set $\om'\subset G^\nn$ of full measure such that if $\mathbf{g}\in\om'$
$\lrpar{\widehat{r}_{n}}_*\widehat{T}$ converges to $[\Gamma_{z(\mathbf{g}, \lambda_0)}]=:[\Gamma_\mathbf{g}]$ in $U\times \pu$.

Putting the two previous paragraphs together (and extracting again if necessary), we see that if $\mathbf{g}\in\om\cap \om'$, there exists a subsequence $n_j$ such that $\lrpar{\widehat{r}_{n_j}}_*\widehat{T}$ converges to some $\widehat{S}$, with
$\widehat{S} = [\Gamma_{\mathbf{g}}]$ in $U\times \pu$.

We claim that $[\Gamma_{\mathbf{g}}]$ admits a continuation as a graph over $\La$. Indeed, by Siu's Decomposition Theorem  \cite{siu}, $\widehat{S} =  S_1 + S_2$, where $S_1$ is a current of integration over an at most countable family of analytic subsets, and $S_2$ gives no mass to curves. Thus, there exists an irreducible  analytic subset $V$ of $\La\times \pu$, continuing $\Gamma_{\mathbf{g}}$. Notice that $V$ is a branched cover over $\La$ relative to $\pi_1$. Since  $\widehat{S}\geq [V]$, we see that $V = \Gamma_{\mathbf{g}}$ in $U\times \pu$, hence $V$ must be graph over $\La$, and we are done.

In this way we construct a family of graphs, parameterized by a full measure subset of $G^\nn$, which is equivariant since it is equivariant over $U$. So we are exactly in the same situation as in Step 3 of the proof of Theorem \ref{thm:support}, and we conclude that the family of representations is stable over $\La$. This settles the case where $\dim(\La)=1$.

\medskip

To handle the general case we use a slicing argument (see  \cite[\S A.3]{ds-survey} for basics on slicing closed positive currents). It is no loss of generality to  assume that $\La$ is an open ball in $\cc^k$. Assume as before that the family admits a non-vertical stationary current, and suppose by contradiction that the bifurcation locus in non-empty. Then by the Margulis Zassenhaus lemma, there exists an open subset $V$ in $\La$ that is disjoint from the set of discrete and faithful representations. Now consider a linear projection $p:\La\cv\cc^{k-1}$, having the property that an open set of fibers intersects both $V$ and the stability locus, and define 
$\widehat p: \La\times \pu \cv\cc^{k-1}$ by $\widehat p = p\circ\pi$. 

For (Lebesgue) a.e. $x\in \cc^{k-1}$ the slice $  \widehat T\rest{\widehat p^{-1}(x)}$ of $\widehat T$ along the fiber $\widehat p^{-1}(x) = p^{-1}(x)\times \pu$ is a well defined  closed positive current, which is a.s. stationary since the group action preserves the fibers. The proof will be finished if we can show that  for a.e. $x$,   $\widehat T\rest{\widehat 
p^{-1}(x)}$ is not vertical. Indeed, we would then have a set of positive measure of fibers $p^{-1}(x)$ intersecting both 
$V$ and the stability  locus, and for which there exists a non-vertical stationary 
current on $p^{-1}(x)\times \pu$, thereby  contradicting the previously treated case $\dim(\La)=1$. 

To show that  $\widehat T\rest{\widehat 
p^{-1}(x)}$ is not vertical, we show that it has positive slice mass (relative to the projection $\pi: 
\widehat p^{-1}(x) \cv p^{-1}(x)$). Recall that $\widehat{T}$ is supposed to have slice mass 1. 
The so-called {\em slicing formula} asserts that if $\om$ is a  positive test  form of maximal degree on $\cc^{k-1}$ of total mass 1 (which can be identified to a probability  measure), and  $\phi$ is any test form of bidegree (1,1) on 
$\La\times \pu$, we have  
$$\int \bigg(\int_{\widehat p^{-1}(x)} \widehat T\rest{\widehat p^{-1}(x)}\wedge \phi\bigg) \om(x)
  = \int \widehat T \wedge \phi\wedge (\widehat p) ^*\om.$$

 Now if locally we view $\La$ as a product $\cc^{k-1}\times \cc$ with respective first and second projections $p$ and $q$ (thus identifying under $q$  the fibers $p^{-1}(x)$ with the second factor), and if we specialize the above formula to forms $\phi$ of the form $\pi^*q^*\varphi$ with $\varphi$ a positive test $(1,1)$ form of total mass 1 on $\cc$, we get that for a.e. $x$, and every such $\varphi$, $\int_{\widehat p^{-1}(x)}  \widehat T\rest{\widehat p^{-1}(x)}\wedge \pi^*q^*\varphi=1$, which was the desired result. 
\end{proof}

%

\section{Equidistribution theorems}\label{sec:equidistribution}

In this section we prove several equidistribution results in parameter space, including Theorems \ref{theo:equidist ae} and \ref{theo:equidist speed}.   We also  give another geometric description of $T_{\rm bif}$, in the spirit of Theorem \ref{thm:geom_interpt}, where the approximating varieties are now fixed points of fibered M\"obius transformations.

\subsection{A general equidistribution scheme} 
The following theorem may be interpreted as  a general method for proving equidistribution results associated to random sequences  in parameter space. 
  Specializing it to well chosen functions $F$ will lead to various equidistribution statements, including Theorem \ref{theo:equidist ae}. 

\begin{thm}\label{thm:abstrait}
Let $(G,\mu, \rho)$ be an admissible families of representations of $G$. 
Let $F$ be a psh function on $\PSL^k$. Assume that:
\begin{enumerate}[{i.}]
\item There exist non negative real numbers 
 $a_1,\ldots, a_k$, with $\sum a_i=1$, and a  constant $C$ such that for $(\gamma_i)_{i=1}^k\in \PSL^k$, 
  $$ F(\gamma_1, \ldots , \gamma_k)\leq a_1\log\norm{\gamma_1} + \cdots + a_k\log\norm{\gamma_k} + C.$$
\item If $\la\in\La$ is fixed, then for $(\mu^\nn)^{\otimes k}$-a.e. $(\mathbf{g}_1,\ldots, \mathbf{g}_k)$,  $$\unsur{n}F(\rho_\la(l_n(\mathbf{g}_1)), \ldots, \rho_\la(l_n(\mathbf{g}_k)))\underset{n\cv\infty}\longrightarrow \chi(\la).$$
\end{enumerate}
Then  for $(\mu^\nn)^{\otimes k}$-a.e. $(\mathbf{g}_1,\ldots, \mathbf{g}_k)$, 
the sequence of psh functions defined by  $$\la\longmapsto \unsur{n} F\big(\rho_\la(l_n(\mathbf{g}_1)), \ldots, \rho_\la(l_n(\mathbf{g}_k))\big)$$ converges to $ \chi(\la)$ in $L^1_{\rm loc}(\La)$.
\end{thm}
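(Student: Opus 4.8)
The plan is to regard
\[
u_n(\la):=\unsur{n}\, F\big(\rho_\la(l_n(\mathbf g_1)),\ldots,\rho_\la(l_n(\mathbf g_k))\big)
\]
as a sequence of psh functions on $\La$ and to run a standard pluripotential-theoretic compactness argument: hypothesis {\it (i)} will supply a locally uniform upper bound, and hypothesis {\it (ii)}, after a Fubini manipulation, will pin down the limit. The preliminary observation is that each $u_n$ is indeed psh, being the pullback, under the holomorphic map $\la\mapsto(\rho_\la(l_n(\mathbf g_1)),\ldots,\rho_\la(l_n(\mathbf g_k)))$, of the psh function $F$ on the complex manifold $\PSL^k$.

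The first genuine step is the locally uniform upper bound. Fixing a compact $K\subset\La$ and a finite generating set of $G$, continuity of $\la\mapsto\norm{\rho_\la(\cdot)}$ on $K$ together with submultiplicativity of the operator norm produces a constant $M_K$ with $\sup_{\la\in K}\log\norm{\rho_\la(g)}\le M_K\,\length(g)=:\Phi_K(g)$ for every $g\in G$, and $\Phi_K\in L^1(G,\mu)$ by the moment condition~\eqref{eq:moment condition finitely generated}. Applying {\it (i)} and submultiplicativity once more gives, for $\la\in K$,
\[
u_n(\la)\ \le\ \sum_{i=1}^k a_i\,\unsur{n}\sum_{j=1}^n\Phi_K\big((\mathbf g_i)_j\big)+\frac Cn\,,
\]
and by the strong law of large numbers the right-hand side converges, hence stays bounded in $n$, for $(\mu^\nn)^{\otimes k}$-almost every $(\mathbf g_1,\ldots,\mathbf g_k)$. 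Exhausting $\La$ by countably many compacts, one concludes that for a.e. $(\mathbf g_i)$ the sequence $(u_n)$ is locally uniformly bounded above on $\La$.

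The second step converts the pointwise statement {\it (ii)} --- valid for each fixed $\la$ and a.e. sequence --- into its "a.e. sequence, a.e. $\la$'' form: the set of $(\la,(\mathbf g_i))$ for which $u_n(\la)\to\chi(\la)$ is Borel and has full-measure $\la$-slices, hence is of full measure for $\leb\otimes(\mu^\nn)^{\otimes k}$, hence has full-measure $(\mathbf g_i)$-slices. Finally, fixing $(\mathbf g_i)$ in the full-measure set where the upper bound and this a.e. convergence both hold, I would invoke the compactness properties of psh functions \cite{hormander}: any subsequence of $(u_n)$ has a further subsequence converging in $L^1_{\rm loc}(\La)$, either to $-\infty$ (uniformly on compact subsets) or to a psh function $u$. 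The first alternative would force $u_{n_j}(\la)\to-\infty$ at every $\la$, contradicting the a.e. convergence $u_n\to\chi$; in the second, extracting once more a Lebesgue-a.e. convergent sub-subsequence and comparing it with $u_n\to\chi$ a.e. gives $u=\chi$ Lebesgue-a.e., whence $u=\chi$ everywhere since two psh functions equal almost everywhere coincide. Therefore every subsequence of $(u_n)$ admits a sub-subsequence converging to $\chi$ in $L^1_{\rm loc}$, and consequently $u_n\to\chi$ in $L^1_{\rm loc}(\La)$, as desired.

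I expect the main obstacle to be precisely the locally uniform upper bound: the naive bound on $u_n$ involves $\unsur{n}\log\norm{\rho_\la(l_n(\mathbf g_i))}$, which for a \emph{fixed} sequence is not controlled by a deterministic constant, so one must extract a bound uniform on compacts of $\La$ that is valid for a.e. sequence --- this is exactly where the moment assumption~\eqref{eq:moment condition finitely generated} (rather than the weaker~\eqref{eq:moment condition}) and the strong law of large numbers are essential. The remaining ingredients --- $L^1_{\rm loc}$ compactness of psh functions, the Fubini quantifier swap, and the uniqueness of the limit --- are routine.
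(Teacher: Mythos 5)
Your proof is correct, but it executes the ``psh compactness plus identification of the limit'' strategy differently from the paper at both of its key points. For the upper bound, the paper invokes Proposition \ref{prop:kingman} --- a parameterized Kingman argument showing that $\unsur{n}\log\norm{\rho_\la(l_n(\mathbf g))}$ converges to $\chi$ in $L^1_{\rm loc}$ for $\mu^\nn$-a.e.\ $\mathbf g$ --- whereas you only extract local uniform boundedness above from the crude majorant $M_K\length(\cdot)$ together with the strong law of large numbers; this is weaker information about the majorant, but it is all your argument needs. For the identification of the limit, the paper applies hypothesis \emph{(ii)} at a countable dense set of parameters and then uses the tailored Hartogs-type Lemma \ref{lem:hartogs}, which needs both the $L^1_{\rm loc}$ convergence of the majorant and the continuity of $\chi$ in order to rule out a subsequential limit lying strictly below $\chi$ at some point; you instead upgrade \emph{(ii)} to ``a.e.\ sequence, Lebesgue-a.e.\ $\la$'' by Fubini, after which $u=\chi$ a.e.\ follows by extracting an a.e.-convergent sub-subsequence from an $L^1_{\rm loc}$-convergent one --- no Hartogs lemma and no continuity of $\chi$ required (plurisubharmonicity of $\chi$ suffices, and even that only for the cosmetic upgrade from ``a.e.'' to ``everywhere''). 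Your route is therefore more elementary and self-contained; the paper's route reuses Proposition \ref{prop:kingman}, which the authors need elsewhere anyway, and yields convergence at every point of a prescribed dense set, which is inessential here. Two small points you should make explicit in a final write-up: the joint Borel measurability of $(\la,(\mathbf g_i))\mapsto u_n(\la)$ that Fubini requires (clear, since for fixed $n$ the map factors through the countable set $G^n$ and $F$ is upper semicontinuous), and the possibility that finitely many $u_n$ are identically $-\infty$ (e.g.\ when $F=\log\abs{\tr^2(\cdot)-t}$ and $l_n(\mathbf g)$ has constant trace $t$), which your a.e.\ pointwise convergence rules out for large $n$ and which does not affect the limit.
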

 
One might also specify different measures  $\mu_i$ on each factor. In this case the $\chi(\la)$
in {\em ii.} must be replaced by $\sum a_i\chi(\la,\mu_i)$ and the same function will appear in the conclusion. 

\medskip

 The starting point    is the following proposition. 
Recall from Theorem \ref{t: furstenberg} 
    that for a fixed representation, for $\mu^\nn$-a.e. $\mathbf g$, $\unsur{n}\log\norm{\rho(l_n(\mathbf{g}))}$ converges to $\chi(\rho)$.  We now give a parameterized version of this result.

\begin{prop}\label{prop:kingman}
  Let $(G,\mu,\rho)$ be an  admissible family of representations of $G$ into $\PSL$. Then  for $\mu^\nn$-a.e. $\mathbf{g}\in G^\nn$, the sequence of functions $\la\mapsto \unsur{n} \log\norm{\rho_\la(l_n(\mathbf{g}))}$ converges to $\chi(\la)$ in $L^1_{\rm loc}(\La)$.
\end{prop}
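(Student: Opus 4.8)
The plan is to deduce the statement from the pointwise (in $\la$) almost sure convergence of Theorem~\ref{t: furstenberg}, combined with a Fubini argument and an elementary domination; no complex potential theory will be needed. Write $u_n(\la,\mathbf{g}):=\unsur{n}\log\norm{\rho_\la(l_n(\mathbf{g}))}$, so that each $u_n(\cdot,\mathbf{g})$ is psh and the assertion to be proved is that $u_n(\cdot,\mathbf{g})\cv\chi$ in $L^1_{\mathrm{loc}}(\La)$ for $\mu^\nn$-a.e.\ $\mathbf{g}$.

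First I would record two elementary bounds. Since any $\gamma\in\PSL$, lifted to $\SL$, has operator norm $\norm{\gamma}\geq 1$, we get $u_n\geq 0$ everywhere. On the other hand, submultiplicativity of $\norm{\cdot}$ gives, on a compact $K\subset\La$, the bound $u_n(\la,\mathbf{g})\leq\unsur{n}\sum_{k=1}^n\phi_K(g_k)$, where $\phi_K(g):=\sup_{\la\in K}\log\norm{\rho_\la(g)}$; and $\phi_K\in L^1(\mu)$, because expressing an element as a word of length $\length(g)$ in the generators yields $\phi_K(g)\leq L_K\,\length(g)$ for a finite constant $L_K=L_K(K,\rho)$, together with $\int\length\,d\mu<\infty$ from \eqref{eq:moment condition finitely generated}. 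By the law of large numbers (equivalently Birkhoff's theorem applied to the ergodic shift on $G^\nn$), for $\mu^\nn$-a.e.\ $\mathbf{g}$ the averages $\unsur{n}\sum_{k=1}^n\phi_K(g_k)$ converge, hence stay bounded; letting $K$ run over a countable exhaustion of $\La$, we conclude that for $\mu^\nn$-a.e.\ $\mathbf{g}$ the sequence $(u_n(\cdot,\mathbf{g}))_n$ is locally uniformly bounded on $\La$, with values in $[0,+\infty)$.

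Next I would upgrade Furstenberg's pointwise statement into an almost-everywhere-in-$\la$ statement. The map $(\la,\mathbf{g})\mapsto u_n(\la,\mathbf{g})$ is continuous in $\la$ and measurable in $\mathbf{g}$ (it depends only on $g_1,\dots,g_n$), hence jointly measurable; since $\chi$ is continuous (Proposition~\ref{prop:defchi}), the set $\mathcal{A}:=\set{(\la,\mathbf{g}):\,u_n(\la,\mathbf{g})\not\to\chi(\la)}$ is measurable. Theorem~\ref{t: furstenberg} asserts that each $\la$-slice of $\mathcal{A}$ is $\mu^\nn$-negligible, so Fubini (working in a chart so that Lebesgue measure on $\La$ makes sense) gives $(\leb\otimes\mu^\nn)(\mathcal{A})=0$; therefore, for $\mu^\nn$-a.e.\ $\mathbf{g}$ one has $u_n(\la,\mathbf{g})\cv\chi(\la)$ for Lebesgue-a.e.\ $\la$. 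Fixing $\mathbf{g}$ in the intersection of this full-measure set with the one from the previous paragraph, on each compact $K$ contained in a coordinate chart we then have $u_n(\cdot,\mathbf{g})\cv\chi$ a.e.\ and $0\leq u_n(\cdot,\mathbf{g})\leq M_K(\mathbf{g})$ for all $n$, so dominated convergence yields $\int_K\abs{u_n(\cdot,\mathbf{g})-\chi}\,d\leb\cv 0$; a countable cover of $\La$ by such charts gives the claim.

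The only point requiring care is the passage from pointwise a.e.\ convergence to $L^1_{\mathrm{loc}}$ convergence of the \emph{whole} sequence: for sequences of psh functions this can fail, because mass may escape to $-\infty$, and here it is rescued precisely by the positivity $\log\norm{\cdot}\geq 0$, which supplies the domination. Alternatively, one can argue by compactness of families of psh functions locally bounded above: any $L^1_{\mathrm{loc}}$-cluster value $u$ of $(u_n(\cdot,\mathbf{g}))$ is psh, and passing to a sub-subsequence converging a.e.\ and using that the full sequence already converges a.e.\ to $\chi$ by the Fubini step, one gets $u=\chi$ a.e., hence $u=\chi$ since two psh functions agreeing a.e.\ coincide; as $\chi$ is the unique cluster value and the sequence lies in a compact set, it converges to $\chi$.
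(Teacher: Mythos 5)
Your proof is correct, but it takes a genuinely different route from the paper's. You upgrade Furstenberg's fixed-$\la$ almost sure convergence (Theorem~\ref{t: furstenberg}) to an a.e.-in-$\la$ statement via a Fubini argument on $\La\times G^\nn$, and then pass to $L^1_{\rm loc}$ convergence by dominated convergence, the domination coming from $\log\norm{\cdot}\geq 0$ below and from the Birkhoff averages of $\la\mapsto\sup_K\log\norm{\rho_\la(g_k)}$ above (the latter controlled by $\length(g_k)$ and \eqref{eq:moment condition finitely generated}). The paper instead applies Kingman's subadditive ergodic theorem directly to the integrated cocycle $\Theta_n(\mathbf{g},U)=\int_U\log\norm{\rho_\la(l_n(\mathbf{g}))}\,d\la$ over a countable basis of open sets $U_q$, obtaining deterministic limits $\Theta(U_q)$; it then uses compactness of locally uniformly bounded families of psh functions to conclude that the whole sequence converges in $L^1_{\rm loc}$ to a limit $\theta$ with $\int_{U_q}\theta=\Theta(U_q)$, and finally identifies $\theta=\chi$ by integrating in $\mathbf{g}$ and exchanging integrals. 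Your argument is shorter and more elementary in that it needs neither Kingman applied to the integrated quantities nor the psh compactness step (though your closing alternative reintroduces the latter); what the paper's route buys is that it never invokes Fubini over parameter space and produces the deterministic limit $\theta(\la)$ before any identification, which is structurally closer to how the rest of Section~\ref{sec:equidistribution} is organized. Both arguments rest on the same ingredients (subadditivity, the moment condition, and the positivity of $\log\norm{\cdot}$ preventing escape of mass to $-\infty$), and all the measurability and integrability points you need (joint measurability of $u_n$, continuity of $\chi$, $\sigma$-finiteness on charts) do hold, so there is no gap.
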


\begin{proof} Of course, the point  is to make a choice of generic $\mathbf g$ not depending     on $\la$.
 It is no loss of generality to assume that $\La$ is a ball in $\cc^{\dim(\La)}$. 
Let  $U\subset \La$ be any open subset, and  for $\mathbf{g}\in G^\nn$ consider the sequence $\Theta_n(U)$ defined by $\Theta_n(\mathbf{g}, U) = \int_U \log\norm{\rho_{\la }(l_n(\mathbf{g}))} d\la$.
By \eqref{eq:sub-additive cocycle}, this assignment
defines a real valued sub-additive cocycle,  which, by the moment condition \eqref{eq:moment condition finitely generated}, satisfies $\int  \Theta_1(\mathbf{g}, U) d\mu^\nn(\mathbf{g})<\infty$ .  Therefore,
 by Kingman's sub-additive ergodic theorem and the ergodicity of the shift acting on $(G^\nn, \mu^\nn)$,  we deduce that   $\unsur{n} \Theta_n(\mathbf{g}, U)$
 converges $\mu^\nn$-a.e.  to a non-negative number $\Theta(U)$ independent of $\mathbf{g}$.

Also,
$\unsur{n}\int \Theta_n(\mathbf{g}, U) d\mu^\nn(\mathbf{g})$    converges to $\Theta(U)$. Indeed 
$0\leq \Theta_n(\mathbf{g}, U)\leq C \length(l_n\mathbf{g})$ for some constant $C$, whereas by Kingman's theorem and \eqref{eq:moment condition finitely generated} the sequence $\length(l_n(\mathbf{g}))$ converges in $L^1(\mu^\nn)$ (see the domination argument after Theorem \ref{t: furstenberg}). Therefore, the convergence of $\unsur{n} \Theta_n(\mathbf{g}, U)$ to $\Theta(U)$ takes place in $L^1(\mu^\nn)$.

Take now a countable neighborhood basis $(U_q)_q$ of $\La$. There exists a full measure subset $\om\subset G^\nn$ such that if $\mathbf{g}\in\om$, then for every $q$,
$\unsur{n} \int_{U_q} \log\norm{\rho_{\la }(l_n(\mathbf{g}))} d\la$ converges to some $\Theta (U_q)$.

By   \eqref{eq:moment condition finitely generated} again, for $\mu^\nn$-a.e. $\mathbf{g}$
the length of the word $l_n(\mathbf{g})$ in $G$ grows at linear speed. Hence
the sequence of psh functions on $\La$ defined by  $\left(\lambda\mapsto\unsur{n}\log\norm{\rho_{\la}(l_n(\mathbf{g}))}\right)_n$ is   locally uniformly bounded, so it admits convergent subsequences in $L^1_{loc}$. If $\theta(\mathbf{g}, \la)$ denotes  any of its cluster values, we see that
$\int_{U_q}\theta(\mathbf{g}, \la)d\la$ must be equal to $\Theta(U_q)$. Hence the sequence actually converges  to a limit independent of $\mathbf g$, which we denote by   $\theta(  \la)$.

\medskip

The last step is of course to prove that $\theta(\cdot)=\chi(\cdot)$. For this, it is enough to integrate with respect to $\mathbf{g}$. Indeed, for any $\la\in \La$,
$$ \frac{1}{n}\int _G \log \norm{\rho_\la(g)} d\mu^{n}(g)
=\frac{1}{n}\int\log\norm{\rho_{\la}(l_n(\mathbf{g}))} d\mu^\nn(\mathbf{g}) \underset{n\cv\infty} \longrightarrow \chi(\la).$$ Since the left hand side is locally uniformly bounded in $n$, by dominated convergence we  infer that for any open set $U$,
$$\int_U\lrpar{\frac{1}{n}\int\log\norm{\rho_{\la}(l_n(\mathbf{g}))} d\mu^\nn(\mathbf{g})} d\la \underset{n\cv\infty} \longrightarrow  \int_U\chi.$$
Now we let $U= U_q$ and switch the integrals  to see that
$$\int_U\lrpar{\frac{1}{n}\int\log\norm{\rho_{\la}(l_n(\mathbf{g}))} d\mu^\nn(\mathbf{g})} d\la
=\int \unsur{n} \Theta_n(\mathbf{g}, U_q) d\mu^\nn(\mathbf{g}),$$ which converges to $\Theta(U_q)  = \int_{U_q}\theta$. We conclude that for any $q$, $\int_{U_q}\theta =\int_{U_q}\chi$, and the result follows.
\end{proof}

We also need the following variation on the Hartogs Lemma (see  \cite[pp. 149-151]{hormander}). 

\begin{lem}\label{lem:hartogs}
Let $\om\subset\cc^n$ be a connected  open set and $(v_n)$ be a sequence of psh functions in $\om$ converging in $L^1_{\rm loc}$ to a continuous psh function $v$. Assume now that $(u_n)$ is another sequence of psh functions in $\om$ such that:
\begin{itemize}
\itm for every $x\in \om$, $u_n(x)\leq v_n(x)$;
\itm there exists a dense subset $D\subset \om$ such that for every $x\in D$, $u_n(x)\cv v(x)$ as $n\cv\infty$.
\end{itemize}
Then $(u_n)$ converges to $v$ in $L^1_{\rm loc}$.
\end{lem}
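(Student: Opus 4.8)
The plan is to reduce the statement to the usual sub-subsequence argument combined with the standard compactness properties of plurisubharmonic functions, the real issue being to extract usable information from the mere \emph{pointwise} convergence on the (possibly Lebesgue-null) dense set $D$.

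First I would check that $(u_n)$ is locally uniformly bounded above. Since $u_n\leq v_n$, it is enough to prove this for $(v_n)$, and it follows from the sub-mean value inequality: for $K\Subset\om$ and $r>0$ small enough that $K+\overline{B(0,r)}\Subset\om$, one has $v_n(x)\leq\frac{1}{|B(x,r)|}\int_{B(x,r)}v_n$ for $x\in K$, and the right-hand side is bounded uniformly in $n$ and $x\in K$ because $v_n\to v$ in $L^1_{\rm loc}$. Hence $(u_n)$ is a relatively compact family of psh functions: on the connected open set $\om$, every subsequence has a further subsequence that either tends to $-\infty$ in $L^1_{\rm loc}$ or converges in $L^1_{\rm loc}$ to some psh function (see \cite[pp.~149--151]{hormander}). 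By the usual sub-subsequence argument it then suffices to rule out the $-\infty$ alternative and to show that any such $L^1_{\rm loc}$-limit equals $v$.

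To rule out $-\infty$, I would fix $x_0\in D$ and $r$ with $B(x_0,r)\Subset\om$: convergence to $-\infty$ in $L^1_{\rm loc}$ would force the averages $\frac{1}{|B(x_0,r)|}\int_{B(x_0,r)}u_{n_k}$ to tend to $-\infty$, while they dominate $u_{n_k}(x_0)$, which converges to the finite value $v(x_0)$. So let $u=\lim_k u_{n_k}$ in $L^1_{\rm loc}$ be an $L^1_{\rm loc}$-cluster value, with $u$ psh. Passing to a sub-subsequence along which $u_{n_k}\to u$ and $v_{n_k}\to v$ almost everywhere, the inequality $u_n\leq v_n$ gives $u\leq v$ a.e., hence, comparing ball-averages and using that $v$ is continuous, $u\leq v$ everywhere. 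On the other hand, for $x\in D$ the sub-mean value inequality gives $\limsup_k u_{n_k}(x)\leq\frac{1}{|B(x,r)|}\int_{B(x,r)}u$ for every small $r$, hence $\limsup_k u_{n_k}(x)\leq u(x)$; since $u_{n_k}(x)\to v(x)$ this yields $v(x)\leq u(x)$, so $u=v$ on $D$. Finally, $w:=v-u$ is nonnegative, lower semicontinuous (difference of a continuous and an upper semicontinuous function) and vanishes on the dense set $D$, so the open set $\{w>0\}$ is disjoint from $D$, hence empty: $u\equiv v$. Thus every cluster value is $v$, and $u_n\to v$ in $L^1_{\rm loc}$.

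The main obstacle is precisely this last mechanism: $L^1$-convergence is insensitive to values on null sets, so the hypothesis on $D$ cannot be fed directly into an $L^1$ estimate. The workaround is that \emph{any} psh $L^1_{\rm loc}$-cluster value $u$ automatically satisfies $u\leq v$ everywhere and $\limsup_n u_n\leq u$ pointwise (both by monotonicity of ball-averages for psh functions); the assumption on $D$ then upgrades to $u=v$ on $D$, and lower semicontinuity of $v-u$ spreads this equality over all of $\om$. Everything else is routine.
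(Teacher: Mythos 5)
Your proof is correct and follows essentially the same route as the paper's: local upper boundedness, compactness of the family $(u_n)$, the a.e.\ inequality $u\le v$ for any $L^1_{\rm loc}$ cluster value $u$, and then the density of $D$ combined with semicontinuity to force $u=v$. The only cosmetic difference is the endgame: the paper invokes the Hartogs lemma to get $u_{n_j}<v-\delta$ on an open set around a hypothetical point where $u<v$ and contradicts pointwise convergence at a point of $D$ there, whereas you establish $u=v$ on $D$ via the pointwise bound $\limsup_k u_{n_k}\le u$ and then spread the equality by lower semicontinuity of $v-u$; both arguments rest on the same two facts.
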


\begin{proof}
 Observe first that $(v_n)$ is locally uniformly bounded above, hence  so is $(u_n)$. Since $u_n(x)\cv v(x)$ on $D$,  $(u_n)$ cannot  diverge to $-\infty$ so there exists a  subsequence $(u_{n_j})$ converging  in $L^1_{\rm loc}$ to a psh function $u$. The point is to prove that $u=v$.

For a.e. $x$, $\limsup u_{n_j} = u(x)$, from which we infer that $u\leq v$ a.e. Now suppose that there exists $x_0$ such that $u(x_0)< v(x_0)$. By upper semi-continuity (we use the fact that $v$ is continuous) there exists a relatively compact open set $B\ni x_0$  where $u< v-\delta$  for some positive $\delta$.   By the Hartogs lemma for large $j$ we get that $u_{n_j}< v-\delta$ on $B$. This contradicts the fact that
$u_n(x)\cv v(x)$ on a dense subset.
\end{proof}

\begin{proof}[Proof of  Theorem \ref{thm:abstrait}]
Pick   a dense sequence $(\la_p) $ in $\La$. There exists a set   $\om_0\subset (G^\nn)^k$ of full measure such that if $(\mathbf{g}_1, \ldots, \mathbf{g}_k) \in\om_0$, then for every $p$ 
$$\unsur{n}  F\big(\rho_{\la_p}(l_n(\mathbf{g}_1)), \ldots, \rho_{\la_p} (l_n(\mathbf{g}_k)  )\big)\underset{n\cv\infty} \longrightarrow \chi(\la_p).$$
Applying  Proposition \ref{prop:kingman}, let $\om_1^k\subset G^\nn$ be a set of full measure such that for any 
  $(\mathbf{g}_1, \ldots, \mathbf{g}_k) \in\om_1$, 
$$\frac{a_1}{n} \log\norm{\rho_\la(l_n(\mathbf{g}_1))} + \cdots +
\frac{a_k}{n} \log\norm{\rho_\la(l_n(\mathbf{g}_k))} \longrightarrow \chi(\la) \text{ in } L^1_{\rm loc}.$$ 
 
Now for every $\la\in\La$ we have that   $$ \unsur{n} F\big(\rho_{\la }(l_n(\mathbf{g}_1)), \ldots, \rho_{\la } (l_n(\mathbf{g}_k)  )\big)\leq  
\frac{a_1}{n} \log\norm{\rho_\la(l_n(\mathbf{g}_1))} + \cdots +
\frac{a_k}{n} \log\norm{\rho_\la(l_n(\mathbf{g}_k))}+ O\lrpar{\unsur{n}}.$$
 From Lemma \ref{lem:hartogs},  we thus conclude that if 
$(\mathbf{g}_1, \ldots, \mathbf{g}_k) \in\om_0\cap \om_1^k$, 
$$ \unsur{n} F\big(\rho_{\la }(l_n(\mathbf{g}_1)), \ldots, \rho_{\la } (l_n(\mathbf{g}_k)  )\big)\longrightarrow \chi(\la) \text{ in } L^1_{\rm loc}, $$ which finishes the proof.
\end{proof}

As a sample application of Theorem \ref{thm:abstrait},  let us prove the following variant of  Theorem \ref{thm:geom_interpt}.  

\begin{thm}\label{thm:geom_interpt2}
Let $(G,\mu,\rho)$ be an  admissible family of representations of $G$ into $\PSL$. Fix $z_0\in \pu$. Then for $\mu^\nn$-a.e. $\mathbf{g}\in G^\nn$, the sequence of currents $\unsur{n}[\widehat{l_n(\mathbf g)}\cdot z_0]$ in $\La\times\pu$
converges to $\pi_1^*(T_{\rm bif})$.
\end{thm}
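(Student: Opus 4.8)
The plan is to deduce this from the general equidistribution scheme of Theorem \ref{thm:abstrait}, applied with $k=1$ to the function $F:\PSL\to\re$ defined by $F(\gamma)=\log\norm{\gamma z_0}=\log\frac{\norm{\gamma Z_0}}{\norm{Z_0}}$ (for a lift $Z_0$ of $z_0$), and then to translate the resulting $L^1_{\rm loc}$ convergence of potentials into convergence of the currents, following the bookkeeping of the proof of Theorem \ref{thm:geom_interpt}.

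First I would check that $F$ meets hypotheses {\em i.} and {\em ii.} of Theorem \ref{thm:abstrait}. For {\em i.}, since $\norm{\cdot}$ is the operator norm we have $\norm{\gamma Z_0}\leq\norm{\gamma}\norm{Z_0}$, hence $F(\gamma)\leq\log\norm{\gamma}$; so {\em i.} holds with $a_1=1$ and $C=0$. Moreover $F$ is psh on $\PSL$, since locally $\gamma\mapsto\gamma Z_0$ lifts to a holomorphic $\cd$-valued map, so that $\la\mapsto F(\rho_\la(g))$ is psh for every holomorphic family. Hypothesis {\em ii.} is exactly (a consequence of) Theorem \ref{t: furstenberg}: for fixed $\la$ and $\mu^\nn$-a.e. $\mathbf g$, $\unsur n\log\norm{\rho_\la(l_n(\mathbf g))Z_0}\to\chi(\la)$. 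Theorem \ref{thm:abstrait} then provides a full measure set of $\mathbf g\in G^\nn$ such that, setting $\chi_n^{\mathbf g}(\la):=\unsur n\log\frac{\norm{\rho_\la(l_n(\mathbf g))Z_0}}{\norm{Z_0}}$, one has $\chi_n^{\mathbf g}\to\chi$ in $L^1_{\rm loc}(\La)$.

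Next, fix such a $\mathbf g$ and put $\widehat T_n^{\mathbf g}=\unsur n[\widehat{l_n(\mathbf g)}\cdot z_0]$. The computations in the proof of Theorem \ref{thm:geom_interpt} apply verbatim with the average $\unsur n\int\cdot\,d\mu^n(g)$ replaced by the single term $g=l_n(\mathbf g)$: since $\widehat{l_n(\mathbf g)}\cdot z_0$ is a graph over $\La$, we get $\bra{\widehat T_n^{\mathbf g},\pi_1^*\omega^k}=O\lrpar{\unsur n}$; and since $\pi_1$ restricted to this graph is a biholomorphism, the change of variables of \eqref{eq:chapeau}--\eqref{eq:pi etoile} gives $(\pi_1)_*\big(\widehat T_n^{\mathbf g}\wedge\pi_2^*\omega_\pu\big)=dd^c_\la\chi_n^{\mathbf g}$. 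Using $\widehat\omega^k=\pi_1^*\omega^k+k\,\pi_1^*\omega^{k-1}\wedge\pi_2^*\omega_\pu$ together with these two facts shows that $(\widehat T_n^{\mathbf g})$ has locally uniformly bounded mass.

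Finally I would pass to cluster values. If $\widehat T$ is any cluster value of $(\widehat T_n^{\mathbf g})$, then $\widehat T\wedge\pi_1^*\omega^k=0$, so by Lemma \ref{lem:fibration} there is a positive closed $(1,1)$ current $T$ on $\La$ with $\widehat T=\pi_1^*T$. Since $\pi_1:\La\times\pu\to\La$ is proper and $\pi_2^*\omega_\pu$ is smooth, the operation $(\pi_1)_*\big(\cdot\wedge\pi_2^*\omega_\pu\big)$ is continuous, so $(\pi_1)_*\big(\widehat T\wedge\pi_2^*\omega_\pu\big)=\lim_n dd^c\chi_n^{\mathbf g}=dd^c\chi=T_{\rm bif}$; on the other hand $(\pi_1)_*\big(\pi_1^*T\wedge\pi_2^*\omega_\pu\big)=T$ by the elementary cohomological computation used right after Theorem \ref{thm:geom_interpt}. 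Hence $T=T_{\rm bif}$, every cluster value equals $\pi_1^*(T_{\rm bif})$, and the full sequence converges. I do not expect a serious obstacle: the statement is essentially Theorem \ref{thm:geom_interpt} ``de-averaged'', the only genuine input being Theorem \ref{thm:abstrait}, whose applicability rests on the trivial bound $F\leq\log\norm{\cdot}$ and on Furstenberg's theorem; the one point needing a little care is the passage from $L^1_{\rm loc}$ convergence of the potentials $\chi_n^{\mathbf g}$ to convergence of the currents in $\La\times\pu$, handled exactly as in \S\ref{subs:geom_interpt}.
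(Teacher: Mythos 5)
Your proposal is correct and follows essentially the same route as the paper: the authors also reduce the statement, via the computations of Theorem \ref{thm:geom_interpt}, to the $L^1_{\rm loc}$ convergence of the potentials $\la\mapsto\unsur{n}\log\norm{\rho_\la(l_n(\mathbf g))Z_0}$ to $\chi$, and obtain that convergence by applying Theorem \ref{thm:abstrait} to the psh function $F(\gamma)=\log\norm{\gamma Z_0}$, with hypothesis \emph{i.} given by the operator-norm bound and \emph{ii.} by Furstenberg's theorem. No gaps.
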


By following step by step the proof of Theorem \ref{thm:geom_interpt} (and keeping notation as in that proof),
 we first see that every cluster value $\widehat S$ of the sequence $\unsur{n}[\widehat{l_n(\mathbf g)}\cdot z_0]$ must satisfy $\langle \widehat S, \pi_1^*\omega^k\rangle = 0$. To show that this sequence of currents is a.s. of bounded mass and converges to $\pi_1^*(T_{\rm bif})$, it is enough to show that for every $(k-1,k-1)$ test form $\phi$ on $\La$ and a.e. $\mathbf{g}$
$$\bra{ (\pi_1)_*\left(\unsur{n}[\widehat{l_n(\mathbf g)}\cdot z_0]\wedge \pi^*_2\omega_\pu\right), \phi} 
= \int_\La \phi\wedge dd^c_\la \left(\unsur{n} \log\frac{\norm{\rho_\la(l_n(\mathbf{g}))Z_0}}{\norm{Z_0}}\right)
 \underset{n\cv\infty}\longrightarrow \bra{\tbif, \phi},$$  where the equality on the left hand side is obtained as in \eqref{eq:pi etoile}. Thus we conclude
 that to obtain Theorem \ref{thm:geom_interpt2} it is enough to establish the following:
 
\begin{prop}
Let $(G,\mu,\rho)$ be an  admissible family of representations of $G$ into $\PSL$. Let $z_0\in \pu$ and let $Z_0\in\cd$ be any lift of $z_0$.
Then for $\mu^\nn$ a.e. $\mathbf{g}$ the sequence of functions $\la\mapsto \unsur{n} \log\norm{\rho_\la(l_n(\mathbf{g}))Z_0}$ converges to $\chi(\la)$ in $L^1_{\rm loc}(\La)$.
\end{prop}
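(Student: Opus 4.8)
The plan is to repeat the argument of Proposition~\ref{prop:kingman} almost verbatim, replacing $\log\norm{\rho_\la(l_n(\mathbf g))}$ by $\log\norm{\rho_\la(l_n(\mathbf g))Z_0}$ and using Theorem~\ref{t: furstenberg} as the source of pointwise convergence. First I would reduce to the case where $\La$ is a ball in $\cc^{\dim\La}$, and observe that for each fixed $\mathbf g$ and $n$ the function $\la\mapsto\unsur n\log\norm{\rho_\la(l_n(\mathbf g))Z_0}$ is psh: on a ball $\rho_\la(l_n(\mathbf g))$ lifts to a holomorphic $\SL$-valued map, so $\la\mapsto\rho_\la(l_n(\mathbf g))Z_0$ is a nowhere-vanishing holomorphic $\cc^2$-valued function (each matrix is invertible and $Z_0\neq 0$), and $\log$ of the norm of such a function is psh, being of the form $\tfrac12\log(|H_1|^2+|H_2|^2)$ with $H=(H_1,H_2)$ holomorphic. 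I would also normalize $\norm{Z_0}=1$ (rescaling $Z_0$ by a nonzero constant perturbs the functions only by a term $O(1/n)$, which does not affect $L^1_{\rm loc}$ convergence), so that $u_n(\la):=\unsur n\log\norm{\rho_\la(l_n(\mathbf g))Z_0}\leq \unsur n\log\norm{\rho_\la(l_n(\mathbf g))}=:v_n(\la)$ pointwise.

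Next comes the domination input. By Proposition~\ref{prop:kingman} there is a full $\mu^\nn$-measure set $\om_1\subset G^\nn$ along which $v_n\to\chi$ in $L^1_{\rm loc}(\La)$, and $\chi$ is continuous psh by Proposition~\ref{prop:defchi}. For the pointwise input I would fix a dense sequence $(\la_p)$ in $\La$; Theorem~\ref{t: furstenberg}, applied at each parameter $\la_p$ with the fixed point $z_0$, provides for every $p$ a full-measure set of sequences $\mathbf g$ along which $u_n(\la_p)\to\chi(\la_p)$. Intersecting these countably many sets with $\om_1$ yields a full-measure set $\om\subset G^\nn$ such that for $\mathbf g\in\om$ one has simultaneously $v_n\to\chi$ in $L^1_{\rm loc}$ and $u_n(\la_p)\to\chi(\la_p)$ for every $p$.

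Finally, for $\mathbf g\in\om$ I would invoke Lemma~\ref{lem:hartogs} on the ball $\La$, with $v=\chi$, majorant sequence $(v_n)$, sequence $(u_n)$, and dense set $D=\{\la_p\}$: the $u_n,v_n$ are psh, $u_n\leq v_n$ pointwise, $v_n\to\chi$ in $L^1_{\rm loc}$ with $\chi$ continuous, and $u_n\to\chi$ on $D$; hence $u_n\to\chi$ in $L^1_{\rm loc}(\La)$, which is exactly the claim. I do not expect a serious obstacle here; the only points needing a little care are the genuine plurisubharmonicity of $u_n$ (i.e.\ that $\rho_\la(l_n(\mathbf g))Z_0$ never vanishes, after a local holomorphic lift to $\SL$) and the bookkeeping of null sets — the exceptional sets from Theorem~\ref{t: furstenberg} depend on the parameter, so one must take them only at the countably many $\la_p$ and remove them by a countable union, then combine with the $L^1_{\rm loc}$ domination from Proposition~\ref{prop:kingman} through Lemma~\ref{lem:hartogs}, which is precisely what upgrades ``pointwise on a dense set'' to ``$L^1_{\rm loc}$''.
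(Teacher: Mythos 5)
Your argument is correct and is essentially the paper's proof: the paper simply invokes Theorem~\ref{thm:abstrait} with $k=1$ and $F(\gamma)=\log\norm{\gamma Z_0}$ (condition \emph{i.} being trivial and \emph{ii.} being Theorem~\ref{t: furstenberg}), and what you have written is precisely the proof of Theorem~\ref{thm:abstrait} unpacked in this special case — Proposition~\ref{prop:kingman} for the dominating sequence, Furstenberg at a countable dense set of parameters, and Lemma~\ref{lem:hartogs} to upgrade to $L^1_{\rm loc}$.
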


\begin{proof} It is enough to check that the assumptions of Theorem \ref{thm:abstrait} hold for the psh function  $F:\gamma  \mapsto \log\norm{\gamma Z_0}$ on $\PSL$. The inequality in {\em i.} is obvious, while {\em ii.} follows from Theorem \ref{t: furstenberg}. The result follows.
\end{proof}
%

The next result shows that under an additional assumption we can integrate with respect to $\mathbf{g}$ in Theorem \ref{thm:abstrait}. It is slightly more convenient to state it in terms of currents rather than potentials. We use the notation  $\m_\om(T)$ for the mass of the current $T$ in $\om$. 

\begin{prop}\label{prop:integree}
 Let $(G,\mu,\rho)$ be an admissible family of representations, and $F$ a function on $\PSL^k$ satisfying the assumptions of Theorem \ref{thm:abstrait}. For $(g_1, \ldots, g_k)\in G^k$ let $T(g_1, \ldots, g_k)$ be the current on $\La$ defined by 
$T(g_1, \ldots, g_k) = dd^c_\la\lrpar{F(\rho_\la(g_1), \ldots , \rho_\la(g_k))}$. 

Assume that for every $(g_1, \ldots, g_k)\in G^k$ and every $\La'\Subset\La$, there exists a constant $C(\La')$ such that 
$\m_{\La'}(T(g_1, \ldots, g_k) )\leq C\sum_{i=1}^k \length(g_i)$. 

Then $\unsur{n}   T(l_n(\mathbf{g}_1), \ldots , l_n(\mathbf{g}_k))$ converges to $\tbif$ in $L^1(\mu^\nn\otimes\cdots \otimes\mu^\nn)$. In particular
\begin{equation}\label{eq:integree}
\unsur{n}\int T(g_1, \ldots, g_k)d\mu^n(g_1)\cdots d\mu^n(g_k)\underset{n\cv\infty} \longrightarrow \tbif.
\end{equation}
\end{prop}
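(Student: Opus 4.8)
The plan is to feed the almost sure convergence provided by Theorem~\ref{thm:abstrait} into a uniform integrability argument, the extra mass bound being exactly what is needed to upgrade that almost sure statement to convergence in $L^1$ over the probability space $((G^\nn)^k,(\mu^\nn)^{\otimes k})$; the asserted ``$L^1$ convergence of currents'' is understood in the sense that $\bra{S_n,\phi}\cv\bra{\tbif,\phi}$ in $L^1((\mu^\nn)^{\otimes k})$ for every test form $\phi$, where we set
$$S_n(\mathbf g_1,\ldots,\mathbf g_k):=\unsur{n}\,T\big(l_n(\mathbf g_1),\ldots,l_n(\mathbf g_k)\big).$$
Since $F$ is psh and $\la\mapsto(\rho_\la(g_1),\ldots,\rho_\la(g_k))$ is holomorphic, each $S_n$ is a positive closed $(1,1)$-current.

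I would first record the almost sure convergence. As $F$ satisfies the hypotheses of Theorem~\ref{thm:abstrait}, for $(\mu^\nn)^{\otimes k}$-a.e. $(\mathbf g_1,\ldots,\mathbf g_k)$ the psh potentials $\la\mapsto\unsur{n}F(\rho_\la(l_n(\mathbf g_1)),\ldots,\rho_\la(l_n(\mathbf g_k)))$ converge to $\chi$ in $L^1_{\rm loc}(\La)$; applying $dd^c$, which is continuous from $L^1_{\rm loc}$ to the weak topology on currents, yields $S_n\cv dd^c\chi=\tbif$ weakly, almost surely. Then I would bound the masses uniformly in $n$. Writing $\mathbf g_i=(g_{i,j})_{j\geq1}$, subadditivity of the word length gives $\length(l_n(\mathbf g_i))=\length(g_{i,n}\cdots g_{i,1})\leq\sum_{j=1}^n\length(g_{i,j})$, so the mass bound hypothesis yields, for every $\La'\Subset\La$,
$$\m_{\La'}(S_n)\leq\frac{C(\La')}{n}\sum_{i=1}^k\sum_{j=1}^n\length(g_{i,j}).$$
The variables $\length(g_{i,j})$ are i.i.d.\ with finite expectation by \eqref{eq:moment condition finitely generated}, so the right-hand side is an average of i.i.d.\ integrable random variables; such averages converge a.s.\ and in $L^1$ (to $k\,C(\La')\int\length\,d\mu$), hence form a uniformly integrable sequence, and therefore so does $(\m_{\La'}(S_n))_n$.

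Now for a test $(k-1,k-1)$-form $\phi$ supported in some $\La'\Subset\La$, positivity of $S_n$ gives a bound $\abs{\bra{S_n,\phi}}\leq C(\phi)\,\m_{\La'}(S_n)$, whence $(\bra{S_n,\phi})_n$ is uniformly integrable; combined with the almost sure convergence $\bra{S_n,\phi}\cv\bra{\tbif,\phi}$ from the first step, the Vitali convergence theorem gives $\bra{S_n,\phi}\cv\bra{\tbif,\phi}$ in $L^1((\mu^\nn)^{\otimes k})$, which is the desired statement. For the ``in particular'' assertion, since the map $(\mathbf g_1,\ldots,\mathbf g_k)\mapsto(l_n(\mathbf g_1),\ldots,l_n(\mathbf g_k))$ pushes $(\mu^\nn)^{\otimes k}$ onto $(\mu^n)^{\otimes k}$ and $dd^c_\la$ commutes with integration against a finite measure, one has $\int S_n\,d(\mu^\nn)^{\otimes k}=\unsur{n}\int T(g_1,\ldots,g_k)\,d\mu^n(g_1)\cdots d\mu^n(g_k)$; testing this current against $\phi$, exchanging the pairing with the integral over $(G^\nn)^k$, and invoking the $L^1$ convergence (which in particular forces convergence of expectations) gives \eqref{eq:integree}.

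The main obstacle is the uniform integrability step: the almost sure weak convergence is free from Theorem~\ref{thm:abstrait}, but converting it into convergence in the $L^1$-sense over the parameter-free probability space requires an $n$-uniform integrable majorant for $\m_{\La'}(S_n)$, which is precisely where the mass-bound hypothesis enters, and it is the i.i.d.\ structure of the random walk increments that promotes this majorant from merely ``a.s.\ bounded'' to ``uniformly integrable''. The remaining ingredients --- continuity of $dd^c$, the positivity estimate $\abs{\bra{S_n,\phi}}\leq C(\phi)\m_{\La'}(S_n)$, and Fubini for currents --- are routine.
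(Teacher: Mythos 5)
Your proposal is correct and follows essentially the same route as the paper: almost sure weak convergence from Theorem~\ref{thm:abstrait}, domination of the masses by $\unsur{n}\sum_i\length(l_n(\mathbf g_i))$, and the observation that this majorant converges in $L^1(\mu^\nn)$ to upgrade to $L^1$ convergence. The only cosmetic difference is that you phrase the last step via uniform integrability and Vitali, whereas the paper invokes the (generalized) dominated convergence theorem with the convergent dominating sequence $\unsur{n}\length(l_n(\mathbf g))$ --- these are the same argument.
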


\begin{proof} Note first that \eqref{eq:integree} means that for any  $(k-1, k-1)$ test form $\varphi$ on $\La$ ($k= \dim(\La)$) , 
\begin{equation}\label{eq:integreephi}
\unsur{n}\int \bra{T(g_1, \ldots, g_k), \varphi} d\mu^n(g_1)\cdots d\mu^n(g_k)\underset{n\cv\infty} \longrightarrow \bra{\tbif, \varphi}.
\end{equation}
The mass estimate in the statement of the proposition implies that 
$\abs{\langle T(g_1, \ldots, g_k), \varphi\rangle} \leq  C(\varphi) \sum_{i=1}^k \length(g_i)$.
Since an admissible family of representations satisfies 
\eqref{eq:moment condition finitely generated}, this  guarantees the existence of the integrals in \eqref{eq:integreephi}.
Next, by Theorem \ref{thm:abstrait}, for a.e. $(\mathbf{g}_1, \ldots , \mathbf{g}_k)$, 
$\unsur{n} \bra{ T(l_n(\mathbf{g}_1), \ldots , l_n(\mathbf{g}_k)), \varphi}$ converges to  $\bra{\tbif, \varphi}$. To get the desired result we need to show that this convergence takes place in $L^1(\mu^\nn\otimes\cdots \otimes\mu^\nn)$. Now the domination argument after Theorem \ref{t: furstenberg} implies  that $\unsur{n} \length(l_n(\mathbf{g}))$ converges in $L^1(\mu^\nn)$ to a constant, so the result simply follows from the Dominated Convergence Theorem.
\end{proof}

\subsection{Equidistribution of parameters with a given trace}
Let $(G,\mu,\rho)$ be an admissible family of representations, and fix $t\in \cc$.
Let $\mathcal{P}_t\subset G$ be the set of elements $g$ such that
the function $\la\mapsto\tr^2(g_\la)$ is constant and equal to $t$ (typically, a persistently parabolic element). By 
 Corollary~\ref{cor:large deviations}, $\mu^n(\mathcal{P}_t)$ converges to zero. 

Our purpose here is to study the distribution of parameters $\la$ such that there exists
$g\in G\setminus \mathcal{P}_t$ with $\tr^2(g_\la)= t$.
This is mostly interesting when $t = 4\cos^2\left(2\pi\frac{p}{q}\right)$, since the representations for these parameters exhibit ``accidental" new relations (also accidental parabolics when $t=4$). Recall from Corollary \ref{cor:accidental} that such parameters are dense in the bifurcation locus.

For $g\in G\setminus \mathcal{P}_t$ we let $Z(g,t)$ be the codimension 1 subvariety of parameter space defined as
  $Z(g,t) =\set{\lambda, \ \tr^2(g_\lambda) - t=0}$ (with the corresponding multiplicity, if any). Recall that with our conventions, if $g\in\mathcal{P}_t$,   $[Z(g,t)] = 0$.

The next  result  belongs to the general scheme presented in the previous paragraph. 

\begin{thm}\label{thm:equidist ae}
Let $(G,\mu,\rho)$ be an admissible family of representations satisfying the exponential moment condition \eqref{eq: exponential moment condition in the group}, and fix $t\in \cc$.

Then for $\mu^{\nn}$-a.e. $\mathbf{g}\in G^\nn$, the sequence of integration currents
$ \unsur{2n}\left[Z(l_n(\mathbf{g}),t)\right]$ converges to $T_{\rm bif}$.
\end{thm}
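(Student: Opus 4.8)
The plan is to apply the general equidistribution scheme of Theorem \ref{thm:abstrait} to a carefully chosen psh function $F$ on $\PSL$ (so $k=1$ here). The natural candidate is $F(\gamma) = \log\abs{\tr^2(\gamma) - t}$, which is psh on $\PSL$ (it is $\log$ of the modulus of a holomorphic function once a lift to $\SL$ is chosen, and the choice of lift is irrelevant since $\tr^2$ is well-defined). With this $F$, the current $dd^c_\la\, F(\rho_\la(g)) = dd^c_\la \log\abs{\tr^2(g_\la)-t}$ is exactly $[Z(g,t)]$ by the Poincar\'e--Lelong formula (and equals $0$ when $g\in\mathcal P_t$, consistently with our conventions). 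So the convergence of potentials $\unsur{2n} F(\rho_\la(l_n(\mathbf g))) \to \chi(\la)$ in $L^1_{\rm loc}$, which is what Theorem \ref{thm:abstrait} delivers, immediately gives $\unsur{2n}[Z(l_n(\mathbf g),t)] \to dd^c\chi = T_{\rm bif}$ after applying $dd^c$ (which is continuous for the $L^1_{\rm loc}$ topology on psh functions).

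So the real work is checking the two hypotheses of Theorem \ref{thm:abstrait}. Hypothesis \textit{i.} asks for a bound $F(\gamma) \le \log\norm{\gamma} + C$ (here $a_1 = 1$); since $\abs{\tr\gamma}\le 2\norm{\gamma}$ for any $\SL$-lift, we get $\abs{\tr^2\gamma - t} \le 4\norm{\gamma}^2 + \abs t \le C'\norm{\gamma}^2$, hence $F(\gamma) \le 2\log\norm{\gamma} + \log C'$, which is exactly the inequality with the factor $\frac1{2n}$ normalization in mind (the $a_1\log\norm{\gamma_1}$ becomes $2\log\norm{\gamma}$, matching the $\frac1{2n}$). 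One should be slightly careful to phrase this so that it fits the literal statement of the theorem --- i.e. apply the theorem with $F$ replaced by $\frac12 F$, or simply observe that the proof of Theorem \ref{thm:abstrait} goes through with the normalization $\frac1{2n}$ and the bound $F(\gamma)\le 2\log\norm\gamma + C$ verbatim.

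Hypothesis \textit{ii.} is the pointwise statement: for fixed $\la$, for $\mu^\nn$-a.e. $\mathbf g$, $\unsur{2n}\log\abs{\tr^2(\rho_\la(l_n(\mathbf g))) - t} \to \chi(\la)$. This is where the probabilistic input enters. When $t$ is such that it is attained by no Zariski-dense subset of values --- or more simply, for a \emph{generic} sequence --- the quantity $\tr^2(\rho_\la(l_n(\mathbf g)))$ grows like $e^{2n\chi(\la)}$, so $\unsur{2n}\log\abs{\tr^2 - t} = \unsur{2n}\log\abs{\tr^2} + o(1)$, and the convergence $\unsur{2n}\log\abs{\tr(\rho_\la(l_n\mathbf g))}\to\chi(\la)$ is precisely Guivarc'h's Theorem \ref{thm:guivarch}. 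The subtlety is that $\abs{\tr^2(\rho_\la(l_n\mathbf g)) - t}$ could a priori be much smaller than $\abs{\tr^2}$ if $\tr^2(\rho_\la(l_n\mathbf g))$ happens to come very close to $t$; one must rule out that this happens at a bad rate along a.e.\ trajectory. This is exactly the kind of large deviations estimate announced in Appendix \ref{app:distance} (``precise large deviations estimates on the asymptotic distribution of $\tr^2(g_\la)$''): one shows that $\mathbb P(\abs{\tr^2(\rho_\la(l_n\mathbf g)) - t} < e^{-\e n})$ decays fast enough (summably in $n$) so that by Borel--Cantelli, a.e.\ $\mathbf g$ has $\abs{\tr^2(\rho_\la(l_n\mathbf g))-t}\ge e^{-\e n}$ for all large $n$; combined with the trivial upper bound and Guivarc'h this pins the limit at $\chi(\la)$. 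This last point --- the quantitative non-concentration of $\tr^2$ near a fixed value --- is the main obstacle, and it is the content I would import from the refined version of Guivarc'h's theorem proved in the appendix.

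\smallskip

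Once both hypotheses are in hand, Theorem \ref{thm:abstrait} gives the $L^1_{\rm loc}$-convergence of the potentials for $\mu^\nn$-a.e.\ $\mathbf g$, and applying $dd^c$ finishes the proof. One remark worth making: the almost-sure set of good $\mathbf g$ does not depend on $t$ a priori only through the appendix estimate, which should be uniform enough; and it does not depend on $\la$, which is the whole point of the scheme (the genericity is extracted at a countable dense set of parameters and then propagated by the Hartogs-type Lemma \ref{lem:hartogs}).
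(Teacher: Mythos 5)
Your proof is correct and follows the paper's argument exactly: apply Theorem \ref{thm:abstrait} to $F(\gamma)=\tfrac12\log\abs{\tr^2(\gamma)-t}$, with hypothesis \emph{i.} coming from $\abs{\tr\gamma}\le 2\norm{\gamma}$ and hypothesis \emph{ii.} from Guivarc'h's Theorem \ref{thm:guivarch}. The only superfluous step is the "main obstacle" you identify about $\tr^2(\rho_\la(l_n(\mathbf g)))$ concentrating near $t$: since $t$ is a fixed complex number and Guivarc'h already gives $\abs{\tr(\rho_\la(l_n(\mathbf g)))}\to\infty$ a.s.\ at exponential rate, one has $\abs{\tr^2-t}\sim\abs{\tr^2}$ eventually along a.e.\ trajectory with no further Borel--Cantelli or appendix input; the non-concentration estimates are genuinely needed only for the integrated versions (Theorems \ref{thm:equidist integre} and \ref{thm:equidist speed}), where one averages over $\mathbf g$ and must control the exceptional elements with $\tr^2(g_\la)$ close to $t$.
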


%

\begin{proof} 
We work with potentials so for $g\in G\setminus \mathcal{P}_t$, let
$u (\lambda, g,  t )  =  \log\abs{\tr^2(g_\lambda) - t}$ be a psh potential of $Z(g,t)$. Notice that 
if $g\in \mathcal{P}_t$, $u (\lambda, g,  t )  \equiv -\infty$, 
 nevertheless this won't affect the argument. 
Since $dd^c\left(\unsur{2n} u(\cdot, l_n(\mathbf{g}), t) \right)= \unsur{2n} [Z(l_n(\mathbf{g}),t)]$, 
to get the desired convergence it suffices
to show that for $\mu^{\nn}$-a.e. $\mathbf{g}$,  
$\unsur{2n} u (\cdot, l_n(\mathbf g),t )$ converges to $\chi$ in $L^1_{\rm loc}(\La)$ (in particular $l_n (\mathbf{g})\notin \mathcal{P}_t$ for large   $n$). 

For this,  it suffices to apply Theorem \ref{thm:abstrait} to  the psh function 
defined on $\PSL$ by $F(\gamma) = \unsur{2} \log\abs{\tr^2(\gamma)-t}$. Assumption {\em i.} in that theorem clearly holds, and Theorem \ref{thm:guivarch} gives  {\em ii.} 
\end{proof}

It is natural to wonder whether the convergence  in the previous theorem can be made more precise. 
We already observed --see the discussion after Corollary \ref{cor:guivarch}-- that it is not true in general that for a given $\la$, $\unsur{2n}\log\abs{\tr^2(\rho_{\la }(l_n(\mathbf g)) - t}$ converges to $\chi(\la)$ in $L^1(\mu^\nn)$. 
Here we  show  that under some global assumptions on $\La$ we can  indeed  integrate with respect to $\mathbf{g}$ in  Theorem \ref{thm:equidist ae}.

\begin{thm}\label{thm:equidist integre}
Let $(G,\mu,\rho)$ be an admissible family of representations satisfying the exponential moment condition \eqref{eq: exponential moment condition in the group}, and fix $t\in \cc$. Suppose in addition that one of the following two conditions is satisfied:
\begin{enumerate}
\item[i.] the family of representations $(\rho_\la)_{\la\in \La}$ is algebraic;
\item[ii.] or there exists at least one geometrically finite representation  in $\La$.
\end{enumerate}
Then
\begin{equation}\label{eq:equidist integree}
\unsur{2n} \int_{G^\nn}\left[Z(l_n(\mathbf{g}),t) \right]  d\mu^\nn(\mathbf g)=
\unsur{2n} \int_{G} \left[Z(g,t)\right] d\mu^n(g)  \underset{n\cv\infty}{\longrightarrow} T_{\rm bif}
\end{equation}(recall that  if $g\in \mathcal{P}_t$, by definition $[Z(g,t)]=0$).
\end{thm}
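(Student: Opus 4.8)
The plan is to derive Theorem~\ref{thm:equidist integre} from Proposition~\ref{prop:integree}, applied with $k=1$ to the plurisubharmonic function $F(\gamma)=\tfrac12\log\abs{\tr^2(\gamma)-t}$ on $\PSL$. With this choice $T(g):=dd^c_\la F(\rho_\la(g))=\tfrac12[Z(g,t)]$ (and $T(g)=0$ when $g\in\mathcal{P}_t$, in accordance with our convention), so that the conclusion \eqref{eq:integree} of Proposition~\ref{prop:integree} is exactly \eqref{eq:equidist integree}. The two hypotheses of Theorem~\ref{thm:abstrait} for this $F$ have already been verified in the proof of Theorem~\ref{thm:equidist ae}: assumption~{\em i.} holds with $a_1=1$ since $\abs{\tr^2\gamma-t}\le C\norm{\gamma}^2$, and assumption~{\em ii.} is Guivarc'h's Theorem~\ref{thm:guivarch}. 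Hence the entire content of the proof reduces to checking the mass hypothesis of Proposition~\ref{prop:integree}, namely that for every $\La'\Subset\La$ there is a constant $C(\La')$ with
$$\m_{\La'}([Z(g,t)])\le C(\La')\,\length(g)\qquad\text{for all }g\in G,$$
and it is only here that the extra assumptions~{\em i.} and {\em ii.} intervene.

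To establish this bound I would argue in the standard way. The mass of $[Z(g,t)]$ on $\La'$ is the number of zeros (with multiplicity) of the holomorphic function $\la\mapsto\tr^2(g_\la)-t$ in $\La'$, and by Jensen's formula --- or equivalently by the classical comparison between the Monge--Amp\`ere mass of a psh function and its values, cf.\ \cite{demailly} --- this is controlled, for $\La'\Subset\La''\Subset\La$, by $\sup_{\La''}\log\abs{\tr^2(g_\la)-t}$ minus a lower bound for the same quantity at a fixed reference parameter (or a fixed finite set of them). The upper term is harmless: submultiplicativity of the operator norm together with relative compactness give $\sup_{\La''}\log\norm{g_\la}\le\length(g)\cdot\max_{s}\sup_{\La''}\log\norm{s_\la}=O(\length(g))$ (the maximum over a finite generating set), whence $\sup_{\La''}\log\abs{\tr^2(g_\la)-t}=O(\length(g))$. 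So everything comes down to exhibiting parameters $\la_*$ in a fixed compact subset, chosen independently of $g$, at which $\abs{\tr^2(g_{\la_*})-t}\ge e^{-O(\length(g))}$ --- that is, to ruling out that $\tr^2(\rho_\la(g))-t$ be uniformly exponentially small in $\length(g)$ along $\La$.

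Under assumption~{\em i.} this is immediate, and in fact stronger: the family being algebraic, $\la\mapsto\tr^2(\rho_\la(g))$ is pulled back from a regular function of degree $O(\length(g))$ (a product of $\length(g)$ coordinate matrices of bounded degree), so $Z(g,t)$ is a hypersurface of degree $O(\length(g))$ and a B\'ezout-type estimate bounds its volume in $\La'$ by $O(\length(g))$ directly; no number theory is required at this stage (it enters only for the quantitative Theorem~\ref{theo:equidist speed}). Under assumption~{\em ii.}, lacking the algebraic structure, I would use the geometrically finite representation $\rho_{\la_0}\in\La$: a geometrically finite group has only finitely many cusps and only finitely many closed geodesics below any given length, so the values $\tr^2(\rho_{\la_0}(g))$ lying within distance $1$ of $t$ form a finite set; when $t$ is not among them --- which holds in particular whenever $t$ corresponds to an elliptic element of infinite order, since then $t$ lies in no discrete trace spectrum --- one obtains a uniform positive lower bound for $\abs{\tr^2(\rho_{\la_0}(g))-t}$, and the remaining cases (essentially $g$ conjugate to a power of a parabolic, for $t=4$) are handled by taking a reference parameter slightly off $\la_0$ and using conjugation-invariance of the trace. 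The mass bound then follows as above, and the same reasoning makes clear that~{\em ii.} can be relaxed to asking only that the family analytically continue to one containing a geometrically finite representation. I expect this last point --- the careful bookkeeping of trace values at (a neighbourhood of) the geometrically finite parameter --- to be the main obstacle; case~{\em i.} is essentially immediate by comparison.
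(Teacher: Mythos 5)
Your proposal is correct and follows essentially the same route as the paper: reduce via Proposition~\ref{prop:integree} to the mass bound $\m_{\La'}([Z(g,t)])\leq C\length(g)$, obtain it under~\emph{i.} by B\'ezout, and under~\emph{ii.} by bounding $\log\abs{\tr^2(g_\la)-t}$ above by $O(\length(g))$ everywhere and below at a reference parameter near the geometrically finite one (finitely many exceptional conjugacy classes by the bounded-geodesic-length count), then converting pointwise control of the potential into an $L^1$/mass bound via the standard psh compactness argument. The paper packages the last step as Lemmas~\ref{lem:traceminor ii} and~\ref{lem:three circles}, which is exactly your Jensen-type comparison.
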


 The notion of an algebraic family of representations was  introduced in \S\ref{subs:families}. Observe in particular that condition {\em i.} is satisfied when $\La$ is an open subset of the family of {all} representations of $G$ into $\PSL$ (resp. modulo conjugacy). 
 It will also be clear from the proof that {\em ii.} can be relaxed to only requiring that $\La$ can be continued to a family containing a geometrically finite representation.  

\begin{proof} In view of Proposition \ref{prop:integree} it is enough to show that for 
every $g\in G$, and every $\La'\Subset \La$, $\m_{\La'}\lrpar{[Z(g,t)]}\leq C(\La') \length(g)$. 
Observe that if $g\in  \mathcal{P}_t$ this is true by definition.

 This is easiest under assumption {\em i.}, so let us assume that $(\rho_\la)$  is an  algebraic family.
Recall that $\PSL$ is isomorphic to $\mathrm{SO}(3,\cc)$, so that, reducing $\La'$ if necessary,  we view
$\La'$ as an open subset of an affine subvariety in $\cc^{9k}$ ($k$ is the number of generators). For $1\leq i\leq k$, let $(a_{i, j})_{1\leq i\leq 9}$ be the coefficients corresponding to the generator $g_i$.
If $g\in G$ is any element, it  is easy to see that $\tr^2(g)-t$ is a polynomial in the $a_{i,j}$ of degree $O(\length(g))$, so the desired estimate simply follows from Bézout's Theorem and the fact that the volume of an algebraic subvariety is controlled by its degree.

\medskip

Let us now suppose that {\em ii.} holds. If $g\in G\setminus \mathcal{P}_t$, let
 $u (\lambda,g )  =  \log\abs{\tr^2(g_\lambda) - t}, $ be a psh potential of $[Z(g,t)]$. If $\La''$ is an open set with  $\La'\Subset\La''\Subset\La$, there exists a constant $C( \La',\La'')$ such that
 $\m_{\Lambda'}\lrpar{[Z(g,t)]} \leq C \norm{u(\cdot, g)}_{L^1(\La'')}$ (see \cite[Remark 3.4]{demailly}), so our task is  to control this $L^1$ norm. Notice further that it is enough to consider the case where $\la\mapsto\tr^2(g_\lambda) $ is not constant (for otherwise $[Z(g,t)] = 0$). The following lemma then completes the proof of the theorem.
\end{proof}

\begin{lem}\label{lem:traceminor ii}
If there exists a geometrically finite representation in $\La$, then for every
relatively compact open subset $\La'\subset \La$ there exists a constant $C$ such
that for every $g\in G$, if $\la\mapsto\tr^2(g_{\lambda})$ is not a constant function, then
$\norm{u(\cdot, g)}_{L^1(\La')}\leq C\length(g)$.
\end{lem}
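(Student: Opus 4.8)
The plan is to reduce the $L^1$ bound to a single pointwise lower bound for $\abs{\tr^2(g_\lambda)-t}$ at one well-chosen parameter, and to obtain that lower bound from the fact that the set of trace values of a geometrically finite group is discrete.

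Fix $\La'\Subset\La''\Subset\La$ (or $\Subset\widetilde\La$, the analytic continuation, in the weaker form of hypothesis ii) with $\La''$ connected and containing a parameter $\lambda_0$ to be chosen; recall $u(\lambda,g)=\log\abs{\tr^2(g_\lambda)-t}$, which is psh on $\La$. Two elementary ingredients enter. First, writing $g$ as a word of length $\length(g)$ in the generators and using submultiplicativity of the operator norm together with $\abs{\tr^2\gamma}\le 4\norm{\gamma}^2$, one gets $\sup_{\La''}\abs{\tr^2(g_\lambda)}\le 4M^{2\length(g)}$ with $M:=\max_i\sup_{\La''}\norm{(g_i)_\lambda}<\infty$, hence $\sup_{\La''}u(\cdot,g)\le C_1\length(g)+C_1$ for some $C_1=C_1(t,\La'')$. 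Second, the classical sub-mean value inequality $\abs{B}\,v(\lambda_0)\le\int_B v\le\abs B\sup_B v$ on a ball $B\ni\lambda_0$, combined with a Harnack-type chaining over a finite covering of a connected compact set containing $\overline{\La'}\cup\set{\lambda_0}$, gives a constant $C_0=C_0(\La',\La'',\lambda_0)$ such that
\[
\norm v_{L^1(\La')}\le C_0\bigl((\sup\nolimits_{\La''}v)^++(v(\lambda_0))^-\bigr)
\]
for every psh $v$ on $\La''$ (see \cite{hormander}). Granting a constant $d>0$, independent of $g$, and a parameter $\lambda_0$ with $\abs{\tr^2(g_{\lambda_0})-t}\ge d$ for all $g$ such that $\tr^2(g_\cdot)$ is non-constant, we apply this to $v=u(\cdot,g)$ and obtain $\norm{u(\cdot,g)}_{L^1(\La')}\le C_0\bigl(C_1\length(g)+C_1+\abs{\log d}\bigr)\le C\length(g)$ (using $\length(g)\ge 1$), which is the lemma.

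It remains to produce $\lambda_0$ and $d$, and this is where geometric finiteness is used. By stability of geometrically finite groups, geometrical finiteness is an open condition, so under hypothesis ii there is a nonempty open set $W$ of geometrically finite representations (in $\La$, or in a continuation). For a non-elementary geometrically finite group $\Gamma=\rho_{\lambda_1}(G)$ the set $\Sigma(\lambda_1):=\set{\tr^2\gamma:\gamma\in\Gamma}$ is a \emph{discrete} subset of $\cc$: there are only finitely many conjugacy classes of elliptic and of parabolic elements, contributing finitely many values in $[0,4]$; and for loxodromic $\gamma$ of complex length $\ell+i\theta$ one has $\abs{\tr^2\gamma}=2\abs{1+\cosh(\ell+i\theta)}\ge 2(\sinh\ell-1)$, while $\Gamma$ has positive systole $\ell_{\min}>0$ and only finitely many closed geodesics of length $\le L$ for each $L$ (standard properties of geometrically finite groups, see \cite{kapovich}), so $\Sigma(\lambda_1)\cap\set{\abs z\le R}$ is finite for every $R$. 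Now $\bigcup_{g}Z(g,t)$, where the union runs over the countably many $g\in G$ with $\tr^2(g_\cdot)$ non-constant, is a countable union of proper analytic subvarieties, so we may choose $\lambda_0\in W$ outside it; then for every such $g$ we have $\tr^2(g_{\lambda_0})\in\Sigma(\lambda_0)\setminus\set t$, whence $\abs{\tr^2(g_{\lambda_0})-t}\ge d:=\dist\bigl(t,\Sigma(\lambda_0)\setminus\set t\bigr)>0$, the positivity being exactly the discreteness of $\Sigma(\lambda_0)$.

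The delicate point is precisely this last step. One cannot simply take the given geometrically finite parameter as $\lambda_0$, because $\tr^2(g_{\lambda_0})$ may equal $t$ — for instance at an accidental parabolic when $t=4$ — making $u(\lambda_0,g)=-\infty$; this is what forces moving $\lambda_0$ generically (using openness of geometrical finiteness) and the structural input that the trace spectrum of a geometrically finite group is discrete. Everything else is soft potential theory.
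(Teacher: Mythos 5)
Your reduction to a pointwise lower bound (the sup estimate, the psh compactness inequality, and the local finiteness of the trace spectrum $\set{\tr^2\gamma}$ of a geometrically finite group via the finiteness of short geodesics and of parabolic/elliptic conjugacy classes) is sound and closely parallels the paper. The genuine gap is the sentence ``by stability of geometrically finite groups, geometrical finiteness is an open condition''. This is false in the presence of parabolics: openness (Marden-type stability) holds for convex cocompact representations, but a geometrically finite representation with cusps is in general a limit of indiscrete representations — think of the holonomy of a cusped finite-volume hyperbolic $3$-manifold, or of a cusp group on the boundary of a Bers or Maskit slice, where arbitrarily close parameters in the family give non-discrete groups. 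Under hypothesis \emph{ii.} the set of geometrically finite parameters in $\La$ may therefore reduce to the single given point $\la_0$, and then you cannot choose a geometrically finite $\lambda$ outside the countable union $\bigcup_g Z(g,t)$; without that, your single uniform constant $d=\dist\bigl(t,\Sigma(\lambda_0)\setminus\set{t}\bigr)$ is of no use for the (possibly existing) elements $g$ with non-constant trace but $\tr^2(g_{\la_0})=t$, exactly the accidental-parabolic situation you flag yourself.

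The way around this, as in the paper, is to give up on a single evaluation point valid for all $g$. At the one geometrically finite parameter $\la_0$, local finiteness of the trace spectrum shows that \emph{all but finitely many conjugacy classes} satisfy $\abs{\tr^2(g_{\la_0})-t}>1$. For each of the finitely many exceptional classes whose trace function is non-constant, one picks a parameter $\lambda_1$ in a fixed small ball $\La''\ni\la_0$ with $\abs{\tr^2(g_{\lambda_1})-t}\geq C$, the constant being uniform because there are only finitely many such classes (and $\tr^2$ is a class function). Since the psh estimate (your $L^1$ inequality, or Lemma \ref{lem:three circles}) has a constant depending only on $\La'$ and $\La''$ and not on the particular evaluation point $x_0\in\La''$, this class-dependent choice of base point costs nothing, and no openness of geometric finiteness is needed. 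With that modification the rest of your argument goes through.
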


As the proof will show, it is easy to obtain such an estimate for a family consisting entirely of geometrically finite representations. To handle the general case, we use some classical properties of psh functions, which we remind first.

\begin{lem}\label{lem:three circles}
Let $u$ be a psh function on a connected complex manifold $\om$, and $M>0$ with $\sup_\om u \leq  M$. Fix two  relatively compact  open subsets $\om'$ and  $\om''$ of $\om$,  and let $x_0\in \om''$.  Then there exists a constant $A(\om', \om'')$
such that
 the following properties hold:
 \begin{enumerate}[i.]
 \item $\displaystyle \norm{u}_{L^1(\om')} \leq A \max(\abs{u(x_0)},M)$;
 \item $\displaystyle \sup_{\om'}u \geq -A \max(\abs{u(x_0)},M)$.
 \end{enumerate}
 \end{lem}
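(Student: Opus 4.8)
\emph{Plan of proof.} The statement is local and concerns only (pluri)subharmonic functions, so the plan is to prove it by a chaining argument built on the solid sub-mean value inequality, working in coordinate charts where psh functions are ordinary Euclidean subharmonic functions. First I would fix the geometry once and for all: since $\om$ is connected and $\om',\om''\Subset\om$, choose a connected relatively compact open set $\widetilde\om$ with $\overline{\om'}\cup\overline{\om''}\subset\widetilde\om\Subset\om$, together with a finite cover of $\overline{\widetilde\om}$ by coordinate balls $B(a_1,r),\dots,B(a_N,r)$ such that each concentric ball $B(a_i,10r)$ lies in a single chart and inside $\om$, and such that the incidence graph on $\{1,\dots,N\}$ (with an edge $ij$ whenever $B(a_i,r)\cap B(a_j,r)\neq\emptyset$) is connected. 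All constants produced below will depend only on $N$, $r$ and this finite collection of charts, hence only on $\om'$ and $\om''$; it is exactly this uniformity that has to be watched.

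The first ingredient is a purely local estimate. If $B(a,3r)$ lies in a chart inside $\om$, the solid sub-mean value inequality for subharmonic functions gives $\vol(B(a,r))\,u(a)\le\int_{B(a,r)}u$; since $\sup_\om u\le M$ forces $u^+\le M$, we have $\abs{u}\le 2M-u$ pointwise, whence
\[
\norm{u}_{L^1(B(a,r))}\le\int_{B(a,r)}(2M-u)\le\vol(B(a,r))(2M-u(a))\le C_1\max(M,\abs{u(a)}).
\]
The second ingredient is a propagation step converting an $L^1$ bound into a pointwise bound and back: if $\norm{u}_{L^1(B(a,r))}\le K$, then $\int_{B(a,r/2)}u\ge-K$, so there is a point $y\in B(a,r/2)$ with $u(y)\ge-C_2K$; applying the local estimate at $y$ (legitimate since $B(y,3r)\subset B(a,4r)$ still lies in a chart inside $\om$) yields $\norm{u}_{L^1(B(y,2r))}\le C_1\max(M,C_2K)\le C_3(M+K)$. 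Since any ball $B(a',r)$ of the cover meeting $B(a,r)$ is contained in $B(y,2r)$ for such a $y$, this transfers the $L^1$ bound to each neighbouring ball at the uniform cost of a multiplicative factor $C_3$ and an additive term $C_3 M$.

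With these in hand I would run the chain: pick $i_0$ with $x_0\in B(a_{i_0},r)$, apply the local estimate at $x_0$ to get $\norm{u}_{L^1(B(a_{i_0},r))}\le C_1\max(M,\abs{u(x_0)})$, and then propagate across the connected incidence graph. After at most $N$ iterations one obtains a constant $A_0$, depending only on $N$, $C_1$, $C_3$ (hence only on $\om',\om''$), such that $\norm{u}_{L^1(B(a_i,r))}\le A_0\max(M,\abs{u(x_0)})$ for every $i$. Summing over $i$ and using $\om'\subset\bigcup_i B(a_i,r)$ gives (i) with $A:=NA_0$. Part (ii) is then immediate, since
\[
\sup_{\om'}u\ \ge\ \frac{1}{\vol(\om')}\int_{\om'}u\ \ge\ -\frac{1}{\vol(\om')}\norm{u}_{L^1(\om')}\ \ge\ -\frac{A}{\vol(\om')}\max(M,\abs{u(x_0)}),
\]
and one enlarges $A$ so that a single constant serves both statements. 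The only step that is not pure bookkeeping is the propagation step — extracting from an $L^1$ bound a point where $u$ is not too negative, and re-running the sub-mean value inequality there — and the genuine content of the lemma is that this can be arranged so that the final constant depends on $\om'$ and $\om''$ alone, with no dependence on $u$ or $M$.
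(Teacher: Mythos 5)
Your argument is correct in substance, but it is a genuinely different route from the one in the paper. The paper disposes of the lemma in two lines by a soft compactness argument: normalize $v=u/\max(|u(x_0)|,M)$, observe that $\sup_\om v\le 1$ and $v(x_0)\ge -1$ with $x_0$ ranging over the relatively compact set $\om''$, and invoke the standard fact that a family of psh functions which is locally uniformly bounded above and does not degenerate to $-\infty$ at some point is relatively compact in $L^1_{\rm loc}$; both \emph{i.} and \emph{ii.} then follow by contradiction. Your proof instead makes the content of that compactness explicit: the local sub-mean value estimate plus the propagation step (extract a point $y$ where $u$ is not too negative, re-run the sub-mean value inequality at $y$) is exactly the mechanism hidden inside the Hartogs-type compactness theorem, and chaining it across a finite connected cover yields an effective constant $A(\om',\om'')$. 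What your approach buys is constructiveness and an explicit dependence of $A$ on the covering data; what it costs is length and some bookkeeping you gloss over --- e.g.\ the inclusion ``$B(a',r)\subset B(y,2r)$'' is false as written (with $y\in B(a,r/2)$ and $|a'-a|<2r$ one only gets $B(a',r)\subset B(y,7r/2)$, so the radii in the propagation step must be enlarged, which is why you need the margin $B(a_i,10r)\subset\om$), and the comparison of Euclidean balls taken in different charts requires a uniform distortion constant on the compact set $\overline{\widetilde\om}$. These are genuinely only adjustments of constants and do not affect the validity of the argument.
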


 \begin{proof}
This follows from a standard compactness argument: consider the family of psh functions of the form $ v = \frac{u}{\max(\abs{u(x_0)},M)}$.  This family is compact in $L^1_{\rm loc}(\om)$,
 since $\sup_\om v \leq  1$ and $v(x_0)\geq -1$, whence {\em  i.} and  {\em ii.} follows.
%
\end{proof}

\begin{proof}[Proof of Lemma \ref{lem:traceminor ii}]
First, since $\tr^2(g_\la)\leq C\norm{g_\la}^2$,  there exists a constant $M(\La' )>0$ such that for every $g\in G$,
\begin{equation}\label{eq:suptrace}
\sup_{\la\in\Lambda'  } \abs{\tr^2(g_\la) -t} \leq M^{\length(g)}.
\end{equation}

 Let now $\lambda_0$ be a parameter such that $\rho_{\lambda_0}$ is geometrically finite. Recall that this means that $\rho_{\lambda_0}$ is discrete, faithful, and that there is a finite sided fundamental domain for the $\rho_{\lambda_0}$-action of $G$ on hyperbolic $3$-space. In this case it is known (see \cite[Theorem 12.7.8]{ratcliffe}) that given any constant $\ell>0$ the quotient hyperbolic manifold admits only finitely many closed  geodesics of length bounded by $\ell$.
Let now $\La''$ be a small ball containing $\la_0$.
 Observe that if $\gamma $ is the closed geodesic in $M$ corresponding to some element $g_{\lambda_0}$, then the length of $\gamma$ is given by $2 \log |\lambda_{\max}(g_{\lambda_0})|$, where as before $\lambda_{\rm max}$ denotes an eigenvalue of $g_{\lambda}$ of maximal modulus. 
Therefore there is only a finite number of conjugacy classes of elements $g\in G$ such that $| \lambda_{\rm max} (g_{\lambda_0}) |\leq 4 + |t|$ (observe that in a geometrically finite representation there is only a finite number of conjugacy classes of parabolic elements).
Hence there exists a positive number $C$ such that for every element in this finite number of conjugacy classes, either $\lambda\mapsto \tr ^2 g_{\lambda}$ is   constant, or
there is a parameter $\lambda_1\in \Lambda''$ such that $\abs{\tr ^2 g_{\lambda_1}- t}\geq C$.
On the other hand,  for the elements $g\in G$ satisfying $|\lambda_{\rm max} (g_{\lambda_0} )| >|t|+4$, we have that $|\tr^2 g_{\lambda_0} -t| > 1$.
From this discussion and \eqref{eq:suptrace}, we obtain the desired bound on the $L^1$ norm
 by passing to logarithms and applying  Lemma \ref{lem:three circles} with $x_0 =\la_0$, $\om = \La$, $\om' = \La'$ and $\om''=\La''$.
\end{proof}

\subsection{Collisions between fixed points}
Here we examine the distribution of another  natural codimension 1 phenomenon in the bifurcation locus.
For a pair of elements  $(g,h)$ in $ G$,  consider the subvariety in $\La$ defined by
$$F(g,h) =\set{\la, \ \fix(g_\la)\cap \fix(h_\la)\neq\emptyset} .
$$ 
As before, if $F(g,g')=\La$ we declare that $[F(g,g')]=0$.

The associated equidistribution statement is the following.

\begin{thm}\label{thm:collision}
Let $(G,\mu,\rho)$ be  an admissible family of representations satisfying the exponential moment condition \eqref{eq: exponential moment condition in the group}.

Then for $(\mu^{\nn}\otimes  \mu ^\nn)$-a.e. $(\mathbf{g},\mathbf{h})\in (G^\nn)^2$,  we have 
$$ \unsur{4n}\left[  F(l_n(\mathbf{g}), l_n(\mathbf{h})) \right] \underset{n\cv\infty}\longrightarrow
T_{\rm bif}.$$
If furthermore one of the conditions {i.},  ii. of Theorem \ref{thm:equidist integre} holds, then the convergence takes place in $L^1(\mu^\nn\otimes \mu^\nn)$.
\end{thm}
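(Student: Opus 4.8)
The plan is to deduce the statement from the abstract equidistribution scheme of Theorem~\ref{thm:abstrait}, applied with $k=2$ and with $\mu$ on both factors. First observe that, by Lemma~\ref{lem:commutator}, for any $g,h\in G$ the variety $F(g,h)$ is the zero divisor of the holomorphic function $\la\mapsto \tr[g_\la,h_\la]-2=\tr\big(\rho_\la([g,h])\big)-2$ (the trace of a commutator being well defined on $\PSL$), so that $[F(g,h)]=dd^c_\la\log\abs{\tr[g_\la,h_\la]-2}$ and
\[ \unsur{4n}\big[F(l_n(\mathbf g),l_n(\mathbf h))\big]=dd^c_\la\Big(\unsur{4n}\log\abs{\tr[\rho_\la(l_n(\mathbf g)),\rho_\la(l_n(\mathbf h))]-2}\Big). \]
Hence it suffices to apply Theorem~\ref{thm:abstrait} to the function $F(\gamma_1,\gamma_2):=\frac14\log\abs{\tr[\gamma_1,\gamma_2]-2}$ on $\PSL^2$, which is psh (being the logarithm of the modulus of a holomorphic function of the matrix entries) and descends to $\PSL^2$.

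Hypothesis~\emph{i.}\ of Theorem~\ref{thm:abstrait} is easy: since $\abs{\tr M}\leq 2\norm M$ for $M\in\SL$ and $\norm{[\gamma_1,\gamma_2]}\leq\norm{\gamma_1}^2\norm{\gamma_2}^2$, one gets $\abs{\tr[\gamma_1,\gamma_2]-2}\leq 2\norm{\gamma_1}^2\norm{\gamma_2}^2+2\leq C\norm{\gamma_1}^2\norm{\gamma_2}^2$ (recall $\norm{\gamma_i}\geq 1$), whence $F(\gamma_1,\gamma_2)\leq\frac12\log\norm{\gamma_1}+\frac12\log\norm{\gamma_2}+C$, so $a_1=a_2=\frac12$ works.

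The main point is hypothesis~\emph{ii.}: for fixed $\la$ one must show that for $\mu^\nn\otimes\mu^\nn$-a.e.\ $(\mathbf g,\mathbf h)$, $\unsur{4n}\log\abs{\tr[A_n,B_n]-2}\to\chi(\la)$, where $A_n=\rho_\la(l_n(\mathbf g))$ and $B_n=\rho_\la(l_n(\mathbf h))$. The upper bound $\limsup\leq\chi(\la)$ follows from the pointwise inequality of the previous paragraph together with Theorem~\ref{t: furstenberg} ($\unsur n\log\norm{A_n}\to\chi(\la)$ a.s., likewise for $B_n$). For the lower bound I would use the trace identity (Fricke's identity combined with $\tr(AB)+\tr(AB^{-1})=\tr A\,\tr B$)
\[ \tr[A,B]-2=\tr^2 A+\tr^2 B-\tr(AB)\,\tr(AB^{-1})-4. \]
Now $A_nB_n=\rho_\la\big(l_n(\mathbf g)\,l_n(\mathbf h)\big)$, and $l_n(\mathbf g)\,l_n(\mathbf h)$ is the left product of a $2n$-tuple of i.i.d.\ $\mu$-distributed elements, so Guivarc'h's Theorem~\ref{thm:guivarch} gives $\unsur{2n}\log\abs{\tr(A_nB_n)}\to\chi(\la)$ a.s. Similarly $A_nB_n^{-1}=\rho_\la\big(l_n(\mathbf g)\,(l_n(\mathbf h))^{-1}\big)$ is a product of two independent random products of length $n$, each of norm $e^{n\chi(\la)+o(n)}$; since the stationary measure is diffuse (Theorem~\ref{t: furstenberg}), the attracting direction of the first factor and the repelling direction of the second are a.s.\ distinct, so this product again has norm $e^{2n\chi(\la)+o(n)}$, and the refined estimates on traces and fixed points of random products of Appendix~\ref{app:distance} yield $\unsur{2n}\log\abs{\tr(A_nB_n^{-1})}\to\chi(\la)$ a.s. Finally $\unsur n\log\abs{\tr^2 A_n+\tr^2 B_n-4}\to 2\chi(\la)$ a.s., which is negligible compared with $\tr(A_nB_n)\,\tr(A_nB_n^{-1})$ of size $e^{4n\chi(\la)+o(n)}$ (here $\chi(\la)>0$ is crucial); hence $\abs{\tr[A_n,B_n]-2}=\abs{\tr(A_nB_n)\,\tr(A_nB_n^{-1})}\,(1-o(1))$ a.s.\ and $\unsur{4n}\log\abs{\tr[A_n,B_n]-2}\to\chi(\la)$. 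With~\emph{i.}\ and~\emph{ii.}\ verified, Theorem~\ref{thm:abstrait} yields the almost sure $L^1_{\rm loc}$ convergence, which is the first assertion.

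For the $L^1(\mu^\nn\otimes\mu^\nn)$ statement under the extra hypothesis~\emph{i.}\ or~\emph{ii.}\ of Theorem~\ref{thm:equidist integre}, I would invoke Proposition~\ref{prop:integree}, so it only remains to prove a mass bound $\m_{\La'}\big([F(g,h)]\big)\leq C(\La')\,(\length(g)+\length(h))$ for all $g,h\in G$ and all $\La'\Subset\La$. Since $\tr[g_\la,h_\la]-2=\tr\rho_\la(c)-2$ with $c=[g,h]\in G$ and $\length(c)\leq 2(\length(g)+\length(h))$, this reduces to a mass bound for $dd^c_\la\log\abs{\tr\rho_\la(c)-2}$. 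Under~\emph{i.}\ (algebraic family) it follows from B\'ezout's theorem exactly as in the proof of Theorem~\ref{thm:equidist integre}, because $\tr\rho_\la(c)-2$ is a polynomial of degree $O(\length(c))$ in the matrix coordinates of the generators. Under~\emph{ii.}\ (a geometrically finite representation $\rho_{\la_0}\in\La$) it follows by the argument of Lemma~\ref{lem:traceminor ii} applied to $\tr$ in place of $\tr^2$: the function $\la\mapsto\tr\rho_\la(c)-2$ depends only on the conjugacy class of $c$ in $G$, faithfulness and geometric finiteness imply that only finitely many conjugacy classes yield an element of $\rho_{\la_0}(G)$ that is parabolic or loxodromic of bounded translation length, and for every other class $\abs{\tr\rho_{\la_0}(c)-2}>1$; combined with the a priori bound $\sup_{\La'}\abs{\tr\rho_\la(c)-2}\leq M^{\length(c)}$ and the compactness Lemma~\ref{lem:three circles}, this gives $\norm{\log\abs{\tr\rho_\la(c)-2}}_{L^1(\La')}\leq C\length(c)$, hence the mass bound. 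The main obstacle of the whole proof is the lower bound in hypothesis~\emph{ii.}\ above, namely controlling the ``mixed'' random product $A_nB_n^{-1}$, which is exactly the phenomenon that the distribution results for fixed points of random products of Appendix~\ref{app:distance} are designed to handle.
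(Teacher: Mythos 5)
Your overall architecture is exactly the paper's: reduce to Theorem \ref{thm:abstrait} applied to $F(\gamma_1,\gamma_2)=\tfrac14\log\abs{\tr[\gamma_1,\gamma_2]-2}$ with $a_1=a_2=\tfrac12$, and get the $L^1(\mu^\nn\otimes\mu^\nn)$ statement from Proposition \ref{prop:integree} via the same mass bounds (B\'ezout under \emph{i.}, the Lemma \ref{lem:traceminor ii} argument under \emph{ii.}). Where you genuinely diverge is the verification of hypothesis \emph{ii.} of Theorem \ref{thm:abstrait}: the paper invokes Corollary \ref{cor:schottky}, which estimates $\norm{[A_n,B_n]}$ directly by a ping-pong argument (the commutator maps the complement of $A_n(\mathbf{g'})$ into $A_n(\mathbf g)$, so it is loxodromic with separated fixed points and $\norm{[A_n,B_n]}\asymp\norm{A_n}^2\norm{B_n}^2$, then Lemma \ref{l:norm, trace and distance between fixed points} converts norm to trace), whereas you use Fricke's identity $\tr[A,B]-2=\tr^2A+\tr^2B-\tr(AB)\tr(AB^{-1})-4$ to reduce to traces of the length-$2n$ products $A_nB_n$ and $A_nB_n^{-1}$. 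Your identity is correct and the reduction is clean, but note two things. First, it does not bypass the hard input: the lower bound on $\abs{\tr(A_nB_n^{-1})}$ (and the fact that the mixed term dominates $\tr^2A_n+\tr^2B_n$) still requires the separation of the attracting ball of $B_n^{-1}$ from the repelling ball of $A_n$, i.e.\ precisely Theorem \ref{thm:schottky}, so the two proofs rest on the same appendix estimates. Second, a small citation fix: Theorem \ref{thm:guivarch} is an a.s.\ statement along the partial products of a single random walk, and the words $g_n\cdots g_1h_n\cdots h_1$ for varying $n$ are \emph{not} such partial products (the word changes in its middle as $n$ grows); you should instead use the large deviations estimate of Corollary \ref{cor:large deviations} (the law of $A_nB_n$ is $\mu^{2n}$) together with Borel--Cantelli to get the a.s.\ convergence $\unsur{2n}\log\abs{\tr(A_nB_n)}\to\chi(\la)$. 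With that adjustment the argument goes through.
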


 It is certainly possible to give    estimates  for the speed of convergence in the spirit of Theorem \ref{thm:equidist speed}, but we  omit this. Also, we may choose $\mathbf{h}$ to be generic with respect to some other measure  $\mu'$  on $G$, in which case,  $ \unsur{2n}\left[  F(l_n(\mathbf{g}), l_n(\mathbf{h})) \right] \underset{n\cv\infty}\longrightarrow
T_{\rm bif} + T_{\rm bif}',$ where $\tbif'$ is the bifurcation current associated to $(G,\mu',\rho)$



\begin{proof}
Recall from Lemma \ref{lem:commutator} that $F(g,h) = \set{\la,\ \tr[g_\la, h_\la]=2}$. Notice that this allows us to properly define the multiplicity of $F(g,h)$.  Passing to potentials, what we need to prove is that for $(\mu^{\nn}\otimes  \mu ^\nn)$-a.e. $(\mathbf{g},\mathbf{h})$, 
$$\unsur{4n} \log  \abs{\tr[l_n(\mathbf g), l_n(\mathbf h)]-2} \underset{n\cv\infty}\longrightarrow \chi(\la) \text{ in } L^1_{\rm loc}.$$ Again for this we use Theorem \ref{thm:abstrait}, for $F(\gamma_1,\gamma_2) = \log\abs{\tr[\gamma_1, \gamma_2]-2}$. The plurisubharmonicity and {\em i.} are obvious, while {\em ii.} follows from Corollary \ref{cor:schottky}.

The proof of  Theorem \ref{thm:equidist integre} shows that under one of the additional assumption  {\em i.} or {\em ii.} of that theorem, for every $\La'\Subset \La$, $\m_{\La'}([F(g,h)])\leq C (\length(g)+\length(h))$. Therefore the second assertion of   Theorem \ref{thm:collision} follows from Proposition \ref{prop:integree}. 
\end{proof}

\subsection{Speed of convergence}
We now prove Theorem \ref{theo:equidist speed}. 

\begin{thm}\label{thm:equidist speed}
Let $(G,\mu,\rho)$ be an admissible family of representations satisfying the exponential moment condition \eqref{eq: exponential moment condition in the group}, and fix $t\in \cc$. 

 Suppose in addition that one of the following conditions holds:
   \begin{enumerate}
 \item[i.] either $\La$ is an algebraic family of representations, defined over $\overline{\mathbb{Q}}$;
 \item[ii.] or there is at least one geometrically finite representation in $\La$.
 \end{enumerate}
  
Then there exists a constant  $C$ such that  for every test form $\phi$
\begin{equation}\label{eq:equidist speed}
\bra{\unsur{2n} \int  \left[Z(g,t)\right] d\mu^n(g) -T_{\rm bif}, \phi}\leq
 C \frac{\log n }{n} \norm{\phi}_{C^2}.
\end{equation}
\end{thm}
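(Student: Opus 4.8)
The plan is to reduce the statement to an $L^1_{\mathrm{loc}}$ estimate on potentials and then apply a standard convexity/regularization argument for currents. Concretely, write $u_n(\la) = \frac{1}{2n}\int \log\abs{\tr^2(g_\la)-t}\,d\mu^n(g)$, which is a psh potential of $\frac{1}{2n}\int[Z(g,t)]\,d\mu^n(g)$, and recall $\chi = dd^c$-potential of $T_{\mathrm{bif}}$. Since $\bra{dd^c(u_n-\chi),\phi} = \bra{u_n-\chi, dd^c\phi}$ and $\norm{dd^c\phi}_{C^0}\leq C\norm{\phi}_{C^2}$, it suffices to prove the $L^1_{\mathrm{loc}}$ bound
\begin{equation}\label{eq:L1bound plan}
\norm{u_n - \chi}_{L^1(\La')} \leq C\,\frac{\log n}{n}
\end{equation}
for every $\La'\Subset\La$. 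First I would establish the pointwise (in $\la$) large deviation estimate: by the refined Guivarc'h-type theorem of Appendix \ref{app:distance} together with the Le Page exponential convergence (Theorem \ref{thm:lepage}, uniform in $\la$ by Remark \ref{rmk:uniform}), for each fixed $\la$ in the non-elementary locus one gets $\mu^n\{g : \abs{\frac{1}{2n}\log\abs{\tr^2(g_\la)-t} - \chi(\la)} > \e\} \leq C e^{-c(\e)n}$, plus an upper tail bound coming from $\abs{\tr^2(g_\la)-t}\leq C\norm{g_\la}^2$ and large deviations for $\frac1n\log\norm{\rho_\la(g)}$. Splitting the integral defining $u_n(\la)$ over the good set and the bad set, the good set contributes $\chi(\la)+O(\e)$ and the upper tail contributes $o(1)$; the delicate part is the lower tail, i.e. controlling $\frac{1}{2n}\int_{\{\abs{\tr^2(g_\la)-t}\ \mathrm{small}\}} \log\abs{\tr^2(g_\la)-t}\,d\mu^n(g)$ from below.

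The lower-tail control is where the extra hypotheses \textit{i.} and \textit{ii.} and the volume estimates enter, and this is the main obstacle. The strategy is: (a) pointwise in $\la$, using Appendix \ref{app:distance}, bound the $\mu^n$-measure of $\{g: \abs{\tr^2(g_\la)-t}<\delta\}$ by something like $C\delta^\kappa e^{\e n}$ so that the singularity $\log\delta$ is integrable against this measure and produces only an $O(\e)$ error provided $\delta$ is not allowed to be super-exponentially small; (b) to exclude the regime where $\tr^2(g_\la)-t$ is \emph{uniformly} tiny over $\La'$ for some $g$ (which would make $u(\cdot,g)\equiv -\infty$-ish and destroy the $L^1$ bound after integrating in $\la$), one needs that $\log\abs{\tr^2(g_\la)-t}$ cannot be uniformly close to $-\infty$ on $\La$: under \textit{ii.} this follows from the geometrically finite parameter as in Lemma \ref{lem:traceminor ii} (only finitely many conjugacy classes of elements with $\abs{\lambda_{\max}}$ bounded, hence a definite lower bound $\abs{\tr^2(g_{\la_1})-t}\geq C$ at some $\la_1$), and under \textit{i.}, defined over $\overline{\mathbb Q}$, it follows from the number-theoretic lemma of Appendix \ref{app:number} giving a lower bound on $\abs{\tr^2(g_\la)-t}$ at some algebraic point in terms of $\length(g)$, i.e. $\geq e^{-C\length(g)}$. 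Either way one gets, via Lemma \ref{lem:three circles}, a bound $\norm{u(\cdot,g)}_{L^1(\La')}\leq C\length(g)$ together with a lower bound $\sup_{\La'} u(\cdot,g)\geq -C\length(g)$ with $C$ independent of $g$; integrating these against $\mu^n$ and using $\int\length\,d\mu^n = O(n)$ keeps everything under control.

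Having these ingredients, I would optimize the cut-off: choose $\e = \e_n \to 0$ and $\delta = \delta_n$ so that the exponential decay $e^{-c(\e_n)n}$, the volume growth $e^{\e_n n}$, and the $\log\delta_n$ factor balance — the natural choice being $\e_n \asymp \frac{\log n}{n}$, which makes the large-deviation error terms $O(\frac{\log n}{n})$ and, by Borel–Cantelli-type bookkeeping (or rather direct estimation of the integral), gives $\abs{u_n(\la)-\chi(\la)} = O(\frac{\log n}{n})$ at each $\la$ in the non-elementary locus, with the exceptional (elementary) locus being pluripolar hence $L^1$-negligible. Then dominating $u_n$ uniformly: from the above $u_n(\la) \leq \frac1{2n}\int \log(C\norm{\rho_\la(g)}^2)\,d\mu^n(g) = \frac1n\int\log\norm{\rho_\la(g)}\,d\mu^n(g)+O(1/n)$, which is locally uniformly bounded above and converges to $\chi$, while $\frac{1}{2n}\int\log\abs{\tr^2(g_\la)-t}\,d\mu^n(g)$ is bounded below in $L^1(\La')$ by the volume estimates; so I can apply dominated convergence (with an $O(\frac{\log n}{n})$ rate coming from Hartogs-type comparison) to upgrade the pointwise rate to the $L^1(\La')$ rate \eqref{eq:L1bound plan}. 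Finally, tracking constants through $\bra{u_n-\chi,dd^c\phi}\leq \norm{u_n-\chi}_{L^1(\La')}\norm{dd^c\phi}_{L^\infty}$ and absorbing the $dd^c$ into $\norm{\phi}_{C^2}$ yields \eqref{eq:equidist speed}. I expect the genuinely hard step to be making the pointwise large-deviation estimate for $\log\abs{\tr^2(g_\la)-t}$ near $t\in[0,4]$ quantitative and uniform in $\la$ over $\La'$, since $\tr^2$ can vanish there for rotations, which is precisely why Appendix \ref{app:distance} on the distribution of fixed points is needed rather than just Guivarc'h's theorem.
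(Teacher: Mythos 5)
Your reduction to the $L^1_{\mathrm{loc}}$ estimate $\norm{u_n-\chi}_{L^1(\La')}=O(\log n/n)$ and your list of ingredients (separation of fixed points from Appendix A, the trace lower bounds from geometric finiteness or from the number-theoretic lemma, Lemma \ref{lem:three circles}, the bound $\norm{u(\cdot,g)}_{L^1(\La')}\lesssim \length(g)$) match the paper's. But the assembly has a genuine gap: your claimed pointwise estimate $\abs{u_n(\la)-\chi(\la)}=O(\log n/n)$ ``at each $\la$ in the non-elementary locus, with the exceptional (elementary) locus being pluripolar'' is false. The estimate fails on the set $V_n$ of parameters at which \emph{some} word $g$ of length $\lesssim n$ has $\abs{\tr^2(g_\la)-t}$ smaller than roughly $e^{-Bn^2\log n}$; indeed $u_n(\la)=-\infty$ on $\bigcup_{g\in\supp\mu^n}Z(g,t)$, which is dense in the bifurcation locus, so the exceptional set is not the elementary locus and is not negligible for free. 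The step you are missing — and which is the heart of the paper's proof — is the quantitative volume bound $\vol(V_n)\leq e^{-cn}$, obtained by combining the uniform exponential integrability of the normalized potentials $u(\cdot,g)/\length(g)$ (H\"ormander's sublevel-set estimate, made uniform in $g$ via Lemma \ref{lem:three circles} and Lemmas \ref{lem:traceminor ii}/\ref{lem:traceminor i}) with a union bound over the $e^{O(n)}$ words of length $\leq An$; one then needs a separate argument (coarea formula plus the uniform $L^1$ bounds) to show $\norm{u_n-\chi}_{L^1(V_n)}$ is itself exponentially small. Only off $V_n$ does the pointwise argument close: there the bad set of words (those with $\delta(g_\la)<n^{-\alpha}$, of $\mu^n$-measure $\leq Cn^{-\alpha K}$ by Theorem \ref{thm:distance}) contributes at most $\frac1n\cdot n^{-\alpha K}\cdot Bn^2\log n$ precisely \emph{because} $\la\notin V_n$ caps the integrand at $Bn^2\log n$.

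Two subsidiary steps as you state them would also fail. First, the bound $\mu^n\{g:\abs{\tr^2(g_\la)-t}<\delta\}\leq C\delta^{\kappa}e^{\e n}$ is not what Appendix \ref{app:distance} provides and cannot hold in general: at a parameter $\la_0\in Z(g_0,t)$ with $g_0\in\supp\mu^n$ the left-hand side does not tend to $0$ as $\delta\to 0$, so no power of $\delta$ controls the lower tail pointwise; this is exactly why the excision of $V_n$ cannot be avoided. Second, ``dominated convergence with an $O(\log n/n)$ rate coming from Hartogs-type comparison'' produces no rate — the paper explicitly points out that the compactness machinery behind Theorem \ref{thm:abstrait} is useless for \eqref{eq:equidist speed}, which is why the proof must be redone from scratch with the volume estimates. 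Your proposal identifies the right tools but does not contain the argument that actually yields the speed.
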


A few words about the proof: the machinery of Theorem \ref{thm:abstrait}, based on a compactness argument, does not allow for such an estimate, so the idea is to reprove Theorem \ref{thm:equidist integre} from scratch by using the quantitative results of Appendix \ref{app:distance}. The necessity to integrate with respect to ${\mathbf g}$ is due to the fact that    the estimate on $\delta(\rho(l_n(\mathbf g)))$ given in Theorem \ref{thm:distance} is too sensitive to bifurcations to  be made uniform in $\la$. As already said, a basic source of difficulty is that in general one cannot expect that for a given $\la$, $\unsur{2n}\log\abs{\tr^2(\rho_{\la }(l_n(\mathbf g)) - t}$ converges to $\chi(\la)$ in $L^1(\mu^\nn)$. To control the size of the set of   ``exceptional'' parameters where this convergence does not hold, we use volume estimates for sublevel sets of psh functions. As usual the notation $C$ stands for a ``constant" which may change from line to line, but does not depend on $n$.

\begin{proof}
 Fix  a finite set $\set{g_1,\ldots,g_k}$ of generators and let
 $B_G(\mathrm{id}, R)\subset G$ be the set of elements of length at most $  R$.
 
 Assume first for simplicity  that $\mu$ has finite support.
To prove the desired estimate we work with potentials, so as before  let
 $u (\lambda,g  )  =  \log\abs{\tr^2(g_\lambda) - t}.$
 Let
 $\mathcal{P}'_t = \bigcup_{\set{s,\ \abs{s-t}\leq 1}}\mathcal{P}_s$ be the set of $g$ such that $\tr^2(g_\lambda)$ is a constant close to $t$. By the Large Deviations Theorem for the traces (Corollary \ref{cor:large deviations}) $\mu^n({\mathcal{P}'_t})$ decreases to zero exponentially fast. We  define $u_n(\lambda)$ by the formula  $$u_n(\lambda ) = \unsur{2n} \int_{G\setminus \mathcal{P}'_t} u(\lambda,g )  d\mu^n(g).$$
This is a psh potential of  $\unsur{2n} \int  \left[Z(g,t)\right] d\mu^n(g)$. Now to prove \eqref{eq:equidist speed} it is enough to show that if $\La'\subset\La$ is a relatively compact  open subset, 
\begin{equation}\label{eq:lognsurn}
\norm{u_n-\chi}_{L^1(\La')} = O\lrpar{\frac{\log n}{n}} .
\end{equation} 

Assume that the condition {\it ii.} of Theorem~\ref{thm:equidist speed} holds. 
 We know from Lemma \ref{lem:traceminor ii} that $\norm{u(\cdot, g)}_{L^1(\La')}\leq C\length(g)$. 
Using standard estimates for the volume of sublevel sets of psh functions, we can control the volume of the set of representations possessing
an element with trace too close to $t$. 

\begin{lem}\label{lem:volume} Assume that the condition {\it ii.} of Theorem~\ref{thm:equidist speed} is satisfied.
Fix a relatively compact open subset $\La'\subset \La$, and a positive constant $A$.
Then if $B>0$ is large enough the volume of the open set
$$V_n = \set{\la\in \La' \text{ s.t. }  \text{ there exists } g\notin \mathcal{P}'_t  \text{ of length } \leq  An  \text{ with } \abs{\tr^2(g_\la) -t}< e^{-Bn^2}}$$ is exponentially small in $n$.
\end{lem}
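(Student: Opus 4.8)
The plan is to bound $\vol(V_n)$ by a union bound over all words $g$ of length at most $An$, showing that each individual bad set $\{\la\in\La':\abs{\tr^2(g_\la)-t}<e^{-Bn^2}\}$ has volume exponentially small in $n$, at a rate that — once $B$ is chosen large enough — beats the exponential growth of the number of such words.

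First I would fix an intermediate open set $\La'\Subset\La''\Subset\La$. For $g\notin\mathcal{P}'_t$ the potential $u(\cdot,g)=\log\abs{\tr^2(g_\la)-t}$ is psh and not identically $-\infty$. If $\la\mapsto\tr^2(g_\la)$ is constant, then by definition of $\mathcal{P}'_t$ that constant value lies at distance $>1$ from $t$, so for $n\ge 1$ the bad set is empty and such $g$ contribute nothing; hence only the $g$ with non-constant $\tr^2(g_\la)$ matter. For those, the estimate \eqref{eq:suptrace} (valid on any relatively compact subset) gives $\sup_{\La''}u(\cdot,g)\le\length(g)\log M$ for a fixed $M=M(\La'')\ge 1$, while Lemma~\ref{lem:traceminor ii} applied on $\La''$ gives $\norm{u(\cdot,g)}_{L^1(\La'')}\le C\length(g)$. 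I would then rescale: put $v_g=(c\,\length(g))^{-1}\bigl(u(\cdot,g)-\length(g)\log M\bigr)$, where $c$ is a constant depending only on $\La''$, chosen large enough that $\sup_{\La''}v_g\le 0$ and $\norm{v_g}_{L^1(\La'')}\le 1$.

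The key input is the classical estimate on sublevel sets of psh functions: there exist $C_0,\alpha>0$ depending only on the pair $\La'\Subset\La''$ such that $\vol\bigl(\{v<-s\}\cap\La'\bigr)\le C_0\,e^{-\alpha s}$ for every psh function $v$ on $\La''$ with $\sup_{\La''}v\le 0$ and $\norm{v}_{L^1(\La'')}\le 1$ (this follows from the compactness of such normalized families together with a uniform integrability bound for $e^{-\alpha v}$, in the spirit of \cite{demailly}). Applying this to $v_g$ and unwinding the rescaling, the condition $\abs{\tr^2(g_\la)-t}<e^{-Bn^2}$ forces $v_g<-Bn^2/(c\,\length(g))$, which when $\length(g)\le An$ is $v_g<-Bn/(cA)$; hence the bad set for such a $g$ has volume at most $C_0\,e^{-\alpha Bn/(cA)}$.

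Finally I would count: the ball of radius $An$ in $G$ contains at most $e^{C_1 n}$ elements, where $C_1$ depends only on $A$ and on the number of generators of $G$. Summing the previous bound over these elements yields $\vol(V_n)\le C_0\,e^{(C_1-\alpha B/(cA))\,n}$, so it suffices to take $B>C_1cA/\alpha$ to make the exponent negative, giving $\vol(V_n)\le C_0\,e^{-\delta n}$ with $\delta>0$. The point requiring the most care is that the psh sublevel estimate must be applied with constants independent of $g$; this is precisely why Lemma~\ref{lem:traceminor ii} is formulated with an $L^1$ bound proportional to $\length(g)$, so that the rescaled functions $v_g$ are uniformly normalized. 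It is also the reason the threshold is taken quadratic in $n$: dividing by $\length(g)\le An$ eats one power of $n$, and a second power is needed to overpower the exponential count $e^{C_1 n}$ of admissible words.
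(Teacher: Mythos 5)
Your proof is correct and follows essentially the same route as the paper's: normalize $u(\cdot,g)=\log\abs{\tr^2(g_\la)-t}$ by $\length(g)$ using the $L^1$ bound of Lemma \ref{lem:traceminor ii} and the upper bound \eqref{eq:suptrace}, apply the uniform exponential sublevel-set estimate for normalized psh families (the paper cites \cite[Theorem 4.4.5]{hormanderCV} together with Lemma \ref{lem:three circles} for exactly this), and then sum over the at most exponentially many words of length $\leq An$, choosing $B$ large enough to beat the entropy of the count. Your explicit remark that constant-trace elements outside $\mathcal{P}'_t$ contribute an empty bad set is a welcome clarification of a point the paper leaves implicit.
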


\begin{proof}
It is no loss of generality to assume that $\La'\Subset\Omega\Subset\La$, where $\om$ is biholomorphic to (and viewed as) the  ball $B(0,1)$
 in $\cc^{\dim(\La)}$. Fix $g\in G$ and let 
$$\widetilde u(\lambda, g) = \unsur{\length(g)}u(\la, g) = \unsur{\length(g)}\log \abs{\tr ^2 g_{\lambda }- t}.$$

By Lemma \ref{lem:traceminor ii}, the family of psh functions  $\widetilde u(\cdot, g)$ is relatively compact in $L^1(\La')$, so  by Lemma   \ref{lem:three circles},  there exist constants $M$ and $A$ independent of $g$ such that for every $g$, $\sup_{B(0,1)} \widetilde u(\cdot, g)\geq M$ and 
 there exists a point $x_0 \in B(0, 1/2)$ such that $\widetilde u(x_0, g)\geq -A$. 

By \cite[Theorem 4.4.5]{hormanderCV} (combined with Lemma \ref{lem:three circles})\note{en fait on peut faire ref à hormander proposition 4.2.9} there exist constants $c_1$ and $c_2$ such that $\int \exp(-c_1 \widetilde u(\cdot, g))\leq c_2$ uniformly in $g$. Thus by the Markov inequality, there exists a constant $a$ such that for every $s>0$,
$$\vol \lrpar{\set{ \la \in \La',\ \widetilde  u(\la, g)<-s}}\leq C e^{-as}.$$
In this equation,   we put  $s = \frac{Bn^2}{\length(g) }$,
and infer that there exists a constant $C$ such that if
$\length(g)\leq A n$,
$$\vol \lrpar{\set{ \la \in \La',\ \abs{\tr^2(g_\la) -t}< e^{-Bn^2}}} \leq
C  \exp\lrpar{ -\frac{ aBn}{ A }}. $$  
To finish the proof, it is enough to sum this estimate over all words of length $\leq An$. Since the number of such elements is at most exponential in $n$ (bounded by $Ce^{C(A)n}$), we conclude that if $B$ is large enough (i.e. $B>  A C(A)/a$), $\vol(V_n)$ is exponentially small.
\end{proof} 

 \begin{rmk}
A similar estimate was proven  in $\mathrm{SU}(2)$ by Kaloshin and Rodnianski in \cite{kr}, by a different method. It appears that the use of  pluripotential theoretic tools leads to a short proof of their result (see Remark \ref{rmk:moderate} below).   
 \end{rmk}
 
 Let us now assume that {\it i.} holds. 
To estimate the $L^1$ norm of $u(\cdot,g)$, we use 
the results of Appendix \ref{app:number}.

\begin{lem}\label{lem:traceminor i}
Under    the  assumption   {\it i.}  of Theorem~\ref{thm:equidist speed}, for every
relatively compact open subset $\La'\subset \La$ there exists a constant $\delta>0$ such that for every $g\in G$, if $\la\mapsto\tr^2(g_{\lambda})$ is not  constant, then
$$\sup_{\la\in\Lambda'} \abs{\tr^2(g_\la) -t} \geq \delta^{\length(g)\log(\length(g))}.$$
\end{lem}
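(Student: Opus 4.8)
The plan is to deduce this statement from the number-theoretic results of Appendix~\ref{app:number}, using that under assumption \emph{i.} the whole configuration --- the family $\rho$, the relatively compact set $\La'$, and the point $t$ --- can be taken to be defined over a number field $K$ (extending $K$ if necessary so that $t\in K$; if $t\notin\overline{\mathbb Q}$ the estimate is in fact easier because $\tr^2(g_\la)-t$ cannot vanish to high order at an algebraic point, but I will concentrate on the algebraic case). First I would fix generators $g_1,\dots,g_k$ and recall from \S\ref{subs:families} that $\tr^2(g_\la)-t$ is a polynomial $P_g$ in the coordinates $(a_{i,j})$ of the generators, with coefficients in $K$, and of degree $O(\length(g))$; moreover, as in the proof of Theorem~\ref{thm:equidist integre}, its coefficients have heights that are bounded by $C^{\length(g)}$ for a constant $C$ depending only on $\rho$ and the generators (each multiplication by a generator multiplies heights by a bounded factor).

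Next I would pick a $K$-rational (or algebraic) point $\la_0\in\La'$ at which $\rho_{\la_0}$ is non-elementary and faithful; since $\rho_{\la_0}(g)$ then has infinite order whenever $g\ne e$, and since $g\notin\mathcal P_t$ means $\la\mapsto \tr^2(g_\la)$ is non-constant, the value $\tr^2(\rho_{\la_0}(g))-t$ is a nonzero algebraic number lying in a fixed number field $K$ (or in an extension of bounded degree). Its absolute value is therefore bounded below by the inverse of its house/height via the product formula: $|\tr^2(\rho_{\la_0}(g))-t|\ge H(\tr^2(\rho_{\la_0}(g))-t)^{-[K:\mathbb Q]}$, and the height of this number is at most (degree)$\times$(height of coefficients)$\times$(height of $\la_0$), which is $\exp\big(O(\length(g)\log\length(g))\big)$ --- the $\log\length(g)$ factor coming precisely from the $\log$(degree) term in the standard height estimate for the value of a polynomial of controlled degree and height at a fixed algebraic point. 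This is exactly the content we would extract from Appendix~\ref{app:number}. Taking $\delta$ small enough gives $\sup_{\La'}|\tr^2(g_\la)-t|\ge |\tr^2(\rho_{\la_0}(g))-t|\ge \delta^{\length(g)\log\length(g)}$.

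The main obstacle is bookkeeping the heights: one must verify (i) that the coefficients of the polynomial $P_g$ in the generator coordinates have logarithmic height $O(\length(g))$ --- this follows from submultiplicativity of heights under matrix multiplication and addition, with the constant depending on the (fixed) generators, exactly as the degree bound $O(\length(g))$ does in Theorem~\ref{thm:equidist integre} --- and (ii) that evaluating $P_g$ at the fixed algebraic point $\la_0$ multiplies the logarithmic height by at most $O(\log\deg P_g)=O(\log\length(g))$ plus an additive $O(\length(g))$ term, which is the classical estimate $h(P(\la_0))\le h(P)+\deg(P)\,h(\la_0)+O(\log\,(\text{number of monomials}))$. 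Combining, $h(\tr^2(\rho_{\la_0}(g))-t)=O(\length(g)\log\length(g))$, and the lower bound follows from the Liouville-type inequality for nonzero algebraic numbers. A final routine point is that one may need to pass to a finite extension $K'$ of $K$ to ensure $\la_0$ and $t$ are defined over it and $\rho_{\la_0}$ is non-elementary and faithful; this only changes $[K':\mathbb Q]$, hence $\delta$, which is harmless.
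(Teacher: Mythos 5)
Your reduction to a Liouville-type inequality is the right general direction (it is the engine behind Proposition \ref{prop:philippon}), and your bookkeeping of the degree $O(\length(g))$ and of the coefficient heights $\leq D^{\length(g)}$ matches what the paper establishes before invoking Appendix \ref{app:number}. But the central step of your argument has a genuine gap: you evaluate $\tr^2(g_\la)-t$ at a \emph{fixed} algebraic point $\la_0$ and assert that the value is a nonzero algebraic number. Neither claim survives scrutiny. Non-constancy of $\la\mapsto\tr^2(g_\la)$ does not prevent $\la_0$ from lying on $Z(g,t)$; faithfulness and infinite order of $\rho_{\la_0}(g)$ are irrelevant here (take $t=4$ and $g_{\la_0}$ parabolic, or $t$ equal to the trace squared of some element of a fixed discrete group --- infinitely many $g$ can satisfy $\tr^2(g_{\la_0})=t$ exactly). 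Moreover $t$ is an arbitrary complex number in the lemma: if $t$ is transcendental (say a Liouville number), no height inequality bounds $\abs{\alpha-t}$ from below for algebraic $\alpha$, so your "easier" transcendental case is actually not covered at all. This is precisely why the paper does not bound $\abs{P(\la_0)-t}$ but instead bounds the \emph{variation} $\mathrm{var}_{U}(\tr^2(g_\la))=\sup_{x,y}\abs{\tr^2(g_x)-\tr^2(g_y)}$ (Corollary \ref{cor:philippon}), which eliminates $t$ entirely since $\sup_{\La'}\abs{\tr^2(g_\la)-t}\geq\tfrac12\mathrm{var}_{\La'}(\tr^2(g_\la))$.

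The second, related issue is the source of the $\log\length(g)$ in the exponent. In the paper's Proposition \ref{prop:philippon} the algebraic point at which Liouville is applied must be chosen \emph{after} $g$, so as to avoid the hypersurface $\{P_g=0\}\cap V$; a point of bounded degree avoiding a hypersurface of degree $O(\length(g))$ can only be guaranteed to have height $O(\log\length(g))$, and the Liouville inequality \eqref{eq:liouville} then contributes $\deg(P_g)\cdot h(x)=O(\length(g)\log\length(g))$. Your accounting, by contrast, attributes the logarithm to a $\log(\text{number of monomials})$ term, which is only an additive $O(\log\length(g))$ and would (incorrectly) yield the stronger bound $\delta^{\length(g)}$. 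To repair the proof you essentially have to reproduce the projection-and-point-selection argument of Appendix \ref{app:number}; the fixed-base-point shortcut cannot work.
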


\begin{proof} Recall that  $\La$ can be viewed as an open subset of an affine subvariety in $\cc^{9k}$ ($k$ is the number of generators). 
If $g\in G$ is any element, then as before $\tr^2(g)$ is the restriction to $\La$ of a polynomial in the $a_{i,j}$ (the matrix coefficients corresponding to the generator $g_i$) 
of degree $O(\length(g))$, with integer coefficients. Furthermore, there exists a constant $D$ such that each of these coefficients is bounded by $D^{\length(g)}$. The estimate that we seek is now  a direct consequence of Corollary \ref{cor:philippon}. 
\end{proof}

We have a version of Lemma \ref{lem:volume} in this context. 

 \begin{lem}\label{lem:volume i} Assume that condition {\it i.} of Theorem~\ref{thm:equidist speed} holds.   
 Fix a relatively compact open subset $\La'\subset \La$, and a positive constant $A$.
Then if $B>0$ is large enough the volume of the open set
$$V_n = \set{\la\in \La' \text{ s.t. }  \text{ there exists } g\notin \mathcal{P}'_t  \text{ of length } \leq  An  \text{ with } \abs{\tr^2(g_\la) -t}< e^{-Bn^2\log n}}$$ is exponentially small in $n$.
\end{lem}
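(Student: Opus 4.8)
The plan is to repeat the proof of Lemma \ref{lem:volume} almost verbatim, the only change being that the normalization by $\length(g)$ is replaced by a normalization by $\length(g)\log\length(g)$, the extra logarithmic factor being precisely what the weaker lower bound of Lemma \ref{lem:traceminor i} forces (this lemma plays here the role that Lemma \ref{lem:traceminor ii} played under assumption {\it ii.}). As before, we may assume $\La'\Subset\Omega\Subset\La$ with $\Omega$ biholomorphic to (and viewed as) the unit ball $B(0,1)$ in $\cc^{\dim(\La)}$. For $g\in G\setminus\mathcal{P}'_t$ with $\la\mapsto\tr^2(g_\la)$ non-constant and $\length(g)\geq 3$, I set
\[ \widetilde u(\la,g) = \frac{1}{\length(g)\log\length(g)}\log\abs{\tr^2(g_\la)-t}, \]
the finitely many remaining elements being harmless (discard them, or normalize by $\max(\length(g)\log\length(g),1)$).

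The one genuinely new point is to check that the family $\set{\widetilde u(\cdot,g)}$ is relatively compact in $L^1_{\rm loc}(\Omega)$. The upper bound is immediate from \eqref{eq:suptrace} (valid on any relatively compact subset): $\sup_{\overline{\Omega}}\log\abs{\tr^2(g_\la)-t}\leq \length(g)\log M$, hence $\sup_{\overline{\Omega}}\widetilde u(\cdot,g)\leq \log M$. For the lower direction, Lemma \ref{lem:traceminor i} gives $\sup_{\La'}\widetilde u(\cdot,g)\geq\log\delta$, so there is a point $x_1\in\overline{\La'}$ with $\log\delta-1\leq\widetilde u(x_1,g)\leq\log M$; in particular $\abs{\widetilde u(x_1,g)}$ is bounded independently of $g$. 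Applying Lemma \ref{lem:three circles} {\it i.} with $\om=\Omega$, $x_0=x_1$ and $\La'\Subset\La''\Subset\Omega$, this bounds $\norm{\widetilde u(\cdot,g)}_{L^1(\La'')}$ uniformly in $g$, which together with the uniform upper bound gives the desired relative compactness. (This is where assumptions {\it i.} and {\it ii.} genuinely differ: in the {\it ii.} case Lemma \ref{lem:traceminor ii} is already an $L^1$ bound, whereas here only a sup bound is available and must be upgraded through Lemma \ref{lem:three circles}.)

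From this point the argument is identical to that of Lemma \ref{lem:volume}: Lemma \ref{lem:three circles} furnishes a uniform constant $A$ and a point $x_0\in B(0,1/2)$ with $\widetilde u(x_0,g)\geq -A$; \cite[Theorem 4.4.5]{hormanderCV} then gives $c_1,c_2>0$ with $\int_{\La'}\exp(-c_1\widetilde u(\cdot,g))\leq c_2$ uniformly in $g$; and Markov's inequality produces a constant $a>0$ with $\vol(\set{\la\in\La',\ \widetilde u(\la,g)<-s})\leq C e^{-as}$ for all $s>0$. Now for $g\notin\mathcal{P}'_t$ of length $\leq An$ one has $\length(g)\log\length(g)\leq C(A)\,n\log n$, so taking $s=\frac{Bn^2\log n}{\length(g)\log\length(g)}\geq \frac{B}{C(A)}n$ yields
\[ \vol\Big(\set{\la\in\La',\ \abs{\tr^2(g_\la)-t}<e^{-Bn^2\log n}}\Big)\leq C\exp\Big(-\tfrac{aB}{C(A)}\,n\Big). \]
Summing over the at most $Ce^{C(A)n}$ elements of length $\leq An$ shows that $\vol(V_n)$ is exponentially small as soon as $B$ is chosen large enough. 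The main obstacle is thus the relative compactness step just described; once it is in place, the Hörmander estimate, Markov's inequality and the counting of words are routine, and the replacement of the exponent $Bn^2$ by $Bn^2\log n$ is exactly the price of the factor $\log(\length(g))$ appearing in Lemma \ref{lem:traceminor i}.
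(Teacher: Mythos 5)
Your proof is correct and follows essentially the same route as the paper: the paper's own argument consists of deducing the $L^1$ bound $\norm{u(\cdot,g)}_{L^1(\La')}\leq C\,\length(g)\log(\length(g))$ from Lemma \ref{lem:traceminor i} together with \eqref{eq:suptrace}, normalizing $u$ by $\length(g)\log(\length(g))$, and repeating the argument of Lemma \ref{lem:volume}. Your write-up merely makes explicit the compactness step (via Lemma \ref{lem:three circles}) that the paper leaves implicit, and the final Markov-inequality and word-counting computation matches the paper's.
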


\begin{proof}
It follows from the previous 
  lemma  and \eqref{eq:suptrace} that
there exist a constant  $C$ such that for every $g\in G$,
\begin{equation}\label{eq:controle L1 i}
 \norm{u(\cdot, g)}_{L^1(\La')}\leq C  \ \length(g)\log(\length(g)),
\end{equation} so we simply put $\widetilde u(\lambda, g) = \unsur{\length(g)\log(\length(g))}u(\la, g)$ and argue as in Lemma \ref{lem:volume}.
 \end{proof}
 
We now resume the proof of the theorem. The less favorable situation is when {\it i.} holds, so let us put ourselves in this case.
 Fix a constant $A$  in Lemma \ref{lem:volume i} such that   $\supp(\mu^n)\subset B_G(\mathrm{id}, An)$, and a corresponding constant $B$. Then, with notation as in the lemma, $V_n$ has exponentially small volume. To prove that 
 $\norm{u_n-\chi}_{L^1(\La')} = O\lrpar{\frac{\log n }{n}}$, we will proceed in two steps: first show that this estimate holds pointwise outside $ V_n$, by using Theorem \ref{thm:distance}, and then use general facts on psh functions to get a global $L^1$ bound.
  
So first fix a parameter $\lambda\notin V_n $. As in Theorem \ref{thm:distance}, let $\delta(g_{\la})$ be the distance between the fixed points of  
 $g_{\la}$. Let $\e_n = n^{-\alpha}$, where $\alpha>0$ is a   constant to be fixed later, and let 
 $$E_n^1 = \set{g\in G,\ \delta(\rho_{\la}(g))\geq \e_n}.$$
 By Theorem \ref{thm:distance}, $\mu^n(E_n^1)\geq 1-C\e_n^K$ for some $K$. 
   
  Let now $\chi_n (\la)= \unsur{n} \int_G \log\norm{g_{\la}} d\mu^n(g)$. We claim that  
 $\norm{\chi_n-\chi}_{L^\infty} = O\lrpar{\unsur{n}}$, in which case it will be enough to prove that  $\abs{u_n(\la) - \chi_n(\la)} = O\lrpar{\frac{\log n }{n}}$.
 Indeed we know from Proposition \ref{prop:Ounsurn} that if $z_0\in \pu$ is fixed, then   $\norm{\chi_{n, z_0}-\chi}_{L^\infty} = O\lrpar{\unsur{n}}$, where  $ \chi_{n, z_0}(\la) = \unsur{n} \int \log{\norm{g_\la(z_0)}}  d\mu^n(g) =  \unsur{n} \int \log\frac{\norm{g_\la(Z_0)}}{\norm{Z_0}} d\mu^n(g)$. Moreover it is clear from the proof of that lemma that the $O(\cdot)$ is uniform with respect to $z_0$.
Thus to get our claim  it is enough to  integrate with respect to $z_0$ and apply  Lemma \ref{lem:norme integree}. 
 
 Let $E_n^2$ be the set of those $g\in E_n^1$ such that moreover  $g\notin \mathcal{P}'_t$ and $\abs{\unsur{n} \log\norm{g_{\la}} -\chi(\la)}\leq \chi(\la)/2$. Then $\mu^n(E_n^1\setminus E_n^2)$ is exponentially small and accordingly, 
$\mu^n(E_n^2)\geq 1-C\e_n^K$.  
 
 We split  $u_n-\chi_n$ as 
 \begin{align*}
 u_n(\la)  - \chi_n(\la) = \unsur{n}&
 \int_{E^2_n}\left( \unsur{2}\log \abs{\tr^2(g_{\la})-t} - \log\norm{g_{\la}}\right) d\mu^n(g) + \\ &+  \unsur{n}
 \int_{(E^2_n)^c\setminus \mathcal{P}_t}  \unsur{2}\log \abs{\tr^2(g_{\la})-t}d\mu^n(g)  -  \unsur{n} \int_{(E^2_n)^c} \log\norm{g_{\la}} d\mu^n(g)  \end{align*}

 To estimate the first integral, we use Lemma \ref{l:norm, trace and distance between fixed points}. Indeed, since $\delta(g_\la)\geq \e_n=n^{-\alpha}$ we see that 
 if $g\in E_n^2$, $\tr^2(g_\la)$ is of order of magnitude $e^{n\chi(\la)}$ so 
 $\unsur{n} \log\abs{\tr^2(g_{\la})-t}  = \unsur{n} \log\abs{\tr^2(g_{\la})-4} + o\lrpar{\unsur{n}}$, and  
 $\unsur{2}\log \abs{\tr^2(g_{\la})-t} - \log\norm{g_{\la}} \sim (\log \delta(g_\la)) = O(\log n)$. We deduce that this first integral is $O\lrpar{\frac{\log n}{n}}$. 
 
 The third integral is bounded by $C\mu^n((E_n^2)^c) = C\e_n^K=Cn^{-\alpha K}$ which is $O(n^{-2})$ if $\alpha$ is large enough (recall that $K$ does not depend on $\alpha$). 
   
 Finally, to estimate the second integral, we use the fact that $\la\notin V_n$.  From this we infer  that $\abs{\log\abs{\tr^2(g_\la) - t}}\leq  Bn^2\log n$, thus the integral is bounded by 
 $\unsur{n} \mu^n{(E_n^2)^c} Bn^2\log n  = O(n^{1-\alpha K}\log n)$, which again is $O(n^{-2})$ for large   $\alpha$. 
 
 \medskip
 
 It is clear that all the $O(\cdot)$  appearing in the above  reasoning are  uniform for  $\la\in \La'\setminus V_n$. Thus at this point we know that $u_n$ is a psh function, bounded from above (by \eqref{eq:suptrace}), with $\norm{u_n}_{L^1(\La')}\leq C \log n$ (by \eqref{eq:controle L1 i}; notice that the $\log n$ is superfluous under  {\it ii.}) and $\norm{u_n-\chi}_{L^\infty(\La'\setminus V_n)} = O\lrpar{\frac{\log n}{n}}$.

To complete the proof of the desired estimate \eqref{eq:lognsurn} in case $\mu$ has finite support, it remains to show
that  $\norm{u_n-\chi}_{L^1(V_n)} =O(\frac{\log n}{n})$, which is done in the following easy lemma. 
 
\begin{lem}
 Under the above assumptions and notation, $\norm{u_n-\chi}_{L^1(V_n)}$ is exponentially small in $n$. 
\end{lem}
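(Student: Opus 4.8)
The plan is to reduce the estimate to a single quantitative input on psh functions, namely Hörmander's exponential integrability estimate, exactly as in the proof of Lemma~\ref{lem:volume}. First, dispose of the $\chi$ term: since $\chi$ is continuous, hence bounded on $\La'$, we have $\norm{\chi}_{L^1(V_n)}\leq\norm{\chi}_{L^\infty(\La')}\vol(V_n)$, which is exponentially small by Lemma~\ref{lem:volume i} (or Lemma~\ref{lem:volume} under assumption \emph{ii.}). It therefore suffices to show that $\norm{u_n}_{L^1(V_n)}$ is exponentially small. Next, record that $u_n$ is bounded above on $\La'$ \emph{uniformly in} $n$: indeed $\supp(\mu^n)\subset B_G(\mathrm{id},Dn)$ for some $D$, so the same computation as in \eqref{eq:suptrace} gives $\sup_{\La'}u_n\leq C_0$ with $C_0$ independent of $n$. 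Consequently $\int_{V_n}u_n^+\,d\vol\leq C_0\vol(V_n)$ is again exponentially small, and the whole problem comes down to estimating $\int_{V_n}(-u_n)^+\,d\vol$.

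For this, the key is a uniform-in-$n$ exponential estimate. The functions $u_n-C_0$ are psh and $\leq 0$ on a fixed neighbourhood $\om'$ of $\overline{\La'}$ with $\om'\Subset\La$, and by \eqref{eq:controle L1 i} (valid on $\om'$ up to changing the constant) their $L^1(\om')$ norms are $O(\log n)$; equivalently, normalising by $\log n$ as in the proof of Lemma~\ref{lem:volume}, the family $\{(u_n-C_0)/\log n\}$ is bounded above and bounded in $L^1(\om')$. The classical fact to invoke --- the ``uniform'' form of \cite[Theorem 4.4.5]{hormanderCV}, combined with the connectedness argument of Lemma~\ref{lem:three circles} to pass from balls to $\La'\Subset\om'$ --- is that for a psh function $v\leq 0$ on $\om'$ with $\norm{v}_{L^1(\om')}\leq M$, one has $\int_{\La'}\exp(-v/(CM))\,d\vol\leq C$ with $C$ depending only on $\La'$ and $\om'$, not on $M$. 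Applying this to $v=u_n-C_0$ with $M=C\log n$ yields a constant $C$, independent of $n$, such that $\int_{\La'}\exp\bigl(-u_n/(C\log n)\bigr)\,d\vol\leq C$.

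From here the argument is routine. By the Chebyshev inequality, $\vol\bigl(\{\la\in\La':u_n(\la)<-t\}\bigr)\leq Ce^{-t/(C\log n)}$ for every $t>0$. Writing $\vol(V_n)\leq C'e^{-cn}$ and using the layer-cake formula $\int_{V_n}(-u_n)^+\,d\vol=\int_0^\infty\vol(\{u_n<-t\}\cap V_n)\,dt$ together with the trivial bound $\vol(\{u_n<-t\}\cap V_n)\leq\vol(V_n)$, split at a threshold $T$:
\[
\int_{V_n}(-u_n)^+\,d\vol\ \leq\ T\,\vol(V_n)+\int_T^\infty Ce^{-t/(C\log n)}\,dt\ \leq\ C'T e^{-cn}+C^2\log n\;e^{-T/(C\log n)}.
\]
Choosing $T=\tfrac12\,c\,C\,n\log n$ makes both terms exponentially small in $n$, and combining this with the bounds on $\int_{V_n}u_n^+\,d\vol$ and on $\norm{\chi}_{L^1(V_n)}$ gives $\norm{u_n-\chi}_{L^1(V_n)}=O(e^{-c''n})$ for some $c''>0$, which is the assertion.

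I expect the only delicate point to be the second paragraph: one must check that the exponential-integrability constant can be taken independent of $n$ even though the relevant $L^1$ norms grow like $\log n$. This is genuinely true --- it is the scale invariance built into Skoda's estimate, as the elementary model $v=M'\log|z|$ already shows --- but it is the step that needs to be stated carefully rather than waved through; everything else is bookkeeping with the already-established facts that $\vol(V_n)$ is exponentially small and $u_n$ is uniformly bounded above.
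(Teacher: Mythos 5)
Your proof is correct and follows essentially the same route as the paper: dispose of $\chi$ via the exponential smallness of $\vol(V_n)$, normalise $u_n$ by $\log n$ so that Hörmander's exponential integrability estimate applies with constants uniform in $n$, and conclude by a layer-cake split at a threshold of order $n\log n$. The uniformity issue you flag in the second paragraph is exactly what the paper handles by working with $\widetilde u_n = u_n/\log n$, whose $L^1$ norms are bounded.
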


\begin{proof}
Recall from Lemmas \ref{lem:volume} and \ref{lem:volume i} that 
  $\vol(V_n)$ is exponentially small. By boundedness of $\chi$ it follows that 
    $\norm{\chi}_{L^1(V_n)}$ is exponentially small.

To control $\norm{u_n}_{L^1(V_n)}$, put $\widetilde{u}_n =\unsur{\log n} u_n$.
Since   $(\widetilde u_n)$ is bounded in $L^1$, it is no loss of generality to assume that $\widetilde u_n\leq 0$, then as before there exists a   constant $a$ such that $\vol(\set{\widetilde u_n<-M})\leq e^{-aM}$. Thus we simply write
 $$\int_{V_n} \abs{\widetilde u_n} = \int_{V_n\cap\set{\widetilde u_n<-M}} \abs{\widetilde u_n} +  \int_{V_n\cap\set{\widetilde u_n\geq-M}} \abs{\widetilde u_n} \leq \int_{\set{\widetilde u_n<-M}} \abs{\widetilde u_n} + M \vol(V_n) ,$$ and use the coarea formula $$\int_{\set{\widetilde u_n<-M}} \abs{\widetilde u_n} = \int_0^M \vol(\widetilde u_n<-M)dt + \int_M^{+\infty} \vol(\widetilde u_n<-t)dt = O(Me^{-aM})$$ to deduce that
 $\int_{V_n} \abs{\widetilde u_n} = O(Me^{-aM} + M\vol(V_n))$. To conclude that $\int_{V_n} \abs{\widetilde u_n}$, whence $\int_{V_n} \abs{ u_n}$, is exponentially small, it  suffices to pick $M=n$. 
\end{proof}

\medskip

It remains to treat the case where  the support of   $\mu$ is infinite. Recall that we assume that $\mu$ satisfies 
   \eqref{eq: exponential moment condition in the group}.
  We adapt the proof  by  using the exponential moment condition to show that $\mu$ almost behaves like a measure with finite support, and obtain exponentially decaying estimates for the resulting errors.
Again we work under the less favorable assumption {\it i}.

It is an easy consequence of
 the moment condition that that for a sufficiently large constant $A$, $\mu^n(B_G(\mathrm{id}, An)^c)$ and more generally $\int_{B_G(\mathrm{id}, An)^c} \length(g)d\mu^n(g)$ tend to zero exponentially fast. Indeed, let
 $I = \int_G \exp(\tau \length(g))d\mu(g)$ which is finite by assumption. Subadditivity of the length implies that $\int_G \exp(\tau \length(g))d\mu^n(g) \leq I^n$, therefore,  by the Markov inequality, for every $s>0$, $\mu^n(\set{g, \ \length(g)\geq s})\leq \exp(-\tau s)I^n$.

We then infer that
\begin{align*}
\int_{B_G(\mathrm{id}, An)^c}\!\!\length(g)\log(\length(g)) d\mu^n(g)  &= 
\sum_{k=An+1}^\infty k(\log k) \mu^n\lrpar{\set{g,\ \length(g) = k}} \\&\leq  I^n \sum_{k=An+1}^\infty k(\log k) e^{-\tau k}
\end{align*}
 which decreases exponentially if $A$ is sufficiently large.

Now, recall from Lemma \ref{lem:traceminor ii} that $\norm{u(\cdot, g)}_{L^1(\La')} = O(\length(g))$. A first consequence is that the sequence $(u_n)$ is bounded in $L^1(\La')$. With notation as before, it is enough to show that $\norm{u_n-\chi_n}_{L^1(\La')}  = O\lrpar{\frac{\log n}{n}}$.
For a constant  $A$ as just above, decompose $u_n$ as
$$u_n = \unsur{2n}\int_{(G\setminus \mathcal{P}'_t)\cap B(\mathrm{id}, An)}\! u(\cdot, g) d\mu^n(g) +
\unsur{2n}\int_{(G\setminus \mathcal{P}'_t)\cap B(\mathrm{id}, An)^c}\!  u(\cdot, g) d\mu^n(g) =: u_n^1+ u_n^2,$$ and similarly for $\chi_n$, and write $u_n-\chi_n = (u_n^1-\chi_n^1)  + (u_n^2-\chi_n^2)$.
The first part of the proof shows that $\norm{u_n^1-\chi_n^1}_{L^1(\La')}= O\lrpar{\frac{\log n}{n}}$
while the above considerations imply that $\norm{u_n^2-\chi_n^2}_{L^1(\La')}$ decreases to zero exponentially fast. The proof is complete.
\end{proof}

\begin{rmk}\label{rmk:moderate}
The proof actually says more. A measure $m$  on $\La$ is said to be {\em moderate} \cite{ds} if for any $\La''\Subset \La'\Subset \La$,  there exist constants $C,\alpha>0$ such that if $u$ is a  psh function   with
$\norm{u}_{L^1(\La')}\leq 1$, then for every $s>0$, $$m(\set{\la\in\La'',\ u(\la)<-s})\leq Ce^{-\alpha s}.$$
An obvious adaptation of the proof shows that $\norm{u_n-\chi}_{L^1_{\rm loc}(m)} = O\lrpar{\frac{\log n}{n}}$ for any moderate measure $m$.

This observation has several interesting consequences. Dinh and Sibony \cite{ds} showed that if $T$ is a (1,1) current
 with H\"older continuous potentials, then its trace measure $\sigma_T$, and more generally that of its successive exterior powers $T\wedge\cdots\wedge T$ are moderate. As a consequence, if we let $Z_n = \unsur{2n} \int  \left[Z(g,t)\right] d\mu^n(g)$, then for every $q\leq \dim(\La)-1$, if $\phi$ is a test form of 
 the right dimension, 
 $$\bra{Z_n\wedge T_{\rm bif}^q - T_{\rm bif}^{q+1}, \phi}\leq C {\frac{\log n}{n}} \norm{\phi}_{C^2}.$$ Such an estimate might prove useful (as   Proposition \ref{prop:Ounsurn} was  for $q=1$) 
 when trying to characterize $\supp(T_{\rm bif}^q)$ (see \S \ref{subsub:wedge}).
 
It is a classical fact  that the area measure on  a totally real submanifold of maximal dimension is moderate --this may also easily be deduced from \cite{ds}.  This applies in particular to  the area measure on $\mathrm{SU}(2)\subset \SL$. Thus, arguing exactly as in Lemma \ref{lem:volume}, we can recover the original
Kaloshin-Rodnianski estimate \cite{kr}: there exists a constant $B>0$ such that  the volume of the set of $(u,v)\in\mathrm{SU}(2)$ with the property  that  there exists a word $w$ of length $n$ in the free group $\mathbb{F}_2$ such that   $\mathrm{dist}(w(u,v), \mathrm{id})<\exp (-Bn^2)$, is exponentially small (recall that if $w\in \mathrm{SU}(2)$, 
$\norm{w-\mathrm{id}}^2 \asymp  \abs{\tr(w)-2}$). For this, we use the fact that {\it ii.} holds in this context, that is, the free group $\mathbb{F}_2$  admits geometrically finite representations  into $\SL$, e.g. Schottky subgroups.
  Notice that the same applies to $\mathrm{U}(n)\subset \mathrm{GL}(n,\cc)$ (resp. $\mathrm{SU}(n)\subset \mathrm{SL}(n,\cc)$), and to free groups with arbitrary many generators.
\end{rmk}

\subsection{Motion of fixed points in $\La\times\pu$}
 Keeping notation as in \S \ref{subs:geom_interpt}, for $g$ in $G\setminus\mathrm{id}$,
we let  $\fix(\widehat g)$ be the hypersurface in $\La\times \pu$ defined by  the equation
$\set{(\la,z), \ g_\la(z) = z}$ (counted with its multiplicity). Notice that if $\lambda_0$ is such that $g_{\lambda_0} = \mathrm{id}$, then $\set{\lambda_0}\times \pu \subset \fix(\widehat g)$.

\begin{thm}\label{thm:motion of fixed points}
Let $(G,\mu,\rho)$ be an  admissible family of representations of $G$ into $\PSL$
 satisfying the exponential moment condition~\eqref{eq: exponential moment condition in the group}.

Then for $\mu^\nn$-a.e.  $\mathbf{g}\in G^\nn$,  the sequence of   currents of bidegree $(1,1)$
$\unsur{ n}  \left[ \fix\left(\widehat {l_n(\mathbf{g}}) \right)\right]$
on $\La\times\pu$ converges to $\pi_1^*(T_{\rm bif})$.

If furthermore, one of the additional assumptions {i.} and  {ii.} of Theorem \ref{thm:equidist integre} is satisfied, then the convergence takes place in $L^1(\mu^\nn)$.
\end{thm}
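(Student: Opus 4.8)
The plan is to follow closely the proof of Theorem~\ref{thm:geom_interpt2}, with the hypersurface $\fix(\widehat g)$ playing the role of the graph $\widehat g\cdot z_0$. The key algebraic observation is that $\fix(\widehat g)$ is the zero divisor of the quadratic form
\[
Q_{g,\la}(Z)\ :=\ (g_\la Z)\wedge Z\ =\ c_\la x^2+(d_\la-a_\la)\,xy-b_\la\,y^2,\qquad Z=(x,y),
\]
where $g_\la=\left(\begin{smallmatrix}a_\la&b_\la\\ c_\la&d_\la\end{smallmatrix}\right)$ is any local lift of $\rho_\la(g)$ to $\SL$ and $u\wedge v=\det(u\,|\,v)$. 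Since $|Q_{g,\la}|$ does not depend on the chosen lift, $v_g(\la,z):=\log\big(|Q_{g,\la}(Z)|/\norm{Z}^2\big)$ is a globally well defined psh function on $\La\times\pu$ with $dd^c v_g=[\fix(\widehat g)]-2\,\pi_2^*\omega_\pu$ (the constant $2$ because $Q_{g,\la}$ has degree $2$ in $Z$). Reducing as usual to the case where $\La$ is a ball in $\cc^k$, and noting that $\pi_1$ restricts to $\fix(\widehat g)$ as a degree-$2$ branched cover of $\La$, one has $\langle [\fix(\widehat g)],\pi_1^*\omega^k\rangle=2\int_\La\omega^k$, hence $\langle \unsur n[\fix(\widehat{l_n(\mathbf g)})],\pi_1^*\omega^k\rangle=O(1/n)$. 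Therefore, exactly as in the proof of Theorem~\ref{thm:geom_interpt2}, it suffices to prove that for $\mu^\nn$-a.e. $\mathbf g$ the psh functions
\[
\la\ \longmapsto\ \unsur n\int_\pu v_{l_n(\mathbf g)}(\la,z)\,\omega_\pu(z)\ =\ \unsur n\,F_{\mathrm{fix}}\big(\rho_\la(l_n(\mathbf g))\big)
\]
converge to $\chi$ in $L^1_{\rm loc}(\La)$, where $F_{\mathrm{fix}}(\gamma):=\int_\pu\log\frac{|Q_\gamma(Z)|}{\norm{Z}^2}\,\omega_\pu(Z)$; indeed $\pi_2^*\omega_\pu\wedge\pi_2^*\omega_\pu=0$ gives $(\pi_1)_*\big([\fix(\widehat g)]\wedge\pi_2^*\omega_\pu\big)=dd^c_\la\,F_{\mathrm{fix}}(\rho_\la(g))$.

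The convergence of these potentials will be obtained by applying Theorem~\ref{thm:abstrait} to the function $F_{\mathrm{fix}}$ on $\PSL$ (with $k=1$). It is psh, being the integral against the positive measure $\omega_\pu$ of the psh functions $\gamma\mapsto\log|Q_\gamma(Z)|$, and it is not $\equiv-\infty$. Assumption~\emph{i.} holds with $a_1=1$: from $|Q_\gamma(Z)|=|(\gamma Z)\wedge Z|\le\norm{\gamma Z}\,\norm{Z}$ we get $F_{\mathrm{fix}}(\gamma)\le\int_\pu\log\norm{\gamma z}\,\omega_\pu(z)\le\log\norm\gamma+C$ by Lemma~\ref{lem:norme integree}. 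For assumption~\emph{ii.}, factor $Q_\gamma(Z)=\kappa_\gamma\,(Z\wedge P^+)(Z\wedge P^-)$, where $P^\pm\in\cd$ are unit lifts of the (possibly equal) fixed points of $\gamma$. Because $\omega_\pu$ is rotation invariant and $\mathrm{SO}(3)$ acts transitively on $\pu$, the quantity $C_0:=\int_\pu\log\frac{|Z\wedge P|}{\norm Z}\,\omega_\pu(Z)$ is a finite constant independent of the unit vector $P$, so $F_{\mathrm{fix}}(\gamma)=\log|\kappa_\gamma|+2C_0$. Comparing discriminants of binary quadratic forms yields $|\kappa_\gamma|=\sqrt{|\tr^2\gamma-4|}\,/\,|P^+\wedge P^-|$ when $\gamma$ is not parabolic, and $|P^+\wedge P^-|\asymp\delta(\gamma)$, the spherical distance between the fixed points; combining this with Lemma~\ref{l:norm, trace and distance between fixed points} (and a direct computation for parabolic $\gamma$) gives
\[
\big|\,F_{\mathrm{fix}}(\gamma)-\log\norm\gamma\,\big|\ \le\ C\qquad\text{whenever }\norm\gamma\ge 2.
\]
Since for fixed $\la$ and $\mu^\nn$-a.e. $\mathbf g$ one has $\norm{\rho_\la(l_n(\mathbf g))}\to\infty$ and $\unsur n\log\norm{\rho_\la(l_n(\mathbf g))}\to\chi(\la)$ by Theorem~\ref{t: furstenberg}, assumption~\emph{ii.} follows, and Theorem~\ref{thm:abstrait} yields $\unsur n[\fix(\widehat{l_n(\mathbf g)})]\to\pi_1^*(T_{\rm bif})$ for $\mu^\nn$-a.e. $\mathbf g$.

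For the final assertion we invoke Proposition~\ref{prop:integree} with $F=F_{\mathrm{fix}}$; it remains to verify the mass bound $\m_{\La'}\big(dd^c_\la F_{\mathrm{fix}}(\rho_\la(g))\big)\le C(\La')\,\length(g)$ for every $g\in G$ and every $\La'\Subset\La$. Under assumption~\emph{i.} of Theorem~\ref{thm:equidist integre}, the coefficients of $Q_{g,\la}$ are polynomials of degree $O(\length(g))$ in the matrix coordinates of the generators, so $\fix(\widehat g)$ is an algebraic hypersurface of degree $O(\length(g))$ in $\La\times\pu$ and the bound follows from Bézout's theorem. Under assumption~\emph{ii.}, bound the mass of $dd^c$ by the $L^1$-norm of the potential on a slightly larger set (cf.\ \cite[Remark~3.4]{demailly}): from $F_{\mathrm{fix}}(\rho_\la(g))=\log|\kappa_{g,\la}|+2C_0$ together with $\delta(g_\la)\le\mathrm{diam}(\pu)$ one gets $\tfrac12\log|\tr^2 g_\la-4|-C\le F_{\mathrm{fix}}(\rho_\la(g))\le\log\norm{g_\la}+C$, so $\norm{F_{\mathrm{fix}}(\rho_\cdot(g))}_{L^1(\La')}\le C\length(g)+\tfrac12\norm{\log|\tr^2 g_\cdot-4|}_{L^1(\La')}\le C\length(g)$ by Lemma~\ref{lem:traceminor ii}. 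Proposition~\ref{prop:integree} then gives the convergence in $L^1(\mu^\nn)$.

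I expect the delicate point to be the verification of assumption~\emph{ii.} of Theorem~\ref{thm:abstrait}, that is, the comparison $F_{\mathrm{fix}}(\gamma)=\log\norm\gamma+O(1)$ for $\norm\gamma$ large. This relies on the factorization of $Q_\gamma$, on the rotation invariance of the proximity constant $C_0$, and crucially on Lemma~\ref{l:norm, trace and distance between fixed points} to control the leading coefficient $|\kappa_\gamma|=\sqrt{|\tr^2\gamma-4|}/|P^+\wedge P^-|$ by $\norm\gamma$; one must also treat separately the degenerate situations where $\gamma$ is parabolic, $\gamma=\mathrm{id}$, or $\la\mapsto g_\la$ is identically parabolic, which occur only on negligible sets of parameters or sequences and are harmless for the conclusion.
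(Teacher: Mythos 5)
Your overall architecture is the same as the paper's: write down a psh potential on $\La$ for $(\pi_1)_*\big([\fix(\widehat g)]\wedge\pi_2^*\omega_\pu\big)$, compare it with $\log\norm{g_\la}$, feed it into Theorem \ref{thm:abstrait} for the almost sure statement, and into Proposition \ref{prop:integree} (with a mass bound linear in $\length(g)$) for the $L^1(\mu^\nn)$ statement. Where you differ is in how the potential is produced, and your route is actually cleaner: the paper computes the potential $v(\la,g)=\log\big(\abs{b_\la}^2+\abs{c_\la}^2+\tfrac{\abs{d_\la-a_\la}^2+\abs{\tr^2 g_\la-4}}{2}\big)^{1/2}$ by splitting $\fix(\widehat g)$ into the two fixed-point graphs away from $I(g)=\{g_\la=\mathrm{id}\}$ and the parabolic locus, and then must check by a separate multiplicity computation that $v$ remains a potential across $I(g)$; your Lelong--Poincar\'e identity $dd^c\log\abs{Q_{g,\la}(Z)}=[\fix(\widehat g)]$ followed by fiber integration gets all of this (multiplicities included) in one stroke. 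Likewise your estimate $F_{\mathrm{fix}}(\gamma)=\log\norm{\gamma}+O(1)$ for $\norm{\gamma}\ge 2$ is slightly sharper than the paper's two-sided bound \eqref{eq:encadrement} and lets you verify hypothesis \emph{ii.} of Theorem \ref{thm:abstrait} from the norm alone, without invoking Theorem \ref{thm:guivarch}. One small point to make explicit: Lemma \ref{l:norm, trace and distance between fixed points} gives $\abs{\kappa_\gamma}=\sqrt{\abs{\tr^2\gamma-4}}/\abs{P^+\wedge P^-}\asymp\norm{\gamma}$ only once $\norm{\gamma}$ exceeds the implicit constant; on the intermediate range $2\le\norm{\gamma}\le C$ you should add the (easy) observation that $\abs{\kappa_\gamma}\asymp\max(\abs{b},\abs{c},\abs{d-a})$ is bounded below there because $ad-bc=1$ forbids $b,c,d-a$ from being simultaneously small when $\norm{\gamma}\ge 2$.

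There is, however, a genuine gap in your verification of the mass bound under assumption \emph{ii.} The chain $\tfrac12\log\abs{\tr^2 g_\la-4}-C\le F_{\mathrm{fix}}(\rho_\la(g))$ combined with Lemma \ref{lem:traceminor ii} fails precisely for the elements $g$ for which $\la\mapsto\tr^2(g_\la)$ is constant: Lemma \ref{lem:traceminor ii} is only stated (and only true) for non-constant trace functions, and for persistently parabolic $g$ the left-hand side is identically $-\infty$, so no $L^1$ bound on $F_{\mathrm{fix}}(\rho_\cdot(g))$ is obtained. These elements cannot be dismissed as negligible, contrary to your closing remark: Proposition \ref{prop:integree} requires the bound $\m_{\La'}\big(dd^c_\la F_{\mathrm{fix}}(\rho_\la(g))\big)\le C\length(g)$ for \emph{every} $g$, and persistently parabolic elements occur in the most natural families covered by \emph{ii.} (e.g. peripheral elements in Bers-slice type families), while their fixed-point hypersurfaces contribute fully to $\int[\fix(\widehat g)]\,d\mu^n(g)$. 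The repair is short and bypasses Lemma \ref{lem:traceminor ii} altogether: at the geometrically finite parameter $\la_0$ the group $\rho_{\la_0}(G)$ is discrete and faithful, so the identity is isolated in it and there is $\e_0>0$ with $\max(\abs{b_{\la_0}},\abs{c_{\la_0}},\abs{d_{\la_0}-a_{\la_0}})\ge\e_0$ for every $g\neq e$; hence $F_{\mathrm{fix}}(g_{\la_0})\ge\log\e_0-C$ uniformly in $g$, and Lemma \ref{lem:three circles}, applied to the psh function $F_{\mathrm{fix}}(\rho_\cdot(g))$ (which is bounded above by $C\length(g)$ on $\La'$), gives $\norm{F_{\mathrm{fix}}(\rho_\cdot(g))}_{L^1(\La')}\le C\length(g)$ for all $g\neq e$ at once, including the constant-trace and persistently parabolic ones. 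With this substitution your proof of the second assertion is complete; the argument under assumption \emph{i.} via the bidegree of $\fix(\widehat g)$ and B\'ezout is fine as written.
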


Again we may interpret this by saying that $\la_0\in \supp(T_{\rm bif})$ iff for every neighborhood $U$ of $\lambda_0$ the average volume of $\fix\left(\widehat {l_n(\mathbf{g}})\right)\cap \pi_1^{-1}(U)$ grows linearly with $n$.

\begin{proof}
The proof is similar to that of Theorem \ref{thm:geom_interpt} so we shall be brief. Again, the result is local on $\La$ so we may assume it is a ball, and we decompose the K\"ahler form in $\La\times\pu$ as $\widehat{\omega} =  \pi_1^*\omega+ \pi_2^*\omega_\pu$.

For every $g\in G$, $\pi_1: \fix(\widehat g) \cv \Lambda$  is a dominant mapping of degree at most 2, with possibly some exceptional fibers corresponding to parameters where $\lambda_0 = \mathrm{id}$.  In any case we infer that  $\langle{\widehat{T}_n, \pi_1^*\omega^k}\rangle\cv 0$.  Thus, again,  what we need to analyze is
pairings of the form $\langle \widehat{T}_n,  \pi_2^*\omega_{\pu}\wedge \pi_1^*\phi\rangle = \bra{(\pi_1)_*( \widehat{T}_n\wedge   \pi_2^*\omega_{\pu}), \phi}$,
where $\phi$ is a $(k-1, k-1)$ test form on $\La$.

Let $I(g)$ (resp. $P(g)$ be the subvariety of $\La$ defined by $I(g) = \set{\la, \ g_\la =  \mathrm{id}}$ (resp. $P(g)=\set{\la, \ g_\la \text{ is parabolic}}$).
Consider an open subset  $\om\subset\La$  disjoint from $I(g) $, so that $\pi_1\rest{ \fix(\widehat g)\cap \pi_1^{-1}(\om)}$ is a branched cover of degree 1 (in the case of a persistently parabolic element) or 2 (in the other cases).

Suppose first that $g$ is not persistently parabolic, and pick a ball $U$ where $g_\la$ is never parabolic.
 In this case, $\fix(\widehat g)\cap \pi_1^{-1}(U)$ consists of two graphs $\fix_1(\widehat g)$ and $\fix_2(\widehat g)$ over $U$ corresponding to the two fixed points of $g_\la$. Let us denote these by $f_i(\la)$, $i=1,2$. As in \eqref{eq:chapeau} we obtain that
 $$ \big\langle[\fix (\widehat g)], \pi_2^*\omega_{\pu}\wedge \pi_1^*\phi \big\rangle
 = \big\langle[\fix_1(\widehat g)] + [\fix_2(\widehat g)], \pi_2^*\omega_{\pu}\wedge \pi_1^*\phi \big\rangle =\int_\La\lrpar{ (  f_1)^*\omega_\pu +
(  f_2 )^*\omega_\pu}\wedge \phi$$
Let $g = \left(\begin{smallmatrix} a & b \\ c& d \end{smallmatrix}\right)$; the coefficients are defined only up to sign, but this does not affect the foregoing formulas. For the matter of computation we may assume that $c_\la$ never  vanishes in $U$, so that $f_1$ and $f_2$ take their values in  a fixed  affine chart $\cc\subset \pu$. We then obtain  that
\begin{align*}
(f_1)^*\omega_\pu + (f_2)^*\omega_\pu &= dd^c  \log\left(1+ \abs{f_1}^2\right)^{\unsur{2}} +
 dd^c  \log\left(1+ \abs{f_2}^2\right)^{\unsur{2}} \\
 &= dd^c \log \left(\abs{b_\la}^2 + \abs{c_\la}^2 + \frac{\abs{d_\la-a_\la}^2 + \abs{\tr ^2(g_\la)-4}}{2}\right)^{\frac12}.
 \end{align*}
The last equality is in turn also valid   when $c_\lambda$ vanishes. Let $v(\la, g)$ be the argument of the $dd^c$ in the last line.

Assume that $P(g)\cap \om$ is not empty (recall that by assumption $I(g)\cap \om = \emptyset$). The function $v(\la, g)$  is locally bounded near $P(g)$ so $dd^cv(\cdot, g)$ gives no mass to $P(g)$. Likewise,   $[\mathrm{Fix}(\widehat g )]$ gives no mass to $\pi^{-1}(P(g)\cap \om)$. Therefore we conclude that $v(\cdot, g)$ is a potential of
$(\pi_1)_*\left([\fix(\widehat g)]\wedge \pi_2^*\omega_\pu\right)$ throughout $\om$.

It is straightforward to check that the same holds when $g_\la$ is persistently parabolic.

\medskip

At this point we know that  $v(\cdot, g)$ is a potential of $(\pi_1)_*\left([\fix(\widehat g)]\wedge \pi_2^*\omega_\pu\right)$ outside $I(g)$. We claim that this is actually true
    everywhere on $\La$. Notice first that $v(\cdot, g)$ is a well-defined psh function, with poles on $I(g)$.  Let $\Sigma$ be an irreducible component of
$I(g)$. The extension of $v(\cdot, g)$ as a potential of $(\pi_1)_*\left([\fix(\widehat g)]\wedge \pi_2^*\omega_\pu\right)$ over $\Sigma$ is immediate when $\codim(\Sigma)>1$, since 
 since neither $dd^cv(\cdot, g)$ nor $(\pi_1)_*\left([\fix(\widehat g)]\wedge \pi_2^*\omega_\pu\right)$ would carry any mass on $\Sigma$ in this case. So we can suppose that  $\codim(\Sigma)=1$, and, slicing by 1-dimensional submanifolds, we may further assume that
$\dim(\La)=1$ and $\Sigma = \set{\lambda_0}$. In this case,  the measure  $(\pi_1)_*\left([\fix(\widehat g)]\wedge \pi_2^*\omega_\pu\right)$ has an atom of multiplicity $m$ at $\la_0$, where $m$ is the generic multiplicity of $\fix(\widehat g)$ along $\pi_1^{-1}(\Sigma)$. Let us compute $m$: we are looking  at
the multiplicity of the root $\la_0$ of the equation $c_\la z^2+ (d_\la-a_\la)z+ b_\la = 0$ for generic $z$, at a parameter $\lambda_0$ where
$b_{\la_0} = c_{\la_0} = d_{\la_0}-a_{\la_0} = 0$. We infer that
$$m=\min \left(\mathrm{mult}_{\la_0}(b_\la), \mathrm{mult}_{\la_0}(c_\la),\mathrm{mult}_{\la_0}(d_\la-a_\la)\right).$$

To prove that $dd^cv(\cdot, g) =
(\pi_1)_*\left([\fix(\widehat g)]\wedge \pi_2^*\omega_\pu\right)$, it is enough to show that $dd^cv(\cdot, g)$ also admits an atom of multiplicity $m$ at $\lambda_0$. Equivalently, we need to show that $v(\cdot, g)$ has a logarithmic pole of order $m$ at $\lambda_0$,
which is clear from the  formula defining $v$ and the observation that $\mathrm{mult}_{\la_0}(\tr^2(g_\la)-4)\geq 2m$.

\medskip


By definition of $v(\cdot, g)$,
 it is clear that there exists a constant $C$ independent  of $g$ such that
 \begin{equation}\label{eq:encadrement}
  \min\left (\frac{1}{2}\log\abs{\tr^2(g_\la) - 4} ,  \log \norm{g_\la} \right)-C\leq v(\la, g)\leq  \log \norm{g_\la} + C. 
 \end{equation} 
As usual, we   conclude from Theorem \ref{thm:abstrait} that  $\unsur{ n}  \big[ \fix(\widehat {l_n(\mathbf{g}})) \big]$ converges to $\pi_1^*(T_{\rm bif})$.

Finally,  we leave the reader check that under each of the  assumptions {\em i.} or {\em ii.} of Theorem \ref{thm:equidist integre}, \eqref{eq:encadrement} shows that the mass of $\unsur{ n}  
\big[ \fix(\widehat {l_n(\mathbf{g}})) \big]$ is locally controlled by $\length(g)$. So the second assertion of the theorem follows from   Proposition \ref{prop:integree}. 
\end{proof}

\section{Further comments}\label{sec:further}

\subsection{Canonical bifurcation currents}\label{subs:bers slice}
Let $S$ be a Riemann surface of genus $g\geq2$ and $\mathrm{Hom}(\pi_1(S), \PSL)$ the set of representations of $ \pi_1(S)$ into $\PSL$. Take a holomorphic family $\La\subset \mathrm{Hom}(\pi_1(S), \PSL)$ made of non-elementary representations. Our purpose in this paragraph is to outline the construction of  a canonical bifurcation current on $\La$,   depending only on the Riemann surface structure. The details will appear in a subsequent paper \cite{dd2}.

The idea consists in replacing the \textit{discrete} random walk on $\pi_1(S)$ by a \textit{continuous} Markov process on $S$: the Brownian motion with respect to a conformal metric. This defines a Lyapunov exponent, very much in the spirit of~\cite{dk}. The induced function on $\La$ is well-defined up to a multiplicative constant, because of the conformal invariance of the Brownian motion. 

To be more precise, denote by $\widetilde S$ the universal cover of $S$. Given a representation $\rho\in \La$, consider the flat $\mathbb P^1$-bundle over $S$ with monodromy $\rho$, that we denote by $X$. Recall that it is obtained by taking the quotient of  $\widetilde S\times\pu$ by the diagonal action of $\pi_1(S)$   (defined by $\gamma(x,z) = (\gamma x , \rho(\gamma) z)$). If $x\in S$, we denote the fiber of the bundle over $x$ by $X_x$. Observe that to any oriented continuous path $\gamma$ with endpoints $x$ and $y$  corresponds a holonomy map $h_{\gamma}$ from $X_x$ to $X_y$, obtained by lifting $\gamma$ as a family of continuous paths in the flat sections. 
A spherical metric $\norm{\cdot}$ being given on the $\mathbb P^1$-fibers, for every path $\gamma: [0,\infty) \rightarrow S$   we  may consider the limit
\begin{equation}\label{eq: lyapunov} \chi (\gamma) = \lim _{t\rightarrow \infty} \frac{\log \norm{ h_{\gamma_{|[0,t]}} }}{t}. \end{equation}
If $\gamma$ is a generic Brownian path, the limit in~\eqref{eq: lyapunov} indeed exists and only depends on the conformal metric and the representation (but not on $\gamma$). As   already said, two different conformal metrics give rise to Lyapunov exponent functions on $\La$  that differ only by a multiplicative constant.  To specify this constant it is enough to fix the metric as being
 the Poincar\'e metric of constant curvature $-1$.



It is not difficult to  convince oneself that the function $\chi$ on $\La$ is psh. We can thus define a bifurcation current on $\La$, 
depending only on the complex structure on $S$, by the formula $T_{\rm bif} = dd^c\chi$.  What is less obvious is that there actually exists a measure on $\pi_1(S)$, possessing exponential moments, and such that $T_{\rm bif}$ is the associated bifurcation current on $\La$. In particular we have that: 




\begin{thm}
The support of $T_{\rm bif}$ is the bifurcation locus.
\end{thm}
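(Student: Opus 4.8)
The plan is to deduce the statement from Theorem~\ref{thm:support} by exhibiting a probability measure $\mu$ on $\pi_1(S)$, with exponential moments, whose associated bifurcation current in the sense of \S\ref{sec:bifcurrent} coincides, up to a positive multiplicative constant, with the canonical current $T_{\rm bif}=dd^c\chi$ defined by the Brownian Lyapunov exponent~\eqref{eq: lyapunov}. Since the bifurcation locus of the family $\La\subset \mathrm{Hom}(\pi_1(S),\PSL)$ does not refer to any measure on the group, the conclusion is then immediate.

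First I would discretize the Brownian motion. Fix a relatively compact fundamental domain $D$ for the action of $\pi_1(S)$ on $\widetilde S=\mathbb H^2$, a base point $x_0\in D$, and the Brownian motion $(\omega_t)_{t\geq 0}$ on $\widetilde S$ issued from $x_0$. A Lyons--Sullivan type discretization (or the balayage/return scheme of Furstenberg and Kaimanovich) produces, from $\mu^{\otimes\nn}$-almost every Brownian path, a sequence $(h_1,h_2,\dots)\in\pi_1(S)^\nn$ of deck transformations, whose law is $\mu^{\otimes\nn}$ for a well-defined probability measure $\mu$ on $\pi_1(S)$; the support of $\mu$ generates $\pi_1(S)$ as a semi-group, and the harmonic measure on $\partial\mathbb H^2$ is the $\mu$-stationary measure on $\pu$ after transport by the equivariant boundary map. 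The crucial feature is that the scheme is \emph{adapted}: the holonomy of the flat $\pu$-bundle along the Brownian path, sampled at the random discretization times $0=t_0<t_1<t_2<\cdots$, is exactly $\rho_\la(h_n\cdots h_1)$.

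Next I would verify the two quantitative inputs. Since $\pi_1(S)$ acts cocompactly on $\mathbb H^2$, the word length of $g\in\pi_1(S)$ is comparable to $d_{\mathbb H^2}(x_0,gx_0)$ by the Milnor--\v Svarc lemma; combined with the standard Gaussian upper bound for the heat kernel on a surface of bounded geometry, and with the exponential tails of $t_{n+1}-t_n$, this yields $\int_{\pi_1(S)}\exp(\sigma\,\length(g))\,d\mu(g)<\infty$ for some $\sigma>0$, i.e. \eqref{eq: exponential moment condition in the group} holds. On the other hand, by the ergodic theorem for the shift on Brownian path space one has $t_n/n\to c$ almost surely, with $c>0$ the mean discretization time; hence, using adaptedness together with Theorem~\ref{t: furstenberg} and \eqref{eq: lyapunov},
$$\chi(\rho_\la)=\lim_{n\to\infty}\unsur n\log\norm{\rho_\la(h_n\cdots h_1)}=\lim_{n\to\infty}\frac{t_n}{n}\cdot\frac{\log\norm{h_{\gamma|[0,t_n]}}}{t_n}=c\,\chi(\gamma),$$
where the left-hand side is the random-walk Lyapunov exponent of $(\pi_1(S),\mu,\rho_\la)$ and $\chi(\gamma)$ is the Brownian one. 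Therefore $T_{\rm bif}=dd^c\chi=\tfrac1c\,dd^c\big(\la\mapsto\chi(\rho_\la,\mu)\big)$. The triple $(\pi_1(S),\mu,\rho)$ is admissible and satisfies \eqref{eq: exponential moment condition in the group}, so Theorem~\ref{thm:support} identifies $\supp\big(dd^c\chi(\rho_\cdot,\mu)\big)$, hence $\supp(T_{\rm bif})$, with the bifurcation locus.

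The main obstacle is the discretization step and the verification that the induced $\mu$ has exponential moments while faithfully recording the holonomy cocycle: one must choose the scheme so that the sampled holonomy is genuinely a product of the $\rho_\la(h_i)$ with the $h_i$ i.i.d.\ of law $\mu$, and so that the comparison between path length and number of steps is uniform enough to transfer Furstenberg's theorem and to preserve the exponential moment. Everything else --- plurisubharmonicity and continuity of $\chi$ on $\La$, the measure-independence of the bifurcation locus, and the final appeal to Theorem~\ref{thm:support} --- is then formal. The details are carried out in \cite{dd2}.
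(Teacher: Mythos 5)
Your proposal follows essentially the same route as the paper, which itself only sketches this argument: produce a discretizing measure $\mu$ on $\pi_1(S)$ with exponential moments whose random-walk Lyapunov exponent is a constant multiple of the Brownian one, and then invoke Theorem~\ref{thm:support}; the paper defers all details of the discretization to the companion work \cite{dd2}. The points you flag as delicate (adaptedness of the sampled holonomy and the exponential moment of the Lyons--Sullivan measure) are exactly the ones the authors postpone, so your outline is consistent with theirs.
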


It is also possible to state equidistribution theorems involving summations over the set of closed geodesics on $S$. 

\medskip

Here is a situation where these ideas naturally apply:  consider the set of complex projective structures over a Riemann surface $S$, compatible with its 
complex structure. This is an affine space of dimension $3g-3$,  
 admitting a distinguished point, namely the projective structure obtained by viewing $S$ as a quotient of the unit disk (see~\cite{dumas} for an introductory text on this).  The so-called (and much studied) {\em Bers slice} of Teichm\"uller space is the connected component of this point in the stability locus.
A projective structure induces  a {\em monodromy representation} (which is always non-elementary and defined only up to conjugacy) so the above discussion applies and we conclude that {\em the space of projective structures on $S$ admits a canonical bifurcation current.} We also show in \cite{dd2} that
the Lyapunov exponent function is {\em constant} on the Bers slice.  
Through the Sullivan dictionnary (as extended in \cite{mcm}), this corresponds to the theorem that the Lyapunov exponent of  a monic  polynomial of degree $d$ with connected Julia set is equal to $\log d$.


\subsection{Open questions}

\subsubsection{} Arguably the most important question left open in the paper is: how does $\tbif$ depend on $\mu$? For instance, are the bifurcation currents mutually singular/absolutely continous when $\mu$ varies?

In this context it may be interesting to note that if $\supp(\mu)$ is finite, then $\chi(\mu)$ is a real analytic function of the transition probabilities for a fixed representation~\cite{peres}.

Here is a related question: assume that $\La$ is the character variety of representations of $G$ into $\PSL$. How does the outer automorphism group $\mathrm{Out(G)}$ act on the bifurcation currents? Is it possible to find a measure $\mu$ so that $\mathrm{Out(G)}$ preserves the measure class induced by $T_{\rm bif}$?

\subsubsection{}\label{subsub:wedge} For spaces of rational maps,    
    the description of the exterior powers of $T_{\rm bif}$ is an important theme,  with again  some  emphasis   on  the  characterization of  their  supports and equidistribution theorems \cite{bas-ber, bas-ber1, bas-ber2, df, cubic, buff-epstein, gauthier} . The underlying ideology is that $\supp(T_\mathrm{bif}^k)$, for $1\leq k\leq \dim(\La)$ should define a dynamically meaningful filtration of the bifurcation locus.  

It is also  natural to investigate  this question in our context.  However, it seems that 
the supports of $\tbif^k$ do not give any new information here. To be precise, assume that  $\dim(\La)\geq 2$ and that
different representations in $\La$ are never conjugate (i.e. $\La$ is a subset of the character variety). Then {\em we conjecture that for every $k\leq \dim(\La)$, $\supp(T_{\rm bif}^k )=\mathrm{Bif}$}. 

Here is some evidence for this: let $\theta \in \re\setminus\pi\mathbb{Q}$, $t=4\cos^2(\theta)$ and look at the varieties $Z(g,t)$. Since for $\la\in Z(g,t)$, $\rho_\la$ is not discrete, the bifurcation locus of $\set{\rho_\la, \la \in Z(g,t)}$ is equal to $Z(g,t)$. Hence $\supp(\tbif\wedge [Z(g,t)]) = Z(g,t)$, which by Theorem \ref{thm:equidist ae} makes the equality $\supp(\tbif^2) = \supp(\tbif)$ plausible (see also Remark \ref{rmk:moderate}).

Notice  that the currents constructed by Cantat in \cite{cantat} as natural invariant currents under  holomorphic automorphisms of the character variety,  have zero self-intersection. 

\subsubsection{} We know that the normalized currents of integration over $Z(l_n(\mathbf{g}), 4)$ are equidistributed towards $\tbif$. Now, the parameters in $Z(l_n(\mathbf{g}), 4)$ can be of two types: parabolic or identity. Is there a dominant one?
One might guess that parabolic parameters prevail. 

\subsubsection{}  Do our results lead to efficient algorithms for producing computer pictures of stability/bifurcation loci? It is often a delicate issue in this type of problems to find numerical   criteria deciding whether a representation is discrete. Here, given a group endowed with a measure and a one-dimensional  family of representations, one    may simply try to  plot   numerically the Lyapunov exponent function   and look for regions where it is harmonic.

Another approach would  be  to  use Theorem \ref{thm:equidist ae} to obtain an approximation of the bifurcation locus by plotting the solutions to $\tr^2(\rho_\la(g)
)=4$ for (a small number of) large random elements $g = g_n\cdots g_1$.

\subsubsection{} Can $T_{\rm bif}$ be described more precisely in some   particular families (say, one-dimensional, so that $\tbif$ simply becomes a measure)? Are there measures $\mu$ for which $T_{\rm bif}$ is absolutely continuous with respect to Lebesgue measure? Are there measures for which $\tbif$ gives some mass to representations with values in $\text{PSL}(2,\mathbb R)$  ? 
Is the boundary of the bifurcation locus always of zero measure?

\appendix

\section{Estimates on the distance between fixed points and applications}\label{app:distance}

In this section we give some  refinements   of the classical results on random matrix products that we presented in \S\ref{subs:random}. Similar results have been recently proven in a much more general setting by  Aoun \cite{aoun}. In our context the proofs simplify greatly, so we include them  for convenience --besides, we need estimates slightly different from his.

If $\gamma\in \PSL$ we denote by $\delta(\gamma)$ the distance between its fixed points (on $\pu$).
Our first purpose is to show that on a set of large $\mu^\nn$ measure, $\rho(l_n(\mathbf{g}))$ is a loxodromic element with well-separated fixed points (recall that for   ${\bf g}\in G^{\mathbb N}$,   $l_n({\bf g})=g_n \cdots g_1$).

\begin{thm}\label{thm:distance}
Let $(G, \mu,\rho)$ be a  non-elementary representation
satisfying  the exponential moment condition~\eqref{eq:exponential moment}.

Then there exists a constant $K$ such that if
  $\e_n$ is one of the sequences $cn^{-\alpha}$ (for $c $, $\alpha>0$), 
   or $\exp(-\gamma n)$ (for $0<\gamma<\gamma_0$ where $\gamma_0$ is an explicit constant), then
$n$ for large enough,
$$ \mathbb{P}\lrpar{\delta(\rho(l_n(\mathbf g)))<\e_n} \leq \e_n^K.$$
\end{thm}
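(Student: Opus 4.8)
The plan is to deduce the statement from Lemma~\ref{l:norm, trace and distance between fixed points} together with a quantitative upper bound on the probability that $\tr(\rho(l_n(\mathbf{g})))$ is much smaller than its typical size $e^{\chi n}$. Concretely, that lemma gives $\delta(\gamma)\geq c\,\sqrt{\abs{\tr^2\gamma-4}}/\norm{\gamma}$ for every $\gamma\in\PSL$, so $\set{\delta(\rho(l_n(\mathbf g)))<\e_n}\subset\set{\abs{\tr^2(\rho(l_n(\mathbf g)))-4}\leq c^{-2}\e_n^2\norm{\rho(l_n(\mathbf g))}^2}$. By the Large Deviations Theorem for norms (see \cite{bougerol-lacroix}), for any $\eta>0$ one has $\norm{\rho(l_n(\mathbf g))}\leq e^{(\chi+\eta)n}$ off an event of probability $\leq e^{-c(\eta)n}$; hence, up to this exponentially small error, $\set{\delta(\rho(l_n(\mathbf g)))<\e_n}$ is contained in $\set{\abs{\tr(\rho(l_n(\mathbf g)))}\leq 2+C\e_n e^{(\chi+\eta)n}}$ (using $\abs{\tr^2-4}\leq u\Rightarrow\abs{\tr}\leq 2+\sqrt u$). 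Everything thus reduces to bounding $\mathbb{P}\big(\abs{\tr(\rho(l_n(\mathbf g)))}\leq 2+C\e_n e^{(\chi+\eta)n}\big)$.

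For this I would use the elementary identity coming from cyclicity of the trace: for $g,h\in\SL$, writing a singular value decomposition $h=k_1\,\mathrm{diag}(\sigma,\sigma^{-1})\,k_2$ with $k_i$ unitary, $\sigma=\norm h\geq1$, and $v^+(h)=k_1e_1$ a unit vector spanning the top singular direction, one has $\tr(gh)=\sigma\,\langle k_2^*e_1,\,g\,v^+(h)\rangle+R$ with $\abs R\leq\sigma^{-1}\norm g$. Thus if $\sigma$ is large and $\norm g$ not too large, $\abs{\tr(gh)}$ small forces the point $g\cdot[v^+(h)]\in\pu$ into a small spherical ball around a point of $\pu$ depending only on $h$. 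I then split $l_n(\mathbf g)=P\cdot Q$, with $Q=g_m\cdots g_1$ a product of $m=n-\lceil\kappa n\rceil$ letters and $P=g_n\cdots g_{m+1}$ a product of $\lceil\kappa n\rceil$ letters, $\kappa\in(0,1)$ to be chosen. Since $P$ and $Q$ are independent and $\tr(\rho(l_n(\mathbf g)))=\tr(\rho(P)\rho(Q))$, I condition on $Q$ and apply the identity with $g=\rho(P)$, $h=\rho(Q)$: on the large-deviation event $\set{\norm{\rho(Q)}\geq e^{(\chi-\eta)(1-\kappa)n},\ \norm{\rho(P)}\leq e^{(\chi+\eta)\kappa n}}$ (of complementary probability $\leq e^{-cn}$, again by the Large Deviations Theorem), the event $\set{\abs{\tr(\rho(l_n(\mathbf g)))}\leq 2+C\e_n e^{(\chi+\eta)n}}$ forces $\rho(P)\cdot[v^+(\rho(Q))]$ into a ball of radius $r_n\leq C\,\e_n\,e^{a(\kappa+\eta)n}$ about a point of $\pu$ depending only on $Q$ (here $a=a(\chi)$, and one uses the harmless bound $\norm{\rho(P)v^+}\geq\norm{\rho(P)}^{-1}$). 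Hence, conditionally on $Q$, the probability we are after is at most $\displaystyle\sup_{x,y\in\pu}\mu^{\lceil\kappa n\rceil}\big(\set{g:\ d_\pu(\rho(g)x,y)<r_n}\big)+e^{-cn}$.

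The last ingredient is a regularity estimate for the walk on $\pu$: under \eqref{eq:exponential moment} the stationary measure is H\"older, $\nu(D(y,r))\leq Cr^\beta$ for some $\beta>0$ (\cite{bougerol-lacroix}), and since the law of $\rho(l_m(\mathbf g))x$ equals $P^m\delta_x$ (the laws of $l_m$ and $r_m$ coincide), Theorem~\ref{thm:lepage} gives $\abs{(P^mf)(x)-\int f\,d\nu}\leq C\bar\lambda^m\norm f_{C^\alpha}$ with $\bar\lambda<1$; testing against an $\alpha$-H\"older bump of the ball $D(y,r)$ yields, for all $m\geq1$ and $r>0$, $\displaystyle\sup_{x,y\in\pu}\mu^m\big(\set{g:\ d_\pu(\rho(g)x,y)<r}\big)\leq C r^\beta+C\bar\lambda^m r^{-\alpha}$. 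Plugging $m=\lceil\kappa n\rceil$ and $r=r_n$ and integrating over $Q$, $\mathbb{P}\big(\delta(\rho(l_n(\mathbf g)))<\e_n\big)\leq C r_n^\beta+C\bar\lambda^{\kappa n}r_n^{-\alpha}+Ce^{-cn}$. For $\e_n=e^{-\gamma n}$ one has $r_n\asymp e^{-p_0 n}$ with $p_0=\gamma-a(\kappa+\eta)$; choosing $\eta=\gamma/10$ and $\kappa$ in the (nonempty, for $\gamma<\gamma_0$) interval $\big[\,\tfrac{\alpha\gamma}{\log(1/\bar\lambda)+a\alpha},\ \tfrac{\gamma}{4a}\,\big]$ makes $p_0\geq c_1\gamma$ and $\kappa\log(1/\bar\lambda)-\alpha p_0\geq c_2\gamma$ with $c_1,c_2>0$ depending only on $\chi,\alpha,\beta,\bar\lambda$. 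Therefore $\mathbb{P}(\delta(\rho(l_n(\mathbf g)))<\e_n)\leq Ce^{-c_3\gamma n}=C\e_n^{c_3}$ with $c_3=\min(\beta c_1,c_2)>0$ \emph{independent of $\gamma$}, giving the theorem with any $K<c_3$ for $n$ large; the restriction $\gamma<\gamma_0$ only serves to make $r_n\to0$ (one may take $\gamma_0$ a fixed fraction of $\chi$). The polynomial sequences $\e_n=cn^{-a}$ ($a>0$) are then handled by running the identical argument with the slowly varying rate $\gamma_n:=(a\log n-\log c)/n\to0$ in place of $\gamma$ — so $P$ consists of $\asymp\log n$ letters and the regularity estimate is used at that scale — all the estimates above being uniform in $\gamma\in(0,\gamma_0)$.

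The main obstacle is precisely this regularity estimate at a \emph{finite}, and in the polynomial case only logarithmically growing, number of steps: the one-step bound $\mu(\set{g:d_\pu(\rho(g)x,y)<r})\leq Cr^\beta$ is false in general (for instance when $\mu$ is finitely supported), so the smoothing must genuinely come from iterating $\mu$. One therefore has to take the number $\kappa n$ of ``smoothing'' letters large enough, relative to $\log(1/r_n)$, for Le Page's exponential convergence to be effective, while keeping the ``main'' part $Q$ long enough that $\norm{\rho(Q)}$ dominates the trace scale $C\e_n e^{(\chi+\eta)n}$. Balancing these two competing requirements is what pins down the explicit constant $\gamma_0$ and yields a final exponent $K$ that does not degenerate as $\gamma\to0$.
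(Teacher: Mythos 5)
Your second half is essentially the right engine, and it coincides with the paper's: split off a short top block, condition on the long bottom block, and control the probability that the short block sends a prescribed point of $\pu$ into a prescribed ball of radius $r$ by testing a H\"older bump against $P^m-N$ (Theorem \ref{thm:lepage}) together with the H\"older regularity of $\nu$; this is exactly Lemma \ref{l:exponential convergence} in the paper. The gap is in your opening reduction. To turn $\delta(\rho(l_n(\mathbf g)))<\e_n$ into an \emph{absolute} smallness statement $\abs{\tr}\leq 2+C\e_n e^{(\chi+\eta)n}$ via Lemma \ref{l:norm, trace and distance between fixed points}, you must impose $\norm{\rho(l_n(\mathbf g))}\leq e^{(\chi+\eta)n}$ with $\eta<\gamma$ (otherwise your trace threshold exceeds the typical size $e^{\chi n}$ and the event you bound is not rare), and likewise $\norm{\rho(Q)}\geq e^{(\chi-\eta)(1-\kappa)n}$. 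You then discard the complements as ``exponentially small by the Large Deviations Theorem'', but this is not uniform in the precision: the rate function vanishes (generically quadratically) at $\chi$, so with $\eta=\gamma/10$ the error is only of size $e^{-c\gamma^2 n}=\e_n^{O(\gamma)}$, and the exponent you end up with degrades as $\gamma\to0$, contrary to your claim that $c_3$ is independent of $\gamma$. In the polynomial case the problem is fatal rather than quantitative: your prescription $\eta_n\asymp\gamma_n\asymp(\log n)/n$ asks $\log\norm{\rho(l_n)}$ to stay within $O(\log n)$ of $n\chi$, a deviation well inside the central-limit window of order $\sqrt n$ (the asymptotic variance is positive for non-elementary representations), so the events you condition on fail with probability bounded away from $0$ and the error terms are not even $o(1)$, let alone $O(\e_n^K)$. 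Since the whole point of the statement, as it is used in the proof of Theorem \ref{thm:equidist speed}, is a single $K$ valid for all the polynomial sequences (only the threshold in $n$, not $K$, may depend on $c,\alpha$), the reduction through an absolute trace threshold cannot deliver the theorem: any such route needs norm control at precision finer than $\e_n$, and that control costs more in probability than $\e_n^K$ when $\e_n$ decays slowly.

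The paper avoids this entirely by never leaving the geometry of fixed points: with the long block $l_m$, $m=\lfloor(1-\e)n\rfloor$ and $\e$ a \emph{fixed} small constant depending only on $\chi$, the KAK decomposition produces an attracting ball $A_m$ and a repelling ball $R_m$ of radius $\approx\norm{\rho(l_m)}^{-1}$; after composing with the short block (whose norm is only required to be at most $e^{2\chi(n-m)}$, a fixed-precision event), $\rho(l_n(\mathbf g))$ is loxodromic with $\delta\geq\e_n$ as soon as the transported ball $A_n$ stays $\e_n$-separated from $R_n$, and the failure of this separation is precisely the event that the short block maps a point $x_0$ into a ball of radius $2\e_n$ around a point $y_0$ determined by the long block --- which is then bounded by $C\e_n^{K}$ exactly by your regularity estimate, applied over $p=n-m\asymp\e n$ steps. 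All large-deviation inputs there are at fixed precision, so $K$ is uniform over both families of sequences and $\gamma_0$ comes out of the spectral gap $\beta$. If you want to keep your trace identity, you must reformulate it relatively (compare $\abs{\tr(gh)}$ with $\norm{gh}$ rather than with an absolute threshold, so that only the short block's norm at a fixed exponential scale enters the radius); after reworking the bookkeeping this can be made to close, but at that point the argument is essentially the paper's.
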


As a first consequence, we obtain the following large deviation estimate in Theorem
\ref{thm:guivarch}

\begin{cor}\label{cor:large deviations}
Let $(G, \mu,\rho)$ be a  non-elementary representation
satisfying  the exponential moment condition~\eqref{eq:exponential moment}.

 Then for every positive  real number $\varepsilon$, the probability
\[ \mathbb P \left( \left| \frac{1}{n} \log \abs{\tr \rho (l_n({\bf g}) )} -\chi \right| > \varepsilon  \right) \]
decreases to zero exponentially fast when $n$ tends to infinity.
\end{cor}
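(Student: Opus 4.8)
The plan is to deduce the large deviation estimate for $\frac1n\log\abs{\tr\rho(l_n(\mathbf g))}$ by comparing $\abs{\tr\rho(l_n(\mathbf g))}$ with $\norm{\rho(l_n(\mathbf g))}$, using Lemma \ref{l:norm, trace and distance between fixed points} together with Theorem \ref{thm:distance} to control the distance between fixed points, and then invoking the classical large deviation theorem for $\frac1n\log\norm{\rho(l_n(\mathbf g))}$. First I would recall that under the exponential moment assumption \eqref{eq:exponential moment}, the norm cocycle satisfies the Large Deviations Theorem: for every $\varepsilon>0$, $\mathbb P\big(\big|\frac1n\log\norm{\rho(l_n(\mathbf g))}-\chi\big|>\varepsilon\big)$ decreases exponentially fast (this is one of the standard consequences of Le Page's Theorem \ref{thm:lepage}, cited in \S\ref{subs:random}). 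Since $\abs{\tr\gamma}\le 2\norm{\gamma}$ always holds, the upper bound $\frac1n\log\abs{\tr\rho(l_n(\mathbf g))}<\chi+\varepsilon$ fails only on an event of exponentially small probability, so only the lower tail $\big\{\frac1n\log\abs{\tr\rho(l_n(\mathbf g))}<\chi-\varepsilon\big\}$ requires work.

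For the lower tail, I would fix $\varepsilon>0$ and argue as follows. Choose $\alpha>0$ small and set $\e_n=n^{-\alpha}$; by Theorem \ref{thm:distance} there is $K>0$ with $\mathbb P(\delta(\rho(l_n(\mathbf g)))<\e_n)\le\e_n^K=n^{-\alpha K}$, which is summable if $\alpha K>1$ (and in any case tends to $0$; to get a genuinely exponential bound one instead takes $\e_n=\exp(-\gamma n)$ with $\gamma<\gamma_0$ small, giving $\mathbb P(\delta<\e_n)\le e^{-\gamma K n}$). On the complementary event $\{\delta(\rho(l_n(\mathbf g)))\ge\e_n\}$, Lemma \ref{l:norm, trace and distance between fixed points} gives
\[
\norm{\rho(l_n(\mathbf g))}\;\asymp\;\max\!\Big(1,\;\frac{\sqrt{\abs{\tr^2\rho(l_n(\mathbf g))-4}}}{\delta(\rho(l_n(\mathbf g)))}\Big)
\;\le\;\max\!\Big(1,\;\frac{\sqrt{\abs{\tr^2\rho(l_n(\mathbf g))-4}}}{\e_n}\Big),
\]
so that $\abs{\tr^2\rho(l_n(\mathbf g))-4}\ge c\,\e_n^{2}\,\norm{\rho(l_n(\mathbf g))}^{2}$ whenever the norm is large. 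Taking logarithms and dividing by $n$, on this event
\[
\tfrac1n\log\abs{\tr\rho(l_n(\mathbf g))}\;\ge\;\tfrac1n\log\norm{\rho(l_n(\mathbf g))}\;+\;\tfrac1n\log\e_n\;-\;\tfrac{C}{n},
\]
and since $\frac1n\log\e_n\to 0$ (for $\e_n=n^{-\alpha}$, it is $-\frac{\alpha\log n}{n}$; for $\e_n=e^{-\gamma n}$ one should pick $\gamma<\varepsilon/2$, so the term is $\ge-\varepsilon/2$), this is $\ge\frac1n\log\norm{\rho(l_n(\mathbf g))}-\varepsilon/2$ for $n$ large. Hence $\big\{\frac1n\log\abs{\tr\rho(l_n(\mathbf g))}<\chi-\varepsilon\big\}$ is contained, up to the event $\{\delta<\e_n\}$, in $\big\{\frac1n\log\norm{\rho(l_n(\mathbf g))}<\chi-\varepsilon/2\big\}$, whose probability is exponentially small. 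Combining with the exponentially small bound on $\{\delta<\e_n\}$ (using the choice $\e_n=e^{-\gamma n}$) finishes the proof.

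The only delicate point is the bookkeeping of which sequence $\e_n$ to use: to obtain an \emph{exponential} rate in the conclusion one must take $\e_n=\exp(-\gamma n)$ with $\gamma$ chosen smaller than both $\gamma_0$ (so that Theorem \ref{thm:distance} applies) and $\varepsilon/2$ (so that the $\frac1n\log\e_n$ error is absorbed into the $\varepsilon$-window), rather than the polynomial sequence $n^{-\alpha}$, which would only give a subexponential — though still vanishing — bound. Everything else is a routine combination of Lemma \ref{l:norm, trace and distance between fixed points}, Theorem \ref{thm:distance}, and the standard norm large deviation estimate. I would write this out in half a page.
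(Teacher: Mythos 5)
Your proposal is correct and follows essentially the same route as the paper: the large deviations theorem for $\unsur{n}\log\norm{\rho(l_n(\mathbf g))}$, combined with Lemma \ref{l:norm, trace and distance between fixed points} and Theorem \ref{thm:distance} applied with the exponential sequence $\e_n=\exp(-\gamma n)$, where $\gamma$ is chosen small enough that the $\unsur{n}\log\e_n$ error is absorbed into the $\varepsilon$-window. Your explicit bookkeeping of the choice of $\e_n$ and the separate (trivial) treatment of the upper tail via $\abs{\tr\gamma}\leq 2\norm{\gamma}$ are exactly the points the paper leaves implicit.
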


Of course this implies Theorem \ref{thm:guivarch},   by applying the Borel-Cantelli lemma.

\begin{proof}[Proof of the corollary]
Under these  assumptions, the Large Deviation Theorem   holds for the distribution of the values of
$\unsur{n}\log \norm{l_n( \mathbf{g} ) }$ \cite[\S V.6]{bougerol-lacroix}. Thus if $\e '$ is small,
the probability $\mathbb P \left( \left| \frac{1}{n} \log \norm{l_n({\bf g}) } -\chi \right| > \varepsilon '  \right) $ is exponentially small in $n$.

From Lemma \ref{l:norm, trace and distance between fixed points} we know that when $\norm{\gamma}$ is large enough, $$\unsur{2}\log {\abs{\tr^2\gamma -4}} = \log  \norm{\gamma} + \log \delta(\gamma) + O(1).$$ So the result follows by taking $\e_n = \exp({-\e '' n})$ in Theorem \ref{thm:distance} where  $\e ', \e '' >0$ are such that $\e'+\e''<\e$.
\end{proof}

\begin{proof}[Proof of Theorem \ref{thm:distance}] We start with  a lemma.

\begin{lem}\label{l:exponential convergence}
Let  $(\e_p)$ be  as in the statement of   Theorem \ref{thm:distance}, then 
there exist constants $ C,K >0$ such that for any pair $(x_0,y_0)$ of points of $\pu$, and any integer $p$
\[ \mathbb P \left( \rho (l_{p} ({\bf g})) (x_0)\in B(y_0, \e_p) \right) \leq C \e_p^K .  \]
(here $B(y, r)$ is the ball of center $y$ and radius $r$ with respect to the spherical distance.)
\end{lem}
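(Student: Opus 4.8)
The plan is to prove Lemma \ref{l:exponential convergence} by combining the Hölder-contraction estimate \eqref{eq:holder convergence} (Le Page's theorem) with the key fact that the stationary measure $\nu$ gives small mass to small balls --- this is the "regularity of the stationary measure" property. First I would recall that since $\rho$ is non-elementary and satisfies \eqref{eq:exponential moment}, it follows from the standard theory (see e.g. \cite{bougerol-lacroix}) that there exist constants $C_0>0$ and $\kappa>0$ such that
\begin{equation*}
\sup_{y\in\pu}\nu\bigl(B(y,r)\bigr)\leq C_0 r^\kappa \qquad\text{for all }r>0.
\end{equation*}
This Hölder regularity of $\nu$ is itself a consequence of the exponential moment assumption together with non-elementarity; it is precisely the kind of statement packaged in the Le Page circle of ideas, and I will cite it.

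The main argument then runs as follows. For fixed $x_0\in\pu$, let $f=f_{y_0,\e_p}$ be a Hölder test function which is $1$ on $B(y_0,\e_p)$, supported in $B(y_0,2\e_p)$, and satisfies $\norm{f}_{C^\alpha}\leq C\e_p^{-\alpha}$ (a standard bump function rescaled to the ball, with the Hölder seminorm blowing up like the inverse of the radius to the power $\alpha$). Then
\begin{align*}
\mathbb P\bigl(\rho(l_p(\mathbf g))(x_0)\in B(y_0,\e_p)\bigr)
&\leq \int f\bigl(\rho(l_p(\mathbf g))(x_0)\bigr)\,d\mu^p
 = P^p f(x_0)\\
&\leq N f + \norm{P^p-N}_{C^\alpha}\cdot\norm{f}_{C^\alpha}
 \leq \nu\bigl(B(y_0,2\e_p)\bigr) + Ce^{-\beta p}\e_p^{-\alpha},
\end{align*}
where in the second line I use the definition \eqref{def:Markov operator} of $P$, then \eqref{eq:holder convergence}, and in the third line the regularity of $\nu$ just recalled. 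By the regularity estimate the first term is $\leq C\e_p^\kappa$. For the second term: if $\e_p = cp^{-\alpha'}$ is polynomially decaying, then $e^{-\beta p}\e_p^{-\alpha}$ decays super-exponentially and is certainly $\leq \e_p^K$ for any $K$ and $p$ large; if $\e_p = e^{-\gamma p}$, then $e^{-\beta p}\e_p^{-\alpha} = e^{-(\beta-\alpha\gamma)p}$, which is $\leq \e_p^{(\beta-\alpha\gamma)/\gamma} = \e_p^{K}$ provided $\gamma<\beta/\alpha =: \gamma_0$, giving the explicit constant $\gamma_0$ mentioned in Theorem \ref{thm:distance}. Taking $K=\min(\kappa,(\beta-\alpha\gamma)/\gamma)$ (or just $K=\kappa$ in the polynomial case) and absorbing constants proves the lemma.

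The main obstacle --- or rather the point requiring care --- is justifying the Hölder regularity $\nu(B(y,r))\leq C r^\kappa$ of the stationary measure and making sure the constants are uniform in the way needed (and, for the application in Theorem \ref{thm:distance}, locally uniform in $\rho$, which follows from Remark \ref{rmk:uniform} together with the fact that the regularity exponent $\kappa$ can be taken locally uniform). One should also be slightly careful that the test function $f$ genuinely has $\norm{f}_{C^\alpha}=O(\e_p^{-\alpha})$: since the $C^\alpha$ norm here uses $m_\alpha(f)=\sup(|f(x)-f(y)|/d_\pu(x,y))^\alpha$, a Lipschitz bump with Lipschitz constant $\e_p^{-1}$ on a ball of radius $\e_p$ indeed gives $m_\alpha(f)\leq \e_p^{-\alpha}$, so this is routine. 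Everything else is a direct computation, and the deduction of Theorem \ref{thm:distance} from the lemma will presumably be an easy union bound / iteration argument over the two fixed points, which I would carry out afterwards.
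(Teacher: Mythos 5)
Your proposal is correct and follows essentially the same route as the paper: a rescaled Hölder bump function on $B(y_0,\e_p)$, the Le Page estimate $\norm{P^p-N}_{C^\alpha}\leq Ce^{-\beta p}$, and the Hölder regularity $\nu(B(y,r))\leq Cr^\eta$ of the stationary measure (which the paper likewise quotes from Bougerol--Lacroix rather than reproving), leading to the same bound $C(\e_p^\eta + e^{-\beta p}\e_p^{-\alpha})$ and the same case analysis on $(\e_p)$.
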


\begin{proof}
Recall that there are constants $C,\alpha, \beta>0$ such that for every integer $p$ we have
\begin{equation}\label{eq:exponential decay}  \norm{P^p - N }_{\alpha} \leq C e^{-\beta p} .\end{equation}
In addition, it is  known that the stationary measure has H\"older regularity, in the sense that  there are constants $C,\eta >0$ such that $\nu (B(x,r)) \leq C r^{\eta}$ for every  $x\in \pu$    and every radius $r>0$ (see \cite[p.161]{bougerol-lacroix})

Introduce a function $f: \mathbb P^1 \rightarrow [0,1]$ such that
\begin{itemize}
\itm $f$ is identically $1$ on the ball $B(y_0, \e_p)$,
\itm $f$ vanishes outside $B(y_0, 2\e_p)$,
\itm $f$ is Lipschitz with Lipschitz constant bounded by $\unsur{\e_p}$.
\end{itemize}
The existence of such a function is straightforward.

Observe that $f$ is $\alpha$-H\"older with $\norm{f}_{C^{\alpha}}$ being bounded above by the Lipschitz norm, namely $\norm{f}_{C^{\alpha}} \leq \e_p$. Moreover, we have
\[ \int f d\nu \leq \nu ( B(y_0, 2\e_p) )\leq C \e_p^\eta .\]

Applying  (\ref{eq:exponential decay}) to this function, we get that
\[ \mathbb P \left( \rho ( l_{p}({\bf g})) (x_0 ) \in B (y_0, \e_p ) \right) \leq P^{p} f (x_0) \leq \int f d\nu + C e^{-\beta p} \norm{f}_{C^{\alpha}} , \]
and we conclude that
\[  \mathbb P ( \rho ( l_{p}({\bf g})) (x_0 ) \in B (y_0, \e_p ) ) \leq  C \left(\e_p^\eta +  \frac{ e^{- \beta  p}}{\e_p}\right) .\] If  $\e_n$ decreases sub-exponentially fast, this is smaller than
$C\e_p^K$ for $K=\min( \eta,1)$. If
$ \e_n = \exp (-\gamma n)$  the same holds as soon as $\gamma<\beta$.
\end{proof}

 Let $\e>0$ be  a   real number  which is small with respect to $\chi$ ($\e = \unsur{100}\min(\chi, 1)$ should be enough)  and introduce the integer $m = \lfloor(1 -  {\varepsilon}) n\rfloor$. We divide the composition $l_n ({\bf g})$  into two parts of respective  lengths $m$ and $n-m$: $l_n({\bf g}) = (g_n \ldots g_{n-m+1} ) ( g_m \ldots g_1)$. The first part will have the effect of making $\rho(l_n({\bf g}))$ of large norm (approximately $e^{(\chi + O(\varepsilon ))n}$) with high probability, while the second one will be used to  separate its fixed points by a distance of
the order of magnitude of $\e_n$. 

The quantity
$$ \mathbb P \Big( \Big| \frac{1}{m} \log \norm{ \rho (l_m({\bf g}) )} -\chi \Big| > \varepsilon  \Big) +
\mathbb P \Big( \log \norm{\rho(g_n \ldots g_{m+1}) } > 2 \chi (n-m) \Big) $$ is exponentially small in $n$
by the large deviation estimates for the norm, and the fact that $m \sim (1-\e) n$. Thus, to obtain the
desired  estimate for
$\mathbb{P}\lrpar{\delta(\rho(l_n(\mathbf g)))<\e_n}$  it is enough to estimate the conditional probability
$$\mathbb P \left( \delta(\rho(l_n(\mathbf g)))<\e_n
 \Bigg|\
  \abs{ \frac{1}{m} \log \norm{ \rho (l_m({\bf g}) )} -\chi } \leq  \varepsilon ,\ \mathrm{and} \ \log \norm{\rho(g_n \ldots g_{m+1}) } \leq 2 \chi (n-m) \right).$$

For this,  we let ${\bf h}=(h_1,\ldots , h_m)\in G^{m}$ be such that
\begin{equation} \label{eq:condition at step m} \chi - \varepsilon \leq \frac{1}{m} \log \norm{\rho (l_m({\bf h}))} \leq \chi + \varepsilon,  \end{equation}
and we will prove that the conditional probability
\begin{equation} \label{eq:conditional probability} \mathbb P \left( \delta(\rho(l_n(\mathbf g)))<\e_n \Big|\ g_i = h_i  \ \mathrm{for}\ i= 1,\ldots , m  ,\ \text{and}\ \log \norm{ g_n\ldots g_{m+1} } \leq 2 \chi (n-m) \right)\end{equation}
is bounded by $\e_n^K$ for some $K$, uniformly in ${\bf h}$ satisfying \eqref{eq:condition at step m}. This will give the desired result.

For every ${\bf h}$ satisfying \eqref{eq:condition at step m}, there exist two balls $A_m({\bf h})$ and $R_m({\bf h})$ in the Riemann sphere such that $\rho( l_m ({\bf h} ))  ( R_m({\bf h}) ^c ) = A_m({\bf h})$ and whose diameter are $\sim \frac{1}{\norm {l_m ({\bf h})}}$. Indeed, by the KAK decomposition 
there exist      $R, R' \in \mathrm{SU}(2)$ such that  $\rho(l_m ({\bf h} )) = R\left(\begin{smallmatrix}  {\sigma_m}  & 0 \\ 0 & {\sigma_m^{-1}}  \end{smallmatrix}\right)R'$, where 
$ {\sigma_m}= \norm {l_m ({\bf h})}$. Now $R$ and $R'$ act as Euclidean rotations on the Riemann sphere, therefore we can simply put $R_m({\bf h}) = (R')^{-1}\left(B\left(0,\sigma_m^{-1}  \right)\right)$ and  $A_m({\bf h}) = R\left(B\left(\infty,\sigma_m^{-1}   \right)\right)$.
In particular, we see that the radii of  $A_m({\bf h})$ and $R_m({\bf h})$ are bounded by 
$ e^{(-\chi + \varepsilon ) m } = e^{(-\chi + O(\varepsilon) ) n}$.

Let ${\bf g}\in G^{\mathbb N}$ be such that $g_i = h_i$ for $i=1,\ldots ,m$, and $\norm{\rho(g_n \ldots g_{m+1})} \leq 2 \chi (n-m) \sim 2\chi n \e$. Then the ball $ \rho(g_n \ldots g_{m+1})  A_m ({\bf h})$ has diameter bounded by $ e^{(-\chi + O (\varepsilon) ) n}$. Thus,  slightly abusing  notation,  if we set
\[   A_n ({\bf g}): = \rho (g_n \ldots g_{n-m+1}) A_m ({\bf h}),\ \ R_n ({\bf g}) := R_m ({\bf h}),\]
then we have  that
\[ \rho(l_n ({\bf g})) (R_n ({\bf g})^c ) = A_n ({\bf g})\ \ \mathrm{and}\ \ \mathrm{diam}(A_n({\bf g})), \ \mathrm{diam}(R_n({\bf g}) )\leq e^{(-\chi + O(\varepsilon)) n}.\]

We now  claim that if  the sets $A_n({\bf g}) $ and $R_n({\bf g})$ are separated by a distance  $\geq \e_n$, then  $\delta(\rho(l_n(\mathbf g)))>\e_n$. Indeed, in this case the two balls are disjoint and the map $\rho(l_n ( {\bf g}))$ is loxodromic with one fixed point in each ball $A_n ({\bf g})$ and $R_n({\bf g})$.
%

Fix a point $x_0 \in A_m({\bf h})$ and a point $y_0 \in R_m({\bf h})$. If the sets $A_n({\bf g})$ and $R_n({\bf g})$ are not separated by a distance $\e_n$, then (if $n$ is sufficiently large independently of ${\bf h}$) because their diameter is bounded by $e^{-(\chi + O(\varepsilon)) n}$, the point $x_0$ is mapped under $\rho(g_n \ldots g_{m+1})$ into the ball $B(y_0, 2\e_n)$. But by the Markovian property, and Lemma \ref{l:exponential convergence} applied to $p= n-m$, we see that this happens only with probability less that $C \e_p^K$. Since $p= n-m \sim \e n$, from the choice of possible sequences $(\e_n)$ we get that
$\e_p^K\leq C\e_n^{K(\e)}$.
Hence for $n$ large enough we have shown that the probability in \eqref{eq:conditional probability}
is bounded by $C \e_n^{K(\e)}$, for every ${\bf h}$ satisfying \eqref{eq:condition at step m}. The proof is complete.
\end{proof}

We now study fixed points of pairs of words. We could give more precise estimates in the spirit of Theorem \ref{thm:distance} but those will not be needed.

\begin{thm}\label{thm:schottky}
Let $(G,\mu,\rho)$, $(G,\mu',\rho)$ be  two admissible families of representations satisfying the exponential moment condition \eqref{eq: exponential moment condition in the group}.

Fix  $\gamma>0$. Then for
$\mu^\nn\otimes (\mu')^\nn$ a.e. $(\mathbf{g},\mathbf{g'})$, for large enough $n$, $\rho(l_n(\mathbf{g}))$ and $\rho(l_n(\mathbf{g'}))$ are loxodromic transformations, and the mutual distance between  any two of the four associated fixed points is at least $\exp{(-\gamma n)}$.
\end{thm}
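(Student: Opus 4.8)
The plan is to reduce Theorem~\ref{thm:schottky} to the separation estimate already contained in Theorem~\ref{thm:distance} (or more precisely, to Lemma~\ref{l:exponential convergence} and its consequences), combined with a Borel--Cantelli argument. First I would record what Theorem~\ref{thm:distance} gives: for $\mu^\nn$-a.e.\ $\mathbf g$, eventually $\rho(l_n(\mathbf g))$ is loxodromic with $\delta(\rho(l_n(\mathbf g)))\geq e^{-\gamma' n}$ for any fixed $0<\gamma'<\gamma_0$; applying Borel--Cantelli to the bound $\mathbb P(\delta(\rho(l_n(\mathbf g)))<e^{-\gamma'n})\leq e^{-\gamma' K n}$ takes care of the two ``diagonal'' distances (between the two fixed points of $\rho(l_n(\mathbf g))$, resp.\ of $\rho(l_n(\mathbf g'))$) as soon as $\gamma'<\gamma$. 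What remains is to control the four ``cross'' distances between a fixed point of $\rho(l_n(\mathbf g))$ and a fixed point of $\rho(l_n(\mathbf g'))$.

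For the cross distances, I would reuse the geometric picture from the proof of Theorem~\ref{thm:distance}. Split both words at time $m=\lfloor(1-\e)n\rfloor$: writing $l_n(\mathbf g)=(g_n\cdots g_{m+1})(g_m\cdots g_1)$ and similarly for $\mathbf g'$, the large deviations estimate for the norm shows that, outside an exponentially small set, $\|\rho(l_m(\mathbf g))\|$ and $\|\rho(l_m(\mathbf g'))\|$ are of size $e^{(\chi+O(\e))m}$ and the tails $g_n\cdots g_{m+1}$, $g'_n\cdots g'_{m+1}$ have norm $\leq e^{2\chi\e n}$. As in that proof, there are balls $A_n(\mathbf g),R_n(\mathbf g)$ (and $A_n(\mathbf g'),R_n(\mathbf g')$) of radius $\leq e^{(-\chi+O(\e))n}$ such that the attracting (resp.\ repelling) fixed point of $\rho(l_n(\mathbf g))$ lies in $A_n(\mathbf g)$ (resp.\ $R_n(\mathbf g)$), and likewise for $\mathbf g'$. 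So it suffices to show that, with probability $1-o(1)$ summable in $n$, each of the four pairs of balls among $\{A_n(\mathbf g),R_n(\mathbf g)\}\times\{A_n(\mathbf g'),R_n(\mathbf g')\}$ is separated by at least, say, $2e^{-\gamma n}$. Since the radii are exponentially smaller than $e^{-\gamma n}$ once $\e$ is chosen small enough (depending on $\gamma$ and $\chi$), this reduces to showing that the \emph{centers} of these balls are $\geq 3e^{-\gamma n}$ apart. The centers of $A_n(\mathbf g)$ and $A_n(\mathbf g')$ are $\rho(g_n\cdots g_{m+1})(x_0)$ and $\rho(g'_n\cdots g'_{m+1})(x_0')$ for suitable reference points $x_0\in A_m(\mathbf g)$, $x_0'\in A_m(\mathbf g')$; conditioning on the first $m$ steps of both walks, these two points are the images of \emph{independent} random products $\rho(g_n\cdots g_{m+1})$ and $\rho(g'_n\cdots g'_{m+1})$ (of length $p=n-m\sim\e n$) applied to fixed points. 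By Fubini, it is enough to fix the value of $\rho(g'_n\cdots g'_{m+1})(x_0')=:y_0$ and bound $\mathbb P(\rho(g_n\cdots g_{m+1})(x_0)\in B(y_0,3e^{-\gamma n}))$, which is exactly the content of Lemma~\ref{l:exponential convergence} applied with $\e_p$ of the form $e^{-\gamma'' p}$ (note $3e^{-\gamma n}=e^{-\gamma n + O(1)}$ and $p\sim\e n$, so this is $e^{-(\gamma/\e)p+O(1)}$, still of the allowed exponential form provided $\gamma/\e<\gamma_0$, i.e.\ $\e$ is not too small — one balances $\e$ against $\gamma$ accordingly). The same argument handles the three other cross pairs symmetrically (for $R_n(\mathbf g)=R_m(\mathbf g)$ the ``center'' is a fixed point independent of the tail, which only makes the estimate easier). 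Summing the resulting bounds $C e^{-c n}$ over $n$ and invoking Borel--Cantelli gives the almost sure statement.

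The main obstacle I anticipate is bookkeeping rather than a conceptual difficulty: one must choose the splitting parameter $\e$ small enough that (a) the ball radii $e^{(-\chi+O(\e))n}$ are genuinely dominated by $e^{-\gamma n}$, which requires $\gamma<\chi$ — for larger $\gamma$ one should instead split off a longer tail, i.e.\ choose $m=\lfloor(1-\e)n\rfloor$ with $\e$ depending on $\gamma/\chi$, or iterate — and (b) the sequence $\e_p$ fed into Lemma~\ref{l:exponential convergence} stays in the admissible exponential range $e^{-\gamma'' p}$ with $\gamma''<\gamma_0$. A clean way to sidestep the constraint $\gamma<\chi$ altogether is to note that the statement for a given $\gamma$ follows from the statement for any $\gamma$, since we may always replace $\mathbf g$ by $\mathbf g$ and run the walk longer: more precisely, fix any $\gamma>0$; it suffices to prove the estimate for the subsequence $n_k$ and then handle intermediate $n$, but in fact the argument above already delivers every $\gamma>0$ once $\e$ is taken sufficiently small, because $O(\e)$ can be made smaller than $(\chi-\gamma\e/\gamma)$... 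In any case, the only real work is to verify that the $\e$-dependent constants can be reconciled, and to write the four cross-distance estimates — all of which are formally identical applications of Lemma~\ref{l:exponential convergence} after conditioning on the common prefixes of length $m$. Everything else (loxodromy of $\rho(l_n(\mathbf g))$ for large $n$, the diagonal distances) is already in Theorem~\ref{thm:distance}.
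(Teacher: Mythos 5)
Your reduction of the two ``diagonal'' distances to Theorem \ref{thm:distance} plus Borel--Cantelli is correct, and so is the preliminary reduction to small $\gamma$ (the paper makes the same reduction). The gap is in the cross distances, and it is concentrated on exactly one of the four pairs: $\bigl(R_n(\mathbf g),R_n(\mathbf g')\bigr)$. Your mechanism is to condition on the two length-$m$ prefixes, observe that the remaining randomness comes from independent length-$p$ tails, and invoke Lemma \ref{l:exponential convergence}. This does work for the pairs $\bigl(A_n(\mathbf g),A_n(\mathbf g')\bigr)$, $\bigl(A_n(\mathbf g),R_n(\mathbf g')\bigr)$ and $\bigl(R_n(\mathbf g),A_n(\mathbf g')\bigr)$, because after the conditioning at least one of the two objects is still the image of a reference point under an independent left product of $p$ i.i.d.\ increments, to which the lemma applies uniformly in the (conditioned) base points. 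But $R_n(\mathbf g)=R_m(\mathbf g)$ and $R_n(\mathbf g')=R_m(\mathbf g')$ are both measurable with respect to the prefixes: once you condition, both repelling balls are deterministic, there is no randomness left, and Lemma \ref{l:exponential convergence} yields nothing. Your parenthetical remark that the repelling case ``only makes the estimate easier'' is exactly backwards --- it is the one configuration your scheme cannot reach.

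To close this you need an \emph{unconditional} anti-concentration statement for the law of the repelling ball: the probability that $R_m(\mathbf g)$ meets a given ball of radius $e^{-\gamma n}$ must be polynomially small in that radius. This is precisely what the paper's Lemma \ref{lem:repulsive} supplies: $R_n(\mathbf g)$ converges exponentially fast to a point $R_\infty(\mathbf g)$ whose law is the stationary measure $\check\nu$ of the reflected walk, and one then invokes the H\"older regularity $\check\nu(B(x,r))\leq Cr^\eta$. Equivalently, you could observe that the repelling ball of $g_m\cdots g_1$ is essentially the attracting ball of $g_1^{-1}\cdots g_m^{-1}$, which as a group element is distributed like a left random product for the reflected measure $\check\mu$, and then apply Lemma \ref{l:exponential convergence} for $(G,\check\mu,\rho)$ to the \emph{full prefix} (conditioning only on $\mathbf g'$). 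Either way the reflected walk and its stationary measure must enter the argument; this is the main ingredient of the paper's proof, which in fact treats all four cross pairs this way --- via the laws of the limit attracting/repelling points and the regularity of $\nu$ and $\check\nu$ --- rather than by conditioning on prefixes and exploiting the tails as you do. Your route is a legitimate alternative for three of the four pairs, but as written the theorem is not proved.
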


\begin{cor}\label{cor:schottky}
Let $(G,\mu,\rho)$, $(G,\mu',\rho)$ be  two admissible families of representations satisfying the exponential moment condition \eqref{eq: exponential moment condition in the group}.

Then for $(\mu^{\nn}\otimes (\mu')^\nn)$-a.e. $(\mathbf{g},\mathbf{g'})\in (G^\nn)^2$, we have that
$$ \unsur{2n}  \log\abs{\tr[\rho(l_n(\mathbf{g})),\rho( l_n(\mathbf{g'}))]-2}  \underset{n\cv\infty}\longrightarrow \chi(\rho,\mu)+\chi(\rho, \mu').$$
\end{cor}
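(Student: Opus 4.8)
The plan is to combine the geometric separation statement of Theorem~\ref{thm:schottky} with an exact formula for the commutator trace, and then reduce everything to the norm asymptotics supplied by Furstenberg's Theorem~\ref{t: furstenberg}. Throughout, write $\gamma_1=\rho(l_n(\mathbf g))$ and $\gamma_2=\rho(l_n(\mathbf{g'}))$. The starting point is the following identity. Since the trace of a commutator is conjugation invariant, one may conjugate $\gamma_1$ to the diagonal form $\mathrm{diag}(\alpha,\alpha^{-1})$; writing $\gamma_2=\left(\begin{smallmatrix}a&b\\c&d\end{smallmatrix}\right)$ in these coordinates, a direct $2\times 2$ computation gives $\tr[\gamma_1,\gamma_2]-2=-bc\,(\alpha-\alpha^{-1})^2=-bc\,(\tr^2\gamma_1-4)$, and expressing $bc$ through the fixed points $p,q$ of $\gamma_2$ (one has $pq=-b/c$ and $(p-q)^2=(\tr^2\gamma_2-4)/c^2$) yields
\[
\tr[\gamma_1,\gamma_2]-2 \;=\; \frac{w}{(w-1)^2}\,(\tr^2\gamma_1-4)(\tr^2\gamma_2-4),
\]
where $w$ is the cross-ratio of the four fixed points of $\gamma_1$ and $\gamma_2$. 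As these four points are pairwise distinct one has $w\notin\{0,1,\infty\}$, so the right-hand side is a genuine nonzero number (this formula is of course consistent with Lemma~\ref{lem:commutator}).

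Now fix $\gamma>0$. By Theorem~\ref{thm:schottky}, for $\mu^\nn\otimes(\mu')^\nn$-a.e.\ $(\mathbf g,\mathbf{g'})$ and all large $n$ both $\gamma_1,\gamma_2$ are loxodromic and the four fixed points are pairwise at spherical distance $\ge e^{-\gamma n}$; and by Theorem~\ref{t: furstenberg} applied to $(G,\mu,\rho)$ and to $(G,\mu',\rho)$ one has $\unsur n\log\norm{\gamma_1}\to\chi(\rho,\mu)=:\chi$ and $\unsur n\log\norm{\gamma_2}\to\chi(\rho,\mu')=:\chi'$ almost surely. Since $\chi,\chi'>0$, $\norm{\gamma_i}\to\infty$, so Lemma~\ref{l:norm, trace and distance between fixed points} gives $\abs{\tr^2\gamma_i-4}\asymp\norm{\gamma_i}^2\delta(\gamma_i)^2$ with $e^{-\gamma n}\le\delta(\gamma_i)\le\pi$, hence $\unsur n\log\abs{\tr^2\gamma_i-4}=\tfrac2n\log\norm{\gamma_i}+\tfrac2n\log\delta(\gamma_i)$, the last term lying in $[-2\gamma-o(1),\,o(1)]$. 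For the cross-ratio factor, I would apply a rotation of $\pu$ (which preserves spherical distances and maps $\pu$ to itself) carrying the four fixed points into a fixed bounded affine chart, so that all mutual Euclidean differences are comparable to the corresponding spherical distances, i.e.\ lie in $[c\,e^{-\gamma n},C]$; since $w$ and $w-1$ are each products and quotients of such differences, $\abs{w}$ and $\abs{w-1}$ lie in $[e^{-Cn\gamma},e^{Cn\gamma}]$, whence $\unsur n\log\bigl|\tfrac{w}{(w-1)^2}\bigr|=O(\gamma)$.

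Substituting these three estimates into the logarithm of the displayed identity, I obtain that for a.e.\ $(\mathbf g,\mathbf{g'})$ and all large $n$,
\[
\Bigl|\,\unsur{2n}\log\abs{\tr[\gamma_1,\gamma_2]-2}-\bigl(\chi(\rho,\mu)+\chi(\rho,\mu')\bigr)\Bigr|\le C\gamma+o(1).
\]
Intersecting the corresponding full-measure events over a sequence $\gamma_k\downarrow0$ then yields a set of full $\mu^\nn\otimes(\mu')^\nn$-measure on which $\unsur{2n}\log\abs{\tr[\gamma_1,\gamma_2]-2}\to\chi(\rho,\mu)+\chi(\rho,\mu')$, which is the assertion. (The upper bound alone is elementary and $\gamma$-free: $\abs{\tr[\gamma_1,\gamma_2]-2}\le 2\norm{\gamma_1}^2\norm{\gamma_2}^2+2$, so $\limsup\unsur{2n}\log\abs{\tr[\gamma_1,\gamma_2]-2}\le\chi(\rho,\mu)+\chi(\rho,\mu')$ directly from Theorem~\ref{t: furstenberg}; the content is in the lower bound.) The substantive input is Theorem~\ref{thm:schottky} (and, behind it, Theorem~\ref{thm:distance}), which keeps the fixed points from colliding too fast; the one point that needs genuine care is checking that the cross-ratio factor and the fixed-point distances contribute only $O(n\gamma)$ to $\log\abs{\tr[\gamma_1,\gamma_2]-2}$, uniformly enough that the error vanishes in the limit $\gamma\to0$.
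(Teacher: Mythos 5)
Your argument is correct, and it reaches the conclusion by a genuinely different middle step than the paper. The paper also starts from Theorem~\ref{thm:schottky} and the almost-sure norm asymptotics, but it then works dynamically with the commutator itself: using the contracting/repelling balls $A_n$, $R_n$ from the proof of Theorem~\ref{thm:distance} it shows that $[\rho(l_n(\mathbf g)),\rho(l_n(\mathbf g'))]$ is loxodromic with attracting and repelling fixed points in well-separated balls, estimates $\norm{[\gamma_1,\gamma_2]}\asymp\norm{\gamma_1}^2\norm{\gamma_2}^2$ by inspecting the contraction of a macroscopic ball, and only then applies Lemma~\ref{l:norm, trace and distance between fixed points} \emph{to the commutator}. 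You instead bypass the norm and fixed points of the commutator entirely via the exact identity $\tr[\gamma_1,\gamma_2]-2=\frac{w}{(w-1)^2}\,(\tr^2\gamma_1-4)(\tr^2\gamma_2-4)$ (your derivation of it is correct, and the cross-ratio makes sense since Theorem~\ref{thm:schottky} guarantees four distinct fixed points, in particular $c\neq0$ in your normalization), apply Lemma~\ref{l:norm, trace and distance between fixed points} to each factor separately, and control the cross-ratio by the pairwise $e^{-\gamma n}$ separation; the rotation-to-a-bounded-chart step and the identity $w-1=\frac{(z_1-z_2)(z_3-z_4)}{(z_1-z_4)(z_2-z_3)}$ make that bound legitimate, and the final intersection over $\gamma_k\downarrow 0$ is the same limiting scheme as in the paper. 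What each approach buys: the paper's ping-pong argument yields extra dynamical information (the commutator is itself loxodromic with localized fixed points, in the Schottky spirit), at the cost of the somewhat informal norm estimate for the commutator; your identity-based route is more algebraic and self-contained for the trace asymptotics, with the only delicate point being exactly the one you flag, namely that the cross-ratio factor and the $\log\delta(\gamma_i)$ terms contribute only $O(n\gamma)$, which your separation estimate handles.
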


\begin{proof} Fix  a small   $\gamma>0$  (in particular small with respect to the Lyapunov exponents) and take $(\mathbf{g},\mathbf{g'})$ satisfying the conclusion of Theorem \ref{thm:schottky}, and such that moreover $\unsur{n}\log\norm{l_n(\mathbf{g})}$ (resp. $\unsur{n}\log\norm{l_n(\mathbf{g'})}$ is close to $\chi(\rho,\mu)$ (resp. $\chi(\rho, \mu')$).
With notation as in the proof of Theorem \ref{thm:distance} we see that 
$$[\rho(l_n(\mathbf{g})),\rho( l_n(\mathbf{g'}))](A_n(\mathbf{g'})^c)\subset A_n(\mathbf{g}), \text{ while } 
\mathrm{dist}(A_n(\mathbf{g'}), A_n(\mathbf{g'}))\gtrsim e^{-n\gamma}$$  
(recall that the diameter of these balls is of the order of magnitude of $\exp(-n\chi)$). So we infer that $[\rho(l_n(\mathbf{g})),\rho( l_n(\mathbf{g'}))]$ is a loxodromic element with attracting fixed point in $A_n(\mathbf{g})$ and repelling fixed point in 
$A_n(\mathbf{g'})$. Inspecting the contraction of a ball of macroscopic size, disjoint from   $A_n(\mathbf{g'})$ under  $[\rho(l_n(\mathbf{g})),\rho( l_n(\mathbf{g'}))]$ reveals that 
$$\norm{[\rho(l_n(\mathbf{g})),\rho( l_n(\mathbf{g'}))]}\asymp \norm{\rho(l_n(\mathbf{g}))}^2 \norm{\rho(l_n(\mathbf{g}))}^2, $$ and since the fixed points of this commutator are distant from at least $\exp(-\gamma n)$, we conclude from Lemma \ref{l:norm, trace and distance between fixed points} that for large $n$,
$$\abs{\unsur{2n} \log\abs{\tr[\rho(l_n(\mathbf{g})),\rho( l_n(\mathbf{g'}))]-2}- \unsur{2n}\log  \norm{[\rho(l_n(\mathbf{g})),\rho( l_n(\mathbf{g'}))]}}\leq \gamma,$$
which finishes the proof. 
\end{proof}

\begin{proof}[Proof of the theorem]  
Observe first that it suffices to prove the theorem for small $\gamma$. 
We will  show that the probability that any two of the  four sets $A_n ({\bf g})$, $R_n({\bf g})$, $A_n({\bf g'})$ and $R_n({\bf g'})$ are closer  in distance  than $\exp(-\gamma n)$ is exponentially small in $n$. Then applying the Borel-Cantelli lemma gives the desired result. 

We first need to prove that for a.e. ${\bf g}\in \mu ^{\mathbb N}$, the sets $R_n ({\bf g})$ tend exponentially fast in distribution to the stationary measure $\check{\nu}$ associated to the inverse random walk. More precisely: 

\begin{lem}\label{lem:repulsive} Almost surely the ball $R_n({\bf g})$ converges to a point $R_{\infty}({\bf g})$ whose distribution is $\check{\nu}$. Moreover,  the probability that the distance between $R_n$ and $R_{\infty}$ is larger than   $  \exp (-\chi n /4)$ is exponentially small in $n$. \end{lem}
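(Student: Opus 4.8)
The plan is to work with the centres of the repelling balls and establish two things separately: first that they converge a.s.\ with exponential speed, then that the common distribution of the limits is $\check\nu$. Recall from the $KAK$ decomposition used in the proof of Theorem~\ref{thm:distance} that $R_n(\mathbf{g})$ is the spherical ball centred at the point $c_n=c_n(\mathbf{g})$, the maximally contracted direction of $\rho(l_n(\mathbf{g}))$ (equivalently the attracting point of $\rho(l_n(\mathbf{g}))^{-1}$), of radius at most $C\norm{\rho(l_n(\mathbf{g}))}^{-1}\le C$. (The reindexing $n\mapsto\lfloor(1-\e)n\rfloor$ appearing in that proof only multiplies the exponents occurring below by $1-\e$, so it is harmless and I will argue directly with $l_n(\mathbf{g})$.) The only analytic input beyond what is already recorded in Section~\ref{sec:preliminaries} is the standard contraction estimate for products in $\PSL$ (see e.g.\ \cite[Ch.\ III]{bougerol-lacroix}): if $\gamma,\gamma'\in\PSL$ with $\norm{\gamma}$ large, then $d_{\pu}\big(c(\gamma'\gamma),c(\gamma)\big)\le C\,\norm{\gamma'}^{2}/\norm{\gamma}$ and $d_{\pu}\big(c(\gamma\gamma'),\rho(\gamma')^{-1}c(\gamma)\big)\le C\,\norm{\gamma'}^{2}/\norm{\gamma}$, where $c(\cdot)$ denotes the repelling point.

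\emph{Step 1: exponential convergence of $c_n$.} Since $l_{n+1}(\mathbf{g})=g_{n+1}\,l_n(\mathbf{g})$, the first contraction estimate gives $d_{\pu}(c_{n+1},c_n)\le C\,\norm{\rho(g_{n+1})}^{2}/\norm{\rho(l_n(\mathbf{g}))}$. The lower half of the Large Deviations Theorem for the norm (\cite[\S V.6]{bougerol-lacroix}, valid under \eqref{eq:exponential moment}) shows that $\mathbb P\big(\norm{\rho(l_n(\mathbf{g}))}<e^{\chi n/2}\big)$ decays exponentially in $n$; and since $g_{n+1}$ has law $\mu$ and $\mu$ satisfies the exponential moment condition, a Markov estimate on $\norm{\rho(g)}^{\tau}$ shows that $\mathbb P\big(\norm{\rho(g_{n+1})}>e^{\chi n/16}\big)$ decays exponentially as well. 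On the intersection of these events one gets $d_{\pu}(c_{n+1},c_n)\le C\,e^{-3\chi n/8}$, so $c_n$ is Cauchy there. Summing the geometric tail and applying Borel--Cantelli, we conclude that $c_n$ converges a.s.\ to a point $R_\infty(\mathbf{g})$, that $\mathbb P\big(d_{\pu}(c_n,R_\infty(\mathbf{g}))>e^{-\chi n/4}\big)$ is exponentially small, and — since on the same good event the radius of $R_n(\mathbf{g})$ is at most $e^{-\chi n/2}\ll e^{-\chi n/4}$ — that the same estimate holds for the Hausdorff distance between $R_n(\mathbf{g})$ and $\{R_\infty(\mathbf{g})\}$. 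This is the quantitative content of the lemma.

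\emph{Step 2: identification of the law of $R_\infty$.} Writing $l_n(\mathbf{g})=l_{n-1}(\sigma\mathbf{g})\,g_1$ (where $\sigma$ is the shift) and applying the second contraction estimate with $\gamma=\rho(l_{n-1}(\sigma\mathbf{g}))$, $\gamma'=\rho(g_1)$, we obtain
\[
d_{\pu}\big(c_n(\mathbf{g}),\,\rho(g_1)^{-1}c_{n-1}(\sigma\mathbf{g})\big)\le C\,\frac{\norm{\rho(g_1)}^{2}}{\norm{\rho(l_{n-1}(\sigma\mathbf{g}))}},
\]
whose right-hand side tends to $0$ a.s.\ as $n\to\infty$, because $\norm{\rho(l_{n-1}(\sigma\mathbf{g}))}\to\infty$ a.s.\ by Furstenberg's Theorem~\ref{t: furstenberg} (the sequence $\sigma\mathbf{g}$ being again i.i.d.\ of law $\mu$). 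Letting $n\to\infty$ and using the continuity of $\rho(g_1)^{-1}$ we get the equivariance relation $R_\infty(\mathbf{g})=\rho(g_1)^{-1}R_\infty(\sigma\mathbf{g})$ a.s. Let $m$ be the law of $R_\infty$. Since $g_1$ is independent of $\sigma\mathbf{g}$ and $\sigma\mathbf{g}$ has the same law as $\mathbf{g}$, for every continuous $f$ on $\pu$ we find $\int f\,dm=\int\!\!\int f(\rho(g)^{-1}x)\,dm(x)\,d\mu(g)=\int\!\!\int f(\rho(g)x)\,dm(x)\,d\check\mu(g)$, i.e.\ $m$ is $\check\mu$-stationary. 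The reflected datum $(G,\check\mu,\rho)$ satisfies the hypotheses of Theorem~\ref{t: furstenberg}: $\supp(\check\mu)=\supp(\mu)^{-1}$ still generates $G$ as a semigroup, and $\norm{\gamma^{-1}}=\norm{\gamma}$ in $\PSL$ so the moment conditions are preserved; hence its stationary measure is unique, and it is $\check\nu$ by definition. Therefore $m=\check\nu$, which finishes the proof.

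The step requiring the most care is the second one. The repelling direction of $\rho(l_n(\mathbf{g}))$ is naturally governed by the time-reversed product $\rho(l_n(\mathbf{g}))^{-1}=\rho(g_1^{-1}\cdots g_n^{-1})$, which is not a Markov chain in $n$; passing through the a.s.\ equivariance relation $R_\infty(\mathbf{g})=\rho(g_1)^{-1}R_\infty(\sigma\mathbf{g})$ is what allows one to sidestep a delicate term-by-term coupling and invoke uniqueness of the stationary measure directly. Step~1, by contrast, is routine once the Large Deviations Theorem and the exponential moment assumption are in hand.
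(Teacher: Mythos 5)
Your proof is correct. The quantitative half (your Step 1) is the paper's own argument in slightly different clothing: where you track the centres $c_n$ of the repelling balls and invoke the contraction estimate $d_{\pu}\big(c(\gamma'\gamma),c(\gamma)\big)\le C\norm{\gamma'}^2/\norm{\gamma}$, the paper works with the sets $R^D(\ell)=\set{[x,y]:\ \norm{\ell(x,y)}\le D\norm{(x,y)}}$ and the observation that $R^D(\ell)$ lies in a $C\,D/\norm{\ell}$-neighbourhood of $R^1(\ell)$; both reduce to the same KAK/singular-value computation (which in fact gives the stronger bound with $\norm{\gamma}^{2}$ in the denominator), and both combine the exponential-moment bound on $\norm{\rho(g_{m+1})}$ with the large-deviation lower bound on $\norm{\rho(l_m(\mathbf g))}$ exactly as you do, so the ``standard estimates'' you cite without proof are indeed available. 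The genuine divergence is in the identification of the law of $R_\infty$: the paper observes that $R_\infty$ is a tail-measurable, $\rho$-equivariant function of the reversed sequence, hence factors through the Poisson boundary $P(G,\check\mu)$, and concludes because the push-forward of $\mathsf P_e$ under an equivariant map is stationary; you instead derive the a.s.\ one-step relation $R_\infty(\mathbf g)=\rho(g_1)^{-1}R_\infty(\sigma\mathbf g)$ from the second contraction estimate and check directly, using the independence of $g_1$ and $\sigma\mathbf g$, that the law of $R_\infty$ is $\check\mu$-stationary, then invoke Furstenberg's uniqueness for $(G,\check\mu,\rho)$ (valid since $(\supp\mu)^{-1}$ still generates $G$ as a semigroup and $\norm{\gamma^{-1}}=\norm{\gamma}$). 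Your route is more elementary and self-contained, bypassing boundary theory altogether; the paper's phrasing simply reuses the Poisson-boundary equivariance machinery already set up in the proof of Theorem \ref{thm:support}. Both identifications rest, in the end, on uniqueness of the $\check\mu$-stationary measure.
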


\begin{proof} For an element $l$ of $\PSL$, and a constant $D>0$, let us introduce the set $$R^D(\ell) := \set{ [x,y]\in \mathbb P^1\ \big| \ \norm{ \ell (x,y)} \leq D \norm{(x,y)} }.$$ Observe that if $1 \leq D \leq \frac{\norm{\ell}}{2}$, then $R^D (\ell) $ is contained in a $C^{st} \frac{D}{\norm{l}}$-neighborhood of $R^1(\ell)$, and that we can choose $R_n ({\bf g})$ to be $R^1 (\rho (l_n ({\bf g})) )$. These considerations are left to the reader. 

Because the measure $\mu$ has an exponential moment, there is a constant such that 
$$\mu \left( \set{ \norm{\rho (g) } \geq \exp ( \chi n/4)  } \right) \leq C^{st} \exp ( - \chi \tau n /4) $$  ($\tau$ is the constant appearing in~\eqref{eq:exponential moment}). Therefore, if $E_n\subset G^{\mathbb N}$ is defined by 
$$E_n = \set {\mathbf g \in G^\nn,\ \forall m\geq n, \  \norm {\rho(g_m)} \leq \exp (\chi m/4) \text{ and }
\norm{\rho ( l_m ( {\bf g} ))} \geq \exp(\chi m /2)},$$
then the measure of $E_n$ is exponentially close to $1$ (we use the large deviations estimate for the norm).

Now pick  ${\bf g}\in E_n$ 	and let $m\geq n$. Then, on $R_m ({\bf g}) = R^1 ( \rho(l_m({\bf g})))$, we have $\norm { \rho(l_{m+1} ({\bf g})) } \leq C^{st} \norm{\rho(g_{m+1})} \norm { \rho(l_{m} ({\bf g})) }\leq \exp (\chi m/4)$ so that $R_{m+1}({\bf g}) $ is contained in the $C^{st} \exp (- \chi m /4)$-neighborhood of $R_m ({\bf g})$. We deduce that $R_m ({\bf g})$ converges to a point $R_{\infty}({\bf g})$ for every ${\bf g} \in E_n$, and hence a.s. by Borel-Cantelli, and that the distance between $R_n({\bf g})$ and $R_{\infty}({\bf g})$ is bounded by $A\exp( - \chi n /4 )$ for ${\bf g} \in E_n$ for some constant $A$. Of course we can adjust $A=1$ by introducing appropriate constants in the above reasoning.

To finish the proof, it suffices to verify that the distribution of $R_{\infty}$ is given by the stationary measure $\check{\nu}$. For this, we argue as in Step 2 of the proof of Theorem \ref{thm:support}, by observing  that  the point  $R_{\infty} = \lim R ( l_n ({\bf g}) )$ only depends on the tail of the sequence $l = ( l_n( {\bf g} ) ) $ -- or equivalently of $r = ( r_n (\check{\bf g}) )$, with $\check{\bf g} = (g_n^{-1})$ -- and satisfies the equivariance property~\eqref{eq: equivariance}, with respect to the inverse process. Thus, $R_{\infty}$ defines a map from the Poisson boundary $P(G,\check{\mu})$ to $\mathbb P^1$ which is $\rho$-equivariant. The distribution of the point $R_{\infty}$ is hence given by the stationary measure $\check{\nu}$. 
\end{proof}

With this at hand, let us conclude the proof of the theorem.  Fix $\gamma<\chi/2$. Recall from Theorem \ref{thm:distance}, that with probability exponentially close to 1, 
$\norm {l_n({\bf g})} \geq \exp ((\chi -\varepsilon) n)$ and 
  $A_n({\bf g})$ and $R_n({\bf g})$ are $\exp(-\gamma n)$-separated. Fix a pair of such elements $\mathbf g, \mathbf g'$, and let us show that 
 the probability that the balls $A_n ({\bf g})$ and $R_n({\bf g})$ are not $\exp(-\gamma n )$-separated from both $A_n ({\bf g'})$ and $R_n ({\bf g'})$ is exponentially small.  
 
  Let us first estimate the probability that $R_n ({\bf g})$ intersects the $\exp(-\gamma n)$-neighborhood of $A_n({\bf g'})\cup R_n({\bf g'})$. This implies that the point $R_{\infty} ({\bf g})$ is $C^{st} \exp (-\gamma n ) $-close to $A_n ({\bf g'})\cup R_n({\bf g'})$, hence belongs to a union of two fixed balls of radii $C^{st} \exp (-\gamma n)$ (recall that  $\gamma < \chi /2$). By Lemma \ref{lem:repulsive} and 
  the H\"older regularity property of the  stationary measure, we conclude that this event happens with probability bounded by  $C^{st} \exp(-\gamma \eta n)$  for some  $\eta>0$. 
  
 To bound the probability that $A_n({\bf g})$ intersects the $\exp (-\gamma n)$-neighborhood of $A_n ({\bf g'})\cup R_n ({\bf g'})$, we simply reverse the  random walk on $G$, and use the fact that the distribution of $A_n({\bf \check{g}})$ is the same as that  of $ R_n({\bf g})$ .
 \end{proof}

 \section{A number-theoretic estimate}\label{app:number}

 Our purpose here is to prove the following result. We thank P. Philippon for explaining it to us\footnote{We were informed by S. Boucksom  that in case $V$ is defined over $\mathbb{Q}$,
\cite[Lemma 2.6]{boucksom-chen} gives the same result with   the term $\deg(P)\log\deg(P) $ replaced by $\deg(P)$.}.

 \begin{prop}\label{prop:philippon}
 Let $V$ be an irreducible affine algebraic variety in $\cc^n$, defined over $\overline{\mathbb{Q}}$, and
 $U\Subset V$ be a relatively compact open subset. Then there exists a constant $\delta>0$ (depending on $V$, $U$, $n$) such that if $P\in \zz[X_1, \cdots X_n]$ is any polynomial with integer coefficients, then
 \begin{itemize}
 \itm either $P\rest{V} = 0$
 \itm or $\sup_U\abs{P}\geq \delta^{\deg(P)\log\deg(P) + \log H(P)}$, where $H(P)$ is the maximum  modulus  of the coefficients of $P$.
 \end{itemize}
 \end{prop}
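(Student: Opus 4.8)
The plan is to reduce everything to producing, for each $P$ not vanishing identically on $V$, a single algebraic point $x=x(P)\in U$ of \emph{bounded} degree over $\mathbb{Q}$ at which $P\ne 0$ and whose logarithmic Weil height is $O(\log\deg P)$, and then to invoke Liouville's inequality: a nonzero algebraic number $\alpha$ of degree $\le D_0$ satisfies $\log|\alpha|\ge -D_0\,h(\alpha)$. Applied to $\alpha=P(x)$ this gives $\sup_U|P|\ge|P(x)|\ge\exp(-D_0\,h(P(x)))$, and the proof is then reduced to bounding $h(P(x))$ in terms of $\deg P$, $H(P)$ and $h(x)$. (One may assume $\deg P\ge 2$, which costs nothing in the applications, where $\deg P\to\infty$.)

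First I would fix once and for all a finite surjective linear projection $\pi\colon V\to\cc^{d}$, $d=\dim V$, with integer coefficients; this is possible since the Zariski-open condition on the center of projection that makes $\pi|_V$ finite is defined over $\mathbb{Q}$ and nonempty over $\cc$. Let $e=\deg\pi$, let $D_V=\deg V$, and let $k$ be a number field over which $V$ is defined. Pick a smooth point $p\in U$ at which $\pi$ is a local biholomorphism, so that $\pi^{-1}(t)\cap U\ne\emptyset$ for every $t$ in a fixed ball $B_1\ni\pi(p)$. If $P|_V\ne 0$, then $\{P=0\}\cap V$ has dimension $\le d-1$ and, by B\'ezout, degree $\le D_V\deg P$; hence $\pi(\{P=0\}\cap V)$ is contained in a hypersurface of $\cc^{d}$ of degree $\le D_V\deg P$. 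Now comes a counting argument: for $N\asymp\deg P$ the rational grid $\Gamma_N=(\tfrac1N\mathbb{Z})^{d}\cap B_1$ has at least $c_1N^{d}$ points, whereas a hypersurface of degree $\le D_V\deg P$ meets it in at most $C\deg P\cdot N^{d-1}$ points (the Schwartz--Zippel bound). Choosing $N=O(\deg P)$ large enough, there is $t\in\Gamma_N$ with $t\notin\pi(\{P=0\}\cap V)$; then $\pi^{-1}(t)\cap U\ne\emptyset$ and every point of $\pi^{-1}(t)$ satisfies $P\ne 0$, so we may take $x=x(P)\in\pi^{-1}(t)\cap U$.

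Next, the arithmetic bookkeeping. Since $t\in\mathbb{Q}^{d}$ and $x$ is one of the $\le e$ points of the finite $k$-scheme $\pi^{-1}(t)=V\cap\{L_1=t_1,\dots,L_d=t_d\}$, we get $[\mathbb{Q}(x):\mathbb{Q}]\le e[k:\mathbb{Q}]=:D_0$, a constant depending only on $V$ and $\pi$. Moreover $h(x)\le c_0\bigl(1+h(t)\bigr)$, and $h(t)\le C\log N+O(1)=O(\log\deg P)$ since $d$ is fixed. Writing $P=\sum_\alpha a_\alpha X^\alpha$ with $a_\alpha\in\mathbb{Z}$, $|a_\alpha|\le H(P)$, and at most $(D+1)^{n}$ terms ($D=\deg P$), standard height estimates give
\[
h(P(x))\ \le\ \log H(P)+D\,h(x)+n\log(D+1)\ \le\ \log H(P)+c_2\,D\log D .
\]
Feeding this into Liouville's inequality yields $\log\sup_U|P|\ge\log|P(x)|\ge -D_0c_2\bigl(\log H(P)+D\log D\bigr)$, i.e. $\sup_U|P|\ge\delta^{\deg(P)\log\deg(P)+\log H(P)}$ with $\delta=e^{-D_0c_2}\in(0,1)$, which is the assertion.

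I expect the only genuinely delicate point to be the height comparison $h(x)\le c_0\bigl(1+h(\pi(x))\bigr)$ for points $x\in V$, which I would deduce either from the functoriality of Weil heights (the height machine, as in Bombieri--Gubler or Hindry--Silverman) applied to a projective model of the finite map $\pi$, or, more concretely, from elimination theory, expressing the coordinates of $x$ as roots of polynomials over $k$ whose coefficients are polynomials in $t$ of controlled degree and height and using the standard bound on the size of roots. Everything else (B\'ezout, the grid/Schwartz--Zippel count, Liouville) is elementary, and the fact that $V$ is defined over $\overline{\mathbb{Q}}$ rather than over $\mathbb{Q}$ is harmless since $\pi$ and the chosen $t$ are rational, its only trace being the fixed factor $[k:\mathbb{Q}]$ in $D_0$.
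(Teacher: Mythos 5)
Your proof is correct and shares the overall architecture of the paper's argument --- produce an algebraic point $x\in U$ of bounded degree over $\mathbb{Q}$ with $P(x)\neq 0$ and $h(x)=O(\log\deg P)$, then conclude by the Liouville inequality --- but the two auxiliary steps are carried out differently, and in the key one the difference is genuine. For the choice of the rational base point the paper argues by induction on $\dim V$ (counting rational points of bounded height on a line, then sweeping by lines of controlled height), while you use a grid of mesh $1/N$ with $N\asymp\deg P$ together with the Schwartz--Zippel bound; these are interchangeable and equally elementary. The substantive divergence is in bounding the height of the lifted point: the paper invokes the arithmetic B\'ezout theorem of Bost--Gillet--Soul\'e/Philippon for the $0$-cycle $V\cdot\pi^{-1}(y)$, estimating the height of the linear fiber via Pl\"ucker coordinates, whereas you fix once and for all a Noether-normalization projection $\pi$ with rational coefficients and deduce the comparison $h(x)\le c_0\bigl(1+h(\pi(x))\bigr)$ from the integral dependence of the coordinate functions on $k[t_1,\dots,t_d]$, together with the standard bound on roots of monic polynomials and the bound $[\mathbb{Q}(x):\mathbb{Q}]\le \deg(\pi)\,[k:\mathbb{Q}]$. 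Your route is more elementary and self-contained (no heights of positive-dimensional varieties are needed), at the price of actually carrying out the elimination-theoretic height comparison, which you only sketch; when you do, use that concrete route rather than the ``height machine'', since $\pi$ extended to projective closures need not be a morphism, so functoriality of heights does not apply off the shelf, while the integral equations with coefficients $a_j(t)\in k[t]$ of fixed degree give exactly what is needed. Finally, your reduction to $\deg P\ge 2$ is reasonable: for $\deg P\le 1$ and $H(P)=1$ the right-hand side of the statement degenerates to $1$ and the inequality has to be read up to such harmless normalizations (an edge case the paper's own proof does not address either), and it is irrelevant in the application, where $\deg P\to\infty$.
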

 
 Here is the precise corollary that we need.

 \begin{cor}\label{cor:philippon}
 Let $V$,  $U$, $P$ be as in Proposition \ref{prop:philippon}. Then there exists a constant $\delta>0$ such that the following alternative holds
 \begin{itemize}
 \itm either $P\rest{V}$ is constant
 \itm or $\mathrm{var}_U P \geq \delta^{\deg(P)\log\deg(P) + \log H(P)}$, where $ \mathrm{var}_U {P} =
 \sup_{x,y\in U} \abs{P(x)-P(y)}$.
 \end{itemize}
 \end{cor}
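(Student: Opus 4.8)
The plan is to deduce Corollary \ref{cor:philippon} from Proposition \ref{prop:philippon} by a simple doubling trick. If $P|_V$ is constant we are in the first alternative and there is nothing to prove, so assume $P|_V$ is non-constant (in particular $\deg P\ge 1$ and $H(P)\ge 1$). The key observation is that one should not apply the proposition to $P$ itself — its supremum on $U$ need not control its variation — but to the polynomial
$\widetilde P(X,Y):=P(X)-P(Y)$
in the $2n$ variables $(X_1,\dots,X_n,Y_1,\dots,Y_n)$, on the product variety $W:=V\times V\subset\cc^{2n}$ and the relatively compact open set $U\times U\Subset W$. The whole point is the tautology $\sup_{U\times U}\abs{\widetilde P}=\sup_{x,y\in U}\abs{P(x)-P(y)}=\mathrm{var}_U P$.

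The steps I would carry out are as follows. First, check that $\widetilde P$ is eligible for the proposition: it has integer coefficients since $P$ does; its total degree equals $\deg P$; and its nonzero coefficients are among the coefficients of $P$ and their opposites, so $H(\widetilde P)\le H(P)$. Second, observe that $\widetilde P$ does not vanish identically on $W$: indeed $\widetilde P|_W\equiv 0$ would mean that $P$ takes equal values at every pair of points of $V$, i.e.\ that $P|_V$ is constant, contrary to assumption. Third, note that $W=V\times V$ is again an irreducible affine algebraic variety (a product of irreducible varieties over the algebraically closed field $\cc$ is irreducible), defined over $\overline{\mathbb Q}$ since $V$ is. Hence Proposition \ref{prop:philippon} applies to $\widetilde P$ and the pair $(W,U\times U)$ and produces a constant $\delta_0>0$, which we may assume to satisfy $\delta_0\le 1$ (replacing it by $\min(\delta_0,1/2)$ only weakens the bound, since the relevant exponent is nonnegative), with $\sup_{U\times U}\abs{\widetilde P}\ge \delta_0^{\,\deg(\widetilde P)\log\deg(\widetilde P)+\log H(\widetilde P)}$. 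Fourth, since $\deg\widetilde P=\deg P$ and $1\le H(\widetilde P)\le H(P)$, the exponent on the right is at most $\deg(P)\log\deg(P)+\log H(P)$ and is $\ge 0$; as $\delta_0\le 1$, monotonicity of $t\mapsto\delta_0^t$ gives $\sup_{U\times U}\abs{\widetilde P}\ge\delta_0^{\,\deg(P)\log\deg(P)+\log H(P)}$. Taking $\delta=\delta_0$ and recalling $\sup_{U\times U}\abs{\widetilde P}=\mathrm{var}_U P$ completes the proof.

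I do not expect any serious obstacle: the entire content is the remark that replacing $(V,P)$ by $(V\times V,\,P(X)-P(Y))$ turns the sup-norm lower bound of Proposition \ref{prop:philippon} into a variation lower bound, at the cost of doubling the ambient dimension. The only points needing a word of justification are the irreducibility and $\overline{\mathbb Q}$-rationality of the product $V\times V$ (both standard), the elementary comparison of the degrees and heights of $P$ and $\widetilde P$, and the harmless normalization $\delta_0\le 1$ used in the monotonicity step; all of these are routine.
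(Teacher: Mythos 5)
Your proof is correct and follows exactly the paper's argument: the paper likewise deduces the corollary by applying Proposition \ref{prop:philippon} to $\widetilde P(x,y)=P(x)-P(y)$ on $\widetilde V=V\times V$ with the open set $U\times U$, so that $\sup_{U\times U}\abs{\widetilde P}=\mathrm{var}_U P$. The additional details you supply (irreducibility and $\overline{\mathbb Q}$-rationality of $V\times V$, comparison of degree and height, normalization $\delta_0\le 1$) are routine and correctly handled.
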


 To obtain the corollary, it is enough to apply the proposition to $\widetilde{P}:(x,y)\mapsto P(x)-P(y)$ restricted to $\widetilde{V} :=V\times V\subset \cc^n_x\times \cc^n_y$.
 
 \medskip

 For the matter of proving the proposition we briefly introduce a few concepts from number theory; the reader is referred to   \cite{waldschmidt} for
 details. If $P\in \zz[X_1, \cdots X_n]$, the {\em usual height} $H(P)$ of $P$ is the maximum modulus of its coefficients. Let now $\alpha\in \overline{\mathbb{Q}}$ be an algebraic number, and  $P\in \zz[X]$ be its   minimal polynomial. By definition,  the degree $\deg(\alpha)$ equals $\deg(P)$ and we let   $H(\alpha) := H(P)$.
 We do not need to define precisely the \textit{height} $h(\alpha)$ of $\alpha$, but only note that is satisfies
 $ \abs{h(\alpha) - \unsur{\deg(\alpha)}\log H(\alpha)}\leq C$, where  $C$ is a universal constant $\leq 2$. If $\alpha = p/q$ is rational, $h\lrpar{\alpha} = \log\max({\abs{p}, \abs{q}})$. Also $h(\cdot)$ behaves well under the operation of taking sums and products of algebraic numbers.

 Another useful property is that if $P_1$ $P_2$ are polynomials in $\zz[X]$ with respective degree $d_1$, $d_2$, then $H(P_1P_2)
 \geq 2^{-d_1d_2}(d_1d_2+1)^{-1/2}  H(P_1)H(P_2)$. From this we infer that if $\alpha\in \overline{\mathbb{Q}}$, and $P\in\zz[X]$ is any polynomial such that $P(\alpha) =0$, then there exists a constant $C=C(\deg(P))$ depending only on $\deg(P)$ such that
 $h(\alpha)\leq   \log H(P)+C$.
 Furthermore, if now $\alpha$ satisfies an algebraic equation of the form $P(\alpha)= 0$, where $P\in \mathbb{Q}(\beta)[X]$, with $\beta\in \overline{\mathbb{Q}}$,  then $h(\alpha)\leq  {C(\deg(P), \beta)}(h(P)+1)$, where $h(P)$ denotes the maximum height of the coefficients of $P$. To see this, just observe that the product of the Galois conjugates  of $P$   is an annihilator  of $\alpha$ belonging to $\mathbb{Q}[X]$ and estimate its degree and coefficients.
 
 \medskip
 
 When $V$ is merely a point, the estimate in Proposition \ref{prop:philippon}
  is  classical and known as the  \textit{Liouville  inequality} (see \cite[Proposition 3.14]{waldschmidt}). We need to state it precisely: if $x_1, \ldots, x_n$ are algebraic numbers, and $P\in \zz[X_1, \ldots X_n]$ is a polynomial not vanishing at $x = (x_1, \ldots, x_n)$, then there exists a constant $c$ depending only on the dimension $n$ such that
 \begin{equation}\label{eq:liouville}
 \abs{P(x_1, \ldots , x_n )}\geq e^{-cD (\deg(P)\max_i h(x_i) + \log H(P))}, \text{ where } D = [\mathbb{Q}(x_1, \ldots, x_n):\mathbb{Q}].
 \end{equation}
 
 \begin{proof}[Proof of the proposition] Throughout the proof the notation $a\lesssim b$   means $a\leq Cb$ where $C$ is a constant independent of $P$ (and similarly for  $a\gtrsim b$).
 The main step is to  prove that if $P\rest{V} \neq 0$, then there exists an algebraic point $x = (x_1,\ldots ,x_n)\in V$, such that $P$ does not vanish at $x$ and furthermore
 $\max_i\deg(x_i)\lesssim 1$ and $\max_i h(x_i)\lesssim \log\deg(P)$. Then, applying the Liouville inequality
 \eqref{eq:liouville} to $\abs{P(x)}$ gives the result.
 
 To show the existence of such a point $x$, we use a projection argument.
 We fix a  linear projection $\pi: \cc^n\cv \cc^{\dim V}$, defined over $\mathbb{Q}$,  in general position with respect to $V$.
 Let $\Sigma = \set{P=0}\cap V$, which is a proper subvariety of $V$.
 The projection $\pi(\Sigma)\subset \cc^{\dim(V)}$ is a hypersurface of degree at most $\deg(V)\deg(P)$.
 
 Let $k = \dim(V)$ and
  let $B\subset \cc^{k}$ be a ball contained in $\pi(U)$.
 We claim that there exists a rational point $y\in \mathbb{Q}^{k}$ such that
 $y\in B \setminus \pi(\Sigma)$ and $h(y)\lesssim\log\deg(P)$ (abusing slightly, here we put
 $h(y) = \max_i h(y_i)$, where the $y_i$ are the coordinates of $y$). If $k = 1$ this is obvious since
 $\pi(\Sigma)$ contains at most  $\deg(V)\deg(P)$ points while
 $\# \set{y\in B\cap \mathbb{Q}, \ h(y)\leq h}\gtrsim e^{h}.$ Now if $k=2$, $\pi(\Sigma)$ contains at most $\deg(V)\deg(P)$ lines, so there is a line over $\mathbb{Q}$ defined by an equation of height $\lesssim \log(\deg(V)\deg(P))$ not contained in $\pi(\Sigma)$, and in this line we are back to the previous case $k=1$.  The general case follows by induction.
 
 \medskip
 
 Finally, to obtain the desired $x$ we simply lift $y$ to $V$. Therefore, $x$ is an intersection point between $V$ and the fiber $\pi^{-1}(y)$ of the projection $\pi$ through $y$,  which is 
 a  $(n-k)$-plane parallel to some fixed rational direction and passing through  $y$. To estimate the degree and height of $x$, we work in a projective space $\pp^n$ compactifying $\cc^n$. Since $\pi$ is a linear projection 
defined over $\mathbb{Q}$, Bézout's theorem implies that $\deg(x)\leq\deg(V)\deg(\pi^{-1} (y)) = \deg(V)\lesssim 1$.  Similarly, there are Bézout-type theorems for the height of intersections of projective varieties (see \cite{bgs} or  
\cite[Thm 3]{philippon}). In our case, it expresses as 
$$h(V\cdot \pi^{-1} (y))\leq h(V) \deg(\pi^{-1} (y)) + h(\pi^{-1} (y))\deg(V) + c \deg(V)\deg(\pi^{-1} (y)),$$ where $c$ is a dimensional constant. Here $V\cdot \pi^{-1} (y)$ is a 0-dimensional cycle containing $x$ so that $h(x)\leq h(V\cdot \pi^{-1} (y))$. The precise definition of the height of an  algebraic subvariety is delicate, and differs slightly among authors. Fortunately, these   definitions differ from at most an additive constant depending on the dimension. To fix the ideas let us say that we define $h$ according to \cite{bgs}. 
Since $V$ is a fixed variety, to obtain the desired estimate on $h(x)$ we just need  to check that 
$h (\pi^{-1} (y))\lesssim \log\deg(P)$. 

To see this, we simply note that the height   of a projective subspace 
  is the height of its image under the Pl\"ucker embedding of the corresponding  Grassmanian (see the remarks about Proposition 4.1.2. in \cite{bgs}). 
Here $\pi^{-1} (y)$, viewed as a projective subspace in $\pp^n$, lifts to a linear subspace of dimension $n-k+1$ in $\cc^{n+1}$. We can choose  a basis  $v_1,\cdots, v_{n-k+1}$ made of vectors of height $\lesssim h(y)$, and the Pl\"ucker image of $\pi^{-1} (y)$ is $v_1\wedge \cdots \wedge v_{n-k+1}$. Since the height is subadditive under multiplication, we see that the  coordinates of this vector are $\lesssim h(y)\lesssim \log\deg(P)$, therefore 
$h(\pi^{-1} (y))\lesssim \log\deg(P)$, which finishes the proof.
  \end{proof}

%

\end{document}